\newcommand*{\mailto}[1]{\href{mailto:#1}{\nolinkurl{#1}}}
\newcommand{\C}{{\mathbb C}}
\newcommand{\bbC}{{\mathbb{C}}}
\newcommand{\bbN}{{\mathbb{N}}}
\newcommand{\bbR}{{\mathbb{R}}}
\newcommand{\bsA}{{\boldsymbol{A}}}
\newcommand{\bsB}{{\boldsymbol{B}}}
\newcommand{\bsD}{{\boldsymbol{D}}}
\newcommand{\bsH}{{\boldsymbol{H}}}
\newcommand{\bsI}{{\boldsymbol{I}}}
\newcommand{\bsJ}{{\boldsymbol{J}}}
\newcommand{\bsK}{{\boldsymbol{K}}}
\newcommand{\bsL}{{\boldsymbol{L}}}
\newcommand{\bsR}{{\boldsymbol{R}}}
\newcommand{\bsS}{{\boldsymbol{S}}}
\newcommand{\bsT}{{\boldsymbol{T}}}
\newcommand{\bsU}{{\boldsymbol{U}}}
\newcommand{\bsV}{{\boldsymbol{V}}}
\newcommand{\bsu}{{\boldsymbol{u}}}
\newcommand{\bsv}{{\boldsymbol{v}}}
\newcommand{\cB}{{\mathcal B}}
\newcommand{\cC}{{\mathcal C}}
\newcommand{\cD}{{\mathcal D}}
\newcommand{\cF}{{\mathcal F}}
\newcommand{\cH}{{\mathcal H}}
\newcommand{\cK}{{\mathcal K}}
\newcommand{\e}{\varepsilon}
\DeclareMathOperator{\supp}{supp}
\DeclareMathOperator{\ran}{ran}
\DeclareMathOperator{\dom}{dom}
\DeclareMathOperator{\tr}{tr}
\DeclareMathOperator{\spr}{spr}
\DeclareMathOperator*{\nlim}{n-lim}
\DeclareMathOperator*{\slim}{s-lim}
\renewcommand{\Im}{\text{\rm Im}}
\renewcommand{\ln}{\text{\rm ln}}
\newcommand{\loc}{\text{\rm{loc}}}
\newcommand{\ind}{\operatorname{ind}}
\newcommand{\no}{\notag}
\newcommand{\lb}{\label}
\newcommand{\f}{\frac}
\newcommand{\ol}{\overline}
\newcommand{\bs}{\backslash}
\newcommand{\wti}{\widetilde}
\newcommand{\Oh}{O}
\newcommand{\oh}{o}
\newcommand{\hatt}{\widehat} 
\newcommand{\bi}{\bibitem}
\let\geq\geqslant
\let\leq\leqslant
\def\theequation{\@arabic\c@equation}
\numberwithin{equation}{section}
\newtheorem{theorem}{Theorem}[section]
\newtheorem{lemma}[theorem]{Lemma}
\newtheorem{corollary}[theorem]{Corollary}
\newtheorem{definition}[theorem]{Definition}
\newtheorem{hypothesis}[theorem]{Hypothesis}
\theoremstyle{remark}
\newtheorem{remark}[theorem]{Remark}
\begin{document}

\numberwithin{equation}{section}
\allowdisplaybreaks

\title[Reduction of Fredholm Determinants]{A Jost--Pais-Type Reduction of Fredholm Determinants and Some Applications}

\author[A.\ Carey]{Alan Carey}  
\address{Mathematical Sciences Institute, Australian National University, Kingsley St., 
Canberra, ACT 0200, Australia}  
\email{\mailto{acarey@maths.anu.edu.au}}
\urladdr{\url{http://maths.anu.edu.au/~acarey/}}
  
\author[F.\ Gesztesy]{Fritz Gesztesy}  
\address{Department of Mathematics,
University of Missouri, Columbia, MO 65211, USA}
\email{\mailto{gesztesyf@missouri.edu}}
\urladdr{\url{http://www.math.missouri.edu/personnel/faculty/gesztesyf.html}}

\author[D.\ Potapov]{Denis Potapov}
\address{School of Mathematics and Statistics, UNSW, Kensington, NSW 2052,
Australia} 
\email{\mailto{d.potapov@unsw.edu.au}}

\author[F.\ Sukochev]{Fedor Sukochev}
\address{School of Mathematics and Statistics, UNSW, Kensington, NSW 2052,
Australia} 
\email{\mailto{f.sukochev@unsw.edu.au}}

\author[Y.\ Tomilov]{Yuri Tomilov}
\address{Faculty of Mathematics and Computer Science, Nicholas 
Copernicus University, ul.\ Chopina 12/18, 87-100 Torun, Poland, and Institute of Mathematics, Polish Academy of Sciences. \'Sniadeckich str. 8, 00-956 Warsaw, Poland}
\email{\mailto{tomilov@mat.uni.torun.pl}}

\thanks{{\it Integral Eq. Operator Theory} (to appear).}

\date{\today}
\subjclass[2010]{Primary: 47B10, 47G10, Secondary: 34B27, 34L40.}
\keywords{Fredholm determinants, semi-separable kernels, Jost functions,
perturbation determinants.}

\begin{abstract} 
We study the analog of semi-separable integral kernels in $\cH$ of the type 
\begin{equation*}
K(x,x')=\begin{cases} F_1(x)G_1(x'), & a<x'< x< b, \\ 
F_2(x)G_2(x'), & a<x<x'<b,  
\end{cases}   
\end{equation*}
where $-\infty\leq a<b\leq \infty$, and for a.e.\ $x \in (a,b)$, $F_j (x) \in \cB_2(\cH_j,\cH)$ 
and $G_j(x) \in \cB_2(\cH,\cH_j)$ 
such that $F_j(\cdot)$ and $G_j(\cdot)$ are uniformly measurable, and 
\begin{equation*}  
\|F_j( \cdot)\|_{\cB_2(\cH_j,\cH)} \in L^2((a,b)), \; 
\|G_j (\cdot)\|_{\cB_2(\cH,\cH_j)} \in L^2((a,b)), \quad j=1,2,   
\end{equation*}
with $\cH$ and $\cH_j$, $j=1,2$, complex, separable Hilbert spaces. Assuming 
that $K(\cdot, \cdot)$ generates a trace class operator $\bsK$ in 
$L^2((a,b);\cH)$, we derive the analog of the Jost--Pais reduction theory that 
succeeds in proving that the Fredholm determinant 
${\det}_{L^2((a,b);\cH)}(\bsI - \alpha \bsK)$, $\alpha \in \bbC$, naturally reduces 
to appropriate Fredholm determinants in the Hilbert spaces $\cH$ (and 
$\cH_1 \oplus \cH_2$). 
   
Explicit applications of this reduction theory are made to Schr\"odinger operators 
with suitable bounded operator-valued potentials. In addition, we provide an 
alternative approach to a fundamental trace formula first established by Pushnitski 
which leads to a Fredholm index computation of a certain model operator. 
\end{abstract}

\maketitle


{\scriptsize{\tableofcontents}}

\section{Introduction}  \lb{s1}

The principal topic in this paper concerns semi-separable integral operators and 
their associated Fredholm determinants. In a nutshell, suppose that $\cH$ 
and $\cH_j$, $j=1,2$, are complex, separable Hilbert spaces, that 
$-\infty\leq a<b\leq \infty$, and introduce the semi-separable integral kernel
in $\cH$, 
\begin{equation}
K(x,x')=\begin{cases} F_1(x)G_1(x'), & a<x'< x< b, \\ 
F_2(x)G_2(x'), & a<x<x'<b,  \lb{1.1}
\end{cases}   
\end{equation}
where for a.e.\ $x \in (a,b)$, $F_j (x) \in \cB(\cH_j,\cH)$ and $G_j(x) \in \cB(\cH,\cH_j)$ 
such that $F_j(\cdot)$ and $G_j(\cdot)$ are uniformly measurable (i.e., measurable with 
respect to the uniform operator topology), and 
\begin{equation}  
\|F_j( \cdot)\|_{\cB_2(\cH_j,\cH)} \in L^2((a,b)), \; 
\|G_j (\cdot)\|_{\cB_2(\cH,\cH_j)} \in L^2((a,b)), \quad j=1,2.    
\end{equation}
Assuming that $K(\cdot, \cdot)$ generates a trace class operator $\bsK$ in 
$L^2((a,b);\cH)$, we derive the analog of the Jost--Pais reduction theory that 
naturally reduces the Fredholm determinant 
${\det}_{L^2((a,b);\cH)}(\bsI - \alpha \bsK)$, $\alpha \in \bbC$, to appropriate 
Fredholm determinants in the Hilbert spaces $\cH$ and $\cH_1 \oplus \cH_2$ as 
described in detail in Theorem \ref{tA.13}. For instance, we will prove the 
remarkable Jost--Pais-type reduction of Fredholm determinants \cite{JP51} (see also \cite{Ge86}, \cite{Ne80}), 
\begin{align}
\begin{split} 
{\det}_{L^2((a,b);\cH)}(\bsI-\alpha \bsK)
& = {\det}_{\cH_1}\bigg(I_{\cH_1}-\alpha \int_a^b dx\,
G_1(x)\widehat F_1(x; \alpha )\bigg) \lb{1.101a} \\
& ={\det}_{\cH_2}\bigg(I_{\cH_2}-\alpha \int_a^b dx\,
G_2(x)\widehat F_2(x; \alpha )\bigg),  
\end{split} 
\end{align}
where $\widehat F_1(\cdot; \alpha )$ and $\widehat F_2(\cdot; \alpha )$
are defined via the Volterra integral equations
\begin{align}
\widehat F_1(x; \alpha )&=F_1(x)-\alpha \int_x^b dx'\, H(x,x')\widehat F_1(x'; \alpha ), \lb{1.35} \\ 
\widehat F_2(x; \alpha )&=F_2(x)+\alpha \int_a^x dx'\, H(x,x')\widehat F_2(x'; \alpha ) \lb{1.36}  
\end{align}  
(cf.\ \eqref{A.6} for the definition of $H(\cdot  , \cdot )$). 

To illustrate the ubiquity of semi-separable integral operators it suffices to consider the special 
finite-dimensional case and note that the integral kernel of the resolvent of any ordinary differential 
and finite difference operator with matrix-valued coefficients, on arbitrary intervals on the real line, 
yields a Green's matrix of the type \eqref{1.1}, cf.\ \cite[Sect.\ XIV.3]{GGK90}. (The same applies 
to certain classes of convolution operators, cf.\ \cite[Sect.\ XIII.10]{GGK90}.) In particular, 
Schr\"odinger, Dirac, Jacobi, and CMV operators of great relevance to mathematical physics, are 
prime candidates to which this circle of ideas applies. In these cases the determinant reduction formulas  
\eqref{1.101a}  lead to natural extensions of well-known results due to Jost--Pais 
\cite{JP51}. We also note that Jost functions of the type \eqref{1.101a}  
are intimately related to Evans functions, a fundamental 
tool in linear stability theory associated with classes of non linear evolution equations. In the latter 
context we note the frequent necessity to consider non-self-adjoint operators as the result of a 
linearization process and stress that (infinite) determinants are ideally suited to analyze certain spectral 
properties of non-self-adjoint operators. Moreover, as shown in \cite{GM03}, suitable $2$-modified 
Fredholm determinant extensions of this approach also apply to convolution integral operators, whose
kernel is associated with a symbol given by a rational function. The corresponding determinant 
formula then represents a Wiener--Hopf analog of Day's formula for the determinant associated
with finite Toeplitz matrices generated by the Laurent expansion of a rational function. In addition, 
we note that this circle of ideas applies to Floquet theory and relates the Fredholm determinant of 
a particular Birman--Schwinger-type operator (modeled by $\bsK$ in \eqref{1.101a}) to the Floquet 
discriminant, a standard device in 
the theory of periodic differential and difference equations. In the instance where matrix-valued coefficients 
are replaced by operator-valued coefficients, one can now apply the results developed in this paper in 
the infinite-dimensional extensions reflected in \eqref{1.101a}.  

In Section \ref{s2}, the reduction theory leading to \eqref{1.101a} is presented in 
detail, culminating in Theorem \ref{tA.13}. In order to discuss concrete applications of this reduction 
theory for Fredholm determinants, Appendix \ref{sA} recalls the basic 
Weyl--Titchmarsh theory for Schr\"odinger operators with bounded 
operator-valued potentials on a half-line and on $\bbR$. Section \ref{s3}  
provides the application of Section \ref{s2} to Schr\"odinger operators with 
operator-valued potentials, culminating in Theorem \ref{tB.3}, which is of interest in its own right. 
Section \ref{s4} recalls the computation of the Fredholm index of the model operator 
$\bsD_\bsA^{}$ in $L^2(\bbR;\cH)$ (cf.\ \cite{GLMST11}, \cite{Pu08}, 
\cite{Ra04}, \cite{RS95} and the references therein) in terms of appropriate spectral 
shift functions. Here 
\begin{equation}
\bsD_\bsA^{} = \f{d}{dt} + \bsA,
\quad \dom(\bsD_\bsA^{})= \dom(d/dt).   \lb{1.9}
\end{equation}
with the operator $d/dt$ in $L^2(\bbR;\cH)$  is defined by
\begin{align} 
& \bigg(\f{d}{dt}f\bigg)(t) = f'(t) \, \text{ for a.e.\ $t\in\bbR$,} 
\lb{1.10}  \\
& \, f \in \dom(d/dt) = \big\{g \in L^2(\bbR;\cH) \, \big|\,
g \in AC_{\loc}(\bbR; \cH); \, g' \in L^2(\bbR;\cH)\big\} = W^{2,1}(\bbR;\cH),   \no
\end{align}
and the operator $\bsA \in \cB\big(L^2(\bbR;\cH)\big)$ is associated with the family of 
bounded self-adjoint operators $A(t) \in \cB(\cH)$, $t\in\bbR$, in $\cH$ by
\begin{equation}
(\bsA f)(t) = A(t) f(t) \, \text{ for a.e.\ $t\in\bbR$,}   \quad f \in L^2(\bbR;\cH).    
\lb{1.8}
\end{equation}
(For the precise hypotheses on $A(t)$, $t \in \bbR$ we refer to \eqref{2.3} and 
Hypothesis \ref{h4.1}.) 

Our hypotheses on $A(\cdot)$ guarantee the existence of bounded, self-adjoint asymptotes 
$A_{\pm} = \nlim_{t  \to \pm \infty} A(t) \in \cB(\cH)$. In addition, we prove that $\bsD_\bsA^{}$ is a 
Fredholm operator in $L^2(\bbR; \cH)$ if and only if $0 \in \rho(A_+)\cap\rho(A_-)$. 

We emphasize that we 
do not make the assumption that the operators $A_\pm$ have discrete spectrum. 
We then prove the following equalities for the Fredholm index of $\bsD_\bsA^{}$: 
\begin{align}
\ind (\bsD_\bsA^{}) & = \xi(0_+; \bsH_2, \bsH_1)    \lb{1.38} \\  
& = \xi(0;A_+,A_-),     \lb{1.39} 
\end{align}
where $\xi(\,\cdot\,;S_2,S_1)$ denotes the spectral shift function for the pair of 
self-adjoint operators $(S_2, S_1)$, and where 
\begin{align} \lb{1.14}
\bsH_1=\bsD_\bsA^* \bsD_\bsA^{} = - \f{d^2}{dt^2} \dot + \bsV_1, \quad 
\dom\big(\bsH_1^{1/2}\big) = W^{2,1}(\bbR;\cH), 
\quad \bsV_1 = \bsA^2 - \bsA',    \\
\bsH_2=\bsD_\bsA^{} \bsD_\bsA^* = - \f{d^2}{dt^2} \dot + \bsV_2, \quad 
\dom\big(\bsH_2^{1/2}\big) = W^{2,1}(\bbR;\cH), 
\quad \bsV_2 = \bsA^2 + \bsA',
\end{align} 
and the operator $\bsA^{\prime}$ is defined in terms of 
the family $\{A'(t)\}_{t\in\bbR}$ in $\cH$ as in \eqref{2.8a}. Thus, 
\begin{equation} 
\bsH_2=\bsH_1 \dot + 2 \bsA'.   \lb{1.9a} 
\end{equation} 
Here the symbol $\dot +$ abbreviates the form sum (and we note that 
$\bsA^2 \in \cB\big(L^2(\bbR;\cH)\big)$). 

Equation \eqref{1.38} follows from the fundamental trace formula 
\begin{align}
\begin{split}
   \lb{1.19}
   \tr_{L^2(\bbR;\cH)}\big((\bsH_2 - z \, \bsI)^{-1}-(\bsH_1 - z \, 
\bsI)^{-1}\big) = \frac{1}{2z} \tr_\cH \big(g_z(A_+)-g_z(A_-)\big),&  \\  
z\in\bbC\backslash [0,\infty).&  
\end{split}
\end{align} 
originally due to Pushnitski \cite{Pu08} (extended in \cite{GLMST11}), where 
 \begin{equation} \lb{1.16} 
g_z(x) = x(x^2-z)^{-1/2}, \quad z\in\C\backslash [0,\infty), \; x\in\bbR.   
\end{equation}
Our final Section \ref{s5} then provides a new and Fredholm determinant based 
proof of the trace formula \eqref{1.19}. In addition, introducing the symmetrized 
perturbation determinant $\wti D_{\bsH_2/\bsH_1} (z)$ by 
\begin{align}
\begin{split} 
& \wti D_{\bsH_2/\bsH_1} (z) = {\det}_{L^2(\bbR;\cH)} 
\big((\bsH_1 - z \bsI)^{-1/2} (\bsH_2 - z \bsI) (\bsH_1 - z \bsI)^{-1/2}\big)    \\
& \quad = {\det}_{L^2(\bbR;\cH)} 
\big(\bsI + 2 (\bsH_1 - z \bsI)^{-1/2} \bsA' (\bsH_1 - z \bsI)^{-1/2}\big), 
\quad z \in \rho(\bsH_1),     \lb{1.16a} 
\end{split} 
\end{align}
we prove the new result 
\begin{align}
& \wti D_{\bsH_2/\bsH_1} (z) = {\det}_{L^2(\bbR;\cH)} (\bsI - \bsK(z))     \lb{1.54} \\
& \quad = - {\det}_{\cH} \Big(z^{-1} \big[(A_+^2 - z I_{\cH})^{1/2} + A_+\big] 
\big[-(A_-^2 - z I_{\cH})^{1/2} + A_-\big]\Big), \quad z \in \rho(\bsH_1),      \no 
\end{align}
where the Birman--Schwinger-type integral operator 
$\bsK(z) \in \cB_1\big(L^2(\bbR; \cH)\big)$ is defined by (cf.\ \eqref{1.9a})
\begin{equation}
\bsK(z) = -2 \bsU_{\bsA'} |\bsA'|^{1/2} (\bsH_1 - z \bsI)^{-1} |\bsA'|^{1/2}, 
\quad z \in \rho(\bsH_1),    \lb{1.41} 
\end{equation}
recalling the polar decomposition for $A'(\cdot)$,  
\begin{equation} 
A'(t) = U_{A'(t)} |A'(t)|,  \quad t\in\bbR,    \lb{1.38a} 
\end{equation}   
with $U_{A'(t)}$ a partial isometry in $\cH$, and hence 
\begin{equation}
(\bsU_{\bsA'} f)(t) = U_{A'(t)} f(t) \, \text{ for a.e.\ $t\in\bbR$,}  
\quad f \in L^2(\bbR;\cH).    \lb{1.8A}
\end{equation}

In particular, $\bsK(z)$ has the semi-separable integral kernel $K(z,\cdot , \cdot)$ defined by 
\begin{equation}
K(z,t,t') = - 2 U_{A'(t)} |A'(t)|^{1/2} G_1(z,t,t') |A'(t')|^{1/2}, 
\quad z \in \rho(\bsH_1), \; t, t' \in \bbR,      \lb{1.42} 
\end{equation}
with the semi-separable integral kernel $G_1(z, \cdot, \cdot)$, the integral kernel of 
$(\bsH_1 - z \bsI)^{-1}$, defined in \eqref{3.37}.

Finally, we briefly summarize some of the notation used in this paper: Typically, 
$\cH$ (resp., $\cK$) will be a separable complex Hilbert space, $(\cdot,\cdot)_{\cH}$ 
denotes the scalar product in $\cH$ (linear in the second argument), and $I_{\cH}$ is 
the identity operator in $\cH$.

Next, if $T$ is a linear operator mapping (a subspace of) a Hilbert space into 
another, then $\dom(T)$ and $\ker(T)$ denote the domain and kernel (i.e., 
null space) of $T$. 
The closure of a closable operator $S$ is denoted by $\ol S$. 
The spectrum, essential spectrum, discrete spectrum, point spectrum, and 
resolvent set of a closed linear operator in a Hilbert space will be denoted by 
$\sigma(\cdot)$, $\sigma_{\rm ess}(\cdot)$, $\sigma_{\rm d}(\cdot)$, 
$\sigma_{\rm p}(\cdot)$, and $\rho(\cdot)$, respectively. 

The convergence in the strong operator topology (i.e., pointwise limits) will 
be denoted by $\slim$. Similarly, limits in the norm topology are abbreviated 
by $\nlim$. 

If $T$ is a Fredholm operator, its Fredholm index is denoted by $\ind(T)$.  

The Banach spaces of bounded and compact linear operators between complex, separable 
Hilbert spaces $\cH$ and $\cK$ are denoted by $\cB(\cH, \cK)$ and $\cB_\infty(\cH, \cK)$, 
respectively; the corresponding $\ell^p$-based trace ideals will be denoted 
by $\cB_p (\cH, \cK)$, $p>0$. (In the special case $\cH = \cK$, we will use the notation 
$\cB(\cH)$, $\cB_{\infty}(\cH)$, $\cB_p(\cH)$, $p>0$.) 
Moreover, ${\det}_{\cH}(I_\cH-A)$, and $\tr_{\cH}(A)$ denote the standard 
Fredholm determinant and the corresponding trace 
of a trace class operator $A\in\cB_1(\cH)$. Modified Fredholm determinants are denoted by 
${\det}_{k, \cH}(I_\cH-A)$, $A\in\cB_k(\cH)$, $k \in \bbN$, $k \geq 2$.

We will use the abbreviation $\bbC_+ = \{z\in\bbC\,|\, \Im(z) > 0\}$ for the 
open complex upper half-plane. 

Additional notational conventions  in the context of semi separable integral operators 
and Schr\"odinger operators in $L^2((a,b); dx; \cH)$ are introduced near the beginning 
of Appendix \ref{sA}.

\section{Semiseparable Operators and Reduction Theory for Fredholm 
Determinants} \lb{s2}

In this section we describe one of the basic tools in this paper: a reduction 
theory for Fredholm determinants that permits one to reduce Fredholm 
determinants in the Hilbert space $L^2((a,b);\cH)$ to those in the Hilbert 
space $\cH$, as described in detail in Theorem \ref{tA.13}. More precisely, 
we focus on a particular set of trace class 
operators $\bsK$ in $L^2((a,b);\cH)$ with $\cB(\cH)$-valued semi-separable 
integral kernels (with $\cH$ a complex, separable Hilbert space, generally of 
infinite dimension) and show how naturally to reduce the Fredholm 
determinant ${\det}_{L^2((a,b);\cH)}(\bsI - \alpha \bsK)$, $\alpha \in \bbC$, 
to appropriate Fredholm determinants in Hilbert spaces $\cH$ and 
$\cH \oplus \cH$ (in fact, we will describe a slightly more general framework 
below).   

In our treatment we closely follow the approaches presented in Gohberg,
Goldberg, and Kaashoek \cite[Ch.\ IX]{GGK90} and Gohberg, Goldberg, and
Krupnik \cite[Ch.\ XIII]{GGK00} (see also \cite{GK84}), and especially, in 
\cite{GM03}, where the particular case $\dim(\cH) < \infty$ was treated in 
detail. Our treatment of the case $\dim(\cH) = \infty$ in this section appears 
to be new and we hope it will prove to be of independent interest.  

Before setting up the basic formalism for this section, we state the following 
elementary result: 

\begin{lemma} \lb{lA.1} 
Let $\cH$ and $\cH'$ be complex, separable Hilbert spaces and $-\infty\leq a<b\leq \infty$. 
Suppose that for a.e.\ $x \in (a,b)$, $F (x) \in \cB(\cH',\cH)$ and $G(x) \in \cB(\cH,\cH')$ 
with $F(\cdot)$ and $G(\cdot)$ uniformly measurable, and  
\begin{equation}  
\|F( \cdot)\|_{\cB(\cH',\cH)} \in L^2((a,b)), \; 
\|G (\cdot)\|_{\cB(\cH,\cH')} \in L^2((a,b)). \lb{A.5}
\end{equation}
Consider the integral operator $\bsS$ in $L^2((a,b);\cH)$ with $\cB(\cH)$-valued 
separable integral kernel of the type 
\begin{equation}
S(x,x') = F(x) G(x') \, \text{ for a.e.\ $x, x' \in (a,b)$.}      \lb{A.6a} 
\end{equation}
Then 
\begin{equation}
\bsS \in \cB\big(L^2((a,b);\cH)\big).   \lb{A.7a}
\end{equation}
\end{lemma}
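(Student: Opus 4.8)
The plan is to show that $\bsS$ is bounded on $L^2((a,b);\cH)$ by exhibiting it as a composition of two Hilbert--Schmidt-type operators built from $F$ and $G$, or more directly by a Cauchy--Schwarz estimate on its action. First I would define, for $f \in L^2((a,b);\cH)$, the vector $\eta_f = \int_a^b dx'\, G(x') f(x') \in \cH'$; this integral converges because $\|G(x')f(x')\|_{\cH'} \le \|G(x')\|_{\cB(\cH,\cH')}\|f(x')\|_\cH$ and the right-hand side lies in $L^1((a,b))$ by Cauchy--Schwarz, using $\|G(\cdot)\|_{\cB(\cH,\cH')} \in L^2((a,b))$ from \eqref{A.5} and $\|f(\cdot)\|_\cH \in L^2((a,b))$. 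Moreover $\|\eta_f\|_{\cH'} \le \|G(\cdot)\|_{L^2((a,b))} \|f\|_{L^2((a,b);\cH)}$. Then $(\bsS f)(x) = F(x)\eta_f$ for a.e.\ $x$, so $\|(\bsS f)(x)\|_\cH \le \|F(x)\|_{\cB(\cH',\cH)} \|\eta_f\|_{\cH'}$, and squaring and integrating in $x$ gives $\|\bsS f\|_{L^2((a,b);\cH)} \le \|F(\cdot)\|_{L^2((a,b))} \|G(\cdot)\|_{L^2((a,b))} \|f\|_{L^2((a,b);\cH)}$, which is the desired bound \eqref{A.7a}.

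Before that estimate can be run, I need to check the measurability prerequisites so that all the integrals and the function $x \mapsto (\bsS f)(x)$ actually make sense as Bochner integrals / strongly measurable $\cH$-valued functions. The hypothesis that $F(\cdot)$ and $G(\cdot)$ are uniformly measurable means they are (Bochner) measurable as maps into the separable (or at least separably-valued on the relevant range) spaces $\cB(\cH',\cH)$ and $\cB(\cH,\cH')$; combined with strong measurability of $f$, one gets that $x' \mapsto G(x')f(x')$ is strongly measurable in $\cH'$, hence Bochner integrable by the $L^1$ bound above, so $\eta_f$ is well defined, and then $x \mapsto F(x)\eta_f$ is strongly measurable in $\cH$. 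A small technical point worth a sentence: uniform measurability of an operator-valued function implies measurability of its norm $x \mapsto \|F(x)\|_{\cB(\cH',\cH)}$, which is what makes the $L^2$ conditions in \eqref{A.5} meaningful and usable in Cauchy--Schwarz.

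The main obstacle, such as it is, is purely the measurability bookkeeping in the operator-valued setting: verifying that pointwise products of uniformly measurable operator-valued functions with strongly measurable vector-valued functions are strongly measurable, and that the scalar bounding functions are genuinely measurable, so that Tonelli/Cauchy--Schwarz apply. Once these routine (but in the $\dim(\cH)=\infty$ case non-vacuous) points are dispatched — most cleanly via Pettis's measurability theorem and the separability of the Hilbert spaces — the boundedness estimate itself is a one-line Cauchy--Schwarz argument. I would therefore organize the proof as: (i) measurability of $x' \mapsto G(x')f(x')$ and integrability giving $\eta_f$ with its norm bound; (ii) measurability of $x \mapsto F(x)\eta_f$ and identification with $(\bsS f)(x)$; (iii) the $L^2$ norm estimate yielding $\bsS \in \cB\big(L^2((a,b);\cH)\big)$ with operator norm at most $\|F(\cdot)\|_{L^2((a,b))}\|G(\cdot)\|_{L^2((a,b))}$.
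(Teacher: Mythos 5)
Your proof is correct, and it takes a genuinely different route from the paper's. The paper first establishes a general Schur-type bound valid for \emph{any} integral operator $\bsT$ with $\cB(\cH)$-valued kernel: if $\|T(\cdot,\cdot)\|_{\cB(\cH)} \in L^2((a,b)\times(a,b))$ then $\bsT$ is bounded with $\|\bsT\| \le \|\,\|T\|\,\|_{L^2((a,b)\times(a,b))}$ (cf.\ \eqref{A.8a}--\eqref{A.10a}), and only then specializes to the separable kernel $S(x,x')=F(x)G(x')$ via $\|F(x)G(x')\| \le \|F(x)\|\,\|G(x')\|$. You instead exploit the factored structure directly: the ``column'' map $R_G f = \int_a^b dx'\,G(x')f(x')$ sends $L^2((a,b);\cH)$ boundedly into $\cH'$, the ``row'' map $Q_F\eta = F(\cdot)\eta$ sends $\cH'$ boundedly into $L^2((a,b);\cH)$, and $\bsS = Q_F R_G$. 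Both routes yield the same bound $\|\bsS\| \le \|F(\cdot)\|_{L^2}\|G(\cdot)\|_{L^2}$, so neither is sharper here. The paper's route has the advantage that its intermediate estimate \eqref{A.10a} is reused later (e.g., in Lemma \ref{lA.2a} to bound iterated Volterra kernels), whereas your factorization is actually closer in spirit to the paper's own treatment of the trace-class case in Lemma \ref{lA.9}, where exactly the operators $Q_F$ and $R_G$ appear. One small caveat on your measurability remarks: uniform measurability of $F(\cdot)$ does \emph{not} in general give Bochner measurability as a $\cB(\cH',\cH)$-valued function, because $\cB(\cH',\cH)$ is non-separable; but this is harmless for your argument, since all you in fact use is (a) measurability of the scalar functions $\|F(\cdot)\|$, $\|G(\cdot)\|$, which does follow from uniform measurability, and (b) strong $\cH'$-measurability of $x'\mapsto G(x')f(x')$, which follows from uniform measurability of $G$ together with strong measurability of $f$ and separability of $\cH'$ (approximate $f$ by simple functions and invoke Pettis).
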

\begin{proof}
Let $f\in L^2((a,b);\cH)$, then for a.e.\ $x \in (a,b)$, and any integral 
operator $\bsT$ in $L^2((a,b);\cH)$ with $\cB(\cH)$-valued integral kernel $T(\cdot \, , \cdot)$, 
one obtains  
\begin{align}
\|(\bsT f)(x)\|_{\cH} & \leq \int_a^b dx' \, \|T(x,x')\|_{\cB(\cH)}\|f(x')\|_{\cH}   \no \\
& \leq \bigg(\int_a^b dx' \, \|T(x,x')\|_{\cB(\cH)}^2\bigg)^{1/2} 
\bigg(\int_a^b dx'' \, \|f(x'')\|_{\cH}^2\bigg)^{1/2},      \lb{A.8a}
\end{align}
and hence 
\begin{equation}
\int_a^b dx \, \|(\bsT f)(x)\|_{\cH}^2 \leq \bigg[\int_a^b dx \int_a^b dx' \, \|T(x,x')\|_{\cB(\cH)}^2\bigg]   
\int_a^b dx'' \, \|f(x'')\|_{\cH}^2,       \lb{A.9aa} 
\end{equation}
yields $\bsT \in \cB(L^2((a,b);\cH))$ whenever 
$\Big[\int_a^b dx \int_a^b dx' \, \|T(x,x')\|_{\cB(\cH)}^2\Big] < \infty$, implying
\begin{equation}
\|\bsT\|_{\cB(L^2((a,b);\cH))} \leq 
\bigg(\int_a^b dx \int_a^b dx' \, \|T(x,x')\|_{\cB(\cH)}^2\bigg)^{1/2}.    \lb{A.10a} 
\end{equation}
Thus, using the special form \eqref{A.6a} of $\bsS$ implies 
\begin{align}
\|\bsS\|_{\cB(L^2((a,b);\cH))}^2 &\leq \int_a^b dx \int_a^b dx' \, \|S(x,x')\|_{\cB(\cH)}^2   \no \\
& = \int_a^b dx \int_a^b dx' \, \|F(x) G(x')\|_{\cB(\cH)}^2    \no \\
& \leq \int_a^b dx \, \|F(x)\|_{\cB(\cH',\cH)}^2 \int_a^b dx' \, \|G(x')\|_{\cB(\cH,\cH')}^2 < \infty. 
\lb{A.10A} 
\end{align}
\end{proof}

At this point we now make the following initial set of assumptions: 
 
\begin{hypothesis} \lb{hA.2}  
Let $\cH$ and $\cH_j$, $j=1,2$, be complex, separable Hilbert spaces and $-\infty\leq a<b\leq \infty$. 
Suppose that for a.e.\ $x \in (a,b)$, $F_j (x) \in \cB(\cH_j,\cH)$ and $G_j(x) \in \cB(\cH,\cH_j)$ 
such that $F_j(\cdot)$ and $G_j(\cdot)$ are uniformly measurable, and 
\begin{equation}  
\|F_j( \cdot)\|_{\cB(\cH_j,\cH)} \in L^2((a,b)), \; 
\|G_j (\cdot)\|_{\cB(\cH,\cH_j)} \in L^2((a,b)), \quad j=1,2. \lb{A.1}
\end{equation}
\end{hypothesis}

Given Hypothesis \ref{hA.2}, we introduce in $L^2((a,b);\cH)$ the operator 
\begin{equation}
(\bsK f)(x)=\int_a^b dx'\, K(x,x')f(x') \, \text{ for a.e.\ $x \in (a,b)$, $f\in L^2((a,b);\cH)$,} 
\end{equation}
with $\cB(\cH)$-valued semi-separable integral kernel
$K(\cdot,\cdot)$ defined by
\begin{equation}
K(x,x')=\begin{cases} F_1(x)G_1(x'), & a<x'< x< b, \\ 
F_2(x)G_2(x'), & a<x<x'<b. \end{cases}  \lb{A.3}
\end{equation}

The operator $\bsK$ is bounded,  
\begin{equation}
\bsK \in \cB\big(L^2((a,b);\cH)\big),  
\end{equation}
since, using \eqref{A.10a} and \eqref{A.3}, one readily verifies
\begin{align}
\begin{split} 
& \int_a^bdx\int_a^bdx' \, \|K(x,x')\|_{\cB(\cH)}^2 
= \int_a^b dx \bigg(\int_a^x + \int_x^b \bigg)dx' \, \|K(x,x')\|_{\cB(\cH)}^2   \\
&\quad \leq \sum_{j=1}^2 \int_a^b dx \, \|F_j(x)\|_{\cB(\cH_j,\cH)}^2 
\int_a^b dx' \, \|G_j(x')\|_{\cB(\cH,\cH_j)}^2 < \infty. 
\end{split}
\end{align}

Associated with $\bsK$ we also introduce the bounded Volterra operators $\bsH_a$ and
$\bsH_b$ in $L^2((a,b);\cH)$ defined by 
\begin{align}
(\bsH_af)(x)&=\int_a^x dx'\, H(x,x')f(x'), \lb{A.4} \\
(\bsH_bf)(x)&=-\int_x^b dx'\, H(x,x')f(x'); \quad f\in
L^2((a,b);\cH), \lb{A.12a} 
\end{align}
with $\cB(\cH)$-valued (triangular) integral kernel
\begin{equation}
H(x,x')=F_1(x)G_1(x')-F_2(x)G_2(x').  \lb{A.6}
\end{equation} 
Moreover, introducing the bounded operator block matrices\footnote{$M^\top$  
denotes the transpose of the operator matrix $M$.}  
\begin{align}
C(x)&=(F_1(x) \;\; F_2(x)), \lb{A.7} \\
B(x)&=(G_1(x) \;\; -G_2(x))^\top, \lb{A.8} 
\end{align}
one verifies
\begin{equation}
H(x,x')=C(x)B(x'), \, \text{ where } \begin{cases} 
a<x'<x<b & \text{for $\bsH_a$,} \\ 
a<x<x'<b & \text {for $\bsH_b$} \end{cases} \lb{A.9}
\end{equation}
and 
\begin{equation}
K(x,x')=\begin{cases} C(x)(I_{\cH_1 \oplus \cH_2}-P_0)B(x'), & a<x'<x<b, \\
-C(x)P_0B(x'), & a<x<x'<b,  \end{cases} \lb{A.9a}
\end{equation}
with 
\begin{equation}
 P_0=\begin{pmatrix} 0 & 0 \\ 0 & I_{\cH_2} \end{pmatrix}. \lb{A.9b} 
\end{equation}

The next result proves that, as expected, $\bsH_a$ and $\bsH_b$ are quasi-nilpotent (i.e., have 
vanishing spectral radius) in $L^2((a,b);\cH)$:

\begin{lemma} \lb{lA.2a}
Assume Hypothesis \ref{hA.2}. Then $\bsH_a$ and $\bsH_b$ are quasi-nilpotent in 
$L^2((a,b);\cH)$, equivalently,
\begin{equation}
\sigma (\bsH_a) = \sigma (\bsH_b) = \{0\}.     \lb{A.20a} 
\end{equation}
\end{lemma}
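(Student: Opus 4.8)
The plan is to prove quasi-nilpotence by bounding the operator norms of the iterates $\bsH_a^n$ and $\bsH_b^n$ super-exponentially in $n$, so that the spectral radius formula $\spr(\bsH_a) = \lim_{n\to\infty}\|\bsH_a^n\|_{\cB(L^2((a,b);\cH))}^{1/n}$ yields $\spr(\bsH_a) = \spr(\bsH_b) = 0$. Since a bounded operator on the nonzero Banach space $L^2((a,b);\cH)$ has nonempty spectrum contained in the closed disk of radius $\spr(\cdot)$, this is exactly the assertion \eqref{A.20a}.

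First I would produce a scalar majorant for the kernel $H(\cdot,\cdot)$. From \eqref{A.7}, \eqref{A.8} one has $\|C(x)\|_{\cB(\cH_1\oplus\cH_2,\cH)}^2 \le \|F_1(x)\|_{\cB(\cH_1,\cH)}^2 + \|F_2(x)\|_{\cB(\cH_2,\cH)}^2$ and $\|B(x)\|_{\cB(\cH,\cH_1\oplus\cH_2)}^2 \le \|G_1(x)\|_{\cB(\cH,\cH_1)}^2 + \|G_2(x)\|_{\cB(\cH,\cH_2)}^2$, so by \eqref{A.1} the function $w(x):=\max\big\{\|C(x)\|_{\cB(\cH_1\oplus\cH_2,\cH)},\,\|B(x)\|_{\cB(\cH,\cH_1\oplus\cH_2)}\big\}$ belongs to $L^2((a,b))$, and by \eqref{A.6}, \eqref{A.9}, $\|H(x,x')\|_{\cB(\cH)} = \|C(x)B(x')\|_{\cB(\cH)} \le w(x)\,w(x')$ for a.e.\ $x,x'\in(a,b)$.

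Next, iterating \eqref{A.4} — the use of Fubini's theorem being justified by the Cauchy--Schwarz inequality together with $w\in L^2((a,b))$ — the operator $\bsH_a^n$ has $\cB(\cH)$-valued integral kernel
\[
H_a^{(n)}(x,x') = \int\cdots\int_{x'<x_1<\cdots<x_{n-1}<x} H(x,x_{n-1})H(x_{n-1},x_{n-2})\cdots H(x_1,x')\,dx_1\cdots dx_{n-1}
\]
for a.e.\ $a<x'<x<b$, and $H_a^{(n)}(x,x')=0$ for $a<x<x'<b$. Inserting $\|H(\cdot,\cdot)\|_{\cB(\cH)}\le w(\cdot)w(\cdot)$ and writing $W(x):=\int_a^x w(t)^2\,dt$, so that $0 \le W(x)-W(x') \le \|w\|_{L^2((a,b))}^2$, the substitution $u_k = W(x_k)$ gives
\[
\|H_a^{(n)}(x,x')\|_{\cB(\cH)} \le w(x)w(x')\,\frac{(W(x)-W(x'))^{n-1}}{(n-1)!} \le w(x)w(x')\,\frac{\|w\|_{L^2((a,b))}^{2(n-1)}}{(n-1)!}.
\]
Feeding this into \eqref{A.10a} yields $\|\bsH_a^n\|_{\cB(L^2((a,b);\cH))} \le \|w\|_{L^2((a,b))}^{2n}/(n-1)!$, and hence $\spr(\bsH_a) \le \|w\|_{L^2((a,b))}^2\,\lim_{n\to\infty}((n-1)!)^{-1/n} = 0$. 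The identical computation applied to $\bsH_b$ in \eqref{A.12a} — now over the simplex $a<x<x_1<\cdots<x_{n-1}<x'<b$ and with $W(x')-W(x)$ in place of $W(x)-W(x')$ — gives $\spr(\bsH_b)=0$, so \eqref{A.20a} follows.

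The only genuinely delicate point is the bookkeeping in the iterated-kernel identity for $\bsH_a^n$ (and $\bsH_b^n$) and the accompanying Fubini justification in the operator-valued, infinite-dimensional setting; once these are in place, everything else is the classical Volterra-type estimate.
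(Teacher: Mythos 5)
Your proof is correct and follows essentially the same strategy as the paper's: bound the $\cB(\cH)$-valued iterated kernel $H_a^{(n)}(\cdot,\cdot)$ using the Volterra simplex to pick up a $1/(n-1)!$ factor, feed this into the Hilbert--Schmidt estimate \eqref{A.10a}, and conclude $\spr(\bsH_a)=\spr(\bsH_b)=0$ via the spectral radius formula. The only (minor) difference is your choice of a cleaner factorized scalar majorant $\|H(x,x')\|\le w(x)w(x')$ via $H=CB$ from \eqref{A.9} with $w=\max\{\|C\|,\|B\|\}$, which avoids the paper's $2^n$ factor coming from expanding $H=F_1G_1-F_2G_2$ directly; this tidies the bookkeeping but does not change the argument.
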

\begin{proof}
It suffices to discuss $\bsH_a$. Then estimating the norm of $H_a^n (x,x')$, $n \in \bbN$, (i.e., the integral kernel for $\bsH_a^n$) in a straightforward 
manner (cf.\ \eqref{A.4}, \eqref{A.6}) yields for a.e.\ $x,x' \in (a,b)$, 
\begin{align}
& \big\|H_a^n (x,x')\big\|_{\cB(\cH)} \leq 2^n 
\max_{j = 1,2}\big(\|F_j(x)\|_{\cB(\cH_j,\cH)}\big) \max_{k = 1,2}\big(\|G_k(x')\|_{\cB(\cH,\cH_k)}\big) 
\no \\
& \quad \times \f{1}{(n-1)!} \bigg[\int_a^x dx'' \, 
\max_{1 \leq \ell, m \leq 2}\big(\|G_{\ell}(x'')\|_{\cB(\cH,\cH_{\ell})} 
\|F_m (x'')\|_{\cB(\cH_m,\cH)}\big)\bigg]^{(n-1)},     \no \\
& \hspace*{10cm}  n \in \bbN.    
\end{align}
Thus, applying \eqref{A.10a}, one verifies
\begin{align}
& \big\|\bsH_a^n\big\|_{\cB(L^2((a,b);\cH))} \leq 
\bigg(\int_a^b dx \int_a^b dx' \, \|H_a^n(x,x')\|_{\cB(\cH)}^2\bigg)^{1/2}     \no \\
& \quad \leq \max_{j=1,2} \bigg(\int_a^b dx \, \|F_j(x)\|_{\cB(\cH_j,\cH)}\bigg)^{1/2} 
\max_{k=1,2} \bigg(\int_a^b dx' \, \|G_k(x')\|_{\cB(\cH,\cH_k)}\bigg)^{1/2}    \no \\
& \qquad \times \f{2^n}{(n-1)!} \max_{1 \leq \ell,m \leq 2} \bigg(\int_a^b dx'' \, 
\|G_{\ell}(x'')\|_{\cB(\cH,\cH_{\ell})} \|F_m(x'')\|_{\cB(\cH_m,\cH)}\bigg)^{(n-1)},     \no \\
& \hspace*{10cm} n \in\bbN,    
\end{align}
and hence
\begin{equation}
\spr(\bsH_a) = \lim_{n\to\infty} \big\|\bsH_a^n\big\|_{\cB(L^2((a,b);\cH))}^{1/n} = 0 
\end{equation}
(here $\spr(\cdot)$ abbreviates the spectral radius). 
Thus, $\bsH_a$ and $\bsH_b$ are quasi-nilpotent in $L^2((a,b);\cH)$ which in turn is equivalent to 
\eqref{A.20a}.
\end{proof}

Next, introducing the linear maps
\begin{align}
&Q\colon \cH_2\mapsto L^2((a,b);\cH), \quad (Q w)(x)=F_2(x) w,
\quad w \in\cH_2, \lb{A.10} \\
&R\colon L^2((a,b);\cH) \mapsto \cH_2, \quad (Rf)=\int_a^b
dx'\,G_2(x')f(x'), \quad f\in L^2((a,b);\cH), \lb{A.11} \\
&S\colon \cH_1 \mapsto L^2((a,b);\cH), \quad (S v)(x)=F_1(x) v,
\quad v\in\cH_1, \lb{A.12} \\
&T\colon L^2((a,b);\cH) \mapsto \cH_1, \quad (Tf)=\int_a^b 
dx'\, G_1(x')f(x'), \quad f\in L^2((a,b);\cH), \lb{A.13} 
\end{align}
one easily verifies the following elementary result (cf.\ 
\cite[Sect.\ IX.2]{GGK90}, \cite[Sect.\ XIII.6]{GGK00} in the case $\dim(\cH)<\infty$):

\begin{lemma} \lb{lA.3} Assume Hypothesis \ref{hA.2}. Then
\begin{align}
\bsK &=\bsH_a + QR \lb{A.14} \\
 &=\bsH_b + ST. \lb{A.15}
\end{align}
\end{lemma}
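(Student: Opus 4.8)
The plan is to verify the two operator identities \eqref{A.14} and \eqref{A.15} by simply computing the integral kernels of both sides and checking they agree for a.e.\ $x, x' \in (a,b)$, separately on the two regions $a < x' < x < b$ and $a < x < x' < b$. All the operators involved ($\bsK$, $\bsH_a$, $\bsH_b$, $QR$, $ST$) are bounded on $L^2((a,b);\cH)$ by Lemma \ref{lA.1} and the bounds established just above, and each is an integral operator with an explicit $\cB(\cH)$-valued kernel, so equality of kernels is equivalent to equality of operators.

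First I would treat \eqref{A.14}. The operator $QR$ acts as $(QRf)(x) = F_2(x) \int_a^b dx'\, G_2(x') f(x')$, so its integral kernel is the (globally defined, non-triangular) kernel $F_2(x) G_2(x')$ on all of $(a,b) \times (a,b)$. The Volterra operator $\bsH_a$ has kernel $H(x,x') = F_1(x)G_1(x') - F_2(x)G_2(x')$ for $a < x' < x < b$ and $0$ for $a < x < x' < b$, by \eqref{A.4} and \eqref{A.6}. Adding these, on the region $a < x' < x < b$ we get $F_1(x)G_1(x') - F_2(x)G_2(x') + F_2(x)G_2(x') = F_1(x)G_1(x')$, and on $a < x < x' < b$ we get $0 + F_2(x)G_2(x') = F_2(x)G_2(x')$; comparing with \eqref{A.3} this is exactly $K(x,x')$. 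Hence $\bsK = \bsH_a + QR$.

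Next I would treat \eqref{A.15} the same way. Here $(STf)(x) = F_1(x)\int_a^b dx'\, G_1(x') f(x')$, so $ST$ has kernel $F_1(x)G_1(x')$ on all of $(a,b)\times(a,b)$, while $\bsH_b$ has kernel $-H(x,x') = -F_1(x)G_1(x') + F_2(x)G_2(x')$ for $a < x < x' < b$ and $0$ for $a < x' < x < b$, by \eqref{A.12a} and \eqref{A.6}. Summing: on $a < x < x' < b$ we get $F_1(x)G_1(x') - F_1(x)G_1(x') + F_2(x)G_2(x') = F_2(x)G_2(x')$, and on $a < x' < x < b$ we get $F_1(x)G_1(x') + 0 = F_1(x)G_1(x')$; again this matches $K(x,x')$ from \eqref{A.3}. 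Hence $\bsK = \bsH_b + ST$, completing the proof.

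There is no real obstacle here — the result is essentially a bookkeeping identity, and the only thing to be a little careful about is that $QR$ (resp.\ $ST$) contributes its kernel on the \emph{entire} square, not just on a triangle, so that the Volterra part must \emph{subtract} the spurious $F_2 G_2$ (resp.\ $F_1 G_1$) contribution on the complementary triangle; this is exactly why the triangular kernel $H$ in \eqref{A.6} is defined with the difference $F_1 G_1 - F_2 G_2$ rather than with a single term. The one genuine point worth a sentence is the remark that boundedness of all the pieces (needed so that "equal kernels implies equal operators" is a legitimate step and the sums make sense) has already been secured by Lemma \ref{lA.1} and the kernel estimate preceding the statement, together with the obvious fact that $Q, R, S, T$ are bounded by the $L^2$-conditions in \eqref{A.1} and Cauchy--Schwarz.
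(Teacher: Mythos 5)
Your proof is correct and is essentially the intended one: the paper states this lemma without proof, remarking only that ``one easily verifies'' it (with a pointer to the kernel computation in Gohberg--Goldberg--Kaashoek and Gohberg--Goldberg--Krupnik in the finite-dimensional case), and your piecewise kernel comparison on the two triangles is exactly that computation, carried out carefully in the $\cB(\cH)$-valued setting.
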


To describe the inverse of $\bsI-\alpha \bsH_a$ and $\bsI-\alpha \bsH_b$,
$\alpha\in\bbC$, one introduces the block operator matrix $A(\cdot)$ in $\cH_1 \oplus \cH_2$
\begin{align}
A(x)&=\begin{pmatrix} G_1(x)F_1(x) & G_1(x)F_2(x) \\ 
-G_2(x)F_1(x) & -G_2(x)F_2(x) \end{pmatrix} \lb{A.16} \\[1mm]
& = B(x)C(x)\, \text{ for a.e.\ $x\in (a,b)$} \lb{A.17}
\end{align}
and considers the linear evolution equation in $\cH_1 \oplus \cH_2$, 
\begin{equation}
\begin{cases} u'(x) = \alpha A(x) u(x), \quad \alpha \in \bbC, \, \text{ for a.e.\ $x \in (a,b)$,}  \\
u(x_0) = u_0 \in \cH_1 \oplus \cH_2  \end{cases}    \lb{A.32a} 
\end{equation}
for some $x_0 \in (a,b)$. Since $A(x) \in \cB(\cH_1 \oplus \cH_2)$ for a.e. $x\in (a,b)$, 
$A(\cdot)$ is uniformly measurable, and $\|A(\cdot)\|_{\cB(\cH_1 \oplus \cH_2)} \in L^1((a,b))$, 
Theorems\ 1.1 and 1.4 in \cite{OY11} (see also \cite{HO00}, which includes a discussion of a 
nonlinear extension of \eqref{A.32a}) apply and yield the existence of a unique propagator 
$U(\,\cdot \, ,\,\cdot \, ; \alpha)$ on $(a,b) \times (a,b)$ satisfying the following conditions: 
\begin{align}
& U(\cdot \, ,\cdot \, ; \alpha):(a,b) \times (a,b) \to \cB(\cH_1 \oplus \cH_2) \, \text{ is uniformly (i.e., norm) continuous.}  
\lb{A.33A} \\
& \text{There exists $C_{\alpha} > 0$ such that for all $x, x' \in (a,b)$, } \, 
\|U(x,x'; \alpha)\|_{\cB(\cH)} \leq C_{\alpha}.    \lb{A.33B} \\
& \text{For all $x, x', x'' \in (a,b)$, } \, U(x,x'; \alpha) U(x',x''; \alpha) = U(x,x''; \alpha),   \lb{A.33C} \\ 
& \hspace*{6.21cm} U(x,x; \alpha) = I_{\cH_1 \oplus \cH_2}.    \no \\
&\text{For all $u \in \cH_1 \oplus \cH_2$, $\alpha \in \bbC$, }\no\\
&\quad U(x,\cdot\, ;\alpha) u, U(\cdot \, ,x;\alpha) u \in W^{1,1} ((a,b); \cH_1 \oplus \cH_2),\quad x\in (a,b), 
\lb{A.33D} \\ 
& \text{and}    \no \\
& \text{for a.e.\ $x \in (a,b)$, } \, (\partial/\partial x) U(x,x'; \alpha) u = \alpha A(x) U(x,x'; \alpha) u, 
\quad x' \in (a,b),   \lb{A.33E} \\
& \text{for a.e.\ $x' \in (a,b)$, } \, (\partial/\partial x') U(x,x'; \alpha) u = - \alpha U(x,x'; \alpha) A(x') u, 
\quad x \in (a,b).   \lb{A.33F}
\end{align} 

Hence, $u(\, \cdot \, ; \alpha)$ defined by
\begin{equation}
u(x; \alpha) = U(x,x_0; \alpha) u_0, \quad x \in (a,b),      \lb{A.34a} 
\end{equation}
is the unique solution of \eqref{A.32a}, satisfying
\begin{equation}
u(\,\cdot\,; \alpha) \in W^{1,1} ((a,b); \cH_1 \oplus \cH_2).     \lb{A.35a} 
\end{equation}

In fact, an explicit construction (including the proof of uniqueness and that of the properties of 
\eqref{A.33A}--\eqref{A.33F}) of $U(\cdot\, , \cdot \, ; \alpha)$ can simply be obtained by a 
norm-convergent iteration of 
\begin{equation}
U(x,x'; \alpha) = I_{\cH_1 \oplus \cH_2} + \alpha \int_{x'}^x dx'' \, A(x'') U(x'', x'; \alpha), 
\quad x, x' \in (a,b).      \lb{A.36a} 
\end{equation}
Moreover, because of the integrability assumptions made in Hypothesis \ref{hA.2}, 
\eqref{A.32a}-\eqref{A.36a} extend to $x, x' \in [a,b)$ (resp., $x, x' \in (a,b]$) if $a > - \infty$ 
(resp., $b < \infty$) and permit taking norm limits of $U(x,x'; \alpha)$ as $x, x'$ to $-\infty$ if 
$a=-\infty$ (resp., $+\infty$ if $b= +\infty$), see also Remark \ref{rA.5}. 

The next result appeared in \cite[Sect.\ IX.2]{GGK90}, \cite[Sects.\ XIII.5, XIII.6]{GGK00} in the 
special case $\dim(\cH)<\infty$. Here we extend the results to the case where $\cH$ is 
infinite-dimensional. While this extension is straightforward, it appears to be new.  

\begin{theorem} \lb{tA.4} 
Assume Hypothesis \ref{hA.2}. Then, \\
$(i)$ $\bsI-\alpha \bsH_a$ and $\bsI-\alpha \bsH_b$ are boundedly invertible for all
$\alpha\in\bbC$ and 
\begin{align}
(\bsI-\alpha \bsH_a)^{-1}&= \bsI+\alpha \bsJ_a(\alpha), \lb{A.19} \\
(\bsI-\alpha \bsH_b)^{-1}&= \bsI+\alpha \bsJ_b(\alpha), \lb{A.20} \\
(\bsJ_a(\alpha) f)(x)&=\int_a^x dx'\, J(x,x'; \alpha )f(x'), \lb{A.21} \\ 
(\bsJ_b(\alpha) f)(x)&=-\int_x^b dx'\, J(x,x'; \alpha )f(x'); \quad f \in
L^2((a,b);\cH), \lb{A.22} \\  
J(x,x'; \alpha )&=C(x) U(x, x'; \alpha) B(x'), \, \text{ where }
\begin{cases}  a<x'<x<b & \text{for $\bsJ_a(\alpha)$,} \\ 
a<x<x'<b & \text {for $\bsJ_b(\alpha)$.} \end{cases}  \lb{A.23}  
\end{align}
$(ii)$ Let $\alpha\in\bbC$. Then $\bsI-\alpha \bsK$ is boundedly invertible if and only if 
$I_{\cH_2}-\alpha R(\bsI-\alpha \bsH_a)^{-1}Q$ is. Similarly,
$\bsI-\alpha \bsK$ is boundedly invertible if and only if 
$I_{\cH_1}-\alpha T(\bsI-\alpha \bsH_b)^{-1}S$ is. In particular,
\begin{align}
& (\bsI-\alpha \bsK)^{-1}=(\bsI-\alpha \bsH_a)^{-1}
+\alpha (\bsI-\alpha \bsH_a)^{-1}QR(\bsI-\alpha \bsK)^{-1} \lb{A.24} \\
& \quad = (\bsI-\alpha \bsH_a)^{-1} \no \\
& \qquad +\alpha (\bsI-\alpha \bsH_a)^{-1}Q\big[I_{\cH_2}
-\alpha R(\bsI-\alpha \bsH_a)^{-1}Q\big]^{-1}R(\bsI-\alpha \bsH_a)^{-1}
\lb{A.25} \\
& \quad =(\bsI-\alpha \bsH_b)^{-1}+\alpha (\bsI-\alpha \bsH_b)^{-1}
ST(\bsI-\alpha \bsK)^{-1} \lb{A.26} \\
& \quad =(\bsI-\alpha \bsH_b)^{-1} \no \\
& \qquad +\alpha (\bsI-\alpha \bsH_b)^{-1}S\big[I_{\cH_1}-\alpha T(\bsI-\alpha \bsH_b)^{-1}S\big]^{-1}
T(\bsI-\alpha \bsH_b)^{-1}.
\lb{A.27}
\end{align}
\end{theorem}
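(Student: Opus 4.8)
The plan is to dispatch part $(i)$ in two stages. The bounded invertibility of $\bsI-\alpha\bsH_a$ and $\bsI-\alpha\bsH_b$ for every $\alpha\in\bbC$ is immediate from Lemma \ref{lA.2a}: since $\sigma(\alpha\bsH_a)\subseteq\{0\}$ one has $1\notin\sigma(\alpha\bsH_a)$, and likewise for $\bsH_b$. To identify the inverses explicitly, I would first note that the Volterra operator $\bsJ_a(\alpha)$ with $\cB(\cH)$-valued kernel $J(x,x';\alpha)=C(x)U(x,x';\alpha)B(x')$ on $a<x'<x<b$ is bounded on $L^2((a,b);\cH)$: by \eqref{A.33B} the propagator satisfies $\|U(x,x';\alpha)\|\le C_\alpha$, hence $\|J(x,x';\alpha)\|_{\cB(\cH)}\le C_\alpha\|C(x)\|\,\|B(x')\|$ with $\|C(\cdot)\|,\|B(\cdot)\|\in L^2((a,b))$ by Hypothesis \ref{hA.2}, so \eqref{A.10a} applies. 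Because $\bsI-\alpha\bsH_a$ is already known to be invertible, it suffices to verify the one-sided identity $(\bsI-\alpha\bsH_a)(\bsI+\alpha\bsJ_a(\alpha))=\bsI$, i.e.\ the resolvent (Volterra integral) equation $\bsJ_a(\alpha)=\bsH_a+\alpha\bsH_a\bsJ_a(\alpha)$, which at the level of kernels reads
\[
J(x,x';\alpha)=H(x,x')+\alpha\int_{x'}^{x}dx''\,H(x,x'')\,J(x'',x';\alpha),\qquad a<x'<x<b.
\]
Inserting $H(x,x'')=C(x)B(x'')$ and $J(x'',x';\alpha)=C(x'')U(x'',x';\alpha)B(x')$ and using the factorization $A(\cdot)=B(\cdot)C(\cdot)$ of \eqref{A.17}, the right-hand side becomes $C(x)\big[I_{\cH_1\oplus\cH_2}+\alpha\int_{x'}^{x}dx''\,A(x'')U(x'',x';\alpha)\big]B(x')=C(x)U(x,x';\alpha)B(x')$ by the propagator equation \eqref{A.36a}, which is exactly $J(x,x';\alpha)$. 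The case of $\bsH_b$ is identical, the integral now running from $x$ to $x'$ and \eqref{A.12a}, \eqref{A.22} producing a harmless sign bookkeeping that again collapses via \eqref{A.36a} written from $x'$ to $x$.

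For part $(ii)$ I would use Lemma \ref{lA.3}, $\bsK=\bsH_a+QR$, to factor
$\bsI-\alpha\bsK=(\bsI-\alpha\bsH_a)-\alpha QR=(\bsI-\alpha\bsH_a)\big[\bsI-\alpha(\bsI-\alpha\bsH_a)^{-1}QR\big]$. Since $\bsI-\alpha\bsH_a$ is invertible by $(i)$, $\bsI-\alpha\bsK$ is boundedly invertible iff $\bsI-\alpha(\bsI-\alpha\bsH_a)^{-1}QR$ is. Now I invoke the standard fact that for bounded $X\in\cB(\cK_1,\cK_2)$ and $Y\in\cB(\cK_2,\cK_1)$, $I_{\cK_2}-XY$ is boundedly invertible iff $I_{\cK_1}-YX$ is, in which case $(I_{\cK_2}-XY)^{-1}=I_{\cK_2}+X(I_{\cK_1}-YX)^{-1}Y$ (checked by direct multiplication, cf.\ \cite[Ch.\ XIII]{GGK00}); applying it with $X=\alpha(\bsI-\alpha\bsH_a)^{-1}Q\colon\cH_2\to L^2((a,b);\cH)$ and $Y=R$ yields the first equivalence, since $YX=\alpha R(\bsI-\alpha\bsH_a)^{-1}Q$ acts on $\cH_2$. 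The second equivalence follows verbatim from $\bsK=\bsH_b+ST$ with $S,T,\cH_1$ replacing $Q,R,\cH_2$. For the resolvent formulas, \eqref{A.24} (resp.\ \eqref{A.26}) results from rearranging $\bsI=(\bsI-\alpha\bsH_a)(\bsI-\alpha\bsK)^{-1}-\alpha QR(\bsI-\alpha\bsK)^{-1}$, while \eqref{A.25} (resp.\ \eqref{A.27}) is obtained by substituting the explicit inverse $\big[\bsI-\alpha(\bsI-\alpha\bsH_a)^{-1}QR\big]^{-1}=\bsI+\alpha(\bsI-\alpha\bsH_a)^{-1}Q\big[I_{\cH_2}-\alpha R(\bsI-\alpha\bsH_a)^{-1}Q\big]^{-1}R$ into $(\bsI-\alpha\bsK)^{-1}=\big[\bsI-\alpha(\bsI-\alpha\bsH_a)^{-1}QR\big]^{-1}(\bsI-\alpha\bsH_a)^{-1}$.

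Most of this is bookkeeping, and the only genuinely non-trivial point is the kernel computation in part $(i)$: the crux is recognizing that the propagator ansatz $C(x)U(x,x';\alpha)B(x')$ collapses the Neumann series precisely because $A=BC$, for otherwise the interior factor $B(x'')C(x'')$ would not reassemble into $A(x'')$ and \eqref{A.36a} could not be applied. A secondary technical point — already prepared by the set-up preceding the theorem — is that the Bochner-integral Volterra calculus (boundedness of $\bsJ_a(\alpha)$, the composition rule for triangular kernels, the use of Fubini) carries over unchanged to infinite-dimensional $\cH$; here I would simply cite \eqref{A.10a} and \eqref{A.33A}--\eqref{A.36a} rather than redo the estimates.
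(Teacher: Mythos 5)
Your proof is correct and follows essentially the same strategy as the paper, but with a cleaner presentation of part $(i)$. The paper verifies $(\bsI-\alpha\bsH_a)(\bsI+\alpha\bsJ_a(\alpha))f = f$ pointwise on a test function $f$, using the semigroup property $U(x',x'';\alpha)=U(x',x_0;\alpha)U(x_0,x'';\alpha)$, the differential form \eqref{A.33E} of the propagator equation, and an integration by parts to collapse the double integral; it then asserts the reverse identity $(\bsI+\alpha\bsJ_a(\alpha))(\bsI-\alpha\bsH_a)f=f$ holds ``in the same manner.'' You instead (a) first invoke Lemma \ref{lA.2a} to obtain invertibility of $\bsI-\alpha\bsH_a$ outright, which correctly reduces the problem to a \emph{single} one-sided identity, and (b) verify that identity at the kernel level, $J(x,x';\alpha)=H(x,x')+\alpha\int_{x'}^x dx''\,H(x,x'')J(x'',x';\alpha)$, which collapses in one line upon substituting $A=BC$ and citing the integral form \eqref{A.36a} of the propagator equation. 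This avoids the integration by parts entirely and is the more economical of the two computations; both are mathematically equivalent, since \eqref{A.33E} and \eqref{A.36a} are two faces of the same object. One small housekeeping remark: you should confirm that the kernel-level manipulation genuinely implies the operator identity, which is justified here because all three kernels define bounded (indeed Hilbert--Schmidt) operators by \eqref{A.10a} and \eqref{A.33B}, so Fubini applies — you flag this at the end, which suffices. Part $(ii)$ is the same argument as the paper's: the paper cites ``Kato's resolvent equation for factored perturbations'' where you spell out the $I-XY$ vs.\ $I-YX$ equivalence explicitly; these are identical in substance.
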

\begin{proof} 
To prove the results \eqref{A.19}--\eqref{A.23} it suffices to focus on $\bsH_a$. Let 
$f \in L^2((a,b); \cH)$. Then using $H(x,x') = C(x) B(x')$ and 
$A(x) = B(x) C(x)$  (cf.\ \eqref{A.9} 
and \eqref{A.17}) one computes (for some $x_0 \in (a,b)$) with the help of \eqref{A.33E}, 
\begin{align}
& \big((\bsI - \alpha \bsH_a)(\bsI + \alpha \bsJ_a(\alpha)) f\big)(x) 
= f(x) - \alpha \int_a^x dx' \, C(x) B(x') f(x') \no \\
& \qquad + \alpha \int_a^x dx' \, C(x) U(x,x'; \alpha) B(x') f(x')    \no \\ 
& \qquad - \alpha^2 \int_a^x dx' \, C(x) B(x') \int_a^{x'} dx'' \, C(x') U(x',x''; \alpha) B(x'') f(x'')  \no \\
& \quad = f(x) - \alpha \int_a^x dx' \, C(x) B(x') f(x') 
+ \alpha \int_a^x dx' \, C(x) U(x,x'; \alpha) B(x') f(x')    \no \\ 
& \qquad - \alpha^2 \int_a^x dx' \, C(x)B(x')C(x')U(x',x_0;\alpha) 
\int_a^{x'} dx'' \, U(x_0,x''; \alpha) B(x'') f(x'')   \no \\
& \quad = f(x) - \alpha \int_a^x dx' \, C(x) B(x') f(x') 
+ \alpha \int_a^x dx' \, C(x) U(x,x'; \alpha) B(x') f(x')    \no \\ 
& \qquad - \alpha \int_a^x dx' \, C(x) [(\partial/\partial x') U(x',x_0; \alpha)]
\int_a^{x'} dx'' \, U(x_0,x''; \alpha) B(x'') f(x'')   \no \\
& \quad = f(x) - \alpha \int_a^x dx' \, C(x) B(x') f(x') 
+ \alpha \int_a^x dx' \, C(x) U(x,x'; \alpha) B(x') f(x')    \no \\ 
& \qquad - \alpha C(x) \bigg[U(x',x_0; \alpha) \int_a^{x'} dx'' \, U(x_0,x''; \alpha) B(x'') f(x'') \bigg|_{x'=a}^x 
\no \\[1mm] 
& \hspace*{2.3cm} - \int_a^x dx' \, U(x',x_0; \alpha) U(x_0,x'; \alpha) B(x') f(x')\bigg]    \no \\
& \quad = f(x) \, \text{ for a.e.\ $x \in (a,b)$.} 
\end{align}
In the same manner one proves 
\begin{equation}
\big((\bsI + \alpha \bsJ_a(\alpha))(\bsI - \alpha \bsH_a) f\big)(x) = f(x)  \, \text{ for a.e.\ $x \in (a,b)$.} 
\end{equation} 

By \eqref{A.14} and \eqref{A.15}, $\bsK - \bsH_a$ and $\bsK - \bsH_b$ factor into $QR$ and $ST$, respectively.  Consequently, \eqref{A.24} and \eqref{A.26} follow from the second resolvent identity, while \eqref{A.25} and \eqref{A.27} are direct applications of Kato's resolvent equation for factored perturbations (cf. \cite[Sect.~2]{GLMZ05}).
%
\end{proof}

\begin{remark} \lb{rA.5}  
Even though this will not be used in this paper, we mention for completeness that if 
$(\bsI - \alpha \bsK)^{-1} \in \cB\big(L^2((a,b);\cH)\big)$, and if $U(\cdot\, ,a ; \alpha)$ 
is defined by 
\begin{equation}
U(x,a; \alpha) = I_{\cH_1 \oplus \cH_2} + \alpha \int_a^x dx' \, A(x') U(x',a; \alpha), \quad x \in (a,b), 
\lb{A.48a}
\end{equation}
and partitioned with respect to $\cH_1 \oplus \cH_2$ as
\begin{equation}
U(x,a; \alpha) = \begin{pmatrix} U_{1,1}(x,a; \alpha) & U_{1,2}(x,a;\alpha) \\
U_{2,1}(x,a; \alpha) & U_{2,2}(x,a;\alpha) \end{pmatrix}, \quad x \in (a,b), 
\end{equation}
then 
\begin{align}
(\bsI - \alpha \bsK)^{-1} &= \bsI+\alpha \bsL(\alpha), \lb{A.28} \\
(\bsL(\alpha) f)(x) &= \int_a^b dx'\, L(x,x'; \alpha )f(x'), \lb{A.29} \\  
L(x,x'; \alpha )  &= \begin{cases} C(x)U(x,a; \alpha) (I-P(\alpha))
U(x',a; \alpha)^{-1}B(x'), & a<x'<x<b, \\  
-C(x)U(x,a; \alpha) P(\alpha) U(x',a; \alpha)^{-1}B(x'), & a<x<x'<b, \end{cases} 
\lb{A.30}  
\end{align}
where 
\begin{equation}
P(\alpha)=\begin{pmatrix} 0 & 0 \\ U_{2,2}(b,a; \alpha )^{-1}
U_{2,1}(b,a; \alpha ) & I_{\cH_2} \end{pmatrix},     \lb{A.33}
\end{equation}
with $U(b,a; \alpha) = \nlim_{x \uparrow b} U(x,a; \alpha)$. 
These results can be shown as in the finite-dimensional case treated in \cite[Ch.\ IX]{GGK90}. 
$\Diamond$
\end{remark}

\begin{lemma} \lb{lA.6}
Assume Hypothesis \ref{hA.2} and introduce, for $\alpha\in\bbC$ and a.e.\ $x \in (a,b)$, the Volterra 
integral equations
\begin{align}
\widehat F_1(x; \alpha )&=F_1(x)-\alpha \int_x^b dx'\, H(x,x')\widehat F_1(x'; \alpha ), \lb{A.35} \\ 
\widehat F_2(x; \alpha )&=F_2(x)+\alpha \int_a^x dx'\, H(x,x')\widehat F_2(x'; \alpha ). \lb{A.36}  
\end{align}  
Then there exist unique a.e.\ solutions on $(a,b)$, $\widehat F_j (\cdot\, ; \alpha) \in \cB(\cH_j,\cH)$,  
of \eqref{A.35}, \eqref{A.36} such that $\widehat F_j(\cdot\, ; \alpha)$ are uniformly measurable, and 
\begin{equation}  
\big\|\widehat F_j( \cdot \, ; \alpha)\big\|_{\cB(\cH_j,\cH)} \in L^2((a,b)), \quad j=1,2. \lb{A.45}
\end{equation}
\end{lemma}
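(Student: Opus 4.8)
The plan is to read $\widehat F_1(\cdot\,;\alpha)$ and $\widehat F_2(\cdot\,;\alpha)$ off the bounded invertibility of $\bsI-\alpha\bsH_b$ and $\bsI-\alpha\bsH_a$ established in Theorem \ref{tA.4}\,$(i)$, applied columnwise to the operator-valued data $F_1(\cdot)$, $F_2(\cdot)$. Indeed, since $(\bsH_b g)(x)=-\int_x^b H(x,x')g(x')\,dx'$ and $(\bsH_a g)(x)=\int_a^x H(x,x')g(x')\,dx'$, the Volterra equations \eqref{A.35}, \eqref{A.36} are exactly $(\bsI-\alpha\bsH_b)\widehat F_1(\cdot\,;\alpha)=F_1(\cdot)$ and $(\bsI-\alpha\bsH_a)\widehat F_2(\cdot\,;\alpha)=F_2(\cdot)$, read columnwise. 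Accordingly I would simply \emph{define}, for a.e.\ $x\in(a,b)$,
\begin{align*}
\widehat F_1(x;\alpha) &:= F_1(x) - \alpha\int_x^b C(x) U(x,x';\alpha) B(x') F_1(x')\,dx', \\
\widehat F_2(x;\alpha) &:= F_2(x) + \alpha\int_a^x C(x) U(x,x';\alpha) B(x') F_2(x')\,dx',
\end{align*}
that is, apply the explicit inverses $\bsI+\alpha\bsJ_b(\alpha)$ and $\bsI+\alpha\bsJ_a(\alpha)$ from \eqref{A.19}, \eqref{A.20} (whose kernels are $J(x,x';\alpha)=C(x)U(x,x';\alpha)B(x')$, cf.\ \eqref{A.23}) to the $\cB(\cH_j,\cH)$-valued functions $F_j(\cdot)$, with $C(\cdot)$, $B(\cdot)$ as in \eqref{A.7}, \eqref{A.8}.

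First I would check that these Bochner integrals are legitimate and produce functions with the claimed properties. By the uniform bound \eqref{A.33B} on the propagator together with $\|C(\cdot)\|\le\|F_1(\cdot)\|_{\cB(\cH_1,\cH)}+\|F_2(\cdot)\|_{\cB(\cH_2,\cH)}$ and $\|B(\cdot)\|\le\|G_1(\cdot)\|_{\cB(\cH,\cH_1)}+\|G_2(\cdot)\|_{\cB(\cH,\cH_2)}$, the integrands are dominated in norm by $C_\alpha\|C(x)\|\,\|B(x')\|\,\|F_j(x')\|$, and $\|B(\cdot)\|\,\|F_j(\cdot)\|\in L^1((a,b))$ by Hypothesis \ref{hA.2} and the Cauchy--Schwarz inequality; hence $\widehat F_j(x;\alpha)\in\cB(\cH_j,\cH)$ for a.e.\ $x$ and $\|\widehat F_j(x;\alpha)\|_{\cB(\cH_j,\cH)}\le\|F_j(x)\|_{\cB(\cH_j,\cH)}+|\alpha|\,C_\alpha\,c_j\,\|C(x)\|$ with $c_j:=\big\|\,\|B(\cdot)\|\,\|F_j(\cdot)\|\,\big\|_{L^1((a,b))}<\infty$, so that $\|\widehat F_j(\cdot\,;\alpha)\|_{\cB(\cH_j,\cH)}\in L^2((a,b))$ because $\|F_j(\cdot)\|,\|C(\cdot)\|\in L^2((a,b))$; this is \eqref{A.45}. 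Uniform measurability of $\widehat F_j(\cdot\,;\alpha)$ follows from that of $F_j(\cdot)$, $B(\cdot)$, $C(\cdot)$, the norm-continuity \eqref{A.33A} of $U(\cdot\,,\cdot\,;\alpha)$, and the routine fact — in the spirit of the estimates in Lemma \ref{lA.1} — that the Bochner integral over the variable interval $(x,b)$ (resp.\ $(a,x)$) of a jointly measurable, $L^1$-dominated operator-valued integrand depends uniformly measurably on the endpoint; separability of the Hilbert spaces enters here.

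Next I would verify that $\widehat F_j(\cdot\,;\alpha)$ solves \eqref{A.35}, \eqref{A.36}: for $v\in\cH_1$ one has $F_1(\cdot)v\in L^2((a,b);\cH)$ and, by construction, $\widehat F_1(\cdot\,;\alpha)v=\big((\bsI-\alpha\bsH_b)^{-1}(F_1(\cdot)v)\big)(\cdot)$, whence $(\bsI-\alpha\bsH_b)(\widehat F_1(\cdot\,;\alpha)v)=F_1(\cdot)v$; unravelling the definition of $\bsH_b$ and letting $v$ range over $\cH_1$ gives \eqref{A.35}, and \eqref{A.36} follows identically via $\bsH_a$. For uniqueness, let $\widetilde F_1(\cdot\,;\alpha)$ be another uniformly measurable a.e.\ solution of \eqref{A.35} with $\|\widetilde F_1(\cdot\,;\alpha)\|_{\cB(\cH_1,\cH)}\in L^2((a,b))$; fixing a countable dense set $\cD\subset\cH_1$, for each $v\in\cD$ the $\cH$-valued function $D_v:=(\widehat F_1(\cdot\,;\alpha)-\widetilde F_1(\cdot\,;\alpha))v$ lies in $L^2((a,b);\cH)$ and satisfies $(\bsI-\alpha\bsH_b)D_v=0$, hence $D_v=0$ by the injectivity of $\bsI-\alpha\bsH_b$ (Theorem \ref{tA.4}\,$(i)$); outside a common null set this forces $\widehat F_1(x;\alpha)v=\widetilde F_1(x;\alpha)v$ for all $v\in\cD$, and by density and boundedness $\widehat F_1(x;\alpha)=\widetilde F_1(x;\alpha)$ for a.e.\ $x$. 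The argument for $\widehat F_2$ is identical with $\bsH_a$ in place of $\bsH_b$.

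The expected main obstacle is purely the operator-valued (Bochner) bookkeeping — above all the uniform measurability in $x$ of $\int_x^b C(x)U(x,x';\alpha)B(x')F_j(x')\,dx'$ — everything else being an immediate consequence of Theorem \ref{tA.4}. A self-contained alternative, bypassing Theorem \ref{tA.4} entirely, is to build $\widehat F_1(\cdot\,;\alpha)$ by the norm-convergent Volterra iteration $\widehat F_1^{(0)}:=F_1$, $\widehat F_1^{(n)}(x;\alpha):=F_1(x)-\alpha\int_x^b H(x,x')\widehat F_1^{(n-1)}(x';\alpha)\,dx'$, estimating the successive differences over the $n$-simplex exactly as in the proof of Lemma \ref{lA.2a}: with $\mu:=\|B(\cdot)\|\,\|C(\cdot)\|$ and $\nu:=\|B(\cdot)\|\,\|F_1(\cdot)\|$, both in $L^1((a,b))$, one obtains $\|\widehat F_1^{(n)}(x;\alpha)-\widehat F_1^{(n-1)}(x;\alpha)\|_{\cB(\cH_1,\cH)}\le|\alpha|^n\|C(x)\|\,\|\mu\|_{L^1}^{n-1}\|\nu\|_{L^1}/(n-1)!$, which yields both the $L^2$-solution and, applied to the difference of two solutions, its uniqueness.
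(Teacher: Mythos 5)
Your primary construction is correct but takes a genuinely different route from the paper. The paper proves Lemma~\ref{lA.6} by direct Volterra iteration: it introduces the iterates $\widehat F_{j,0}=F_j$, $\widehat F_{j,n}(x;\alpha)=\mp\alpha\int H(x,x')\widehat F_{j,n-1}(x';\alpha)\,dx'$, establishes the simplex-type factorial bound $\|\widehat F_{j,n}(x;\alpha)\|\le (2|\alpha|)^n\max_k\|F_k(x)\|\cdot\frac{1}{n!}\bigl[\int\max_{k,\ell}(\|G_k\|\|F_\ell\|)\bigr]^n$, sums the norm-convergent series $\widehat F_j=\sum_n\widehat F_{j,n}$, and reads off the exponential $L^2$-estimate and uniqueness ``as in the scalar case''. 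Your approach instead reads $\widehat F_j$ directly off the explicit resolvent kernels $J(x,x';\alpha)=C(x)U(x,x';\alpha)B(x')$ of $(\bsI-\alpha\bsH_b)^{-1}$ and $(\bsI-\alpha\bsH_a)^{-1}$ from Theorem~\ref{tA.4}\,$(i)$, applied columnwise, and then verifies measurability, $L^2$-membership, the Volterra identity, and uniqueness (the last via injectivity of $\bsI-\alpha\bsH_b$, a countable dense set in $\cH_1$, and a.e.\ boundedness). The paper's route is self-contained and yields the series representation \eqref{A.50} together with explicit exponential bounds, which the paper reuses when it later strengthens hypotheses (Lemma~\ref{lA.11}, passing from $\cB(\cH_j,\cH)$ to $\cB_2(\cH_j,\cH)$ by ``systematically replacing'' norms in the iteration). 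Your route is shorter where Theorem~\ref{tA.4} is already on the table, and has the small advantage of exhibiting $\widehat F_j$ in closed form via the propagator $U$, which makes the link to Lemma~\ref{lA.7} transparent. Note, however, that the propagator $U(\cdot,\cdot;\alpha)$ underlying Theorem~\ref{tA.4}\,$(i)$ is itself constructed by norm-convergent iteration of \eqref{A.36a}, so at bottom the two proofs iterate the same Volterra structure — you have merely moved the iteration one step upstream. Your closing alternative, with the $\|\mu\|_{L^1}^{n-1}\|\nu\|_{L^1}/(n-1)!$ difference estimate, is precisely the paper's argument (up to re-indexing $n$ versus $n-1$), and is the tighter way to present it.
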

\begin{proof}
Introducing, 
\begin{align}
\widehat F_{1,0} (x; \alpha) &= F_1(x),    \no \\
\widehat F_{1,n} (x; \alpha) &= - \alpha \int_x^b dx' \, H(x,x') \widehat F_{1,n-1} (x'; \alpha), \quad n \in\bbN,    \\
\widehat F_{2,0} (x; \alpha) &= F_2(x),    \no \\
\widehat F_{2,n} (x; \alpha) &= \alpha \int_a^x dx' \, H(x,x') \widehat F_{2,n-1} (x'; \alpha), \quad n \in\bbN,
\end{align}
for a.e.\ $x \in (a,b)$, the familiar iteration procedure (in the scalar or matrix-valued context) yields 
for fixed $x \in (a,b)$ except for a set of Lebesgue measure zero, 
\begin{align}
& \big\|\widehat F_{1,n} (x; \alpha)\big\|_{\cB(\cH_1,\cH)} \leq (2 |\alpha|)^n 
\max_{j=1,2} \big(\|F_j(x)\|_{\cB(\cH_j,\cH)}\big)     \\ 
& \quad \times \f{1}{n!} 
\bigg[\int_x^b dx' \, \max_{1 \leq k, \ell \leq 2} \big(\|G_k (x')\|_{\cB(\cH,\cH_k)} 
\|F_{\ell} (x')\|_{\cB(\cH_{\ell},\cH)}\big)\bigg]^n, \quad n \in \bbN,     \no \\
& \big\|\widehat F_{2,n} (x; \alpha)\big\|_{\cB(\cH_2,\cH)} \leq (2 |\alpha|)^n 
\max_{j=1,2} \big(\|F_j (x)\|_{\cB(\cH_j,\cH)}\big)      \\ 
& \quad \times \f{1}{n!} 
\bigg[\int_a^x dx' \, \max_{1 \leq k, \ell \leq 2} \big(\|G_k (x')\|_{\cB(\cH,\cH_k)} 
\|F_{\ell} (x')\|_{\cB(\cH_{\ell},\cH)}\big)\bigg]^n, \quad n \in \bbN.    \no 
\end{align}
Thus, the norm convergent expansions 
\begin{align}
\widehat F_j (x; \alpha) = \sum_{n=0}^{\infty} \widehat F_{j,n}(x; \alpha), \quad j=1,2, \, 
\text{ for a.e.\ $x \in (a,b)$,}  
\lb{A.50}
\end{align}
yield the bounds
\begin{align}
\big\|\widehat F_j (x; \alpha)\big\|_{\cB(\cH_j,\cH)} &\leq \max_{k=1,2} \big(\|F_k(x)\|_{\cB(\cH_k,\cH)}\big)   \\
& \;\;\, \times \max_{1 \leq \ell, m \leq 2}
\exp\bigg(2 |\alpha| \int_a^b dx' \, \|G_{\ell} (x')\|_{\cB(\cH,\cH_{\ell})} 
\|F_m (x')\|_{\cB(\cH_m,\cH)}\bigg)      \no 
\end{align}
for a.e.\ $x \in (a,b)$. As in the scalar case (resp., as in the proof of Theorem \ref{tA.4})  
one shows that \eqref{A.50} uniquely satisfies \eqref{A.35}, \eqref{A.36} 
\end{proof}

\begin{lemma} \lb{lA.7}
Assume Hypothesis \ref{hA.2}, let $\alpha\in\bbC$, and introduce 
\begin{align}
& U(x; \alpha)=\begin{pmatrix} I_{\cH_1}-\alpha \int_x^b dx'\, G_1(x') 
\widehat F_1(x'; \alpha ) & \alpha\int_a^x dx'\, G_1(x')\widehat F_2(x'; \alpha ) \\ 
\alpha\int_x^b dx'\, G_2(x')\widehat F_1(x'; \alpha ) & I_{\cH_2}-\alpha \int_a^x
dx'\, G_2(x')\widehat F_2(x'; \alpha ) \end{pmatrix},  \no \\
& \hspace*{9.4cm} x\in (a,b). \lb{A.37}  
\end{align}
If 
\begin{align}
\bigg[I_{\cH_1}-\alpha \int_a^b dx\, G_1(x) \widehat F_1(x; \alpha )\bigg]^{-1} 
\in \cB(\cH_1),    \lb{A.38} \\
\intertext{or equivalently,} 
\bigg[I_{\cH_2}-\alpha \int_a^b dx\, G_2(x)\widehat F_2(x; \alpha )\bigg]^{-1} \in \cB(\cH_2), \lb{A.39} 
\end{align}
then 
\begin{equation}
U(a; \alpha) , \, U(b; \alpha), \, U(x; \alpha), \; x\in (a,b), 
\end{equation} 
are boundedly invertible in $\cH_1 \oplus \cH_2$. In particular,  
\begin{equation}
U(x,x'; \alpha) = U(x; \alpha) U(x'; \alpha)^{-1}, \quad x, x' \in (a,b),   \lb{A.72a}
\end{equation}   
is the propagator for the evolution equation \eqref{A.32a} satisfying \eqref{A.33A}--\eqref{A.36a}, and 
\eqref{A.72a} extends by norm continuity to $x,x' \in \{a,b\}$. 
\end{lemma}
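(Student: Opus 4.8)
The plan is to recognize the $\cB(\cH_1\oplus\cH_2)$-valued function $U(\cdot\,;\alpha)$ of \eqref{A.37} as a normalization of the propagator $U(\cdot\,,\cdot\,;\alpha)$ of \eqref{A.36a}, and then to deduce bounded invertibility at every point of $[a,b]$ from the automatic bounded invertibility of that propagator. The crucial step is the algebraic identity
\begin{equation*}
C(x)\,U(x;\alpha) = \big(\widehat F_1(x;\alpha) \;\; \widehat F_2(x;\alpha)\big) \quad \text{for a.e.\ } x\in(a,b),
\end{equation*}
which I would verify by multiplying out the left-hand side using $C(x)=(F_1(x)\;\;F_2(x))$ (cf.\ \eqref{A.7}) and the four blocks of \eqref{A.37}: recalling $H(x,x')=F_1(x)G_1(x')-F_2(x)G_2(x')$ from \eqref{A.6}, the first block column of the product collapses to $F_1(x)-\alpha\int_x^b dx'\,H(x,x')\widehat F_1(x';\alpha)=\widehat F_1(x;\alpha)$ by \eqref{A.35}, and the second to $F_2(x)+\alpha\int_a^x dx'\,H(x,x')\widehat F_2(x';\alpha)=\widehat F_2(x;\alpha)$ by \eqref{A.36}.

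Next, by Lemma \ref{lA.6} and Hypothesis \ref{hA.2} each product $\|G_j(\cdot)\|_{\cB(\cH,\cH_j)}\|\widehat F_k(\cdot\,;\alpha)\|_{\cB(\cH_k,\cH)}$ lies in $L^1((a,b))$ by Cauchy--Schwarz, so every entry of $U(\cdot\,;\alpha)$ is norm continuous on $[a,b]$ and locally absolutely continuous on $(a,b)$. Differentiating the entries of \eqref{A.37} and invoking the identity above together with $A(x)=B(x)C(x)$ (cf.\ \eqref{A.17}) gives
\begin{equation*}
(\partial/\partial x)\,U(x;\alpha) = \alpha\,B(x)\big(\widehat F_1(x;\alpha)\;\;\widehat F_2(x;\alpha)\big) = \alpha\,B(x)C(x)\,U(x;\alpha) = \alpha\,A(x)\,U(x;\alpha)
\end{equation*}
for a.e.\ $x\in(a,b)$, so $x\mapsto U(x;\alpha)$ solves \eqref{A.32a} with initial value $U(x_0;\alpha)$ at any fixed $x_0$. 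By uniqueness of solutions of \eqref{A.32a}, this forces $U(x;\alpha)=U(x,x_0;\alpha)\,U(x_0;\alpha)$ for all $x\in(a,b)$, and by norm continuity of both factors, together with the extension of \eqref{A.36a} to the closed interval recorded after \eqref{A.36a} and in Remark \ref{rA.5}, the relation persists for $x,x_0\in[a,b]$.

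Now I would read off the endpoint values of \eqref{A.37}: $U(a;\alpha)$ is block lower triangular with $(1,1)$-entry $I_{\cH_1}-\alpha\int_a^b dx\,G_1(x)\widehat F_1(x;\alpha)$, vanishing $(1,2)$-entry, and $(2,2)$-entry $I_{\cH_2}$, so factoring off the always boundedly invertible unipotent lower-triangular part shows that $U(a;\alpha)$ is boundedly invertible iff \eqref{A.38} holds; symmetrically, $U(b;\alpha)$ is block upper triangular with $(1,1)$-entry $I_{\cH_1}$, vanishing $(2,1)$-entry, and $(2,2)$-entry $I_{\cH_2}-\alpha\int_a^b dx\,G_2(x)\widehat F_2(x;\alpha)$, so $U(b;\alpha)$ is boundedly invertible iff \eqref{A.39} holds. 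If \eqref{A.38} holds then $U(a;\alpha)$ is boundedly invertible, hence $U(x;\alpha)=U(x,a;\alpha)U(a;\alpha)$ is boundedly invertible for every $x\in[a,b]$ (the propagator $U(x,a;\alpha)$ being invertible by \eqref{A.33C}); taking $x=b$ yields \eqref{A.39}, and the symmetric argument gives the converse, establishing the equivalence of \eqref{A.38} and \eqref{A.39} together with the bounded invertibility of $U(a;\alpha)$, $U(b;\alpha)$, and $U(x;\alpha)$, $x\in(a,b)$. Finally, from $U(x;\alpha)=U(x,x_0;\alpha)U(x_0;\alpha)$ and \eqref{A.33C} one gets $U(x;\alpha)U(x';\alpha)^{-1}=U(x,x_0;\alpha)U(x',x_0;\alpha)^{-1}=U(x,x_0;\alpha)U(x_0,x';\alpha)=U(x,x';\alpha)$, so \eqref{A.72a} is the propagator of \eqref{A.32a}, inheriting \eqref{A.33A}--\eqref{A.36a}, and its extension by norm continuity to $x,x'\in\{a,b\}$ follows from that of the propagator and the invertibility of $U(a;\alpha),U(b;\alpha)$ just shown.

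The only real computation is the opening identity $C(x)U(x;\alpha)=(\widehat F_1(x;\alpha)\;\;\widehat F_2(x;\alpha))$; everything after that is propagator bookkeeping. The one point needing genuine care is that the relation $U(x;\alpha)=U(x,x_0;\alpha)U(x_0;\alpha)$ survives passage to $x_0\in\{a,b\}$ --- which is what the endpoint extensions of \eqref{A.36a} supply --- together with the observation that, although a block-triangular operator on an infinite-dimensional space need not have boundedly invertible diagonal blocks, here one diagonal block equals the identity, so the triangular factor splits off harmlessly.
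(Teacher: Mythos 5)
Your proof is correct and follows essentially the same route as the paper: differentiate the entries of $U(\cdot;\alpha)$, use \eqref{A.35}--\eqref{A.36} to recognize the result as $\alpha A(\cdot)U(\cdot;\alpha)$, invoke uniqueness of the propagator, and read off the block-triangular structure of $U(a;\alpha)$ and $U(b;\alpha)$. The one place you genuinely diverge is the equivalence of \eqref{A.38} and \eqref{A.39}: the paper disposes of it by citing Theorem~\ref{tA.4}\,$(ii)$, which requires the (unstated at that point) identifications $\int_a^b dx\,G_1(x)\widehat F_1(x;\alpha)=T(\bsI-\alpha\bsH_b)^{-1}S$ and $\int_a^b dx\,G_2(x)\widehat F_2(x;\alpha)=R(\bsI-\alpha\bsH_a)^{-1}Q$ (only justified later, in the course of Theorem~\ref{tA.13}), whereas you obtain it directly from the propagator relation $U(b;\alpha)=U(b,a;\alpha)U(a;\alpha)$ and invertibility of $U(b,a;\alpha)$ --- self-contained and arguably cleaner. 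Making the identity $C(x)U(x;\alpha)=\big(\widehat F_1(x;\alpha)\;\;\widehat F_2(x;\alpha)\big)$ explicit is also a nice way to organize the differentiation step, which the paper does implicitly.
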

\begin{proof}
Since 
\begin{equation}
U(a; \alpha)=\begin{pmatrix} I_{\cH_1}-\alpha \int_a^b dx'\, G_1(x') 
\widehat F_1(x'; \alpha ) & 0 \\ 
\alpha\int_a^b dx'\, G_2(x')\widehat F_1(x'; \alpha ) & I_{\cH_2} \end{pmatrix},     \lb{A.68a}  
\end{equation}
$U(a; \alpha)$ is boundedly invertible in $\cH_1 \oplus \cH_2$ if and only if 
$\Big[I_{\cH_1}-\alpha \int_a^b dx'\, G_1(x') \widehat F_1(x'; \alpha )\Big]$ is in $\cH_1$. (One 
recalls that a bounded $2 \times 2$ block operator 
$D = \left(\begin{smallmatrix} D_{1,1} & 0 \\ D_{2,1} & I_{\cH_2} \end{smallmatrix}\right)$ 
in $\cH_1 \oplus \cH_2$ is boundedly invertible if and only if $D_{1,1}$ is boundedly invertible 
in $\cH_1$, with 
$D^{-1} = \left(\begin{smallmatrix} D_{1,1}^{-1} & 0 \\ - D_{2,1} D_{1,1}^{-1}  & I_{\cH_2} 
\end{smallmatrix}\right)$ if $D$ is boundedly invertible.) Similarly, 
\begin{equation}
U(b; \alpha)=\begin{pmatrix} I_{\cH_1} & \alpha\int_a^b dx'\, G_1(x')\widehat F_2(x'; \alpha ) \\ 
0 & I_{\cH_2}-\alpha \int_a^b dx'\, G_2(x')\widehat F_2(x'; \alpha ) \end{pmatrix}     \lb{A.69a}  
\end{equation}
is boundedly invertible in $\cH_1 \oplus \cH_2$ if and only if 
$\Big[I_{\cH_2}-\alpha \int_a^b dx'\, G_2(x') \widehat F_2(x'; \alpha )\Big]$ is 
boundedly invertible in $\cH_2$. (Again, one 
recalls that a bounded $2 \times 2$ block operator 
$E = \left(\begin{smallmatrix} I_{\cH_1} & E_{1,2} \\ 0 & E_{2,2} \end{smallmatrix}\right)$ 
in $\cH_1 \oplus \cH_2$ is boundedly invertible if and only if $E_{2,2}$ is boundedly invertible 
in $\cH_2$, with 
$E^{-1} = \left(\begin{smallmatrix} I_{\cH_1} & - E_{1,2} E_{2,2}^{-1} \\ 0 & E_{2,2}^{-1} 
\end{smallmatrix}\right)$ if $E$ is boundedly invertible.)

The equivalence of \eqref{A.38} and \eqref{A.39} has been settled in Theorem\ \ref{tA.4}\,$(ii)$. 

Next, differentiating the entries on the right-hand side of \eqref{A.37} with respect to $x$ and 
using the Volterra integral equations \eqref{A.35}, \eqref{A.36} yields 
\begin{equation}
(d/dx) U(x; \alpha) u = \alpha A(x) U(x; \alpha) u \, \text{ for a.e.\ $x \in (a,b)$}.  
\end{equation}
Thus, by uniqueness of the propagator $U(\cdot \, , \cdot \, ; \alpha)$, extended by norm 
continuity to $x=a$ (cf.\ Remark \ref{rA.5}), one obtains that 
\begin{equation}
U(x,a; \alpha) = U(x; \alpha) U(a; \alpha)^{-1}, \quad x \in (a,b). 
\end{equation}
Thus, $U(x; \alpha) = U(x,a; \alpha) U(a; \alpha)$ is boundedly invertible for all $x \in (a,b)$ since 
$U(x,a; \alpha)$, $x \in (a,b)$ is by construction (using norm continuity and the transitivity property 
in \eqref{A.33C}), and $U(a; \alpha)$ is boundedly invertible by hypothesis.  
Consequently, once more by uniqueness of the propagator $U(\cdot \, , \cdot \, ; \alpha)$, one 
obtains that  
\begin{equation}
U(x,x'; \alpha) = U(x; \alpha) U(x'; \alpha)^{-1}, \quad x, x' \in (a,b).   \lb{A.77a}
\end{equation}
Again by norm continuity, \eqref{A.77a} extends to $x, x' \in \{a, b\}$. 
\end{proof}

In the special case where $\cH$ and $\cH_j$, $j=1,2$, are finite-dimensional, the Volterra integral 
equations \eqref{A.35}, \eqref{A.36} and the operator $U$ in \eqref{A.37} were introduced 
in \cite{GM03}. 

\begin{lemma} \lb{lA.9} 
Let $\cH$ and $\cH'$ be complex, separable Hilbert spaces and $-\infty\leq a<b\leq \infty$. 
Suppose that for a.e.\ $x \in (a,b)$, $F (x) \in \cB_2(\cH',\cH)$ and $G(x) \in \cB_2(\cH,\cH')$ 
with $F(\cdot)$ and $G(\cdot)$ weakly measurable, and  
\begin{equation}  
\|F( \cdot)\|_{\cB_2(\cH',\cH)} \in L^2((a,b)), \; 
\|G (\cdot)\|_{\cB_2(\cH,\cH')} \in L^2((a,b)). \lb{A.79a}
\end{equation}
Consider the integral operator $\bsS$ in $L^2((a,b);\cH)$ with $\cB_1(\cH)$-valued 
separable integral kernel of the type 
\begin{equation}
S(x,x') = F(x) G(x') \, \text{ for a.e.\ $x, x' \in (a,b)$.}      \lb{A.80a} 
\end{equation}
Then 
\begin{equation}
\bsS \in \cB_1\big(L^2((a,b);\cH)\big).   \lb{A.81a}
\end{equation}
\end{lemma}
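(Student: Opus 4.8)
The plan is to realize $\bsS$ as the product of two Hilbert--Schmidt operators and invoke the elementary fact that $\cB_2 \cdot \cB_2 \subseteq \cB_1$. Following the pattern of the maps $Q, R, S, T$ in \eqref{A.10}--\eqref{A.13}, I would introduce $\bsF \colon \cH' \to L^2((a,b);\cH)$, $(\bsF w)(x) = F(x) w$, and $\bsG \colon L^2((a,b);\cH) \to \cH'$, $\bsG f = \int_a^b dx' \, G(x') f(x')$. Since the operator norm is dominated by the Hilbert--Schmidt norm, the hypotheses \eqref{A.79a} give, exactly as in Lemma \ref{lA.1}, that $\bsF w \in L^2((a,b);\cH)$ for every $w \in \cH'$, that the $\cH'$-valued Bochner integral defining $\bsG f$ converges absolutely (its integrand lies in $L^1((a,b))$ by Cauchy--Schwarz), and that $\bsF, \bsG$ are bounded. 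Because a bounded operator commutes with a Bochner integral, pulling $F(x)$ outside the $x'$-integral yields the factorization $\bsS = \bsF \bsG$.

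The substance of the argument is the Hilbert--Schmidt membership of the two factors. Fixing an orthonormal basis $\{e_k\}_k$ of $\cH'$ and using Tonelli's theorem (all summands are nonnegative) together with the identity $\sum_k \|T e_k\|^2 = \|T\|_{\cB_2}^2$, one computes
\[
\sum_k \|\bsF e_k\|_{L^2((a,b);\cH)}^2
= \int_a^b dx \sum_k \|F(x) e_k\|_\cH^2
= \int_a^b dx \, \|F(x)\|_{\cB_2(\cH',\cH)}^2 < \infty,
\]
so that $\bsF \in \cB_2(\cH', L^2((a,b);\cH))$. For $\bsG$ one passes to the adjoint $(\bsG^* w)(x') = G(x')^* w$, and the same computation together with the invariance of the Hilbert--Schmidt norm under taking adjoints gives
\[
\|\bsG\|_{\cB_2}^2 = \|\bsG^*\|_{\cB_2}^2
= \int_a^b dx' \, \|G(x')^*\|_{\cB_2(\cH',\cH)}^2
= \int_a^b dx' \, \|G(x')\|_{\cB_2(\cH,\cH')}^2 < \infty.
\]
Measurability of the scalar functions $x \mapsto \|F(x) e_k\|_\cH$ and $x' \mapsto \|G(x')^* e_k\|_\cH$ entering these computations follows from the weak measurability of $F(\cdot)$ and $G(\cdot)$, which, by separability of the Hilbert spaces and Pettis' theorem, is equivalent to strong measurability.

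Combining the two facts, $\bsS = \bsF \bsG$ is a product of Hilbert--Schmidt operators, hence $\bsS \in \cB_1\big(L^2((a,b);\cH)\big)$ with $\|\bsS\|_{\cB_1} \leq \|\bsF\|_{\cB_2} \|\bsG\|_{\cB_2}$, the latter being bounded by the product of the $L^2((a,b))$-norms of $\|F(\cdot)\|_{\cB_2(\cH',\cH)}$ and $\|G(\cdot)\|_{\cB_2(\cH,\cH')}$. I do not expect a genuine obstacle here: the only delicate points are the measurability bookkeeping, dispatched by Pettis' theorem via separability, and the legitimacy of the Tonelli interchange --- both routine.
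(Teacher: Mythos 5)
Your argument is correct and is essentially identical to the paper's: the paper also factorizes $\bsS = Q_F R_G$ with $Q_F w = F(\cdot)w$ and $R_G f = \int_a^b dx'\,G(x')f(x')$, establishes $Q_F \in \cB_2$ by the orthonormal-basis/monotone-convergence computation, handles $R_G$ via its adjoint $R_G^* = Q_{G^*}$, and invokes Pettis' theorem for the measurability bookkeeping. The only cosmetic differences are your appeal to Tonelli in place of monotone convergence and your slightly different (but equivalent) use of Pettis.
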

\begin{proof}
Since the Hilbert space of Hilbert--Schmidt operators, $\cB_2(\cH',\cH)$, is separable, weak 
measurability of $F(\cdot)$ implies $\cB_2(\cH',\cH)$-measurability 
by Pettis' theorem (cf., e.g., \cite[Theorem\ 1.1.1]{ABHN01}, \cite[Theorem\ II.1.2]{DU77}, 
\cite[3.5.3]{HP85}), and analogously for $G(\cdot)$. 

Next, one introduces (in analogy to \eqref{A.10}--\eqref{A.13}) the linear operators
\begin{align}
& Q_F \colon \cH' \mapsto L^2((a,b);\cH), \quad (Q_F w)(x)=F(x) w,
\quad w \in\cH', \lb{A.81A} \\
& R_G \colon L^2((a,b);\cH) \mapsto \cH', \quad (R_G f)=\int_a^b
dx'\,G(x')f(x'), \quad f\in L^2((a,b);\cH), \lb{A.81B}
\end{align} 
such that
\begin{equation}
\bsS = Q_F R_G.    \lb{A.81C} 
\end{equation}
Thus, with $\{v_n\}_{n\in\bbN}$ a complete orthonormal system in $\cH'$, using the monotone 
convergence theorem, one concludes that 
\begin{align}
& \|Q_F\|_{\cB_2(\cH', L^2((a,b);\cH))}^2 = \sum_{n\in\bbN} \|Q_F v_n\|_{L^2((a,b);\cH)}^2  \no \\
& \quad = \sum_{n\in\bbN} \int_a^b dx \, \|F(x) v_n\|_{\cH}^2 
= \int_a^b dx \, \sum_{n\in\bbN} (v_n, F(x)^* F(x) v_n)_{\cH'}    \no \\
& \quad = \int_a^b dx \, {\tr}_{\cH'} \big(F(x)^* F(x)\big) = \int_a^b dx \, \big\|F(x)^* F(x)\big\|_{\cB_1(\cH')} 
\no \\ 
& \quad = \int_a^b dx \, \|F(x)\|_{\cB_2(\cH', \cH)}^2 < \infty.     \lb{A.83a} 
\end{align}
The same argument applied to $R_G^*$ (which is of the form $Q_{G^*}$, i.e., given by 
\eqref{A.81A} with $F(\cdot)$ replaced by $G(\cdot)^*$) then proves 
$R_G^* \in \cB_2(\cH', L^2((a,b);\cH))$. Hence,  
\begin{equation}
Q_F \in \cB_2(\cH', L^2((a,b);\cH)), \quad R_G \in \cB_2(L^2((a,b);\cH), \cH'),  
\end{equation}
together with the factorization \eqref{A.81C}, prove \eqref{A.81a}.
\end{proof}

Next, we strengthen our assumptions as follows: 

\begin{hypothesis} \lb{hA.10}  
Let $\cH$ and $\cH_j$, $j=1,2$, be complex, separable Hilbert spaces and $-\infty\leq a<b\leq \infty$. 
Suppose that for a.e.\ $x \in (a,b)$, $F_j (x) \in \cB_2(\cH_j,\cH)$ and $G_j(x) \in \cB_2(\cH,\cH_j)$ 
such that $F_j(\cdot)$ and $G_j(\cdot)$ are weakly measurable, and 
\begin{equation}  
\|F_j( \cdot)\|_{\cB_2(\cH_j,\cH)} \in L^2((a,b)), \; 
\|G_j (\cdot)\|_{\cB_2(\cH,\cH_j)} \in L^2((a,b)), \quad j=1,2.      \lb{A.78a}
\end{equation}
\end{hypothesis}

As an immediate consequence of Hypothesis \ref{hA.10} one infers the following facts.

\begin{lemma} \lb{lA.11}
Assume Hypothesis \ref{hA.10} and $\alpha \in \bbC$. Then, for a.e.\ $x \in (a,b)$, 
$\widehat F_j (x; \alpha) \in \cB_2(\cH_j,\cH)$, $\widehat F_j(\cdot\, ; \alpha)$ are 
$\cB_2(\cH_j,\cH)$-measurable, and 
\begin{equation}  
\big\|\widehat F_j( \cdot \, ; \alpha)\big\|_{\cB_2(\cH_j,\cH)} \in L^2((a,b)), \quad j=1,2.     \lb{A.84a}
\end{equation}
Moreover, 
\begin{align}
\begin{split} 
\int_c^d dx \, G_j (x) F_k(x), \, 
\int_c^d dx \, G_j (x) \hatt F_k(x; \alpha) \in \cB_1(\cH_k,\cH_j),&    \\
1 \leq j, k \leq 2, \; 
c, d \in (a,b) \cup \{a, b\},&      \lb{A.85a} 
\end{split} 
\end{align}
and 
\begin{align}
& QR, ST \in \cB_1\big(L^2((a,b);\cH)\big),    \lb{A.86b} \\
& \bsK, \bsH_a, \bsH_b \in \cB_2\big(L^2((a,b);\cH)\big).     \lb{A.86a} 
\end{align} 
\end{lemma}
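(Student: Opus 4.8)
\textit{Proof plan.} The plan is to re-run the iteration from the proof of Lemma~\ref{lA.6} in the Hilbert--Schmidt category and to combine it with Lemma~\ref{lA.9}, the only recurring new ingredient being an appeal to Pettis' theorem to pass from weak to strong measurability. First I would record that, the Hilbert spaces $\cB_2(\cH_j,\cH)$ and $\cB_2(\cH,\cH_j)$ being separable, weak measurability of $F_j(\cdot)$ and $G_j(\cdot)$ implies (by Pettis' theorem, exactly as in the proof of Lemma~\ref{lA.9}) their $\cB_2$-measurability, hence, via the contractive embeddings $\cB_2 \hookrightarrow \cB$, their uniform measurability; together with $\|F_j(\cdot)\|_{\cB(\cH_j,\cH)} \leq \|F_j(\cdot)\|_{\cB_2(\cH_j,\cH)} \in L^2((a,b))$ and the analogue for $G_j$, this shows Hypothesis~\ref{hA.2} holds, so that in particular Lemma~\ref{lA.6} applies and $\widehat F_j(\cdot\,;\alpha)$ is well defined as the $\cB(\cH_j,\cH)$-valued solution of \eqref{A.35}, \eqref{A.36}. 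To see $\widehat F_j(\cdot\,;\alpha)$ is in fact $\cB_2(\cH_j,\cH)$-valued I would re-run the iteration \eqref{A.50}: the bound $\|H(x,x')\widehat F_{j,n-1}(x';\alpha)\|_{\cB_2} \leq \|H(x,x')\|_{\cB(\cH)}\|\widehat F_{j,n-1}(x';\alpha)\|_{\cB_2}$ together with the operator-norm estimate of $\|H(x,x')\|_{\cB(\cH)}$ from the proof of Lemma~\ref{lA.6} reproduces, for $\|\widehat F_{j,n}(x;\alpha)\|_{\cB_2(\cH_j,\cH)}$, the very bounds obtained there for the $\cB(\cH_j,\cH)$-norm (with $\|F_k(\cdot)\|_{\cB}$, $\|G_k(\cdot)\|_{\cB}$ replaced by the corresponding $\cB_2$-norms); summing the series then shows that for a.e.\ $x$ the expansion $\sum_n \widehat F_{j,n}(x;\alpha)$ converges absolutely in $\cB_2(\cH_j,\cH)$, necessarily to the same limit $\widehat F_j(x;\alpha)$ by uniqueness of the $\cB$-limit, and yields $\|\widehat F_j(x;\alpha)\|_{\cB_2(\cH_j,\cH)} \leq C(\alpha)\max_k\|F_k(x)\|_{\cB_2(\cH_k,\cH)}$, whence \eqref{A.84a}. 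Finally $\widehat F_j(\cdot\,;\alpha)$ is $\cB_2$-measurable as the a.e.\ limit of the partial sums, each of which is $\cB_2$-measurable (inductively: writing $H(x,x')=C(x)B(x')$, the integrand $B(\cdot)\widehat F_{j,n-1}(\cdot;\alpha)$ is $\cB_1$-measurable, its Bochner integral over the relevant subinterval $(x,b)$ or $(a,x)$ is norm-continuous in $x$, and left multiplication by $C(x)$ keeps the result $\cB_1$- hence $\cB_2$-measurable).

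The remaining assertions are then quick. For \eqref{A.85a}: for a.e.\ $x$ one has $G_j(x)F_k(x),\, G_j(x)\widehat F_k(x;\alpha) \in \cB_1(\cH_k,\cH_j)$ with $\cB_1$-norms at most $\|G_j(x)\|_{\cB_2(\cH,\cH_j)}\|F_k(x)\|_{\cB_2(\cH_k,\cH)}$, resp.\ $\|G_j(x)\|_{\cB_2(\cH,\cH_j)}\|\widehat F_k(x;\alpha)\|_{\cB_2(\cH_k,\cH)}$, and these bounds lie in $L^1((a,b))$ (products of two $L^2$-functions, using \eqref{A.84a} in the second case), so the $\cB_1$-valued Bochner integrals over any subinterval with endpoints in $(a,b)\cup\{a,b\}$ converge absolutely. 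For \eqref{A.86b}, comparing \eqref{A.10}--\eqref{A.13} with \eqref{A.81A}, \eqref{A.81B} identifies $QR$ and $ST$ with the operators $\bsS = Q_F R_G$ of \eqref{A.81C} for the choices $(\cH',F,G)=(\cH_2,F_2,G_2)$ and $(\cH',F,G)=(\cH_1,F_1,G_1)$; since $F_j(x)G_j(x')$ factors through $\cH_j$ as a product of two Hilbert--Schmidt operators and so lies in $\cB_1(\cH)$ a.e., Lemma~\ref{lA.9} applies verbatim and gives $QR, ST \in \cB_1(L^2((a,b);\cH))$.

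For \eqref{A.86a}, $\bsH_a$ (and, symmetrically, $\bsH_b$) has kernel $C(x)B(x')$ on $\{a<x'<x<b\}$ and $0$ on $\{a<x<x'<b\}$, which is $\cB_1(\cH)$- a fortiori $\cB_2(\cH)$-valued and (weakly, hence strongly) measurable, and by \eqref{A.7}, \eqref{A.8} one has $\|C(x)\|_{\cB_2(\cH_1\oplus\cH_2,\cH)}^2 = \sum_{j=1}^2\|F_j(x)\|_{\cB_2(\cH_j,\cH)}^2$ and $\|B(x')\|_{\cB_2(\cH,\cH_1\oplus\cH_2)}^2 = \sum_{j=1}^2\|G_j(x')\|_{\cB_2(\cH,\cH_j)}^2$, both in $L^1((a,b))$; since an integral operator in $L^2((a,b);\cH)$ whose kernel $T(\cdot\,,\cdot)$ is $\cB_2(\cH)$-valued with $\int_a^b\!\int_a^b\|T(x,x')\|_{\cB_2(\cH)}^2\,dx'\,dx<\infty$ is Hilbert--Schmidt (by a standard orthonormal-basis argument), one obtains $\|\bsH_a\|_{\cB_2(L^2((a,b);\cH))}^2 \leq \big(\int_a^b\|C(x)\|_{\cB_2}^2\,dx\big)\big(\int_a^b\|B(x')\|_{\cB_2}^2\,dx'\big) < \infty$, and then $\bsK = \bsH_a + QR \in \cB_2 + \cB_1 \subseteq \cB_2(L^2((a,b);\cH))$ by \eqref{A.14}. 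I do not expect a genuine obstacle anywhere; the only point demanding sustained care is the measurability bookkeeping for the various constructed operator-valued maps, which is handled uniformly by Pettis' theorem and the continuity of the multiplication $\cB_2\times\cB_2\to\cB_1$.
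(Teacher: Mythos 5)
Your proposal is correct and follows essentially the same route as the paper's proof: Pettis' theorem for measurability, re-running the Lemma~\ref{lA.6} iteration with $\cB_2$-norms in place of $\cB$-norms, invoking Lemma~\ref{lA.9} for the trace-class statements, and the Hilbert--Schmidt kernel criterion \eqref{A.82a} for \eqref{A.86a}. You spell out a few steps (the $\cB_2$-norm identities for $C$ and $B$, the identification of $QR$, $ST$ with the setup of Lemma~\ref{lA.9}) that the paper leaves compressed, but there is no substantive difference in method.
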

\begin{proof}
As in the proof of Lemma \ref{lA.9}, one concludes that weak 
measurability of $\widehat F_j(\cdot\, ; \alpha)$, $j=1,2$, implies their $\cB_2(\cH_j,\cH)$-measurability 
by Pettis' theorem. The properties concerning $\widehat F_j( \cdot \, ; \alpha)$, $j=1,2$, then 
follow as in the proof of Lemma \ref{lA.6}, systematically replacing $\| \cdot \|_{\cB(\cH_j,\cH)}$ by 
$\| \cdot \|_{\cB_2(\cH_j,\cH)}$, $j=1,2$. 

Applying Lemma \ref{l4.2}, relations \eqref{A.85a} are now an immediate consequence 
of Hypothesis \ref{hA.10} and the fact that 
\begin{equation}
\big\|G_j (\cdot) \hatt F_k(\cdot \, ; \alpha)\big\|_{\cB_1(\cH_k,\cH_j)} \in L^1((a,b)), \quad 
1 \leq j, k \leq 2.     \lb{A.87a} 
\end{equation}

The proof of Lemma \ref{lA.9} (see \eqref{A.83a}) yields
\begin{align}
\begin{split} 
S \in \cB_2(\cH_1, L^2((a,b);\cH)), \, Q \in \cB_2(\cH_2, L^2((a,b);\cH)), \\  
T \in \cB_2(L^2((a,b);\cH), \cH_1), \, R \in \cB_2(L^2((a,b);\cH), \cH_2),   \lb{A.87A} 
\end{split} 
\end{align}
and \eqref{A.86b} follows.

Next, for any integral operator $\bsT$ in $L^2((a,b);\cH)$, with integral 
kernel satisfying $\|T(\cdot \, , \cdot \,)\|_{\cB_2(\cH)} \in L^2 ((a,b) \times (a,b); d^2x)$, one 
infers (cf.\ \cite[Theorem\ 11.6]{BS87}) that $\bsT \in \cB_2\big(L^2((a,b);\cH)\big)$ and 
\begin{equation}
\|\bsT\|_{\cB_2(L^2((a,b);\cH))} = 
\bigg(\int_a^b dx \int_a^b dx' \, \|T(x,x')\|_{\cB_2(\cH)}^2\bigg)^{1/2}.    \lb{A.82a} 
\end{equation}

Given Lemma \ref{lA.9} and the fact \eqref{A.82a}, one readily concludes \eqref{A.86a}.
\end{proof}

Next, we turn to a discussion of Fredholm determinants in connection with the operators  
$\bsH_a$, $\bsH_b$, and $\bsK$. However, rather than working with modified Fredholm determinants ${\det}_2(\cdot)$ in the following (see, 
however, \cite{GM03} in the special case where $\cH$ and $\cH_j$, $j=1,2$, are assumed to be 
finite-dimensional, and \cite{GN13}), we now make the following additional trace class assumption:

\begin{hypothesis} \lb{hA.12}  
In addition to Hypothesis \ref{hA.10}, suppose that
\begin{equation}
\bsK \in \cB_1\big(L^2((a,b);\cH)\big).     \lb{A.88a} 
\end{equation} 
\end{hypothesis}

Standard examples satisfying Hypothesis \ref{hA.12} are one-dimensional Schr\"odinger 
and Dirac-type operators (on arbitrary intervals, and with appropriate boundary conditions) with 
suitably bounded operator-valued coefficients. Following \cite{GWZ12} and \cite{GWZ13}, this is 
demonstrated in detail in Section \ref{s3} and Appendix \ref{sA} in the context of 
Schr\"odinger operators on the full real line (cf.\ Hypothesis \ref{hB.1} and Theorem \ref{tB.2}). 
A discrete analog of this semi-separable formalism applies to Jacobi and CMV operators, but this 
is not treated in this paper. More generally, ordinary differential operators of arbitrary order with 
bounded operator-valued coefficients have semi-separable operator-valued Green's functions and 
hence are a natural target for the methods developed in this paper. 

As an immediate consequence of combining Hypothesis \ref{hA.12}, the fact \eqref{A.86b}, 
and the decompositions \eqref{A.14}, \eqref{A.15}, one concludes that  
\begin{equation}
\bsH_a, \bsH_b \in \cB_1\big(L^2((a,b);\cH)\big).     \lb{A.88A} 
\end{equation} 

In the following we use some of the standard properties of determinants and traces, such as,
\begin{align}
& {\det}_{\cK}((I_\cK-A)(I_\cK-B))={\det}_{\cK}(I_\cK-A){\det}_{\cK}(I_\cK-B), \quad A, B
\in\cB_1(\cK), \lb{A.92a} \\   
& {\det}_{\cK}(I_{\cK}-AB)={\det}_{\cK'}(I_{\cK'}-BA), \quad {\tr}_{\cK} (AB) = {\tr}_{\cK'} (BA)   \lb{A.93a} \\
& \, \text{for all $A\in\cB_1(\cK',\cK)$, $B\in\cB(\cK,\cK')$ 
such that $AB\in\cB_1(\cK)$, $BA\in \cB_1(\cK')$,} \no
\intertext{and}
& {\det}_{\cK}(I_\cK-A)={\det}_{\cK_2}(I_{\cK_2}-D) \, \text{ for } \, A=\begin{pmatrix} 
0 & C \\ 0 & D \end{pmatrix}, \;\, D \in \cB_1(\cK_2), \; \cK=\cK_1\dotplus \cK_2, \lb{A.94a} 
\intertext{since}
&I_\cH -A=\begin{pmatrix} I_{\cK_1} & -C \\ 0 & I_{\cK_2}-D \end{pmatrix} =
\begin{pmatrix} I_{\cK_1} & 0 \\ 0 & I_{\cK_2}-D \end{pmatrix} 
\begin{pmatrix} I_{\cK_1} & -C \\ 0 & I_{\cK_2} \end{pmatrix}. \lb{A.95a} 
\end{align}
Here $\cK$, $\cK'$, and $\cK_j$, $j=1,2$, are complex, separable Hilbert spaces,
$\cB(\cK)$ denotes the set of bounded linear operators on $\cK$, $\cB_p(\cK)$,
$p\geq 1$, denote the usual $\ell^p$-based trace ideals of $\cB(\cK)$, and $I_\cK$ denotes
the identity operator in $\cK$. Moreover, ${\det}_{\cK}(I_\cK-A)$, $A\in\cB_1(\cK)$,
denotes the standard Fredholm
determinant, and $\tr_{\cK}(A)$, $A\in\cB_1(\cK)$, the
corresponding trace. Finally, $\dotplus$ in \eqref{A.94a} denotes a
direct, but not necessary orthogonal, sum decomposition of $\cK$ into $\cK_1$
and $\cK_2$. (We refer, e.g., to \cite{GGK96}, \cite{GGK97}, 
\cite[Sect.\ IV.1]{GK69}, \cite[Ch.\ 17]{RS78}, \cite{Si77}, \cite[Ch.\ 3]{Si05} 
for these facts). 
 
The principal result of this section then concerns the following reduction of the Fredholm determinant ${\det}_{L^2((a,b);\cH)}(\bsI-\alpha \bsK)$ to certain 
Fredholm determinants associated with operators in the Hilbert spaces 
$\cH_j$, $j=1,2$, and $\cH_1 \oplus \cH_2$:

\begin{theorem} \lb{tA.13} 
Suppose Hypothesis \ref{hA.12} and let $\alpha\in\bbC$. Then, 
\begin{align} 
&{\tr}_{L^2((a,b);\cH)}(\bsH_a)={\tr}_{L^2((a,b);\cH)}(\bsH_b)=0,     \lb{A.96a} \\ 
&{\det}_{L^2((a,b);\cH)}(\bsI-\alpha \bsH_a)
= {\det}_{L^2((a,b);\cH)}(\bsI-\alpha \bsH_b)=1, 
\lb{A.97a} \\ 
&\tr_{L^2((a,b);\cH)}(\bsK)=\int_a^b dx\,\tr_{\cH_1}(G_1(x)F_1(x))
=\int_a^b dx\,\tr_{\cH}(F_1(x)G_1(x)) \lb{A.98a} \\
& \quad =\int_a^b dx\,\tr_{\cH_2}(G_2(x)F_2(x))
=\int_a^b dx\,\tr_{\cH}(F_2(x)G_2(x)). \lb{A.99a} 
\end{align}
Assume in addition that $U$ is given by \eqref{A.37}. Then,
\begin{align}
&{\det}_{L^2((a,b);\cH)}(\bsI-\alpha \bsK)={\det}_{\cH_1}\big(I_{\cH_1}
-\alpha T(\bsI - \alpha \bsH_b)^{-1}S\big) \lb{A.100a}
\\ 
& \quad ={\det}_{\cH_1}\bigg(I_{\cH_1}-\alpha \int_a^b dx\,
G_1(x)\widehat F_1(x; \alpha )\bigg) \lb{A.101a} \\
& \quad ={\det}_{\cH_1 \oplus \cH_2}(U(a; \alpha)) \lb{A.102a} \\
& \quad ={\det}_{\cH_2}\big(I_{\cH_2}-\alpha R(\bsI - \alpha \bsH_a)^{-1}Q\big)
\lb{A.103a} \\
& \quad ={\det}_{\cH_2}\bigg(I_{\cH_2}-\alpha \int_a^b dx\,
G_2(x)\widehat F_2(x; \alpha )\bigg) \lb{A.104a} \\
& \quad ={\det}_{\cH_1 \oplus \cH_2}(U(b; \alpha)). \lb{A.105a} 
\end{align}
\end{theorem}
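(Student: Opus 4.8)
The plan is to establish the trace formulas first, then the determinant identities, exploiting the decompositions $\bsK = \bsH_a + QR = \bsH_b + ST$ from Lemma~\ref{lA.3} together with the quasi-nilpotence of $\bsH_a, \bsH_b$ from Lemma~\ref{lA.2a}. For \eqref{A.96a}: since $\bsH_a \in \cB_1(L^2((a,b);\cH))$ by \eqref{A.88A} and $\sigma(\bsH_a)=\{0\}$ by Lemma~\ref{lA.2a}, all eigenvalues of $\bsH_a$ vanish, so $\tr_{L^2((a,b);\cH)}(\bsH_a)=0$ by Lidskii's theorem; likewise for $\bsH_b$. For \eqref{A.97a}: again by Lidskii (or by analyticity of $\alpha\mapsto{\det}(\bsI-\alpha\bsH_a)$ combined with $\spr(\bsH_a)=0$, forcing the entire function to have no zeros and, being of the right growth, to equal~$1$), one gets ${\det}_{L^2((a,b);\cH)}(\bsI-\alpha\bsH_a)=1$. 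For \eqref{A.98a}--\eqref{A.99a}: write $\bsK = \bsH_a + QR$, use $\tr(\bsH_a)=0$ and $\tr_{L^2((a,b);\cH)}(QR) = \tr_{\cH_2}(RQ)$ from the cyclicity property \eqref{A.93a}; a direct computation of $RQ$ using \eqref{A.10}, \eqref{A.11} gives $RQ = \int_a^b dx\, G_2(x)F_2(x) \in \cB_1(\cH_2)$ by \eqref{A.85a}; the remaining equalities follow symmetrically from $\bsK = \bsH_b + ST$ and from $\tr_{\cH_k}(G_k(x)F_k(x)) = \tr_{\cH}(F_k(x)G_k(x))$ pointwise (again \eqref{A.93a}), integrating in $x$ and justifying the interchange via \eqref{A.87a}.

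Next, for the determinant reduction \eqref{A.100a} and \eqref{A.103a}: starting from ${\det}_{L^2((a,b);\cH)}(\bsI-\alpha\bsK)$, factor $\bsI-\alpha\bsK = (\bsI-\alpha\bsH_b) - \alpha ST = (\bsI-\alpha\bsH_b)\big[\bsI - \alpha(\bsI-\alpha\bsH_b)^{-1}ST\big]$, using invertibility of $\bsI-\alpha\bsH_b$ from Theorem~\ref{tA.4}(i). By the product formula \eqref{A.92a} and \eqref{A.97a}, this equals ${\det}_{L^2((a,b);\cH)}\big(\bsI - \alpha(\bsI-\alpha\bsH_b)^{-1}ST\big)$, and then the cyclicity property \eqref{A.93a} (with the roles $A = (\bsI-\alpha\bsH_b)^{-1}S \in \cB_2$, $B = T \in \cB_2$, the product being trace class) moves this to ${\det}_{\cH_1}\big(I_{\cH_1} - \alpha T(\bsI-\alpha\bsH_b)^{-1}S\big)$, which is \eqref{A.100a}; the $QR$/$\bsH_a$ version gives \eqref{A.103a} identically.

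For \eqref{A.101a} and \eqref{A.104a}: the task is to identify $T(\bsI-\alpha\bsH_b)^{-1}S$ with $\int_a^b dx\, G_1(x)\widehat F_1(x;\alpha)$. The idea is that $(\bsI-\alpha\bsH_b)^{-1}S$ applied to $v\in\cH_1$ should be the function $x\mapsto\widehat F_1(x;\alpha)v$: indeed, $(\bsI-\alpha\bsH_b)(\widehat F_1(\cdot;\alpha)v) = F_1(\cdot)v$ is exactly the Volterra equation \eqref{A.35} (recalling $\bsH_b$'s kernel is $-H(x,x')$ on $x<x'$ from \eqref{A.12a}, \eqref{A.6}), and uniqueness of the Volterra solution from Lemma~\ref{lA.6} pins it down. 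Applying $T$ then yields $\int_a^b dx\, G_1(x)\widehat F_1(x;\alpha)v$, giving \eqref{A.101a}; symmetrically, $(\bsI-\alpha\bsH_a)^{-1}Q$ maps $w\mapsto\widehat F_2(\cdot;\alpha)w$ via \eqref{A.36}, giving \eqref{A.104a}. Finally, for \eqref{A.102a} and \eqref{A.105a}: inspect $U(a;\alpha)$ from \eqref{A.68a}, a lower-triangular $2\times2$ block operator whose determinant by \eqref{A.94a} equals ${\det}_{\cH_1}\big(I_{\cH_1}-\alpha\int_a^b dx\, G_1(x)\widehat F_1(x;\alpha)\big)$, matching \eqref{A.101a}; similarly $U(b;\alpha)$ from \eqref{A.69a} is upper-triangular with determinant ${\det}_{\cH_2}\big(I_{\cH_2}-\alpha\int_a^b dx\, G_2(x)\widehat F_2(x;\alpha)\big) = $ \eqref{A.104a}, yielding \eqref{A.105a}. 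The equivalence of all the expressions is then closed by Lemma~\ref{lA.7} (equivalence of \eqref{A.38}, \eqref{A.39}) and Theorem~\ref{tA.4}(ii).

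The main obstacle I expect is the rigorous justification of the trace-ideal bookkeeping at each step: verifying that the operators appearing (e.g.\ $(\bsI-\alpha\bsH_b)^{-1}S$, the products $ST$, $QR$ conjugated by resolvents) lie in the correct ideals so that \eqref{A.92a}, \eqref{A.93a} apply, and especially justifying that $T(\bsI-\alpha\bsH_b)^{-1}S$, a priori an operator on $\cH_1$, is given by the stated Bochner integral — this requires that $G_1(\cdot)\widehat F_1(\cdot;\alpha)$ is $\cB_1(\cH_1)$-valued and Bochner integrable, which is \eqref{A.85a}--\eqref{A.87a} from Lemma~\ref{lA.11}, together with interchanging the $\cH_1$-valued integration defining $T$ with the resolvent action. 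The quasi-nilpotence input (Lemma~\ref{lA.2a}) and Lidskii's theorem handle \eqref{A.96a}--\eqref{A.97a} cleanly, so the real care is in the infinite-dimensional Hilbert--Schmidt/trace-class estimates, which is precisely where this treatment departs from the finite-dimensional case in \cite{GM03}.
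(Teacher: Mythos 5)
Your proposal is correct and follows essentially the same route as the paper: quasi-nilpotence plus Lidskii for \eqref{A.96a}--\eqref{A.97a}, the decompositions $\bsK=\bsH_a+QR=\bsH_b+ST$ with cyclicity of the trace for \eqref{A.98a}--\eqref{A.99a}, the factorization $\bsI-\alpha\bsK=(\bsI-\alpha\bsH_{a,b})[\bsI-\alpha(\bsI-\alpha\bsH_{a,b})^{-1}(QR\text{ or }ST)]$ combined with multiplicativity of the determinant, \eqref{A.97a}, and cyclicity for \eqref{A.100a}, \eqref{A.103a}, and the triangular structure of $U(a;\alpha)$, $U(b;\alpha)$ (via \eqref{A.68a}, \eqref{A.69a}, \eqref{A.94a}) for the remaining equalities. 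The only place you go slightly beyond the paper's text is in spelling out the identification $(\bsI-\alpha\bsH_b)^{-1}S\colon v\mapsto\widehat F_1(\cdot;\alpha)v$ (and the symmetric version with $\bsH_a$, $Q$, $\widehat F_2$) directly from the Volterra equations \eqref{A.35}--\eqref{A.36} and the uniqueness in Lemma \ref{lA.6}; the paper compresses this into ``Relations \eqref{A.110a} and \eqref{A.112a} follow directly from taking the limit $x\uparrow b$ and $x\downarrow a$ in \eqref{A.37},'' but the underlying content is the same, and your explicit argument is the right way to justify it.
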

\begin{proof}
Relations \eqref{A.96a} and \eqref{A.97a} hold since for any quasi-nilpotent operator $A$ in $\cK$ 
satisfying $A \in \cB_1(\cK)$ one has $\tr_{\cK} (A) = 0$ and $\det_{\cK}(I -A) = 1$ (cf., e.g., 
part $(I)$ of the proof of Theorem\ VII.6.1 in \cite{GGK90}). Thus, \eqref{A.14}, \eqref{A.15}, and 
cyclicity of the trace (i.e., the second equality in \eqref{A.93a}) imply
\begin{align}
\begin{split} 
\tr_{L^2((a,b);\cH)}(\bsK) &= \tr_{L^2((a,b);\cH)}(QR)=\tr_{\cH_2}(RQ)  \\
&= \tr_{L^2((a,b);\cH)}(ST)=\tr_{\cH_1}(TS). \lb{A.106a}
\end{split} 
\end{align}
The equalities throughout \eqref{A.98a} and \eqref{A.99a} follow from computing appropriate traces in \eqref{A.106a}.  For example, taking an orthonormal basis $\{v_n\}_{n\in \bbN}$ in $\cH_2$, one computes
\begin{align}
&\tr_{\cH_2}(RQ)=\sum_{n\in \bbN} (v_n,RQv_n)_{\cH_2} = \sum_{n\in \bbN} \bigg(v_n, \int_a^bdx\, G_2(x)(Qv_n)(x)\bigg)_{\cH_2}\no\\
&\quad= \int_a^bdx\, \sum_{n\in \bbN} (v_n, G_2(x)(Qv_n)(x))_{\cH_2}= \int_a^bdx\, \sum_{n\in \bbN} (v_n,G_2(x)F_2(x)v_n)_{\cH_2}\no\\
&\quad= \int_a^bdx\, \tr_{\cH_2}(G_2(x)F_2(x)),
\end{align}
thereby establishing equality between $\tr_{L^2((a,b);\cH)}(\bsK)$ and the first expression in \eqref{A.99a}.  The remaining traces in \eqref{A.106a} are computed similarly to establish the other equalities in \eqref{A.98a} and \eqref{A.99a}.  Next, one observes
\begin{align}
\bsI-\alpha \bsK &=(\bsI-\alpha \bsH_a)\big[\bsI-\alpha (\bsI-\alpha \bsH_a)^{-1}QR\big] \lb{A.107a} \\ 
&=(\bsI-\alpha \bsH_b)\big[\bsI-\alpha (\bsI-\alpha \bsH_b)^{-1}ST\big] \lb{A.108a}
\end{align}
and hence,
\begin{align}
{\det}_{L^2((a,b);\cH)}(\bsI-\alpha \bsK)&={\det}_{L^2((a,b);\cH)}(\bsI-\alpha \bsH_a)   \no \\
& \quad \times {\det}_{L^2((a,b);\cH)}\big(\bsI-\alpha (\bsI-\alpha \bsH_a)^{-1}QR\big)
\no \\ 
&={\det}_{L^2((a,b);\cH)}\big(\bsI-\alpha (\bsI-\alpha \bsH_a)^{-1}QR\big) \no \\
&={\det}_{\cH_2}\big(I_{\cH_2}-\alpha R(\bsI-\alpha \bsH_a)^{-1}Q\big) \lb{A.109a} \\
&={\det}_{\cH_1 \oplus \cH_2}(U(b; \alpha)). \lb{A.110a} 
\end{align}
Similarly,
\begin{align} 
{\det}_{L^2((a,b);\cH)}(\bsI-\alpha \bsK)&={\det}_{L^2((a,b);\cH)}(\bsI-\alpha \bsH_b)   \no \\
& \quad \times {\det}_{L^2((a,b);\cH)}\big(\bsI-\alpha (\bsI-\alpha \bsH_b)^{-1}ST\big)
\no \\ 
&={\det}_{L^2((a,b);\cH)}\big(\bsI-\alpha (\bsI-\alpha \bsH_b)^{-1}ST\big) \no \\
&={\det}_{\cH_1}\big(I_{\cH_1}-\alpha T(\bsI-\alpha \bsH_b)^{-1}S\big) \lb{A.111a} \\
&={\det}_{\cH_1 \oplus \cH_2}(U(a; \alpha)). \lb{A.112a} 
\end{align}
Relations \eqref{A.110a} and \eqref{A.112a} follow directly from 
taking the limit $x\uparrow b$ and $x\downarrow a$ in \eqref{A.37}. This 
proves \eqref{A.100a}--\eqref{A.105a}.
\end{proof}

The results \eqref{A.96a}--\eqref{A.100a}, \eqref{A.102a}, \eqref{A.103a}, 
\eqref{A.105a} can be found in the finite-dimensional context 
($\dim(\cH) < \infty$ and $\dim(\cH_j) < \infty$, $j=1,2$) in 
Gohberg, Goldberg, and Kaashoek \cite[Theorem 3.2]{GGK90} and in 
Gohberg, Goldberg, and Krupnik \cite[Sects.\ XIII.5, XIII.6]{GGK00} under the 
additional assumptions that $a,b$ are finite. The more general case where 
$(a,b)\subseteq\bbR$ is an arbitrary interval, as well as \eqref{A.101a} and 
\eqref{A.104a}, still in the case where $\cH$ and $\cH_j$, $j=1,2$, are 
finite-dimensional, was derived in \cite{GM03}. To the best of our 
knowledge, the present infinite-dimensional results are new. 

For brevity we restricted ourselves to the case where 
$\bsK \in \cB_1\big(L^2((a,b);\cH)\big)$. However, following the detailed discussion in 
the finite-dimensional case in \cite{GM03}, it is possible to extend all considerations 
in this section to the case where $\bsK \in \cB_2\big(L^2((a,b);\cH)\big)$. This 
extension, will appear in \cite{GN13}.

\section{Schr\"odinger Operators and Associated Fredholm Determinants} 
\lb{s3}

The principal purpose of this section is the application of the abstract Fredholm 
determinant reduction theory in Section \ref{s2}, as summarized in 
Theorem \ref{tA.13}, to the concrete case of Schr\"odinger operators with 
short-range potentials that take values in the trace class. 

We will freely use the notation employed in Appendix \ref{sA}, identifying 
$(a,b)=\bbR$, and throughout this section we make the following assumption: 

\begin{hypothesis} \lb{hB.1}
Suppose that $V:\bbR \to \cB_1(\cH)$ is a weakly
measurable operator-valued function with $\|V(\cdot)\|_{\cB_1(\cH)}\in L^1(\bbR)$,
and assume that $V(x) = V(x)^*$ for a.e.\ $x \in \bbR$.
\end{hypothesis}

(While it is possible to drop the self-adjointness condition $V(x) = V(x)^*$ for 
a.e.\ $x \in \bbR$, non-self-adjointness of $V$ plays no role in this paper and hence will 
not be considered in the following.)

We introduce the self-adjoint operators in $L^2(\bbR;\cH)$ defined by 
\begin{align}
&\bsH_0 f=-f'', \quad f\in \dom(\bsH_0)=W^{2,2}(\bbR;\cH), \lb{B.1} \\
&\bsH f=\tau f, \lb{B.2} \\
&f\in\dom(\bsH)=\{g\in L^2(\bbR;\cH) \,|\, g,g' \in AC_{\loc}(\bbR;\cH); \, \tau g
\in L^2(\bbR;\cH)\},     \no
\end{align}
where, as in Appendix \ref{sA}, $(\tau f)(x) = -f''(x) + V(x) f(x)$ for a.e.\ $x\in\bbR$. 
In addition, we introduce the self-adjoint operator $\bsV$ in $L^2(\bbR;\cH)$ by 
\begin{align}
& (\bsV f)(x) = V(x) f(x),    \no \\
& f \in \dom(\bsV) = \bigg\{g \in L^2(\bbR;\cH) \, \bigg| \, g(x) \in \dom(V(x)) 
\text{ for a.e.\ $x \in \bbR$,}     \lb{B.2a} \\
& \hspace*{2cm} x \mapsto V(x) g(x) \text{ is (weakly) measurable,} \, 
\int_{\bbR} dx \, \|V(x) g(x)\|^2_{\cH} < \infty\bigg\}.      \no
\end{align}

Next we turn to the $\cB(\cH)$-valued Jost solutions $f_\pm (z,\cdot)$ of
$-\psi''(z)+V\psi(z)=z\psi(z)$, $z\in\bbC$ (in the sense described in Corollary \ref{c2.5}), given by 
\begin{align}
\begin{split}
f_\pm (z,x)=e^{\pm iz^{1/2}x} I_{\cH} - \int_x^{\pm\infty} dx' \,
g_0(z,x,x')V(x')f_\pm (z,x'),& \lb{B.3} \\
z \in \bbC, \; \Im(z^{1/2})\geq 0, \; x\in\bbR,&    
\end{split} 
\end{align}
where $g_0 (z,\cdot,\cdot)$ is the $\cB(\cH)$-valued Volterra Green's function of 
$\bsH_0$ given by 
\begin{equation}
g_0(z,x,x') = z^{-1/2}\sin(z^{1/2}(x-x')) I_{\cH}, \quad z \in \bbC, \; x, x' \in \bbR.       \lb{B.4}
\end{equation} 
One observes that up to normalization (cf.\ \eqref{2.58}), $f_\pm (z,\cdot)$ coincide with 
the Weyl--Titchmarsh solutions $\psi_{\alpha}(z,\cdot,x_0)$ in \eqref{2.59} associated 
with $\bsH$.

We also recall the $\cB(\cH)$-valued Green's function of $\bsH_0$,
\begin{align}
\begin{split} 
G_0(z,x,x')=\big(\bsH_0 - z \bsI\big)^{-1}(x,x')=
\f{i}{2z^{1/2}}e^{iz^{1/2}|x-x'|} I_{\cH},&     \lb{B.5} \\ 
z \in \bbC \backslash [0,\infty), \; \Im(z^{1/2})>0, \; x,x'\in\bbR.&
\end{split} 
\end{align}
For subsequent purposes we also recall the $\cB(\cH)$-valued integral kernel  
of $(\bsH_0 - z \bsI\big)^{-1/2}$,
\begin{align}
\begin{split} 
(\bsH_0 - z \bsI\big)^{-1/2}(z,x,x') = \pi^{-1}K_0(- i z^{1/2} |x-x'|) I_{\cH},&    \\
z \in \bbC \backslash [0,\infty), \; \Im(z^{1/2})>0, \; x, x'\in\bbR, \; x \neq x',&    \lb{B.6}
\end{split} 
\end{align} 
where $K_0(\cdot)$ denotes the modified (irregular) Bessel function of order zero
(cf.\ \cite[Sect.\ 9.6]{AS72}). Formulas such as \eqref{B.5} and \eqref{B.6} follow 
from elementary Fourier transform arguments as detailed in \cite[p.\ 57--59]{RS75}.   
Relation \eqref{B.6} requires in addition the integral representation 
\cite[No.\ 3.7542]{GR80} for $K_0(\cdot)$. 

One recalls that the $\cB(\cH)$-valued Jost function $\cF$ associated with the pair of self-adjoint 
operators $(\bsH, \bsH_0)$ is then given by
\begin{align}
\cF(z)&=\f{1}{2iz^{1/2}} W(f_-(\ol z)^*,f_+(z))  \lb{B.7} \\
&=I_{\cH}-\f{1}{2iz^{1/2}}\int_\bbR dx \, e^{- i z^{1/2}x}V(x)f_+ (z,x),    \lb{B.8} \\
&=I_{\cH}-\f{1}{2iz^{1/2}}\int_\bbR dx \, f_- (\ol z,x)^* V(x) e^{i z^{1/2}x},       \lb{B.8a} \\
& \hspace*{2.6cm} z \in \bbC \backslash \{0\}, \; \Im(z^{1/2})\geq 0,     \no 
\intertext{and similarly,} 
\cF(\ol z)^*&=\f{-1}{2iz^{1/2}} W(f_+(\ol z)^*,f_-(z))  \lb{B.8b} \\
&=I_{\cH}-\f{1}{2iz^{1/2}}\int_\bbR dx \, e^{i z^{1/2}x}V(x)f_- (z,x),    \lb{B.8c} \\
&=I_{\cH}-\f{1}{2iz^{1/2}}\int_\bbR dx \, f_+ (\ol z,x)^* V(x) e^{- i z^{1/2}x},    \lb{B.8d} \\
& \hspace*{2.8cm} z \in \bbC \backslash \{0\}, \; \Im(z^{1/2})\geq 0,     \no 
\end{align}
where $W(F_1,F_2)(x) = F_1(x) F'_2(x) - F'_1(x) F_2(x)$, $x \in (a,b)$, denotes the Wronskian 
of $F_1$ and $F_2$ (cf.\ \eqref{2.33A}). 

Next, we recall the polar decomposition of a self-adjoint operator $S$ in a complex 
separable Hilbert space $\cK$
\begin{equation}
S = |S| U_S = U_S |S|,    \lb{B.11} 
\end{equation}
where $U_S$ is a partial isometry in $\cK$ and $|S| = (S^* S)^{1/2}$,  

Introducing the factorization of $\bsV = \bsu \bsv$, where 
\begin{equation}
\bsu = |\bsV|^{1/2} \bsU_{\bsV} = \bsU_{\bsV} |\bsV|^{1/2}, \quad  \bsv = |\bsV|^{1/2}, 
\quad \bsV = |\bsV| \bsU_{\bsV} = \bsU_{\bsV} |\bsV| = \bsu \bsv = \bsv \bsu,    \lb{B.12} 
\end{equation}
one verifies (see, e.g., \cite{GLMZ05}, \cite{Ka66} and the references cited therein) that 
\begin{align}
& (\bsH - z \bsI)^{-1} - (\bsH_0 - z \bsI)^{-1}    \lb{B.13} \\ 
& \quad = (\bsH_0 - z \bsI)^{-1} \bsv \big[\bsI + \ol{\bsu (\bsH_0 - z \bsI)^{-1} \bsv}\big]^{-1} 
\bsu (\bsH_0 - z \bsI)^{-1}, \quad z\in\bbC\backslash\sigma(\bsH). \no
\end{align}

Next, to make contact with the notation used in Appendix \ref{sA}, we now 
introduce the operator $\bsK(z)$ in $L^2(\bbR;\cH)$ by
\begin{equation}
\bsK(z)=-\ol{\bsu (\bsH_0 - z \bsI)^{-1}\bsv}, \quad
z\in\bbC\backslash [0,\infty),  \lb{B.14}
\end{equation}
with integral kernel
\begin{equation}
K(z,x,x')=-u(x)G_0(z,x,x')v(x'), \quad z \in \bbC \backslash [0,\infty), \; \Im(z^{1/2}) > 0, 
\; x,x' \in\bbR,  \lb{B.15} 
\end{equation}
and the Volterra operators $\bsH_{-\infty} (z)$, $\bsH_\infty (z)$ (cf.\ \eqref{A.4},
\eqref{A.12a}) in $L^2(\bbR;\cH)$, with integral kernel
\begin{equation}
H(z,x,x')=u(x)g_0 (z,x,x')v(x').  \lb{B.16} 
\end{equation}
Here we used the abbreviations,  
\begin{align}
\begin{split} 
& u(x) = |V(x)|^{1/2} U_{V(x)}, \quad v(x) = |V(x)|^{1/2},    \lb{B.17} \\ 
& V(x) = |V(x)| U_{V(x)} = U_{V(x)} |V(x)| = u(x) v(x) \, \text{ for a.e.\ $x\in\bbR$.} 
\end{split} 
\end{align}

Moreover, we introduce for a.e.\ $x\in\bbR$, 
\begin{align}
\begin{split}
f_1(z,x)&=-u(x) e^{iz^{1/2}x}, \hspace*{.68cm} 
g_1(z,x)=(i/2)z^{-1/2}v(x)e^{-iz^{1/2}x},  \\
f_2(z,x)&=-u(x)e^{-iz^{1/2}x}, \quad \; 
g_2(z,x)=(i/2)z^{-1/2}v(x)e^{iz^{1/2}x}. \lb{B.18}
\end{split}
\end{align}
Assuming temporarily that 
\begin{equation}
\supp(\|V(\cdot)\|_{\cB(\cH)}) \text{ is compact} \lb{B.19}
\end{equation}
(employing the notion of support for regular distributions on $\bbR$)  
in addition to Hypothesis \ref{hB.1}, identifying $\cH_1 = \cH_2 = \cH$, and introducing 
$\hat f_j(z,\cdot)$, $j=1,2$, by
\begin{align}
\hat f_1(z,x)&=f_1(z,x)-\int_x^\infty dx'\, H(z,x,x')\hat f_1(z,x'), \lb{B.20}
\\ 
\hat f_2(z,x)&=f_2(z,x)+\int_{-\infty}^x dx'\, H(z,x,x')\hat f_2(z,x'),
\lb{B.21} \\ 
& \hspace*{2mm} z\in\bbC\backslash [0,\infty), \; \Im(z^{1/2}) > 0, \; \text{a.e.\ } \, x \in\bbR, \no
\end{align}  
yields $\hat f_j(z,\cdot) \in L^2(\bbR;\cH)$, $j=1,2$, upon a standard iteration of the Volterra 
integral equations \eqref{B.20}, \eqref{B.21}. In fact, $\hat f_j(z,\cdot) \in L^2(\bbR;\cH)$, $j=1,2$, have 
compact support as long as \eqref{B.19} holds. By
comparison with \eqref{B.3}, one then identifies for all $z\in\bbC\backslash [0,\infty)$, 
$\Im(z^{1/2}) > 0$, and a.e.\ $x \in\bbR$, 
\begin{align}
\hat f_1(z,x)&=-u(x) f_+ (z,x), \lb{B.22} \\
\hat f_2(z,x)&=-u(x) f_- (z,x). \lb{B.23} 
\end{align}
We note that the temporary compact support assumption \eqref{B.19} on $\|V(\cdot)\|_{\cB(\cH)}$ has
only been introduced to guarantee that $f_j(z,\cdot), \hat f_j(z,\cdot) \in
L^2(\bbR;\cH)$, $j=1,2$ for all $z\in\bbC\backslash [0,\infty)$, 
$\Im(z^{1/2}) > 0$. This extra hypothesis will eventually be removed. 

Next we state the following basic result.

\begin{theorem} \lb{tB.2}
Assume Hypothesis \ref{hB.1} and let $z\in\bbC\backslash [0,\infty)$. Then 
\begin{equation}
\bsK(z)\in \cB_1\big(L^2(\bbR;\cH)\big). \lb{B.24}
\end{equation}
\end{theorem}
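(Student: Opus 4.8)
The plan is to write $\bsK(z)$ as a product of two Hilbert--Schmidt operators in $L^2(\bbR;\cH)$ by splitting the resolvent as $(\bsH_0 - z\bsI)^{-1} = (\bsH_0 - z\bsI)^{-1/2}(\bsH_0 - z\bsI)^{-1/2}$. On the dense domain $\dom(\bsv)$ one has $\bsu(\bsH_0 - z\bsI)^{-1}\bsv = \big[\bsu(\bsH_0 - z\bsI)^{-1/2}\big]\big[(\bsH_0 - z\bsI)^{-1/2}\bsv\big]$, so I would show that both $(\bsH_0 - z\bsI)^{-1/2}\bsv$ and $\bsu(\bsH_0 - z\bsI)^{-1/2}$ extend to Hilbert--Schmidt operators on $L^2(\bbR;\cH)$. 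Since $(\bsH_0 - z\bsI)^{-1/2}$ is bounded and $\dom(\bsv)$ is dense (every bounded, compactly supported $\cH$-valued function lies in $\dom(\bsv)$ because $\|V(\cdot)\|_{\cB_1(\cH)} \in L^1(\bbR)$), the closure $\bsK(z) = -\,\overline{\bsu(\bsH_0 - z\bsI)^{-1}\bsv}$ then equals $-\,\overline{\bsu(\bsH_0 - z\bsI)^{-1/2}}\;\overline{(\bsH_0 - z\bsI)^{-1/2}\bsv}$, a product of two Hilbert--Schmidt operators, hence $\bsK(z) \in \cB_1\big(L^2(\bbR;\cH)\big)$.

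To verify the Hilbert--Schmidt property, I would read off from \eqref{B.6} and \eqref{B.17} that $(\bsH_0 - z\bsI)^{-1/2}\bsv$ has $\cB_2(\cH)$-valued integral kernel $\pi^{-1} K_0(-iz^{1/2}|x-x'|)\,|V(x')|^{1/2}$, and that $\bsu(\bsH_0 - z\bsI)^{-1/2}$ has kernel $\pi^{-1} K_0(-iz^{1/2}|x-x'|)\,|V(x)|^{1/2}U_{V(x)}$. Using $\big\||V(x)|^{1/2}\big\|_{\cB_2(\cH)}^2 = \|V(x)\|_{\cB_1(\cH)}$ and $\|U_{V(x)}\|_{\cB(\cH)} \leq 1$, the $\cB_2(\cH)$-norm of either kernel at $(x,x')$ is bounded by $\pi^{-1}\big|K_0(-iz^{1/2}|x-x'|)\big|\,\|V(x')\|_{\cB_1(\cH)}^{1/2}$ (respectively with $x$ in place of $x'$). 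By the Hilbert--Schmidt criterion for operator-valued kernels (cf.\ \eqref{A.82a} and \cite[Theorem~11.6]{BS87}), after the substitution $y = x - x'$ the square of the Hilbert--Schmidt norm of $(\bsH_0 - z\bsI)^{-1/2}\bsv$ is dominated by $\pi^{-2}\big(\int_{\bbR} dy\,|K_0(-iz^{1/2}|y|)|^2\big)\big(\int_{\bbR} dx'\,\|V(x')\|_{\cB_1(\cH)}\big)$, and the estimate for $\bsu(\bsH_0 - z\bsI)^{-1/2}$ is identical.

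It then remains to note that $\int_{\bbR} |K_0(-iz^{1/2}|y|)|^2\,dy < \infty$: since $z \in \bbC\backslash[0,\infty)$ (equivalently $\Im(z^{1/2}) > 0$) forces $\Re(-iz^{1/2}) = \Im(z^{1/2}) > 0$, the function $K_0(-iz^{1/2}|\cdot|)$ has only a logarithmic singularity at the origin and decays exponentially at $\pm\infty$ by the standard asymptotics of $K_0$ (cf.\ \cite[Sects.\ 9.6, 9.7]{AS72}), so the integral converges; the factor $\int_{\bbR}\|V(x')\|_{\cB_1(\cH)}\,dx'$ is finite by Hypothesis \ref{hB.1}. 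I do not expect a serious technical difficulty beyond bookkeeping; the one conceptual point worth flagging is that the naive route---treating $\bsK(z)$ directly as a semi-separable operator with kernel built from \eqref{B.18} and applying Lemma \ref{lA.9}/\ref{lA.11}---fails here, because the factors $e^{\pm iz^{1/2}x}$ grow exponentially on a half-line so that their $\cB_2(\cH)$-norms are not in $L^2(\bbR)$ without the compact-support hypothesis \eqref{B.19}; routing the argument through $(\bsH_0 - z\bsI)^{-1/2}$ is precisely what removes that obstruction and keeps both Hilbert--Schmidt factors globally square-integrable in the kernel norm.
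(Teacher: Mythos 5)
Your proposal is correct and takes essentially the same route as the paper: factor $\bsK(z)$ through $(\bsH_0 - z\bsI)^{-1/2}$ to obtain a product of two Hilbert--Schmidt operators, with the Hilbert--Schmidt property of each factor following from the integral kernel \eqref{B.6}, the estimate $\|\,|V(x)|^{1/2}\|_{\cB_2(\cH)}^2 = \|V(x)\|_{\cB_1(\cH)}$, the $K_0$ asymptotics, and $\|V(\cdot)\|_{\cB_1(\cH)} \in L^1(\bbR)$. You supply more detail than the paper does (the explicit change of variables and the remark about why the naive semi-separable factorization based on \eqref{B.18} fails without compact support), but the substance of the argument is identical.
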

\begin{proof}
The well-known asymptotics of $K_0(\zeta)$ as $\zeta \to 0$ and 
$\zeta \to \infty$, $|\arg(\zeta)| < 3\pi/2$, yields  
\begin{equation}
K_0(\zeta) \underset{\zeta \to 0}{=} - \ln(\zeta) + \Oh(1), \quad 
K_0(\zeta) \underset{|\zeta| \to \infty}{=} \bigg[\f{\pi}{2 \zeta}\bigg]^{1/2} 
e^{-\zeta} \big[1 + \Oh\big(|\zeta|^{-1}\big)\big]   \lb{B.6a}
\end{equation}
(cf.\ \cite[Sect.\ 9.6]{AS72}). Thus, with $\zeta = - i z^{1/2} |x - x'|$, applying \eqref{A.82a} and 
$\|V(\cdot)\|_{\cB_1(\cH)}\in L^1(\bbR)$ by Hypothesis \ref{hB.1}, one concludes that 
\begin{equation}
\bsu (\bsH_0 - z \bsI)^{-1/2}, \ol{(\bsH_0 - z \bsI)^{-1/2}\bsv} \in \cB_2\big(L^2(\bbR;\cH)\big), 
\quad z\in\bbC\backslash [0,\infty).     \lb{B.25} 
\end{equation}
By virtue of \eqref{B.14} this proves \eqref{B.24}. 
\end{proof}

An application of Lemma \ref{lA.7} and Theorem \ref{tA.13} then yields the
following Fredholm determinant reduction result, identifying the Fredholm determinant of 
$\bsI - \bsK(\cdot)$ and that of the $\cB(\cH)$-valued Jost function $\cF(\cdot)$ (the inverse transmission 
coefficient). The significance of \eqref{B.26} below lies in the fact that a determinant in the Hilbert 
space $L^2(\bbR;\cH)$ is reduced to one in $\cH$. (This reduction is perhaps most spectacular 
in cases where $\cH$ is finite-dimensional.) In particular, the intimate connection between $\cF(\cdot)$ 
and the inverse transmission coefficient establishes the link between $\bsK(\cdot)$ and stationary scattering theory for Schr\"odinger operators with operator-valued potentials. 

\begin{theorem} \lb{tB.3}
Assume Hypothesis \ref{hB.1} and let $z\in\bbC\backslash [0,\infty)$. Then 
\begin{equation}
{\det}_{L^2(\bbR;\cH)}(\bsI - \bsK(z)) = {\det}_{\cH}(\cF(z)). \lb{B.26}
\end{equation} 
\end{theorem}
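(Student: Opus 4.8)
The plan is to apply Theorem \ref{tA.13} with the identification $\cH_1 = \cH_2 = \cH$, $(a,b) = \bbR$, and the data $F_j(\cdot), G_j(\cdot)$ given by $f_j(z,\cdot), g_j(z,\cdot)$ from \eqref{B.18}, and then to recognize the resulting reduced determinant in $\cH$ as the Jost function $\cF(z)$. First I would note that Theorem \ref{tB.2} supplies the trace class hypothesis $\bsK(z) \in \cB_1(L^2(\bbR;\cH))$, and that the factorization $\bsK(z) = -\ol{\bsu(\bsH_0 - z\bsI)^{-1}\bsv}$ together with \eqref{B.25} gives the Hilbert--Schmidt bounds needed for Hypothesis \ref{hA.10}; thus Hypothesis \ref{hA.12} holds for the present data. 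Comparing the integral kernel \eqref{B.15} of $\bsK(z)$, written via \eqref{B.4}--\eqref{B.5} as $-u(x)\,(i/2)z^{-1/2}e^{iz^{1/2}|x-x'|}v(x')$, with the semi-separable form \eqref{A.3}, one reads off exactly the $f_j(z,\cdot), g_j(z,\cdot)$ of \eqref{B.18} (the split at $x = x'$ matching $e^{iz^{1/2}(x-x')}$ for $x > x'$ and $e^{-iz^{1/2}(x-x')}$ for $x < x'$), and the triangular kernel $H(z,\cdot,\cdot)$ of \eqref{B.16} is precisely $F_1 G_1 - F_2 G_2$ as in \eqref{A.6}. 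Then $\widehat F_1(z,\cdot) = \hat f_1(z,\cdot)$ solves the Volterra equation \eqref{B.20}, which is \eqref{A.35} with $\alpha = 1$.

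Next I would invoke formula \eqref{A.104a} of Theorem \ref{tA.13} (with $\alpha = 1$), which gives
\begin{equation*}
{\det}_{L^2(\bbR;\cH)}(\bsI - \bsK(z)) = {\det}_{\cH}\bigg(I_\cH - \int_\bbR dx\, g_2(z,x)\,\hat f_2(z,x)\bigg),
\end{equation*}
and substitute $g_2(z,x) = (i/2)z^{-1/2} v(x) e^{iz^{1/2}x}$ and $\hat f_2(z,x) = -u(x) f_-(z,x)$ from \eqref{B.23}. Using $v(x)u(x) = V(x)$ this yields
\begin{equation*}
{\det}_{\cH}\bigg(I_\cH + \f{i}{2z^{1/2}}\int_\bbR dx\, e^{iz^{1/2}x} V(x) f_-(z,x)\bigg) = {\det}_{\cH}(\cF(\ol z)^*)
\end{equation*}
by \eqref{B.8c}. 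One can equally use \eqref{A.101a} with $\hat f_1(z,x) = -u(x)f_+(z,x)$ from \eqref{B.22} and \eqref{B.8} to obtain ${\det}_\cH(\cF(z))$ directly; since ${\det}_{L^2(\bbR;\cH)}(\bsI - \bsK(z))$ is a single number, the two reduced determinants agree, giving ${\det}_\cH(\cF(z)) = {\det}_\cH(\cF(\ol z)^*)$ as a byproduct. This proves \eqref{B.26} under the temporary compact support assumption \eqref{B.19}.

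The main obstacle is the removal of the auxiliary hypothesis \eqref{B.19}, since Section \ref{s2} requires only the $L^2$ integrability conditions, but the identifications $\hat f_j(z,\cdot) \in L^2(\bbR;\cH)$ and \eqref{B.22}--\eqref{B.23} were established above only under compact support. The plan here is an approximation argument: choose $V_n \to V$ with $\|V_n(\cdot)\|_{\cB_1(\cH)} \in L^1(\bbR)$, each $V_n$ of compact support, and $\|V_n(\cdot) - V(\cdot)\|_{\cB_1(\cH)} \to 0$ in $L^1(\bbR)$ (e.g.\ $V_n = \chi_{[-n,n]} V$). By the bounds underlying Theorem \ref{tB.2} one gets $\bsK_n(z) \to \bsK(z)$ in $\cB_1(L^2(\bbR;\cH))$, hence ${\det}_{L^2(\bbR;\cH)}(\bsI - \bsK_n(z)) \to {\det}_{L^2(\bbR;\cH)}(\bsI - \bsK(z))$ by continuity of the Fredholm determinant on $\cB_1$. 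On the other side, the Volterra equation \eqref{B.3} for $f_\pm$ is stable under this $L^1$-convergence of the potential (a Gronwall-type estimate on the iterates shows $f_{\pm,n}(z,\cdot) \to f_\pm(z,\cdot)$ locally uniformly, with the integrals in \eqref{B.8}, \eqml{B.8c} converging), so $\cF_n(z) \to \cF(z)$ in $\cB(\cH)$ and in fact in the relevant trace norm, whence ${\det}_\cH(\cF_n(z)) \to {\det}_\cH(\cF(z))$. Passing to the limit in \eqref{B.26} for $V_n$ gives the general case. A minor technical point to check along the way is that the trace class convergence of $\cF_n(z) - I_\cH$ follows from $\|V_n - V\|_{L^1(\bbR;\cB_1(\cH))} \to 0$ together with uniform local bounds on $f_{\pm,n}$, which is where the Hilbert--Schmidt estimates \eqref{B.25} and the kernel asymptotics \eqref{B.6a} are reused.
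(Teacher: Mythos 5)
Your proof is correct and follows essentially the same route as the paper's: reduction via Theorem \ref{tA.13} (formulas \eqref{A.101a}, \eqref{A.104a}) with the identifications \eqref{B.18}, \eqref{B.22}--\eqref{B.23} under the temporary compact support assumption \eqref{B.19}, and then removal of that assumption by the same cutoff approximation $V_n = \chi_{[-n,n]}V$, with trace-norm convergence of $\bsK_n(z)$ obtained from \eqref{A.82a}, \eqref{B.25} plus dominated convergence, and trace-norm convergence of $\cF_n(z) - I_\cH$ obtained by iterating the Volterra equations for $f_{\pm,n}$. The only cosmetic difference is that the paper first writes out the $2\times 2$ block operator $U(z,x)$ from Lemma \ref{lA.7} (eqs.\ \eqref{B.28}--\eqref{B.31}) before reading off both reduced determinants, whereas you apply \eqref{A.101a} and \eqref{A.104a} directly; the byproduct ${\det}_\cH(\cF(z)) = {\det}_\cH(\cF(\ol z)^*)$ you note is the paper's equation \eqref{B.52}.
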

\begin{proof} 
Throughout this proof we fix $z \in \bbC \backslash [0, \infty)$, $\Im(z^{1/2}) > 0$. 
Recalling our temporary assumption \eqref{B.20} that $\supp(\|V(\cdot)\|_{\cB(\cH)})$ is compact, 
Lemma \ref{lA.7} applies and one infers from \eqref{A.37} and \eqref{B.18}--\eqref{B.23} that 
the $2 \times 2$ block operator $U =(U_{j,k})_{1 \leq j, k \leq 2}$, given by 
\begin{align}
& U(z,x)=\begin{pmatrix} I_{\cH_1} - \int_x^\infty dx'\, g_1(z,x')\hat
f_1(z,x') & \int_{-\infty}^x dx'\, g_1(z,x')\hat f_2(z,x') \\ 
\int_x^\infty dx'\, g_2(z,x')\hat f_1(z,x') & I_{\cH_2} - \int_{-\infty}^x
dx'\, g_2(z,x')\hat f_2(z,x') \end{pmatrix},    \no \\
& \hspace*{10cm} x\in\bbR,     \lb{B.27}
\end{align}
becomes
\begin{align}
U_{1,1}(z,x)&= I_{\cH} + \f{i}{2z^{1/2}}\int_x^\infty dx'\,
e^{-iz^{1/2}x'}V(x') f_+ (z,x'), \lb{B.28} \\
U_{1,2}(z,x)&= -\f{i}{2z^{1/2}}\int_{-\infty}^x dx'\,
e^{-iz^{1/2}x'}V(x') f_- (z,x'), \lb{B.29} \\  
U_{2,1}(z,x)&=-\f{i}{2z^{1/2}}\int_x^\infty dx'\, e^{iz^{1/2}x'} V(x') f_+
(z,x'), \lb{B.30} \\ 
U_{2,2}(z,x)&= I_{\cH} + \f{i}{2z^{1/2}}\int_{-\infty}^x dx'\, e^{iz^{1/2}x'} V(x')f_-
(z,x'). \lb{B.31} 
\end{align}
Relations \eqref{A.101a} and \eqref{A.104a} of Theorem \ref{tA.13} then yield 
(with $\cH_1=\cH_2=\cH$)
\begin{align}
{\det}_{L^2(\bbR;\cH)}(\bsI - \bsK(z)) &= {\det}_{\cH} \bigg(I_{\cH} - 
\f{1}{2iz^{1/2}}\int_\bbR dx\, e^{\mp iz^{1/2}x}V(x)f_\pm (z,x)\bigg)    \no \\  
&= {\det}_{\cH} (\cF(z)),       \lb{B.32}
\end{align}
and hence \eqref{B.26} is proved under the additional hypothesis \eqref{B.19}.
Removing the compact support hypothesis on $\|V(\cdot)\|_{\cB(\cH)}$ becomes possible 
along the following approximation argument: first, one replaces 
\begin{equation}
V(\cdot) \, \text{ by } V_n(\cdot) = \chi_{[-n,n]} (\cdot) V(\cdot) \, \text{ a.e.\ on $\bbR$}, \; n \in \bbN,  
\lb{B.33} 
\end{equation}
(with $\chi_M(\cdot)$ the characteristic function of $M \subset \bbR$), and then denotes the quantities 
corresponding to the replacement \eqref{B.33}, in obvious notation, by $u_n$, $v_n$, $\bsV_n$, 
$\bsU_n$, $\bsv_n$, $f_{\pm,n}(z,\cdot)$, $\cF_n(z)$, $\bsK_n(z)$, $n\in\bbN$, etc. In particular, 
what has just been proven can be summarized as 
\begin{equation}
{\det}_{L^2(\bbR;\cH)}(\bsI - \bsK_n(z)) = {\det}_{\cH}(\cF_n(z)), \quad n \in \bbN.    \lb{B.34}
\end{equation} 
Applying \eqref{A.82a} to 
\begin{equation}
\bsu_n (\bsH_0 - z \bsI)^{-1/2}, \ol{(\bsH_0 - z \bsI)^{-1/2}\bsv_n} \in \cB_2\big(L^2(\bbR;\cH)\big), 
\quad n \in \bbN,    \lb{B.35} 
\end{equation}
the dominated convergence theorem readily leads to convergence of the sequences of operators 
in \eqref{B.35} to the corresponding operators in \eqref{B.25} in the  
$\cB_2(L^2(\bbR;\cH))$-norm. Thus, their product converges in trace norm, 
\begin{equation}
\lim_{n\to\infty}\|\bsK_n (z) - \bsK (z) \|_{\cB_1(L^2(\bbR;\cH))} = 0 
\end{equation} 
(recalling that Fredholm determinants are continuous with respect to the trace norm). Consequently,
\begin{equation}
\lim_{n \to \infty}{\det}_{L^2(\bbR;\cH)}(\bsI - \bsK_n(z)) = {\det}_{L^2(\bbR;\cH)}(\bsI - \bsK(z)).    
\lb{B.36}
\end{equation} 
To deal with the limit $n \to \infty$ on the right-hand side of \eqref{B.34} one first notes that by \eqref{B.8}, 
\begin{align}
\cF_n(z) &= I_{\cH}-\f{1}{2iz^{1/2}}\int_\bbR dx \, e^{\mp iz^{1/2}x}V_n(x)f_{\pm,n} (z,x),     \no \\
& = I_{\cH}-\f{1}{2iz^{1/2}}\int_\bbR dx \, V_n(x) \big[e^{\mp iz^{1/2}x} f_{\pm,n} (z,x)\big], 
\quad n \in \bbN.  \lb{B.37} 
\end{align} 
In particular, multiplying \eqref{B.3} by $e^{\mp iz^{1/2}x}$ and rearranging terms yields 
\begin{align} 
\begin{split}
\big[e^{\mp iz^{1/2}x} f_\pm (z,x)\big] = I_{\cH} - \int_x^{\pm\infty} dx' \, \big[e^{\mp iz^{1/2}x} 
g_0(z,x,x') e^{\pm iz^{1/2}x'}\big] V(x')&      \\
\times \big[e^{\mp iz^{1/2}x'} f_\pm (z,x')\big], \quad x\in\bbR,&    \lb{B.38} 
\end{split}
\end{align}
and replacing $V(\cdot)$ by $V_n(\cdot)$ in \eqref{B.38} one obtains for all $n \in \bbN$, 
\begin{align} 
\begin{split}
\big[e^{\mp iz^{1/2}x} f_{\pm,n} (z,x)\big] = I_{\cH} - \int_x^{\pm\infty} dx' \, \big[e^{\mp iz^{1/2}x} 
g_0(z,x,x') e^{\pm iz^{1/2}x'}\big] V_n(x')&      \\
\times \big[e^{\mp iz^{1/2}x'} f_{\pm,n} (z,x')\big], \quad x\in\bbR.&    \lb{B.39} 
\end{split}
\end{align}
Noticing the elementary estimate
\begin{equation}
\big|\big[e^{\mp iz^{1/2}x} g_0(z,x,x') e^{\pm iz^{1/2}x'}\big]\big| \leq |z|^{-1/2} \, 
\text{ for $x' \gtreqless x$},     \lb{B.40} 
\end{equation}
iterating both Volterra integral equations \eqref{B.38} and \eqref{B.39} then yields the uniform bounds,
\begin{equation}
\big\| \big[e^{\mp iz^{1/2}x} f_\pm (z,x)\big] \big\|_{\cB(\cH)} \leq C, \quad 
\big\| \big[e^{\mp iz^{1/2}x} f_{\pm,n} (z,x)\big] \big\|_{\cB(\cH)} \leq C, \quad 
x\in \bbR, \; n \in \bbN,    \lb{B.41} 
\end{equation}
for some fixed $C>0$ (independent of $(x,n) \in \bbR \times \bbN$). In addition, subtracting \eqref{B.38} 
from \eqref{B.39} one obtains
\begin{align} 
& e^{\mp iz^{1/2}x} [f_{\pm,n} (z,x) - f_{\pm} (z,x)]     \no \\ 
& \quad = - \int_x^{\pm\infty} dx' \, \big[e^{\mp iz^{1/2}x} g_0(z,x,x') e^{\pm iz^{1/2}x'}\big] 
[V_n(x') - V(x')] \big[e^{\mp iz^{1/2}x'} f_{\pm,n} (z,x')\big]     \no \\
& \qquad - \int_x^{\pm\infty} dx' \, \big[e^{\mp iz^{1/2}x} g_0(z,x,x') e^{\pm iz^{1/2}x'}\big] V(x')   
 \lb{B.42} \\
& \hspace*{2.7cm}  \times e^{\mp iz^{1/2}x'} \big[f_{\pm,n} (z,x') - f_{\pm} (z,x')\big],      
\quad x\in\bbR.    \no
\end{align}
Using that 
\begin{equation}
\lim_{n \to \infty} \|V_n(x) - V(x)\|_{\cB_1(\cH)} = 0 \, \text{ for a.e.\ $x\in\bbR$},      \lb{B.43} 
\end{equation}
and 
\begin{equation}
\lim_{n \to \infty} \int_{\bbR} dx \, \|V_n(x) - V(x)\|_{\cB_1(\cH)} = 0,     \lb{B.44} 
\end{equation}
an iteration of \eqref{B.42}, employing the estimates \eqref{B.40} and \eqref{B.41}, then proves that 
\begin{equation}
\lim_{n \to \infty} \big\| e^{\mp iz^{1/2}x} [f_{\pm,n} (z,x) - f_{\pm} (z,x)] \big\|_{\cB(\cH)} = 0 
\, \text{ for all $x\in\bbR$.}      \lb{B.45} 
\end{equation}
Finally, using \eqref{B.43}--\eqref{B.45} in \eqref{B.37}, one obtains
\begin{equation}
\lim_{n \to \infty} \|\cF_n (z) - I_{\cH}\|_{\cB_1(\cH)} = \|\cF (z) - I_{\cH}\|_{\cB_1(\cH)},     \lb{B.46} 
\end{equation}
and hence
\begin{equation}
\lim_{n \to \infty} {\det}_{\cH}(\cF_n (z)) = {\det}_{\cH}(\cF (z)).      \lb{B.47} 
\end{equation}
Relations \eqref{B.36} and \eqref{B.47} complete the proof of \eqref{B.26}.
\end{proof} 

Finally, we briefly discuss the spectral shift function for the pair of self-adjoint 
operators $(\bsH, \bsH_0)$. Still assuming Hypothesis \ref{hB.1}, we note that 
a combination of \eqref{B.8}, \eqref{B.8c}, and \eqref{B.32} yields 
\begin{equation}
{\det}_{\cH} (\cF(z)) = {\det}_{\cH} (\cF(\ol z)^*) = \ol{{\det}_{\cH} (\cF(\ol z))}, 
\quad z \in \bbC \backslash [0, \infty).    \lb{B.52} 
\end{equation}
Of course, \eqref{B.52} is also a consequence of 
\begin{equation}
{\det}_{L^2(\bbR;\cH)}(\bsI - \bsK(z)) = \ol{{\det}_{L^2(\bbR;\cH)}(\bsI - \bsK(\ol z))},  
\quad z \in \bbC \backslash [0, \infty),      \lb{B.53}
\end{equation} 
which in turn follows from \eqref{B.14}, $\ol{{\det}_{L^2(\bbR;\cH)}(\bsI - \bsK(z))} 
= {\det}_{L^2(\bbR;\cH)}(\bsI - \bsK(z)^*)$, $\bsu = \bsU_{\bsV} \bsv$, $\bsv = \bsu \bsU_{\bsV}$, and 
cyclicity of the Fredholm determinant (cf.\ \eqref{A.93a}). Moreover, by the resolvent equation 
\eqref{B.13}, one concludes that
\begin{equation}
{\det}_{L^2(\bbR;\cH)}(\bsI - \bsK(z)) \neq 0, \quad z \in \bbC_+,      \lb{3.1a}
\end{equation}
since $\bbC_+ \subset \rho(\bsH)$, due to self-adjointness of $\bsH$. In addition, 
\begin{equation}
\lim_{y \to + \infty} {\det}_{L^2(\bbR;\cH)}(\bsI - \bsK(iy)) = 0,     \lb{4.55a}
\end{equation}
which can be proved as follows: combining \eqref{A.82a}, \eqref{B.6}, and 
\eqref{B.12}, one obtains
\begin{align}
& \big\|\bsu (\bsH_0 - i|y| \bsI)^{-1/2}\big\|_{\cB_2(L^2(\bbR;\cH))}^2 
= \big\|\bsv (\bsH_0 - i|y| \bsI)^{-1/2}\big\|_{\cB_2(L^2(\bbR;\cH))}^2    \no \\
& \quad = \int_{\bbR} dx \int_{\bbR} dx' \, \big\|v(x) \pi^{-1} 
K_0\big(-i (i |y|)^{1/2} |x - x'| I_{\cH}\big\|_{\cB_2(\cH)}^2    \no \\ 
& \quad \leq \int_{\bbR} dx \, \|v(x)\|_{\cB(\cH)}^2 \, \f{1}{\pi^2} 
\int_{\bbR} dx' \, \big|K_0\big(-i (i |y|)^{1/2} |x - x'|\big)\big|^2     \no \\
& \quad = \f{1}{|y|^{1/2}} \int_{\bbR} dx \, \|v(x)\|_{\cB(\cH)}^2 \, \f{1}{\pi^2} 
\int_{\bbR} dx'' \, \big|K_0\big((-i) i^{1/2} |x''|\big)\big|^2    \no \\ 
& \, \underset{|y| \to \infty}{=} \Oh\big(|y|^{-1/2}\big),    \lb{4.56a}
\end{align}
using the asymptotic behavior of $K_0((1-i) |\zeta|) = \ol{K_0((1+i) |\zeta|)}$ 
as $|\zeta| \to 0$ and $|\zeta| \to + \infty$ recorded in \cite[p.\ 375, 378]{AS72}. 
Thus,
\begin{equation}
\|\bsK(i|y|)\|_{\cB_1(L^2(\bbR;\cH))} \underset{|y| \to \infty}{=} \Oh\big(|y|^{-1/2}\big)
\end{equation}
proves \eqref{4.55a} (recalling again that Fredholm determinants are continuous with respect 
to the trace norm). In addition, one notes that ${\det}_{L^2(\bbR;\cH)}(\bsI - \bsK(\cdot))$ is 
analytic on $\bbC_+$ since $\bsK(\cdot)$ is analytic in the $\cB_1\big(L^2(\bbR;\cH)\big)$-norm 
on $\bbC_+$. An argument 
completely analogous to that leading to \eqref{4.56a} (employing once more the estimates 
for $K_0(\cdot)$ in \cite[p.\ 375, 378]{AS72},  
combined with Lebesgue domination in the integral over $x''$ in \eqref{4.56a}) 
then also shows that ${\det}_{L^2(\bbR;\cH)}(\bsI - \bsK(\cdot))$ extends 
continuously to $\bbR$.

Consequently, the spectral shift function $\xi(\cdot \, ; \bsH, \bsH_0)$ for the pair 
$(\bsH, \bsH_0)$ permits the representation, 
\begin{equation}
\xi(\lambda; \bsH, \bsH_0) = \pi^{-1} \lim_{\varepsilon \downarrow 0} 
\Im\big(\ln\big(\wti D_{\bsH/\bsH_0}(\lambda + i \varepsilon)\big)\big) 
\, \text{ for a.e.\ } \, \lambda \in \bbR,   
\end{equation}
where we used the abbreviation (cf.\ \eqref{B.32})
\begin{align}
& \wti D_{\bsH/\bsH_0} (z) = {\det}_{L^2(\bbR;\cH)} 
\big((\bsH_0 - z \bsI)^{-1/2} (\bsH - z \bsI) (\bsH_0 - z \bsI)^{-1/2}\big)  \no \\
& \quad = {\det}_{L^2(\bbR;\cH)} 
\big(\bsI + (\bsH_0 - z \bsI)^{-1/2} \bsV (\bsH_0 - z \bsI)^{-1/2}\big)   \no \\ 
& \quad = {\det}_{L^2(\bbR;\cH)} (\bsI - \bsK(z))    \no \\
& \quad = {\det}_{\cH} (\cF(z)),    \no \\
& \quad = {\det}_{\cH} \bigg(I_{\cH}-\f{1}{2iz^{1/2}}\int_\bbR dx \, e^{\mp i z^{1/2}x}V(x)f_\pm (z,x)\bigg), 
\quad z \in \rho(\bsH_0).    \lb{4.60} 
\end{align}
The choice of square root branch for $(\bsH_0 - z \bsI)^{-1/2}$ (determined either via the spectral theorem 
for $(\bsH_0 - z \bsI)^{-1/2}$, or better yet, via its integral kernel \eqref{B.6}) is of course immaterial in 
\eqref{4.60}. 

For background on the concept of the spectral shift function we refer to 
\cite[Sects.\ 191.4, 19.1.5]{BW83} and \cite[Ch.\ 8]{Ya92}.

\section{Fredholm Indices in terms of Spectral Shift Functions} \lb{s4}

In this section we recall some of the principal results on the Fredholm index 
of the operator $\bsD_\bsA^{}$ (cf.\ \eqref{2.9}) obtained in 
\cite{GLMST11} and specialize them to the case of bounded perturbations first considered by 
Pushnitski \cite{Pu08}.  
This is then used to discuss the Fredholm index of the operator $\bsD_\bsA^{}$ in 
terms of scattering theoretic tools. While this has first been established in this 
context by Pushnitski \cite{Pu08}, the principal aim of this section and the next 
is to provide an alternative, Fredholm determinant based approach to the 
fundamental trace relation in \cite{Pu08} (cf.\ eq. \eqref{2.19} below and the next 
Section \ref{s5}). In turn, this Fredholm determinant based approach permits us 
to derive new results, such as \eqref{5.54}. 

Throughout this section we make the following assumptions (equivalent to those made in 
\cite{Pu08}): 

\begin{hypothesis} \lb{h4.1}
Suppose that $\cH$ is a complex, separable Hilbert space. \\
$(i)$ Assume $A_- \in \cB(\cH)$ is self-adjoint in $\cH$. \\
$(ii)$ Suppose there exists a family of bounded self-adjoint operators $\{B(t)\}_{t\in\bbR}$ 
in $\cH$ with $B(\cdot)$ weakly locally absolutely continuous on $\bbR$, implying the 
existence of a family of bounded self-adjoint operators $\{B'(t)\}_{t\in\bbR}$ in $\cH$ such that for 
a.e.\ $t\in\bbR$,  
\begin{equation} 
\frac{d}{dt} (g,B(t) h)_{\cH} = (g,B'(t) h)_{\cH}, \quad g, h\in\cH.     \lb{2.1}
\end{equation} 
$(iii)$ Assume that $B'(t) \in \cB_1(\cH)$, $t \in \bbR$, and 
\begin{equation}  \lb{2.2}
\int_\bbR dt \, \big\|B'(t)\big\|_{\cB_1(\cH)} < \infty.
\end{equation}
\end{hypothesis}

Here we recall that a family $\{B(t)\}_{t\in \bbR}$ of bounded self-adjoint operators in $\cH$ is called 
weakly locally absolutely continuous if and only if for all $g,h\in \cH$, $(g,B(t)h)_{\cH}$ is locally 
absolutely continuous on $\bbR$.

For notational simplicity later on, $B'(t)$ was defined for all $t\in\bbR$ in 
Hypothesis \ref{h4.1}\,$(ii)$; it would have been possible to introduce it for a.e.\ $t\in\bbR$ 
from the outset. 

Next, assuming Hypothesis \ref{h4.1}, we introduce the family of bounded self-adjoint operators 
$\{A(t)\}_{t\in\bbR}$ in $\cH$ by 
\begin{equation}\lb{2.3}
A(t)=A_- + B(t) \in \cB(\cH), \;  t\in\bbR,
\end{equation}
and hence conclude that 
\begin{equation}  \lb{2.3A}
A'(t) \in \cB_1(\cH), \; t \in \bbR, \, \text{ and } \,  
\int_\bbR dt \, \big\|A'(t)\big\|_{\cB_1(\cH)} < \infty.
\end{equation}
As shown later in \eqref{2.7a}, Hypothesis \ref{h4.1} implies that 
$\|B(\cdot)\|_{\cB_1(\cH)} \in L^{\infty}(\bbR; dt)$ and hence 
$\|A(\cdot)\|_{\cB(\cH)} \in L^{\infty}(\bbR; dt)$. For examples satisfying Hypothesis \ref{h4.1} 
we refer to the Schr\"odinger operators discussed in Section \ref{s5}. 

Next, we turn to a closer examination of the family $\{A(t)\}_{t\in\bbR}$ and recall some of the 
results of \cite{GLMST11} for two reasons: first, some results now simplify under 
Hypothesis \ref{h4.1}, and second, presenting them permits us to provide a fairly self-contained treatment of this material. 

We first recall the following result:

\begin{lemma} [\cite{GLMST11}] \lb{l4.2}
Let $\cH$ be a complex, separable Hilbert space and 
$\bbR \ni t \mapsto F(t) \in \cB_1(\cH)$. Then the following assertions $(i)$ and $(ii)$ 
are equivalent: \\
$(i)$ $\{F(t)\}_{t\in\bbR}$ is a weakly measurable family of operators in $\cB(\cH)$ 
and $\|F(\cdot)\|_{\cB_1(\cH)} \in L^1(\bbR)$.  \\
$(ii)$ $F\in L^1(\bbR; \cB_1(\cH))$. 

Moreover, if either condition $(i)$ or $(ii)$ holds, then
\begin{equation}
\bigg\|\int_{\bbR} dt \, F(t) \bigg\|_{\cB_1(\cH)} \leq 
\int_{\bbR} dt \, \|F(t)\|_{\cB_1(\cH)}   \lb{3.BI}
\end{equation}  
and the $\cB_1(\cH)$-valued function
\begin{equation}
\bbR \ni t \mapsto \int_{t_0}^t ds \, F(s), \quad t_0 \in \bbR\cup \{-\infty\},  
\lb{3.AC}
\end{equation}
is strongly absolutely continuous with respect to the norm in $\cB_1(\cH)$. \\
In addition we recall the following fact: \\
$(iii)$ Suppose that $\bbR \ni t \mapsto G(t) \in \cB_1(\cH)$ is strongly locally absolutely 
continuous in $\cB_1(\cH)$. Then $H(t) = G'(t)$ exists for a.e.\ $t\in\bbR$, $H(\cdot)$ is 
Bochner integrable over any compact interval, and hence
\begin{equation}
G(t) = G(t_0) + \int_{t_0}^t ds \, H(s), \quad t, t_0 \in \bbR.   \lb{3.FTC}
\end{equation}
\end{lemma}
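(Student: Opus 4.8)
The plan is to deduce everything from Pettis' measurability theorem, exactly as in the proof of Lemma \ref{lA.9}, together with two structural facts: since $\cH$ is separable, the trace ideal $\cB_1(\cH)$ is a \emph{separable} Banach space, and moreover $\cB_1(\cH)$ is isometrically isomorphic to the dual $\cB_\infty(\cH)^*$ of the compact operators (with pairing $\langle K,T\rangle=\tr(KT)$), hence $\cB_1(\cH)$ is a separable dual space and therefore has the Radon--Nikodym property. With these in hand, the equivalence $(i)\Leftrightarrow(ii)$ becomes the assertion that, for maps into a separable Banach space, Bochner (strong) measurability is the same as weak measurability.

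For $(i)\Rightarrow(ii)$, the only real point is to upgrade weak measurability ``in the operator sense'' to weak measurability of $F(\cdot)$ as a $\cB_1(\cH)$-valued map, i.e., measurability of $t\mapsto\tr(TF(t))$ for every $T\in\cB(\cH)=\cB_1(\cH)^*$. Fixing a complete orthonormal system $\{e_n\}_{n\in\bbN}$ in $\cH$, I would write $\tr(TF(t))=\sum_{n\in\bbN}(T^*e_n,F(t)e_n)_\cH$, observe that each summand is measurable by hypothesis, and that the series converges absolutely since $\sum_n|(e_n,Be_n)_\cH|\le\|B\|_{\cB_1(\cH)}$ for $B=TF(t)\in\cB_1(\cH)$; hence $t\mapsto\tr(TF(t))$ is a pointwise limit of measurable scalar functions, so measurable. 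Pettis' theorem then yields that $F$ is strongly $\cB_1(\cH)$-measurable, and combined with $\|F(\cdot)\|_{\cB_1(\cH)}\in L^1(\bbR)$ this is precisely $F\in L^1(\bbR;\cB_1(\cH))$. The converse $(ii)\Rightarrow(i)$ is immediate: strong measurability implies weak measurability, and testing against the rank-one functionals $B\mapsto(g,Bh)_\cH$ recovers weak measurability of $\{F(t)\}$ in $\cB(\cH)$, while Bochner integrability makes $t\mapsto\|F(t)\|_{\cB_1(\cH)}$ measurable with finite integral.

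Granting $(i)$ (equivalently $(ii)$), estimate \eqref{3.BI} is the standard Bochner norm inequality $\big\|\int_\bbR F(t)\,dt\big\|\le\int_\bbR\|F(t)\|\,dt$ applied in $\cB_1(\cH)$. For the strong absolute continuity of $t\mapsto\int_{t_0}^t F(s)\,ds$ I would invoke absolute continuity of the Lebesgue integral of the scalar function $\|F(\cdot)\|_{\cB_1(\cH)}\in L^1(\bbR)$: for finitely many disjoint intervals $(a_k,b_k)$, \eqref{3.BI} gives $\sum_k\big\|\int_{a_k}^{b_k}F(s)\,ds\big\|_{\cB_1(\cH)}\le\int_{\bigcup_k(a_k,b_k)}\|F(s)\|_{\cB_1(\cH)}\,ds$, and the right-hand side tends to $0$ as $\sum_k(b_k-a_k)\to0$. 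Finally, for $(iii)$: a strongly locally absolutely continuous function with values in a Banach space having the Radon--Nikodym property is differentiable a.e., its derivative is Bochner integrable over compact intervals, and the fundamental theorem of calculus \eqref{3.FTC} holds; since $\cB_1(\cH)$ has the Radon--Nikodym property this applies verbatim to $G$ (cf.\ \cite{DU77}).

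The genuinely nontrivial inputs are the two facts quoted in the first paragraph --- separability of $\cB_1(\cH)$, so that Pettis applies in $(i)\Rightarrow(ii)$, and the Radon--Nikodym property of $\cB_1(\cH)$ (via its identification with the separable dual $\cB_\infty(\cH)^*$), which is what powers $(iii)$. Once these are recorded, the remaining arguments are a routine transcription of the scalar Lebesgue theory to the Bochner-integral setting, and I expect the write-up to be short.
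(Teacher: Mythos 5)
Your proof is correct. The key structural inputs --- separability of $\cB_1(\cH)$ (so Pettis' theorem upgrades weak to strong measurability), the identification $\cB_1(\cH)\cong\cB_\infty(\cH)^*$ making $\cB_1(\cH)$ a separable dual space with the Radon--Nikodym property (powering part $(iii)$), and the diagonal-series bound $\sum_n|(e_n,Be_n)_\cH|\le\|B\|_{\cB_1(\cH)}$ to pass from operator weak measurability to $\cB_1(\cH)$-weak measurability --- are exactly the standard route, and the paper itself only records the statement by citation to \cite{GLMST11}, so there is no competing in-paper argument; your write-up fills that gap faithfully.
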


\begin{remark} [\cite{GLMST11}] \lb{r4.3}
We temporarily introduce the Bochner integral in $\cB_1(\cH)$, 
\begin{equation}
C(t) = \int_{-\infty}^t ds \, B'(s) \in \cB_1(\cH), \quad t \in \bbR. 
\end{equation}
Applying Lemma \ref{l4.2}\,$(iii)$, one concludes that 
\begin{equation}
C'(t) = B'(t) \, \text{ for a.e.\  $t \in \bbR$,} 
\end{equation}
and hence, in particular, for all $f, g \in \cH$, 
\begin{equation}
(f,C'(t) g)_{\cH} = (f,B'(t) g)_{\cH} \, \text{ for a.e.\  $t \in \bbR$.} 
\end{equation}
Thus, by Hypothesis \ref{h4.1}\,$(iii)$,  
\begin{align}
\f{d}{dt} (f, C(t) g)_{\cH} &= (f,C'(t) g)_{\cH} = (f,B'(t) g)_{\cH}   \no \\
& = \f{d}{dt} (f, B(t) g)_{\cH}  \, \text{ for a.e.\  $t \in \bbR$.} 
\end{align}
Consequently, one arrives at 
\begin{equation}
C(t) = B(t) + C_0 \, \text{ for some $C_0 \in\cB_1(\cH)$.}
\end{equation}
In particular, one infers that 
\begin{equation}
\lim_{t\to -\infty} B(t) = D_- \, \text{ exists in the $\cB_1(\cH)$-norm.}  \lb{3.D-} 
\end{equation}

We now choose the convenient normalization 
\begin{equation}
D_- = 0    \lb{3.D-=0}
\end{equation}
and hence obtain 
\begin{equation} 
B(t) = \int_{-\infty}^t ds \, B'(s) \in \cB_1(\cH), \quad t \in \bbR,  
\lb{2.13jk}
\end{equation} 
implying 
\begin{equation} 
\|B(t)\|_{\cB_1(\cH)}
\leq \int_{-\infty}^t ds \, \|B'(s)\|_{\cB_1(\cH)}, \quad t \in \bbR.   \lb{2.13jl}
\end{equation} 
$\hfill \Diamond$
\end{remark}

\begin{theorem} [\cite{GLMST11}, \cite{Pu08}] \lb{t4.4}
Assume Hypothesis \ref{h4.1} and define $A(t)$, $t\in\bbR$, as in \eqref{2.3}. Then there 
exists a self-adjoint operator $A_+ \in \cB(\cH)$ in $\cH$ such that 
\begin{equation}
\nlim_{t \to \infty} A(t) = A_+, \, \text{ similarly, } \, \nlim_{t \to - \infty} A(t) = A_-.      \lb{2.4}
\end{equation}
\end{theorem}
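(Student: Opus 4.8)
The plan is to exploit the normalization $D_- = 0$ established in Remark \ref{r4.3}, which gives the representation $B(t) = \int_{-\infty}^t ds\, B'(s)$ as a norm-convergent Bochner integral in $\cB_1(\cH)$. The existence of the limit $\nlim_{t\to-\infty} A(t) = A_-$ is then immediate: since $A(t) = A_- + B(t)$ and $\|B(t)\|_{\cB_1(\cH)} \leq \int_{-\infty}^t ds\, \|B'(s)\|_{\cB_1(\cH)} \to 0$ as $t \to -\infty$ by the integrability assumption \eqref{2.2} and dominated convergence, we even get convergence in the stronger $\cB_1(\cH)$-norm, hence a fortiori in the operator norm of $\cB(\cH)$.

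For the limit at $+\infty$, the first step is to show that $B(t)$ converges in $\cB_1(\cH)$ as $t \to +\infty$. By Lemma \ref{l4.2}\,$(i)\Leftrightarrow(ii)$, Hypothesis \ref{h4.1}\,$(ii)$--$(iii)$ guarantees $B' \in L^1(\bbR; \cB_1(\cH))$, so for $t_1 < t_2$ one has the Cauchy estimate
\begin{equation*}
\|B(t_2) - B(t_1)\|_{\cB_1(\cH)} = \bigg\| \int_{t_1}^{t_2} ds\, B'(s) \bigg\|_{\cB_1(\cH)} \leq \int_{t_1}^{t_2} ds\, \|B'(s)\|_{\cB_1(\cH)},
\end{equation*}
using \eqref{3.BI}. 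Since $\int_{\bbR} ds\, \|B'(s)\|_{\cB_1(\cH)} < \infty$, the tail $\int_{t_1}^{\infty} ds\, \|B'(s)\|_{\cB_1(\cH)} \to 0$ as $t_1 \to \infty$, so $\{B(t)\}$ is Cauchy in the Banach space $\cB_1(\cH)$ as $t \to +\infty$. Hence $D_+ := \nlim_{t\to\infty} B(t)$ exists in $\cB_1(\cH)$, and therefore $\nlim_{t\to\infty} A(t) = A_- + D_+ =: A_+$ exists in $\cB(\cH)$.

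Finally I would record that $A_+$ is bounded and self-adjoint: boundedness is clear since $A_+ = A_- + D_+$ with $A_- \in \cB(\cH)$ and $D_+ \in \cB_1(\cH) \subset \cB(\cH)$; self-adjointness follows because each $A(t)$ is self-adjoint and taking adjoints is norm-continuous on $\cB(\cH)$, so $A_+^* = \nlim_{t\to\infty} A(t)^* = \nlim_{t\to\infty} A(t) = A_+$. This is entirely routine once the Bochner-integral machinery of Lemma \ref{l4.2} and the normalization of Remark \ref{r4.3} are in place; there is no real obstacle. The only point requiring a little care is making sure that norm convergence in $\cB_1(\cH)$ is invoked (which the hypotheses supply) rather than merely weak convergence, but this is exactly what Lemma \ref{l4.2}\,$(i)\Rightarrow(ii)$ together with the completeness of $\cB_1(\cH)$ deliver.
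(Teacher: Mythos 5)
Your proof is correct and is exactly the argument that the paper's preliminaries (Lemma \ref{l4.2} together with Remark \ref{r4.3}) set up for: once one has $B(t)=\int_{-\infty}^t ds\,B'(s)$ as a Bochner integral in $\cB_1(\cH)$ with $\|B'(\cdot)\|_{\cB_1(\cH)}\in L^1(\bbR)$, the limit at $-\infty$ is the normalization \eqref{3.D-=0} and the limit at $+\infty$ follows from the Cauchy estimate $\|B(t_2)-B(t_1)\|_{\cB_1(\cH)}\leq\int_{t_1}^{t_2} ds\,\|B'(s)\|_{\cB_1(\cH)}$ and completeness of $\cB_1(\cH)$. The paper itself does not reproduce the argument (it cites \cite{GLMST11}, \cite{Pu08}), but your write-up, including the observation that you in fact obtain $\cB_1(\cH)$-convergence and that self-adjointness of $A_+$ passes to the limit because the involution is norm-continuous, fills in precisely what is intended.
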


Given Theorem \ref{t4.4} one can introduce the self-adjoint 
operator $B(+\infty)$ in $\cH$ by
\begin{equation}
B_+ := B(+\infty) = A_+ - A_-.    \lb{2.5}
\end{equation}
In addition, we also set 
\begin{equation}
B(-\infty) = 0,     \lb{2.6}
\end{equation} 
and note that
\begin{equation}  \lb{2.7}
A_+=A_- + B_+.
\end{equation}  

The integrability condition \eqref{2.2} implies that
\begin{equation}
\sup_{t\in\bbR} \|B(t)\|_{\cB_1(\cH)} 
= \sup_{t\in\bbR} \bigg\| \int_{- \infty}^t dt \, B'(t)\bigg\|_{\cB_1(\cH)} 
\leq \int_{\bbR} dt \, \|B'(t)\|_{\cB_1(\cH)} < \infty.    \lb{2.7a}
\end{equation} 
In particular, this yields that 
\begin{equation}
B_+ = [A_+ - A_-] \in \cB_1(\cH),    \lb{2.5a}
\end{equation}
and that $\bsB$, defined in terms of the family $\{B(t)\}_{t\in\bbR}$ in $\cH$ via, 
\begin{equation}
(\bsB f)(t) = B(t) f(t) \, \text{ for a.e.\ $t\in\bbR$,}   \quad f \in L^2(\bbR;\cH),    \lb{2.5aa}
\end{equation}
actually defines a bounded operator in $L^2(\bbR; \cH)$, 
\begin{equation}
\bsB \in \cB\big(L^2(\bbR; \cH)\big), \quad \|\bsB\|_{\cB(L^2(\bbR; \cH))} 
= \sup_{t\in\bbR} \|B(t)\|_{\cB(\cH)} \leq \sup_{t\in\bbR} \|B(t)\|_{\cB_1(\cH)}. 
\end{equation}

Next, let $\bsA \in \cB\big(L^2(\bbR;\cH)\big)$ be associated with the family 
$\{A(t)\}_{t\in\bbR}$ in $\cH$ by
\begin{equation}
(\bsA f)(t) = A(t) f(t) \, \text{ for a.e.\ $t\in\bbR$,}   \quad f \in L^2(\bbR;\cH),    \lb{2.8}
\end{equation}
and analogously, introduce $\bsA^{\prime}$ in $L^2(\bbR;\cH)$ in terms of 
the family $\{A'(t)\}_{t\in\bbR}$ in $\cH$, by
\begin{align}
&(\bsA^{\prime} f)(t) = A'(t) f(t) \, \text{ for a.e.\ $t\in\bbR$,}   \no \\
& f \in \dom(\bsA^{\prime}) = \bigg\{g \in L^2(\bbR;\cH) \,\bigg|\,
t \mapsto A'(t)g(t) \text{ is (weakly) measurable;}     \lb{2.8a} \\
& \hspace*{6.9cm}  
\int_{\bbR} dt \, \|A'(t) g(t)\|_{\cH}^2 < \infty\bigg\}.   \no 
\end{align} 
We note that by \cite[Lemma\ 4.6]{GLMST11} and the fact that $\bsA_- \in \cB\big(L^2(\bbR; \cH)\big)$, 
where $\bsA_-$ is defined as in \eqref{2.8} with $A(t)$ replaced by the constant fiber $A_-$,  
one has 
\begin{equation}
|\bsA'|^{1/2} (\bsH_0 - z \bsI)^{-1/2} \in \cB_2\big(L^2(\bbR; \cH)\big), 
\end{equation} 
with $\bsH_0$ given by \eqref{B.1}. In particular, this implies that $\bsA'$ is relatively form compact 
with respect to $\bsH_0$.
  
To state our results, we start by introducing in $L^2(\bbR;\cH)$ the operator
\begin{equation}
\bsD_\bsA^{} = \f{d}{dt} + \bsA,
\quad \dom(\bsD_\bsA^{})= \dom(d/dt).   \lb{2.9}
\end{equation}
Here the operator $d/dt$ in $L^2(\bbR;\cH)$  is defined by
\begin{align} 
& \bigg(\f{d}{dt}f\bigg)(t) = f'(t) \, \text{ for a.e.\ $t\in\bbR$,} 
\lb{2.10}  \\
& \, f \in \dom(d/dt) = \big\{g \in L^2(\bbR;\cH) \, \big|\,
g \in AC_{\loc}(\bbR; \cH); \, g' \in L^2(\bbR;\cH)\big\} = W^{1,2}(\bbR;\cH).   \no
\end{align}
We recall that 
\begin{align}
& g \in AC_{\loc}(\bbR; \cH) \, \text{ if and only if $g$ is of the form }        \lb{2.11} \\
& \quad  g(t) = g(t_0) + \int_{t_0}^t ds \, h(s), \; t, t_0 \in \bbR, \, \text{ for some } \, 
h \in L^1_{\loc}(\bbR;\cH), \text{ and } \, g' = h\, \text{ a.e.}   \no 
\end{align} 
(The integral in \eqref{2.11} is of course a Bochner integral.) 

It has been shown in \cite{GLMST11} (under assumptions more general than those in 
Hypothesis \ref{h4.1}) 
that $\bsD_\bsA^{}$ is densely defined and closed in $L^2(\bbR;\cH)$. 

Similarly, the adjoint operator
$\bsD_\bsA^*$ of $\bsD_\bsA^{}$ in $L^2(\bbR; \cH)$ is then given by
\begin{equation}
\bsD_\bsA^*=- \f{d}{dt} + \bsA, \quad
\dom(\bsD_\bsA^*) = \dom(d/dt) = \dom(\bsD_\bsA^{}).
\end{equation}

Using these operators, we closely follow \cite{GLMST11} and introduce in $L^2(\bbR;\cH)$ the 
nonnegative self-adjoint operators 
\begin{align} \lb{2.14}
\bsH_1=\bsD_\bsA^* \bsD_\bsA^{} = \bsH_0 \dot + \bsV_1, \quad 
\dom\big(\bsH_1^{1/2}\big) = W^{1,2}(\bbR;\cH), 
\quad \bsV_1 = \bsA^2 - \bsA',    \\
\bsH_2=\bsD_\bsA^{} \bsD_\bsA^* = \bsH_0 \dot + \bsV_2, \quad 
\dom\big(\bsH_2^{1/2}\big) = W^{1,2}(\bbR;\cH), 
\quad \bsV_2 = \bsA^2 + \bsA',
\end{align} 
in particular, $\bsH_2=\bsH_1 \dot + 2 \bsA'$. Here we denoted the form sum of operators with the 
symbol $\dot +$ and we recall that $\bsA^2 \in \cB\big(L^2(\bbR; \cH)\big)$; for more details we 
refer to \cite[Lemmas\ 4.8, 4.9]{GLMST11}.
 
Finally, we introduce the functions
\begin{align} \lb{2.16}
\begin{split}
g_z(x) & = x(x^2-z)^{-1/2}, \quad z\in\C\backslash [0,\infty), \; x\in\bbR,  \\
g(x) & = g_{-1}(x) = x(x^2+1)^{-1/2},  \quad x\in\bbR.
\end{split}
\end{align}

The next fundamental result, originally due to \cite{Pu08}, and extended in \cite{GLMST11} under hypotheses more general than those in Hypothesis \ref{h4.1}, 
will be reproved in detail in Section \ref{s5}, using alternative 
arguments; it relates the trace of the difference of the 
resolvents of $\bsH_1$ and $\bsH_2$ in $L^2(\bbR;\cH)$, and the trace 
of the difference of $g_z(A_+)$ and $g_z(A_-)$ in $\cH$.

\begin{theorem} \lb{t4.5}
Assume Hypothesis \ref{h4.1} and define the operators
$\bsH_1$ and $\bsH_2$ as in \eqref{2.14} and the function 
$g_z$ as in \eqref{2.16}. Then 
\begin{align}
& \big[(\bsH_2 - z \, \bsI)^{-1}-(\bsH_1 - z \, \bsI)^{-1}\big] 
\in \cB_1\big(L^2(\bbR;\cH)\big),  
\quad z\in\rho(\bsH_1) \cap \rho(\bsH_2),      \lb{2.17}  \\
& [g_z(A_+)-g_z(A_-)] \in \cB_1(\cH),  \quad 
z\in\rho\big(A_+^2\big) \cap \rho\big(A_-^2\big),     \lb{2.18}
\end{align}
and the following trace formula holds,  
\begin{align}
\begin{split}
   \lb{2.19}
   \tr_{L^2(\bbR;\cH)}\big((\bsH_2 - z \, \bsI)^{-1}-(\bsH_1 - z \, 
\bsI)^{-1}\big) = \frac{1}{2z} \tr_\cH \big(g_z(A_+)-g_z(A_-)\big),&  \\  
z\in\bbC\backslash [0,\infty).&  
\end{split}
\end{align} 
\end{theorem}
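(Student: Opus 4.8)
The plan is to identify the left-hand side of \eqref{2.19} with minus the logarithmic $z$-derivative of the symmetrized perturbation determinant $\wti D_{\bsH_2/\bsH_1}(\cdot)$ of \eqref{1.16a}, to evaluate that determinant in closed form by means of the semi-separable reduction theory of Section \ref{s2}, and then to read off the right-hand side by an explicit computation in $\cH$.

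First I would dispose of the trace-class statements. Relation \eqref{2.18} is elementary: for $z$ in the resolvent set of $A_\pm^2$ the function $g_z$ is holomorphic on a complex neighbourhood of $\sigma(A_+)\cup\sigma(A_-)\subset\bbR$, so writing $g_z(A_\pm)$ as a Cauchy integral over such a neighbourhood and using the resolvent identity together with $[A_+-A_-]=B_+\in\cB_1(\cH)$ (cf.\ \eqref{2.5a}) gives $[g_z(A_+)-g_z(A_-)]\in\cB_1(\cH)$ on the set indicated there. For \eqref{2.17}, recall $\bsH_2=\bsH_1\dot+2\bsA'$ (cf.\ \eqref{2.14}); since $\bsH_0$ and $\bsH_1$ have the common form domain $W^{1,2}(\bbR;\cH)$ and $\bsA^2$ is bounded, the Hilbert--Schmidt property of $|\bsA'|^{1/2}(\bsH_0-z\bsI)^{-1/2}$ noted in the discussion following \eqref{2.8a} gives $|\bsA'|^{1/2}(\bsH_1-z\bsI)^{-1/2}\in\cB_2\big(L^2(\bbR;\cH)\big)$; writing $2\bsA'=2|\bsA'|^{1/2}\bsU_{\bsA'}|\bsA'|^{1/2}$ (legitimate since $A'(t)=A'(t)^*$, so $\bsU_{\bsA'}$ commutes with $|\bsA'|^{1/2}$) and invoking a factored resolvent identity (as in \eqref{B.13}, now for the pair $(\bsH_2,\bsH_1)$) then exhibits $(\bsH_2-z\bsI)^{-1}-(\bsH_1-z\bsI)^{-1}$ as a bounded sandwiching of the product of the two Hilbert--Schmidt operators $(\bsH_1-z\bsI)^{-1/2}|\bsA'|^{1/2}$ and its adjoint, hence as an element of $\cB_1\big(L^2(\bbR;\cH)\big)$ on $\rho(\bsH_1)\cap\rho(\bsH_2)$. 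Consequently $\wti D_{\bsH_2/\bsH_1}(z)$ is well defined, equals $\det_{L^2(\bbR;\cH)}(\bsI-\bsK(z))$ with $\bsK(z)$ as in \eqref{1.41} (apply cyclicity of the Fredholm determinant to the factorization \eqref{1.41}), and obeys the standard perturbation-determinant identity $-\f{d}{dz}\ln\wti D_{\bsH_2/\bsH_1}(z)=\tr_{L^2(\bbR;\cH)}\big((\bsH_2-z\bsI)^{-1}-(\bsH_1-z\bsI)^{-1}\big)$.

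The substantial step is the evaluation $\det_{L^2(\bbR;\cH)}(\bsI-\bsK(z))=-\det_{\cH}\big(z^{-1}[(A_+^2-z I_\cH)^{1/2}+A_+]\,[-(A_-^2-z I_\cH)^{1/2}+A_-]\big)$, i.e.\ \eqref{1.54}. I would first establish it under the extra hypothesis $\supp(\|B'(\cdot)\|_{\cB_1(\cH)})\subset[-n,n]$: then $A(\cdot)\equiv A_\mp$ and $A'(\cdot)\equiv0$ for $\pm t>n$, so the potential $\bsV_1=\bsA^2-\bsA'$ equals the constant fibre $A_\pm^2$ on $\{\pm t>n\}$, and the $\cB(\cH)$-valued solutions $\psi_\pm(z,\cdot)$ of $-\psi''+\bsV_1\psi=z\psi$ that are square-integrable near $\pm\infty$ coincide there with $e^{\mp(A_\pm^2-z I_\cH)^{1/2}t}I_\cH$, $(A_\pm^2-z I_\cH)^{1/2}$ being the branch whose spectrum lies in the open right half-plane for $z\in\bbC\backslash[0,\infty)$. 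These $\psi_\pm$ furnish the $\cB(\cH)$-valued semi-separable Green's function of $(\bsH_1-z\bsI)^{-1}$, and, since $A'(\cdot)$ is compactly supported, the resulting functions $F_j(\cdot),G_j(\cdot)$ entering \eqref{1.42} are compactly supported products of bounded operator functions with the $\cB_2(\cH)$-valued $L^2$-function $|A'(\cdot)|^{1/2}$; hence Hypothesis \ref{hA.12} holds for $\bsK(z)$ with $\cH_1=\cH_2=\cH$, $(a,b)=\bbR$, and \eqref{A.102a}, \eqref{A.105a} of Theorem \ref{tA.13} give $\det_{L^2(\bbR;\cH)}(\bsI-\bsK(z))=\det_{\cH\oplus\cH}(U(a;1))=\det_{\cH\oplus\cH}(U(b;1))$ with $(a,b)=\bbR$. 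Evaluating $U(a;1)$ and $U(b;1)$ by iterating the Volterra equations \eqref{A.35}, \eqref{A.36} and using the exact exponential form of $\psi_\pm$ on $\{\pm t>n\}$, the operator-valued integrals telescope exactly as in the Jost-function computation of Section \ref{s3} and collapse, via the factorization identity $[(A^2-z I_\cH)^{1/2}\pm A]\,[(A^2-z I_\cH)^{1/2}\mp A]=-z I_\cH$ (valid because $A$ commutes with $(A^2-z I_\cH)^{1/2}$), to the asserted closed form. The compact-support hypothesis is then removed by the approximation scheme of the proof of Theorem \ref{tB.3}: replacing $B'(\cdot)$ by $\chi_{[-n,n]}(\cdot)B'(\cdot)$, dominated convergence gives $|\bsA'_n|^{1/2}(\bsH_0-z\bsI)^{-1/2}\to|\bsA'|^{1/2}(\bsH_0-z\bsI)^{-1/2}$ in $\cB_2$ (and likewise for the operators built on $\bsH_{1,n}$), so $\bsK_n(z)\to\bsK(z)$ in $\cB_1\big(L^2(\bbR;\cH)\big)$, while $A_+^{(n)}=A_-+\int_{-n}^n ds\,B'(s)\to A_+$ in $\cB_1(\cH)$; since Fredholm determinants are trace-norm continuous, both sides of \eqref{1.54} pass to the limit. (That the right-hand side is $\det_\cH$ of $I_\cH$ plus a trace-class operator follows because $(A_+^2-z I_\cH)^{1/2}-(A_-^2-z I_\cH)^{1/2}$ and $A_+-A_-$ are trace class, while $z^{-1}[(A_-^2-z I_\cH)^{1/2}+A_-]\,[-(A_-^2-z I_\cH)^{1/2}+A_-]=I_\cH$.)

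Combining the two preceding paragraphs, $\tr_{L^2(\bbR;\cH)}\big((\bsH_2-z\bsI)^{-1}-(\bsH_1-z\bsI)^{-1}\big)=-\f{d}{dz}\ln\wti D_{\bsH_2/\bsH_1}(z)=-\f{d}{dz}\ln\big({-}\det_{\cH}\big(z^{-1}[(A_+^2-z I_\cH)^{1/2}+A_+]\,[-(A_-^2-z I_\cH)^{1/2}+A_-]\big)\big)$, and a direct computation of this last derivative---using $\f{d}{dz}(A_\pm^2-z I_\cH)^{1/2}=-\tfrac12(A_\pm^2-z I_\cH)^{-1/2}$, the commutativity of $A_\pm$ with $(A_\pm^2-z I_\cH)^{1/2}$, the factorization identities above, and cyclicity of $\tr_\cH$ (applied only to trace-class combinations)---collapses it to $\tfrac{1}{2z}\tr_\cH\big(g_z(A_+)-g_z(A_-)\big)$, which is \eqref{2.19}. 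I expect the main obstacle to be the third paragraph: pinning down the exponential asymptotics and semi-separable structure of the Green's function of $\bsH_1$ for the non-decaying (step-like) potential $\bsV_1$, pushing the operator-valued telescoping integrals in $U(a;1)$, $U(b;1)$ all the way to the closed form, and controlling the limit $n\to\infty$ in the trace norm; by comparison, the concluding $z$-differentiation in $\cH$, though it requires the functional calculus for the operator square roots, is routine.
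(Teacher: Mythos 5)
Your proposal follows essentially the same Fredholm-determinant route the paper takes in Section \ref{s5}: you identify the trace of the resolvent difference with the negative logarithmic derivative of $\wti D_{\bsH_2/\bsH_1}(z)=\det_{L^2(\bbR;\cH)}(\bsI-\bsK(z))$, evaluate that determinant by the semi-separable reduction of Theorem \ref{tA.13} under a temporary compact-support assumption, collapse it to the $\cH$-determinant \eqref{5.54} via the supersymmetric structure, and remove the compact-support hypothesis by trace-norm approximation. The two arguments differ only cosmetically---the paper reaches the closed form through the Wronskian relations \eqref{3.9}, \eqref{3.12}, \eqref{3.32}--\eqref{3.35}, while you phrase the same supersymmetric collapse through the factorization identity $[(A^2-zI_{\cH})^{1/2}\pm A]\,[(A^2-zI_{\cH})^{1/2}\mp A]=-zI_{\cH}$ and the $U(a;\alpha)$, $U(b;\alpha)$ form of Theorem \ref{tA.13}.
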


Equality \eqref{2.19} is fundamental in two ways: first, it reduces a trace in the 
Hilbert space $L^2(\bbR;\cH)$ to one in $\cH$, and second, it is the starting point for 
Fredholm (resp., Witten) index computations in \cite{GLMST11} (resp., \cite{CGPST13}). 
 
Since by \eqref{2.5a}, $[A_+ - A_-] \in \cB_1(\cH)$, the spectral shift function 
$\xi(\cdot \, ; A_+, A_-)$ associated with the pair $(A_+, A_-)$ is well-defined, 
satisfies 
\begin{equation}
\xi(\, \cdot \, ; A_+,A_-) \in L^1(\bbR; d\nu) 
\end{equation} 
(cf.\ \cite[Theorem\ 8.2.1]{Ya92}), and one obtains the following result: 

\begin{lemma} [\cite{GLMST11}, \cite{Pu08}] \lb{l4.6}
Assume Hypothesis \ref{h4.1}. Then the following trace formula holds
\begin{equation}
\tr_{\cH}\big(g_{z}(A_+) - g_{z}(A_-)\big)
  = - z \int_{\bbR} \frac{\xi(\nu; A_+, A_-) \, d\nu}{(\nu^2 - z)^{3/2}},
\quad  z\in\bbC\backslash [0,\infty).         \lb{2.24}
\end{equation}
In particular, 
\begin{equation}
\bbC\backslash [0,\infty) \ni z \mapsto 
\tr_{\cH} \big(g_{z}(A_+) - g_{z}(A_-)\big) \, \text{ is analytic.} 
\lb{2.25}
\end{equation}
\end{lemma}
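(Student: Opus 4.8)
The plan is to obtain \eqref{2.24} directly from the Krein trace formula for the pair $(A_+, A_-)$, using that $g_z$ is holomorphic in a neighborhood of $\bbR$. First I would observe that, by \eqref{2.5a}, $[A_+ - A_-] \in \cB_1(\cH)$, so the spectral shift function $\xi(\,\cdot\,; A_+, A_-)$ is well defined, belongs to $L^1(\bbR)$, and is supported in the compact interval $[-M, M]$, where $M := \max\big(\|A_+\|_{\cB(\cH)}, \|A_-\|_{\cB(\cH)}\big)$, since $\sigma(A_\pm) \subseteq [-M, M]$. Next, for $z \in \bbC \backslash [0,\infty)$ one checks that $\nu^2 - z$ never lies on the principal branch cut $(-\infty, 0]$ as $\nu$ ranges over $\bbR$ (if $\nu^2 - z \in (-\infty, 0]$ then $z = \nu^2 - t$ for some $t \geq 0$, forcing $z \in [0,\infty)$, a contradiction). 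Hence $x \mapsto g_z(x) = x(x^2 - z)^{-1/2}$, with the principal branch of the square root, extends to a function holomorphic on an open neighborhood of $\bbR$; in particular $g_z \in C^\infty([-M, M])$.

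I would then apply the Krein trace formula (see, e.g., \cite[Ch.\ 8]{Ya92}) to the bounded self-adjoint pair $(A_+, A_-)$ and the function $g_z$. This is legitimate because $A_\pm$ are bounded and $g_z$ is smooth on the fixed compact interval $[-M,M] \supseteq \sigma(A_+) \cup \sigma(A_-)$, and because $[g_z(A_+) - g_z(A_-)] \in \cB_1(\cH)$ by \eqref{2.18} of Theorem \ref{t4.5} (this last fact is in any case elementary for bounded operators with trace class difference). It yields
\[
\tr_\cH\big(g_z(A_+) - g_z(A_-)\big) = \int_\bbR g_z'(\nu)\, \xi(\nu; A_+, A_-)\, d\nu .
\]
A one-line computation gives $g_z'(x) = (x^2 - z)^{-1/2} - x^2 (x^2 - z)^{-3/2} = -z (x^2 - z)^{-3/2}$, and inserting this into the previous display produces exactly \eqref{2.24}.

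Finally, for the analyticity statement \eqref{2.25} I would differentiate under the integral sign on the right-hand side of \eqref{2.24}. For $z$ in an arbitrary compact set $K \subset \bbC \backslash [0,\infty)$ one has $\dist(z, [0,\infty)) \geq c_K > 0$, hence $|\nu^2 - z| \geq c_K$ and $\big|(\nu^2 - z)^{-3/2}\big| \leq c_K^{-3/2}$ uniformly in $\nu \in \bbR$ and $z \in K$; combined with $\xi(\,\cdot\,; A_+, A_-) \in L^1(\bbR)$ this furnishes a $z$-independent $L^1$ majorant, while for each fixed $\nu$ the map $z \mapsto -z(\nu^2 - z)^{-3/2}$ is holomorphic on $\bbC \backslash [0,\infty)$. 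Morera's theorem (or dominated convergence applied to difference quotients) then shows the right-hand side of \eqref{2.24}, and therefore $\tr_\cH\big(g_z(A_+) - g_z(A_-)\big)$, is analytic on $\bbC \backslash [0,\infty)$. The only point requiring any care is the applicability of the Krein trace formula for the specific $g_z$; since $A_\pm$ are bounded this reduces to $C^1$-smoothness of $g_z$ on a fixed compact interval (established above), and if one prefers one may verify the identity first for the resolvent functions $x \mapsto (x - \zeta)^{-1}$, $\zeta \notin \bbR$, and pass to $g_z$ by the analytic/approximation arguments standard in spectral shift function theory. No genuinely hard step is anticipated.
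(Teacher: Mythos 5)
Your proof is correct. Note first that the paper does not actually supply a proof of Lemma~\ref{l4.6}; it cites the result from \cite{GLMST11} and \cite{Pu08}. Your argument---applying the Krein trace formula $\tr_{\cH}(\phi(A_+)-\phi(A_-))=\int_{\bbR}\phi'(\nu)\,\xi(\nu;A_+,A_-)\,d\nu$ to $\phi=g_z$ after observing that $g_z$ is analytic on a neighborhood of $\bbR$ (hence on the compact set $[-M,M]\supseteq\sigma(A_+)\cup\sigma(A_-)$) and computing $g_z'(x)=-z(x^2-z)^{-3/2}$---is the standard route and matches how the cited references proceed. The one point deserving emphasis is precisely the one you flag: the Krein trace formula in its general form requires $\phi$ to lie in a specific function class (e.g., $\phi'$ the Fourier transform of a finite measure, or a Besov condition). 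For bounded self-adjoint $A_\pm$ with $\xi$ compactly supported in $[-M,M]$, a function analytic in a neighborhood of $[-M,M]$ is immediately admissible, most directly by reducing to resolvent functions $x\mapsto(x-\zeta)^{-1}$ via a Cauchy integral over a contour encircling $[-M,M]$ inside the domain of analyticity of $g_z$ and invoking \eqref{2.36}; your closing remark records exactly this fallback. The branch check ($\nu^2-z\notin(-\infty,0]$ for $\nu\in\bbR$, $z\in\bbC\backslash[0,\infty)$) and the uniform bound $|\nu^2-z|\geq\dist(z,[0,\infty))$ used for the dominated-convergence/Morera argument for \eqref{2.25} are both correct, so no gap remains.
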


Next, one also needs to introduce the spectral shift function 
$\xi(\,\cdot\,; \bsH_2,\bsH_1)$ associated with the pair $(\bsH_2, 
\bsH_1)$. Since $\bsH_2\geq 0$ and
$\bsH_1\geq 0$, and 
\begin{equation}
\big[(\bsH_2 + \bsI)^{-1} - (\bsH_1 + \bsI)^{-1}\big] \in \cB_1 
\big(L^2(\bbR;\cH)\big),
\end{equation}
by Theorem \ref{t4.5}, one uniquely introduces $\xi(\,\cdot\,; \bsH_2,\bsH_1)$ by requiring that
\begin{equation}
\xi(\lambda; \bsH_2,\bsH_1) = 0, \quad \lambda < 0,    \lb{2.27}
\end{equation}
and by 
\begin{align}
\begin{split} 
\tr_{L^2(\bbR;\cH)} \big((\bsH_2 - z \, \bsI)^{-1} - (\bsH_1 - z \, 
\bsI)^{-1}\big)
= - \int_{[0, \infty)}  \frac{\xi(\lambda; \bsH_2, \bsH_1) \, 
d\lambda}{(\lambda -z)^2},& \\  
z\in\bbC\backslash [0,\infty),& 
\lb{2.28}
\end{split} 
\end{align}
following \cite[Sect.\ 8.9]{Ya92}. 

\begin{lemma} [\cite{GLMST11}] \lb{l4.7}
Assume Hypothesis \ref{h4.1}. Then 
\begin{equation}
\xi(\,\cdot\,; \bsH_2, \bsH_1) \in L^1\big(\bbR; (|\lambda| + 1)^{-1} 
d\lambda\big)   \lb{2.30}
\end{equation}
and 
\begin{equation}
\xi(\lambda; \bsH_2, \bsH_1) = \pi^{-1} \lim_{\varepsilon \downarrow 0} 
\Im\big(\ln\big(\wti D_{\bsH_2/\bsH_1}(\lambda + i \varepsilon)\big)\big) 
\, \text{ for a.e.\ } \, \lambda \in \bbR,   
\end{equation}
where we used the abbreviation 
\begin{align}
\begin{split} 
& \wti D_{\bsH_2/\bsH_1} (z) = {\det}_{L^2(\bbR;\cH)} 
\big((\bsH_1 - z \bsI)^{-1/2} (\bsH_2 - z \bsI) (\bsH_1 - z \bsI)^{-1/2}\big)    \\
& \quad = {\det}_{L^2(\bbR;\cH)} 
\big(\bsI + 2 (\bsH_1 - z \bsI)^{-1/2} \bsA' (\bsH_1 - z \bsI)^{-1/2}\big), 
\quad z \in \rho(\bsH_1).     \lb{2.31} 
\end{split} 
\end{align}
\end{lemma}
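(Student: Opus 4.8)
The plan is to recognize $\wti D_{\bsH_2/\bsH_1}(\cdot)$ as a symmetrized perturbation determinant for the pair $(\bsH_2,\bsH_1)$ and to derive \eqref{2.30} together with the boundary–value formula from the standard theory of the spectral shift function. As a preliminary I would check that $\bsM(z):=2(\bsH_1-z\bsI)^{-1/2}\bsA'(\bsH_1-z\bsI)^{-1/2}$ lies in $\cB_1\big(L^2(\bbR;\cH)\big)$ and is analytic in $z\in\rho(\bsH_1)=\bbC\backslash[0,\infty)$ (the last equality since $\bsH_1\ge 0$): writing $\bsA'=|\bsA'|^{1/2}\bsU_{\bsA'}|\bsA'|^{1/2}$ and recalling from the discussion preceding \eqref{2.14} that $|\bsA'|^{1/2}(\bsH_0-z\bsI)^{-1/2}\in\cB_2$, a comparison of the resolvents of $\bsH_1$ and $\bsH_0$ (via $\bsV_1=\bsA^2-\bsA'$, with $\bsA^2$ bounded and $\bsA'$ relatively form compact) gives $|\bsA'|^{1/2}(\bsH_1-z\bsI)^{-1/2}\in\cB_2$, so $\bsM(z)$ is a product of two Hilbert--Schmidt operators sandwiching a bounded one. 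Consequently $\wti D_{\bsH_2/\bsH_1}(z)={\det}_{L^2(\bbR;\cH)}(\bsI+\bsM(z))$ is well defined and analytic on $\bbC\backslash[0,\infty)$; the equality of the two expressions in \eqref{2.31} follows from $\bsH_2=\bsH_1\dot+2\bsA'$ (so $(\bsH_1-z\bsI)^{1/2}(\bsI+\bsM(z))(\bsH_1-z\bsI)^{1/2}$ is the operator representing the form of $\bsH_2-z\bsI$) and cyclicity of the determinant, and since $\bsH_2\ge 0$ the operator $\bsI+\bsM(z)$ is boundedly invertible, hence $\wti D_{\bsH_2/\bsH_1}(z)\ne 0$, for all $z\in\bbC\backslash[0,\infty)$.

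Next I would prove the logarithmic–derivative identity
\[
\f{d}{dz}\,\ln\!\big(\wti D_{\bsH_2/\bsH_1}(z)\big)={\tr}_{L^2(\bbR;\cH)}\big((\bsH_1-z\bsI)^{-1}-(\bsH_2-z\bsI)^{-1}\big),\qquad z\in\bbC\backslash[0,\infty),
\]
the right–hand side being trace class by Theorem \ref{t4.5}. The ingredients are the standard formula $\f{d}{dz}\ln{\det}_{L^2(\bbR;\cH)}(\bsI+\bsM(z))={\tr}_{L^2(\bbR;\cH)}\big((\bsI+\bsM(z))^{-1}\bsM'(z)\big)$, the identity $(\bsI+\bsM(z))^{-1}=(\bsH_1-z\bsI)^{1/2}(\bsH_2-z\bsI)^{-1}(\bsH_1-z\bsI)^{1/2}$, the representation $\bsM'(z)=\f12\big[(\bsH_1-z\bsI)^{-1}\bsM(z)+\bsM(z)(\bsH_1-z\bsI)^{-1}\big]$, Kato's resolvent identity for the form perturbation $\bsH_2=\bsH_1\dot+2\bsA'$ in factored form (as in \eqref{A.27} and \cite{GLMZ05}), and cyclicity of the trace; since $\bsA'$ is not trace class, all manipulations must be carried out with the symmetrized (form) expressions and every use of cyclicity justified through the $\cB_2$–factorizations of the first step.

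To integrate this identity and fix the integration constant I would use the decay of $\wti D_{\bsH_2/\bsH_1}$ along the negative axis: from $\bsH_1=\bsD_\bsA^*\bsD_\bsA^{}$ one has the form inequality $\bsH_1\ge\f12\bsH_0-c\,\bsI$ with $c=2\|\bsA\|_{\cB(L^2(\bbR;\cH))}^2$, hence $(\bsH_1+t\bsI)^{-1}\le 2(\bsH_0+t\bsI)^{-1}$ for real $t\ge 2c$, so that $\|\bsM(-t)\|_{\cB_1}\le 2\big\||\bsA'|^{1/2}(\bsH_1+t\bsI)^{-1/2}\big\|_{\cB_2}^2\le 4\big\||\bsA'|^{1/2}(\bsH_0+t\bsI)^{-1/2}\big\|_{\cB_2}^2$; evaluating the last Hilbert--Schmidt norm via the explicit kernel \eqref{B.6} of $(\bsH_0+t\bsI)^{-1/2}$ together with $\int_{\bbR}\|A'(s)\|_{\cB_1(\cH)}\,ds<\infty$ yields $\|\bsM(-t)\|_{\cB_1}=\Oh(t^{-1/2})$ as $t\to\infty$, whence $\ln\!\big(\wti D_{\bsH_2/\bsH_1}(-t)\big)\to 0$. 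Combining the logarithmic–derivative identity with the defining relation \eqref{2.28} of $\xi(\cdot\,;\bsH_2,\bsH_1)$ then gives $\f{d}{dz}\ln\!\big(\wti D_{\bsH_2/\bsH_1}(z)\big)=\int_{[0,\infty)}\xi(\lambda;\bsH_2,\bsH_1)(\lambda-z)^{-2}\,d\lambda$, and integrating from $-\infty$ produces the dispersion relation $\ln\!\big(\wti D_{\bsH_2/\bsH_1}(z)\big)=\int_{[0,\infty)}\xi(\lambda;\bsH_2,\bsH_1)(\lambda-z)^{-1}\,d\lambda$ on $\bbC\backslash[0,\infty)$, which in particular is only possible if $\xi(\cdot\,;\bsH_2,\bsH_1)\in L^1\big(\bbR;(1+|\lambda|)^{-1}\,d\lambda\big)$, i.e.\ \eqref{2.30}. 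Finally, Stieltjes inversion applied to this representation (the Poisson–kernel limit $\pi^{-1}\varepsilon\,[(\lambda'-\lambda)^2+\varepsilon^2]^{-1}\to\delta(\lambda'-\lambda)$ together with the Lebesgue differentiation theorem) gives $\xi(\lambda;\bsH_2,\bsH_1)=\pi^{-1}\lim_{\varepsilon\downarrow 0}\Im\!\big(\ln\!\big(\wti D_{\bsH_2/\bsH_1}(\lambda+i\varepsilon)\big)\big)$ for a.e.\ $\lambda\in\bbR$.

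I expect the main obstacle to be the passage from the mere local integrability of $\xi(\cdot\,;\bsH_2,\bsH_1)$ — already encoded in \eqref{2.28} — to the weighted bound \eqref{2.30}, i.e.\ to controlling the boundary behaviour of $\ln\!\big(\wti D_{\bsH_2/\bsH_1}\big)$ near the endpoint $\lambda=0$ and near $\lambda=\infty$ of the cut $[0,\infty)$, where the crude $\cB_2$–estimate above degenerates (the relevant integral of the kernel \eqref{B.6} diverges as $z$ approaches $(0,\infty)$). This step uses the full apparatus of perturbation determinants for form–trace–class perturbations, which is what one borrows from \cite{Ya92} and \cite{GLMST11}; the remaining steps are routine but, as stressed, require care because $\bsA'$ is not trace class on $L^2(\bbR;\cH)$.
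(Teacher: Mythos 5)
Your overall plan is sound and matches the standard perturbation--determinant route for form--trace--class perturbations: the trace--class property of $\bsM(z)=2(\bsH_1-z\bsI)^{-1/2}\bsA'(\bsH_1-z\bsI)^{-1/2}$, the logarithmic--derivative identity, the $\Oh(t^{-1/2})$ decay of $\ln\wti D_{\bsH_2/\bsH_1}(-t)$, and Stieltjes inversion are the right ingredients, and all of your preliminary estimates are correct. The pivotal step, however --- deducing \eqref{2.30} --- is asserted rather than proved, and as written it does not go through. The a priori information coming from the definition \eqref{2.28} is only $\xi(\,\cdot\,;\bsH_2,\bsH_1)\in L^1\big([0,\infty);(1+\lambda)^{-2}\,d\lambda\big)$. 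Integrating \eqref{2.28} along $(-\infty,0)$ and sending the lower endpoint $-T\to-\infty$ yields, after Fubini (legitimate for each fixed $T$ because of the $(1+\lambda)^{-2}$ weight), the relation $\ln\wti D_{\bsH_2/\bsH_1}(-t)=\lim_{T\to\infty}\int_{[0,\infty)}\xi(\lambda)\big[(\lambda+t)^{-1}-(\lambda+T)^{-1}\big]d\lambda$. Splitting $\xi=\xi_+-\xi_-$ into positive and negative parts produces two monotone increasing limits $I_\pm(T)\uparrow\int_0^\infty\xi_\pm(\lambda)(\lambda+t)^{-1}\,d\lambda$ whose \emph{difference} converges; that by itself is fully compatible with $I_+(\infty)=I_-(\infty)=+\infty$, so it does not establish that either improper integral is finite. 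The quantitative bound $|\ln\wti D_{\bsH_2/\bsH_1}(-t)|=\Oh(t^{-1/2})$ which you derive does not, on its own, resolve this $\infty-\infty$ ambiguity. Consequently the claim ``the dispersion relation is only possible if $\xi\in L^1\big((1+|\lambda|)^{-1}d\lambda\big)$'' overstates what has been shown; \eqref{2.30} must be established before the dispersion representation and Stieltjes inversion can be invoked.

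The way to close the gap is to exploit the supersymmetric reduction to $(A_+,A_-)$ rather than the bare resolvent--difference trace--class property. Combining Theorem \ref{t4.5}, Lemma \ref{l4.6}, and \eqref{2.28} yields
\begin{equation*}
\int_{[0,\infty)}\frac{\xi(\lambda;\bsH_2,\bsH_1)\,d\lambda}{(\lambda-z)^2}
=\frac{1}{2}\int_{\bbR}\frac{\xi(\nu;A_+,A_-)\,d\nu}{(\nu^2-z)^{3/2}},\qquad z\in\bbC\setminus[0,\infty),
\end{equation*}
and an Abel/Stieltjes inversion of this identity (which uses only Theorem \ref{t4.5} and Lemma \ref{l4.6}, not Lemma \ref{l4.7}) gives the explicit relation \eqref{2.37a} of Theorem \ref{t4.9}. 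With \eqref{2.37a} and $\xi(\,\cdot\,;A_+,A_-)\in L^1(\bbR;d\nu)$ in hand, \eqref{2.30} follows directly by Tonelli, since the substitution $u=(\lambda-\nu^2)^{1/2}$ gives $\int_{\nu^2}^\infty(1+\lambda)^{-1}(\lambda-\nu^2)^{-1/2}\,d\lambda=\pi(1+\nu^2)^{-1/2}\le\pi$ uniformly in $\nu$, so $\int_0^\infty|\xi(\lambda;\bsH_2,\bsH_1)|(1+\lambda)^{-1}\,d\lambda\le\int_{\bbR}|\xi(\nu;A_+,A_-)|\,d\nu<\infty$. In other words, the strengthened weight $(1+\lambda)^{-1}$ is a consequence of the specific structure of the pair $(\bsH_2,\bsH_1)$ and of $\xi(\,\cdot\,;A_+,A_-)\in L^1(\bbR)$, not of the generic theory (which only delivers the $(1+\lambda)^{-2}$ weight). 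Once \eqref{2.30} is secured in this way, your integration of \eqref{2.28} together with the decay along $(-\infty,0)$ legitimately produces the exponential Herglotz representation, and Stieltjes inversion then yields the stated boundary--value formula for a.e.\ $\lambda\in\bbR$; the remaining steps of your argument are correct.
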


An explicit reduction of the symmetrized perturbation determinant $\wti D_{\bsH_2/\bsH_1} (z)$ in 
the space $L^2(\bbR;\cH)$ to a particular determinant in $\cH$ will be derived in 
Section \ref{s5}, cf.\ formula \eqref{5.54}. 

Next, we detail the precise connection between $\xi$ and Fredholm perturbation  
determinants associated with the pair $(A_-, A_+)$. In particular, this will justify the perturbation 
determinants formula \eqref{4.66} in the index computation in Theorem \ref{t4.11}. 

Let 
\begin{align} 
& D_{T/S}(z) = {\det}_{\cH} ((T-z I_{\cH})(S- z I_{\cH})^{-1}) 
= {\det}_{\cH}(I_{\cH} + (T-S)(S-z I_{\cH})^{-1}),    \no \\ 
& \hspace*{9.5cm} z \in \rho(S),  
\end{align}
denote the perturbation determinant for the pair of operators $(S,T)$ in $\cH$, 
assuming $(T-S)(S - z_0 I_{\cH})^{-1} \in \cB_1(\cH)$ for some (and hence for all) 
$z_0 \in \rho(S)$. 

\begin{theorem} [\cite{GLMST11}] \lb{t4.8} 
Assume Hypothesis \ref{h4.1}, then
\begin{equation} 
\xi(\lambda; A_+,A_-) = \pi^{-1}\lim_{\e\downarrow 0} 
\Im(\ln(D_{A_+/A_-}(\lambda+i\e))) \, 
\text{ for a.e.\ } \, \lambda\in\bbR,    \lb{2.34}
\end{equation} 
where we make the choice of branch of $\ln(D_{A_+/A_-}(\cdot))$ on $\bbC_+$ such that 
\begin{equation}
\lim_{\Im(z) \to +\infty}\ln(D_{A_+/A_-}(z))=0 
\end{equation} 
(cf.\ \cite[Lemma\ 8.1.2]{Ya92}). Assuming in addition that $0\in\rho(A_-)\cap\rho(A_+)$, then for a continuous representative of  $\xi(\,\cdot\,; A_+,A_-)$ in a neighborhood of $\lambda = 0$ the equality 
\begin{equation} 
\xi(0; A_+,A_-)= \pi^{-1} \lim_{\varepsilon \downarrow 0}
\Im(\ln(D_{A_+/A_-}(i\e)))   \lb{2.35}
\end{equation} 
holds.
\end{theorem}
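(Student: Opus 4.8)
The plan is to obtain both equalities from the classical theory of the spectral shift function for trace class perturbations (Yafaev \cite[Ch.\ 8]{Ya92}) combined with elementary analyticity and nonvanishing properties of the perturbation determinant $D_{A_+/A_-}(\cdot)$. By \eqref{2.5a} we have $B_+ = A_+ - A_- \in \cB_1(\cH)$, so $D_{A_+/A_-}(z) = {\det}_{\cH}(I_{\cH} + B_+(A_- - z I_{\cH})^{-1})$ is well-defined and analytic for $z \in \rho(A_-)$, in particular on $\bbC_+$. Since $A_+$ is self-adjoint, $(A_+ - z I_{\cH})(A_- - z I_{\cH})^{-1}$ is boundedly invertible for $z \in \bbC_+$, so $D_{A_+/A_-}(z) \neq 0$ there; moreover $B_+(A_- - z I_{\cH})^{-1} \to 0$ in $\cB_1(\cH)$ as $\Im(z) \to +\infty$, whence $D_{A_+/A_-}(z) \to 1$. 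Hence the branch of $\ln(D_{A_+/A_-}(\cdot))$ on $\bbC_+$ is unambiguously fixed by $\lim_{\Im(z) \to +\infty} \ln(D_{A_+/A_-}(z)) = 0$, and with this normalization \eqref{2.34} is precisely Krein's representation of the (canonically normalized, trace-formula-determined) spectral shift function $\xi(\cdot; A_+, A_-)$ through its perturbation determinant; see \cite[Ch.\ 8]{Ya92} (and \cite[Lemma\ 8.1.2]{Ya92} for the branch convention), as used in \cite{GLMST11}.

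For \eqref{2.35}, suppose in addition $0 \in \rho(A_-) \cap \rho(A_+)$. Since resolvent sets are open in $\bbC$, there is $\delta > 0$ with $\{z \in \bbC : |z| < \delta\} \subset \rho(A_-) \cap \rho(A_+)$. On the real interval $(-\delta, \delta)$ the spectral projections $E_{A_\pm}((-\infty, \lambda))$ are independent of $\lambda$, so $\xi(\cdot; A_+, A_-)$ is a.e.\ equal to a constant there and therefore possesses a continuous (indeed locally constant) representative near $\lambda = 0$; this is the representative meant by $\xi(0; A_+, A_-)$. On the other hand, $D_{A_+/A_-}(\cdot)$ is analytic on $\rho(A_-)$ and nonvanishing on $\rho(A_-) \cap \rho(A_+)$ (a zero of $D_{A_+/A_-}$ in $\rho(A_-)$ would force a point of $\sigma(A_+)$), hence analytic and nonvanishing on the simply connected open set $\bbC_+ \cup \{z \in \bbC : |z| < \delta\}$. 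Consequently the branch of $\ln(D_{A_+/A_-}(\cdot))$ normalized above extends analytically to this set; in particular $\ln(D_{A_+/A_-}(\cdot))$ is continuous on $(-\delta, \delta)$ and $\lim_{\e \downarrow 0} \ln(D_{A_+/A_-}(i\e)) = \ln(D_{A_+/A_-}(0))$.

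Combining these facts yields \eqref{2.35}: by \eqref{2.34}, for a.e.\ $\lambda \in (-\delta, \delta)$ we have $\xi(\lambda; A_+, A_-) = \pi^{-1} \Im(\ln(D_{A_+/A_-}(\lambda)))$, and since the left-hand side has a continuous representative and the right-hand side is continuous on $(-\delta, \delta)$, the two agree for all $\lambda \in (-\delta, \delta)$; evaluating at $\lambda = 0$ and using the previous paragraph gives $\xi(0; A_+, A_-) = \pi^{-1} \Im(\ln(D_{A_+/A_-}(0))) = \pi^{-1} \lim_{\e \downarrow 0} \Im(\ln(D_{A_+/A_-}(i\e)))$. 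The one point requiring care throughout is the branch bookkeeping for $\ln(D_{A_+/A_-}(\cdot))$ — ensuring the normalization at $\Im(z) = +\infty$ transfers consistently to the real boundary values in \eqref{2.34} and, under the extra hypothesis, along the imaginary axis down to $z = 0$ in \eqref{2.35} — but this is exactly what analyticity and nonvanishing of $D_{A_+/A_-}(\cdot)$ on the relevant simply connected regions provide, together with the cited results of \cite{Ya92}.
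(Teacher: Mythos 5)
Your proposal is correct, and its core—Krein's representation of $\xi$ via the perturbation determinant (Yafaev, Ch.\ 8), applied once one knows $B_+ = A_+ - A_- \in \cB_1(\cH)$ from \eqref{2.5a}—is indeed the engine behind the theorem. The paper itself does not spell out a proof (it attributes the result to \cite{GLMST11}), but the remark immediately following the theorem statement is revealing: \cite[Theorem~7.6]{GLMST11} derived \eqref{2.34} via the factorization \cite[eq.~(7.50)]{GLMST11} of $(A_+ - A_-)(A_- - z I_{\cH})^{-1}$ involving $A_-^{-1}$, which needlessly imported the hypothesis $0 \in \rho(A_-) \cap \rho(A_+)$ into the proof of \eqref{2.34}; the paper then indicates how to avoid this by a different factorization. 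Your route sidesteps the factorization issue entirely: in the bounded setting of Hypothesis~\ref{h4.1}, $B_+$ is trace class outright and $\|(A_- - z I_{\cH})^{-1}\|_{\cB(\cH)} \le (\Im z)^{-1}$ gives both the trace-class product and the decay at $i\infty$ directly, so \eqref{2.34} is an immediate instance of \cite[Ch.~8]{Ya92} with no extra hypothesis. This is a cleaner argument in the present bounded context, though the factored form used in \cite{GLMST11} is what generalizes to situations where $A_\pm$ are unbounded and $A_+ - A_-$ is only relatively trace class.

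Your derivation of \eqref{2.35} by monodromy is also correct and worth highlighting as the key structural point: on the simply connected set $\bbC_+ \cup \{|z| < \delta\}$ with $\delta$ chosen so that the disk lies in $\rho(A_-)\cap\rho(A_+)$, $D_{A_+/A_-}$ is analytic and zero-free (a zero at $z_0 \in \rho(A_-)$ would force $(A_+ - z_0 I_{\cH})(A_- - z_0 I_{\cH})^{-1}$, hence $A_+ - z_0 I_{\cH}$, to be non-invertible), so the branch of $\ln D_{A_+/A_-}$ normalized at $i\infty$ extends analytically across $(-\delta,\delta)$ and down the imaginary axis to $0$, and both sides of \eqref{2.34} have continuous representatives on $(-\delta,\delta)$ that therefore agree pointwise, including at $\lambda = 0$. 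The one place your write-up could be sharpened is the claim that $\xi$ is a.e.\ locally constant near $0$; you assert it via the constancy of the spectral projections $E_{A_\pm}((-\infty,\lambda))$, which is fine, but in fact your own continuity of $\pi^{-1}\Im\ln D_{A_+/A_-}$ on $(-\delta,\delta)$ together with \eqref{2.34} already produces the requisite continuous representative of $\xi$, so that separate step is redundant (though harmless). Overall the proof is sound and, in this setting, arguably more streamlined than the route in \cite{GLMST11} the paper points to.
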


We note that the representation \eqref{2.34} for $\xi(\,\cdot\,; A_+,A_-)$ was proved in \cite[Theorem\ 7.6]{GLMST11} only under the additional hypothesis that $0\in\rho(A_-)\cap\rho(A_+)$, by appealing to the decomposition 
\cite[eq.\ (7.50)]{GLMST11}, involving $A_-^{-1}$. But this additional assumption is clearly unnecessary by using the corresponding decomposition
\begin{equation}
\begin{split}
&(A_+ - A_-) (A_- - z I_{\cH})^{-1} \\
&\quad = [(A_+ - A_-) (|A_-| + I_{\cH})^{-1}]
[(|A_-| + I_{\cH})(A_- - z I_{\cH})^{-1}],\quad
z \in \bbC\backslash\bbR,
\end{split}
\end{equation} 
instead.

In addition (cf., e.g., \cite[Sect.\ 8.2]{Ya92}), we recall that   
\begin{align} 
\begin{split} 
\frac{d}{dz}\ln(D_{A_+/A_-}(z)) &= - {\tr}_{\cH}\big((A_+ - z I_{\cH})^{-1}-(A_- - z I_{\cH})^{-1}\big)   \\
& = \int_{\bbR}\frac{\xi(\lambda; A_+, A_-)\,d\lambda}{(\lambda-z)^2},    \quad 
z \in \bbC\backslash\bbR.      \lb{2.36}  
\end{split} 
\end{align} 

Given these preparations, one obtains Pushnitski's formula \cite{Pu08} relating the spectral shift 
functions $\xi(\cdot \, ; \bsH_2, \bsH_1)$ and $\xi(\cdot \, ; A_+,A_-)$: 

\begin{theorem} [\cite{GLMST11}, \cite{Pu08}] \lb{t4.9}
Assume Hypothesis \ref{h4.1} and define $\xi(\,\cdot\,; \bsH_2,\bsH_1)$ according to \eqref{2.27} and \eqref{2.28}.
Then one has for a.e.\ $\lambda>0$,
\begin{equation} 
\xi(\lambda; \bsH_2, \bsH_1)=\frac{1}{\pi}\int_{-\lambda^{1/2}}^{\lambda^{1/2}}
\frac{\xi(\nu; A_+,A_-) \, d\nu}{(\lambda-\nu^2)^{1/2}},   \lb{2.37a}
\end{equation} 
with a convergent Lebesgue integral on the right-hand side of \eqref{2.37a}. 
\end{theorem}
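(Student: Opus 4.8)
The plan is to derive \eqref{2.37a} by combining the integral representation \eqref{2.24} for $\tr_{\cH}(g_z(A_+)-g_z(A_-))$ with the fundamental trace formula \eqref{2.19} of Theorem \ref{t4.5}, and then matching the result against the defining relation \eqref{2.28} for $\xi(\,\cdot\,;\bsH_2,\bsH_1)$ via uniqueness of the representing measure. First I would insert \eqref{2.24} into the right-hand side of \eqref{2.19} to obtain
\begin{equation*}
\tr_{L^2(\bbR;\cH)}\big((\bsH_2-z\bsI)^{-1}-(\bsH_1-z\bsI)^{-1}\big)
= -\frac{1}{2}\int_{\bbR}\frac{\xi(\nu;A_+,A_-)\,d\nu}{(\nu^2-z)^{3/2}},
\quad z\in\bbC\backslash[0,\infty).
\end{equation*}
The next step is to recognize that the kernel $(\nu^2-z)^{-3/2}$, as a function of $z$, is itself a Cauchy-type transform: one has the elementary identity $\tfrac{1}{2}(\nu^2-z)^{-3/2} = \int_{\nu^2}^{\infty}(\lambda-z)^{-2}\,\phi_\nu(\lambda)\,d\lambda$ for a suitable explicit density $\phi_\nu(\cdot)$ (obtained by computing $\int_{\nu^2}^\infty(\lambda-z)^{-2}(\lambda-\nu^2)^{-1/2}\,d\lambda = (\nu^2-z)^{-3/2}\cdot\tfrac{\pi}{?}$ — in fact the clean route is $\int_0^\infty (\lambda + s - z)^{-2} s^{-1/2}\,ds$ evaluated by a beta-function/contour argument, giving exactly $\tfrac{\pi}{2}(\nu^2 - z)^{-3/2}$ after the shift $\lambda \mapsto \lambda - \nu^2$, so $\phi_\nu \equiv \tfrac{1}{\pi}(\lambda-\nu^2)^{-1/2}\chi_{[\nu^2,\infty)}(\lambda)$). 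Substituting this and interchanging the order of integration (justified by Fubini together with $\xi(\,\cdot\,;A_+,A_-)\in L^1(\bbR;d\nu)$ from Lemma \ref{l4.6} and the absolute integrability of the double integral for $z\in\bbC\backslash[0,\infty)$) yields
\begin{equation*}
\tr_{L^2(\bbR;\cH)}\big((\bsH_2-z\bsI)^{-1}-(\bsH_1-z\bsI)^{-1}\big)
= -\int_{[0,\infty)}\frac{d\lambda}{(\lambda-z)^2}\,
\frac{1}{\pi}\int_{-\lambda^{1/2}}^{\lambda^{1/2}}\frac{\xi(\nu;A_+,A_-)\,d\nu}{(\lambda-\nu^2)^{1/2}},
\end{equation*}
where the change of the $\nu$-region from $\{\nu^2\le\lambda\}$ to $\nu\in[-\lambda^{1/2},\lambda^{1/2}]$ is exactly what the Fubini swap produces.

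Comparing this with the defining equation \eqref{2.28} for $\xi(\,\cdot\,;\bsH_2,\bsH_1)$, namely
$\tr_{L^2(\bbR;\cH)}\big((\bsH_2-z\bsI)^{-1}-(\bsH_1-z\bsI)^{-1}\big) = -\int_{[0,\infty)}(\lambda-z)^{-2}\,\xi(\lambda;\bsH_2,\bsH_1)\,d\lambda$,
and invoking the normalization $\xi(\lambda;\bsH_2,\bsH_1)=0$ for $\lambda<0$ from \eqref{2.27}, I would conclude that the two representing densities on $[0,\infty)$ agree a.e. This uses the standard fact that if two functions in the relevant weighted $L^1$ class produce the same Cauchy-type transform $\int(\lambda-z)^{-2}h(\lambda)\,d\lambda$ for all $z\in\bbC\backslash[0,\infty)$, then they coincide a.e. (Stieltjes inversion, or uniqueness of the exponential representation of the perturbation determinant as in \cite[Sect.~8.9]{Ya92}); one should also note that the inner integral defines a function in $L^1\big(\bbR;(|\lambda|+1)^{-1}d\lambda\big)$, consistent with Lemma \ref{l4.7}, so the comparison is legitimate. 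This gives \eqref{2.37a}, and the Lebesgue integrability of the right-hand side of \eqref{2.37a} is part of what the Fubini argument delivers (finiteness a.e. of the inner integral).

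The main obstacle I anticipate is the rigorous justification of the Fubini interchange and the resulting convergence of the inner integral $\int_{-\lambda^{1/2}}^{\lambda^{1/2}}\xi(\nu;A_+,A_-)(\lambda-\nu^2)^{-1/2}\,d\nu$ for a.e.\ $\lambda>0$: the weight $(\lambda-\nu^2)^{-1/2}$ has an integrable singularity at the endpoints $\nu=\pm\lambda^{1/2}$, but one must combine this with the mere $L^1(d\nu)$-integrability of $\xi(\,\cdot\,;A_+,A_-)$, which requires a careful application of Tonelli to the nonnegative integrand $|\xi(\nu;A_+,A_-)|(\lambda-\nu^2)^{-1/2}$ over the region $\{(\nu,\lambda): \nu^2\le\lambda\}$ and checking that $\int_{\nu^2}^\infty(\lambda-\nu^2)^{-1/2}\,d\mu_z(\lambda)<\infty$ for the appropriate dominating measure. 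A secondary technical point is pinning down the constant in the identity $\tfrac{1}{2}(\nu^2-z)^{-3/2}=\tfrac{1}{\pi}\int_{\nu^2}^{\infty}(\lambda-z)^{-2}(\lambda-\nu^2)^{-1/2}\,d\lambda$, which is a one-line computation but must be done with the correct branch of $(\,\cdot\,)^{-1/2}$ and $(\,\cdot\,)^{-3/2}$ matching the conventions in \eqref{2.16}; once that constant is fixed, everything else is bookkeeping.
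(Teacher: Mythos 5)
Your derivation is correct and is essentially the same route taken in the cited references (\cite{Pu08}, \cite{GLMST11}): start from the trace formula \eqref{2.19}, substitute the Cauchy-transform representation \eqref{2.24}, rewrite the resulting kernel $(\nu^2-z)^{-3/2}$ via the Abel-type identity
\begin{equation*}
\frac{1}{2}(\nu^2-z)^{-3/2}=\frac{1}{\pi}\int_{\nu^2}^{\infty}\frac{d\lambda}{(\lambda-z)^{2}(\lambda-\nu^2)^{1/2}},
\end{equation*}
swap the order of integration by Tonelli--Fubini (using $\xi(\,\cdot\,;A_+,A_-)\in L^1(\bbR;d\nu)$ and the uniform bound $|\lambda-z|\geq c_z(\lambda+1)$ on $[0,\infty)$ for fixed $z\in\bbC\backslash[0,\infty)$), and identify the resulting density on $[0,\infty)$ with $\xi(\,\cdot\,;\bsH_2,\bsH_1)$ by uniqueness of the representation \eqref{2.28}. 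The constant in the Abel identity that you flag as uncertain is indeed $\pi/2$ (the substitution $\lambda=\nu^2+t^2$ reduces it to $2\int_0^\infty(t^2+\nu^2-z)^{-2}dt=\tfrac{\pi}{2}(\nu^2-z)^{-3/2}$ with the branch $\Im\big((\nu^2-z)^{1/2}\big)<0$ for $z\in\bbC_+$, matching \eqref{2.16}), and the a.e.\ finiteness of the inner integral is precisely what Tonelli delivers, since the weight $(\lambda-z)^{-2}$ is bounded above and below by positive multiples of $(\lambda+1)^{-2}$ on $[0,\infty)$.
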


Finally, we turn to the connection between the spectral shift function and the Fredholm 
index of $\bsD_\bsA^{}$. (We recall that $\bsD_\bsA^{}$ is densely defined and closed in 
$L^2(\bbR;\cH)$, cf.\ \cite{GLMST11}.) First, we state a characterization of the Fredholm 
property of $\bsD_\bsA^{}$. For this purpose, one recalls that if $T$ is a densely defined 
and closed operator in $\cH$, then 
\begin{align}
& \text{$T$ is said to be upper-semi-Fredholm if $\ran(T)$ is closed}   \no \\
& \quad \text{and $\dim(\ker(T)) < \infty$,} \\
& \text{$T$ is said to be lower-semi-Fredholm if $\ran(T)$ is closed}   \no \\
& \quad \text{and $\dim(\ker(T^*)) < \infty$,} \\
& \text{$T$ is said to be Fredholm if $\ran(T)$ is closed}   \no \\
& \quad \text{and $\dim(\ker(T)) + \dim(\ker(T^*)) < \infty$.}
\end{align}
Equivalently, $T$ is Fredholm if $\dim(\ker(T)) + {\rm codim}(\ran(T)) < \infty$. 

The following result is a generalization of 
\cite[Propositions\ 7.11, 7.12, Corollary\ 7.13]{LT05} and 
\cite[Theorem\ 1.2]{LP08} (in the context where $\bsA$ is self-adjoint). 

\begin{theorem} \lb{t4.10}
Assume Hypothesis \ref{h4.1}. Then $\bsD_\bsA^{}$ Fredholm if and only if 
$0 \in \rho(A_+)\cap\rho(A_-)$. More precisely, if $0 \in \sigma(A_+)$ $($or $0 \in \sigma(A_-)$$)$, 
then $\bsD_\bsA^{}$ is neither upper-semi-Fredholm, nor lower-semi-Fredholm.
\end{theorem}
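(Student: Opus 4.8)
The plan is to exploit the fact that the Fredholm properties of $\bsD_\bsA^{}$ are dictated entirely by the asymptotic fibres $A_\pm$. I would prove the two implications separately: ``$0 \in \sigma(A_+) \cup \sigma(A_-)$'' forces non-semi-Fredholmness via a direct singular-sequence construction, while ``$0 \in \rho(A_+) \cap \rho(A_-)$'' yields the Fredholm property after reducing $\bsD_\bsA^{}$ to a step operator and exploiting the exponential dichotomy of the constant-coefficient operators $d/dt + A_\pm$ on half-lines, essentially as in \cite[Props.\ 7.11, 7.12, Cor.\ 7.13]{LT05} and \cite[Thm.\ 1.2]{LP08}.

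\textbf{Failure when $0\in\sigma(A_\pm)$.} Suppose $0 \in \sigma(A_+)$ (the case $0 \in \sigma(A_-)$ is symmetric, replacing $+\infty$ by $-\infty$ below). Self-adjointness of $A_+$ gives unit vectors $h_n \in \cH$ with $\|A_+ h_n\|_\cH \to 0$. I would fix $\rho \in C_c^\infty(\bbR)$ with $\|\rho\|_{L^2(\bbR)} = 1$, set $\rho_n(t) = n^{-1/2}\rho(t/n)$ (so $\|\rho_n\|_{L^2(\bbR)} = 1$ and $\|\rho_n'\|_{L^2(\bbR)} \to 0$), choose $t_n \to +\infty$ (e.g.\ $t_n = n^2$) so that $\supp(\rho_n(\cdot - t_n))$ escapes to $+\infty$, and put $f_n(t) = \rho_n(t - t_n)h_n \in W^{1,2}(\bbR;\cH)$. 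Then $\|f_n\|_{L^2(\bbR;\cH)} = 1$, $f_n \rightharpoonup 0$, and
\begin{equation*}
\big\|\bsD_\bsA^{} f_n\big\|_{L^2(\bbR;\cH)} \le \|\rho_n'\|_{L^2(\bbR)} + \|A_+ h_n\|_\cH + \sup_{t \in \supp f_n}\|A(t) - A_+\|_{\cB(\cH)} \xrightarrow[n\to\infty]{} 0,
\end{equation*}
the last term vanishing by $\nlim_{t\to+\infty} A(t) = A_+$ (Theorem \ref{t4.4}). A closed operator possessing such a normalized weakly null almost-kernel is not upper-semi-Fredholm; the same construction applied to $\bsD_\bsA^* = - d/dt + \bsA$ shows $\bsD_\bsA^*$ is not upper-semi-Fredholm, i.e.\ $\bsD_\bsA^{}$ is not lower-semi-Fredholm. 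This settles the ``more precise'' assertion, hence the ``only if'' part.

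\textbf{The Fredholm property when $0\in\rho(A_\pm)$.} Here I would first replace $\bsA$ by the step $\bsA_{\mathrm{st}}$, multiplication by $A_{\mathrm{st}}(t) = A_- \chi_{(-\infty,0)}(t) + A_+ \chi_{(0,\infty)}(t)$. Then $\bsC := \bsA - \bsA_{\mathrm{st}}$ is multiplication by $C(t) = B(t) - \chi_{(0,\infty)}(t) B_+ \in \cB_1(\cH)$, with $\|C(\cdot)\|_{\cB_1(\cH)} \in L^\infty(\bbR)$ and $\|C(t)\|_{\cB_1(\cH)} \to 0$ as $|t| \to \infty$ by \eqref{2.13jk}--\eqref{2.7a}. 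Using the explicit kernel \eqref{B.6} of $(\bsH_0 + \bsI)^{-1/2}$ and \eqref{A.82a}, each truncation $\chi_{[-n,n]}\bsC (\bsH_0 + \bsI)^{-1/2}$ is Hilbert--Schmidt while $\big\|(\bsI - \chi_{[-n,n]})\bsC (\bsH_0 + \bsI)^{-1/2}\big\|_{\cB(L^2(\bbR;\cH))} \le \sup_{|t| > n}\|C(t)\|_{\cB(\cH)} \to 0$; hence $\bsC (\bsH_0 + \bsI)^{-1/2}$ is compact, so $\bsC$ is $\bsD_{\bsA_{\mathrm{st}}}$-compact (note $\dom\bsD_{\bsA_{\mathrm{st}}} = W^{1,2}(\bbR;\cH)$ carries a graph norm equivalent to $\|(\bsH_0 + \bsI)^{1/2}\,\cdot\,\|_{L^2(\bbR;\cH)}$), and $\bsD_\bsA^{} = \bsD_{\bsA_{\mathrm{st}}} + \bsC$ is (semi-)Fredholm precisely when $\bsD_{\bsA_{\mathrm{st}}}$ is. To analyze $\bsD_{\bsA_{\mathrm{st}}}$ I would solve $\bsD_{\bsA_{\mathrm{st}}} f = g$ on the two half-lines: since $0 \in \rho(A_\mp)$, $d/dt + A_\mp$ possesses a bounded Green's operator $\bsG_\mp$ associated with the spectral splitting $\cH = \ran P_{(-\infty,0)}(A_\mp) \oplus \ran P_{(0,\infty)}(A_\mp)$ (the exponential dichotomy), and the general $L^2$-solution on $(-\infty,0)$ (resp.\ $(0,\infty)$) is $\bsG_- g + e^{-\,\cdot\,A_-} c_-$ with $c_- \in \ran P_{(-\infty,0)}(A_-)$ (resp.\ $\bsG_+ g + e^{-\,\cdot\,A_+} c_+$ with $c_+ \in \ran P_{(0,\infty)}(A_+)$). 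Matching at $t = 0$, which is forced by $f \in W^{1,2}(\bbR;\cH)$, identifies $\ker\bsD_{\bsA_{\mathrm{st}}} \cong \ran P_{(-\infty,0)}(A_-) \cap \ran P_{(0,\infty)}(A_+)$ and $\ran\bsD_{\bsA_{\mathrm{st}}} = \{g : (\bsG_- g)(0) - (\bsG_+ g)(0) \in \ran P_{(-\infty,0)}(A_-) + \ran P_{(0,\infty)}(A_+)\}$, with a bounded solution operator on that set once $\ran P_{(-\infty,0)}(A_-) + \ran P_{(0,\infty)}(A_+)$ is closed of finite codimension. Since $B_+ = A_+ - A_- \in \cB_1(\cH)$ and $0 \in \rho(A_\pm)$, one has $P_{(-\infty,0)}(A_+) - P_{(-\infty,0)}(A_-) \in \cB_1(\cH) \subset \cB_\infty(\cH)$, so $\ran P_{(-\infty,0)}(A_-)$ and $\ran P_{(0,\infty)}(A_+)$ form a Fredholm pair of subspaces; hence $\ker\bsD_{\bsA_{\mathrm{st}}}$ is finite-dimensional and $\ran\bsD_{\bsA_{\mathrm{st}}}$ is closed of finite codimension, so $\bsD_{\bsA_{\mathrm{st}}}$, and therefore $\bsD_\bsA^{}$, is Fredholm.

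\textbf{Main obstacle.} The ``only if'' direction and the passage from $\bsD_\bsA^{}$ to $\bsD_{\bsA_{\mathrm{st}}}$ are routine once the singular sequences and the trace-ideal estimates (relying on \eqref{B.6}, \eqref{A.82a}) are in place. The real work is the Fredholm analysis of the step operator: constructing the half-line Green's operators $\bsG_\pm$ and bounding their norms by $\dist(0,\sigma(A_\pm))^{-1}$, and converting the matching condition at $t = 0$ into a genuine Fredholm statement. It is precisely at this last point that Hypothesis \ref{h4.1}$(iii)$ is indispensable: it makes the difference of the asymptotic negative spectral projections compact, hence the pair of stable and unstable subspaces at $\pm\infty$ a Fredholm pair; without a compactness hypothesis on $A_+ - A_-$ the step operator can fail to be Fredholm even when $0 \in \rho(A_\pm)$.
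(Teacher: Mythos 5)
Your proof of the ``only if'' direction coincides with the paper's: you build the same kind of singular sequence (normalized bumps carried by almost-null vectors for $A_{\pm}$, with supports sliding to $\pm\infty$ so the sequence is weakly null), invoke the same criterion---a weakly null normalized sequence in the almost-kernel rules out upper-semi-Fredholmness---and obtain the lower-semi-Fredholm failure by passing to $\bsD_\bsA^*$, exactly as in the paper. For the ``if'' direction the paper simply cites \cite{GLMST11} rather than reproving it; your sketch via the step operator $\bsD_{\bsA_{\mathrm{st}}}$, the exponential dichotomy of $d/dt + A_\pm$, and the Fredholm-pair structure of $\ran P_{(-\infty,0)}(A_-)$ and $\ran P_{(0,\infty)}(A_+)$ (using $A_+ - A_- \in \cB_1(\cH)$ to get compactness of the projection difference) is a sound and standard route to that cited fact, so the overall argument is correct and, for the portion this paper actually proves, essentially identical to it.
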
 
\begin{proof}
That $\bsD_\bsA^{}$ is closed and densely defined in $L^2(\bbR; \cH)$, as well as the 
sufficiency of the condition $0 \in \rho(A_+)\cap\rho(A_-)$ for the Fredholm property of 
$\bsD_\bsA^{}$ have been proven in \cite{GLMST11}. It remains to show that the condition 
$0 \in \rho(A_+)\cap\rho(A_-)$ is also necessary for the Fredholm property of $\bsD_\bsA^{}$. 

Arguing by contradiction, let $0 \in \sigma (A_+)$. Then there exists a sequence 
\begin{equation} 
\{x_n\}_{n \in \bbN} \subset \cH, \text{ with $\|x_n\|_{\cH} = 1$, $n \in \bbN$, such that 
$\|A_+ x_n\|_{\cH} \underset{n\to\infty}{\longrightarrow} 0$.} 
\end{equation}
For each $n\in\bbN$, consider a smooth function $\chi_n:\bbR\to[0,1]$
such that
\begin{equation}
\chi_n(t)= \begin{cases} 1, & |t|\leq n, \\ 0, & |t|\geq n+1, \end{cases}
\quad  |\chi^{\prime}_n(t)|\leq 2, \; t\in\bbR.
\end{equation}
Next, we define $\{{\bf x}_n\}_{n \in \bbN} \subset \dom(\bsD_{\bsA})$ by 
\begin{equation}
{\bf x}_n(t)= \|\chi_n\|^{-1}_{L^2(\bbR; dt)} \chi_n (t-2n-2) x_n, \quad t\in \bbR \; n \in \bbN,
\end{equation}
and note that $\|{\bf x}_n \|_{L^2(\bbR;\cH)}=1$, $n \in \bbN$, 
and
\begin{equation}\label{4.57}
{\bf x}_n \underset{n\to\infty}{\longrightarrow} 0 \, \text{ weakly,} 
\end{equation}
since for each $n \in \bbN$ the support of ${\bf x}_n$ is contained in $[n+1,3n+3]$. 
Moreover,
\begin{align}
& \|\bsD_{\bsA}^{}{\bf x}_n\|^2_{L^2(\bbR;\cH)}   \no \\
& \quad = \|\chi_n\|^{-2}_{L^2(\bbR; dt)} \left \|\chi^{\prime}_n(\cdot-2n-2)x_n 
+ \chi_n(\cdot-2n-2) A(\,\cdot\,)x_n \right \|^2_{L^2(\bbR;\cH)}    \no \\
& \quad \leq 2\|\chi_n\|^{-2}_{L^2(\bbR; dt)} \|\chi^{\prime}_n(\cdot-2n-2)x_n \|^2_{L^2(\bbR;\cH)}    
\no \\
& \qquad + 2\|\chi_n\|^{-2}_{L^2(\bbR; dt)} 
\left \| \chi_n(\cdot-2n-2) A(\,\cdot\,)x_n \right \|^2_{L^2(\bbR;\cH)}  \no \\
& \quad \leq 2\|\chi_n\|^{-2}_{L^2(\bbR; dt)} \|\chi^{\prime}_n \|^2_{L^2(\bbR; dt)} 
+ 2 \sup_{t \in [n+1, 3n+3]} \|A(t)x_n\|_{\cH}^2     \no \\
& \quad \leq  2\|\chi_n\|^{-2}_{L^2(\bbR; dt)} \|\chi^{\prime}_n \|^2_{L^2(\bbR; dt)} 
+ 4 \|A_+ x_n\|_{\cH}^2 + 4\sup_{t \in [n+1, 3n+3]} \|B(t)x_n\|_{\cH}^2   \no \\
& \quad \leq  2\|\chi_n\|^{-2}_{L^2(\bbR; dt)} \|\chi^{\prime}_n \|^2_{L^2(\bbR; dt)} 
+ 4 \|A_+ x_n\|_{\cH}^2 + 4 \sup_{t \in [n+1, 3n+3]}\|B(t)\|_{\cB(\cH)}^2.
\end{align}
Since 
\begin{equation}
\|\chi_n\|^{-2}_{L^2(\bbR; dt)} \|\chi^{\prime}_n \|^2_{L^2(\bbR; dt)} 
\underset{n \to \infty}{\longrightarrow} 0,
\end{equation}
by construction,
and
\begin{equation}
\sup_{t \in [n+1, 3n+3]}\|B(t)\|_{\cB(\cH)} \underset{n \to \infty}{\longrightarrow} 0, 
\end{equation}
by hypothesis (cf.\ \eqref{2.7a}), one obtains 
\begin{equation}
\|\bsD_{\bsA}^{}{\bf x}_n\|^2_{L^2(\bbR;\cH)} \underset{n \to \infty}{\longrightarrow} 0.  \lb{4.61} 
\end{equation}

At this point we mention the following sufficient condition for $T$ to be an upper-semi-Fredholm 
operator, to be found in \cite{Wo59} (where properties of $T^*T$ are exploited). 
For convenience of the reader we briefly state it together with its short proof:  
\begin{align}
& \text{Let $T$ be densely defined and closed in $\cK$. Then $T$ is {\bf not} upper-semi-Fredholm} 
\no \\  
& \quad \text{if there exists a sequence $\{y_n\}_{n\in\bbN} \subset \cK$, 
$\|y_n\|_{\cK} = 1$, $n \in\bbN$, such that}   \no \\
& \quad \text{$y_n \underset{n\to\infty}{\longrightarrow} 0$ weakly and 
$\|T y_n\|_{\cK} \underset{n\to\infty}{\longrightarrow} 0$.}    \lb{4.62}
\end{align}
For the proof of \eqref{4.62} it suffices to note that if $T$ is upper semi-Fredholm, then there exists 
$S \in \cB(\cK)$ and $K \in \cB_{\infty}(\cK)$ such that  $ST = I_{\cK} + K$ on $\dom(T)$ 
(cf.\ \cite[Ch.\ 7]{Sc02}). Then one has $\|STy_n \| \underset{n \to \infty}{\longrightarrow} 0$, and,  
 in view of compactness of $K$, 
$\|ST y_n\|_{\cK} = \|(I_{\cK} + K) y_n\|_{\cK} \underset{n \to \infty}{\longrightarrow} 1$, 
a contradiction. 

In view of \eqref{4.57} and \eqref{4.61}, the latter property implies that $\bsD_{\bsA}^{}$ is not 
an upper-semi-Fredholm, let alone a Fredholm operator. 
Similarly, passing to adjoint operators, $\bsD_{\bsA}^{}$ is not a lower-semi-Fredholm 
operator (using the fact that the adjoint of a closed, densely defined, upper-semi-Fredholm 
operator is lower-semi-Fredholm).

Finally, considering
$\{{\bf \widetilde x}_n\}_{n \in \bbN} \subset \dom(\bsD_{\bsA}^{})$ defined by
\begin{equation}
{\bf \widetilde x}_n(t)= \|\chi_n\|^{-1}_{L^2(\bbR; dt)} \chi_n (t+2n+2) x_n, \quad n \in \bbN,
\end{equation}
and noting $\|{\bf \widetilde x}_n \|_{L^2(\bbR;\cH)}=1$, $n \in \bbN$, and
\begin{equation}
{\bf \widetilde x}_n \underset{n \to \infty}{\longrightarrow} 0 \, \text{ weakly,} 
\end{equation}
entirely analogous arguments show that $0 \in \sigma (A_-)$ implies once more that 
$\bsD_{\bsA}^{}$ is not upper-semi-Fredholm (and again not lower-semi-Fredholm 
by passing to adjoints).
\end{proof} 

To the best of our knowledge, the ``only if'' characterization of the upper/lower Fredholm 
property of $\bsD_\bsA^{}$ under Hypothesis \ref{h4.1} is new. In this context we also refer to 
\cite[Propositions 7.11, 7.12,  Corollary 7.13 ]{LT05}, \cite[Theorem\ 1.2]{LP08}, and 
\cite{SS08} for closely related results (which do not seem to imply Theorem \ref{t4.10}). 
Thus, if $0$ belongs to either of 
$\sigma(A_+)$ or $\sigma(A_-)$ then $\bsD_\bsA^{}$ ceases to be Fredholm and hence 
the spectral assumptions in Theorem \ref{t4.11} below do not restrict the generality of the 
result. For spectral results concerning $\bsD_\bsA^{}$, we refer, for instance, to 
\cite{Ba91}, \cite{CGPST13}, and \cite[Section~2.2.2]{CL99}, and references cited therein.
The literature concerned with model operators of the type $\bsD_\bsA^{}$ is so enormous that 
no comprehensive account can possibly be provided here. Instead, we refer to the monograph 
\cite{CL99} and the extensive list of references cited therein. 

We conclude this section with the following result on the Fredholm index of $\bsD_\bsA^{}$.

\begin{theorem} [\cite{GLMST11}, \cite{Pu08}] \lb{t4.11}
Assume Hypothesis \ref{h4.1} and suppose that $0 \in \rho(A_+)\cap\rho(A_-)$. Then 
$\bsD_\bsA^{}$ is a Fredholm operator in $L^2(\bbR; \cH)$ and the following equalities hold:
\begin{align}
\ind (\bsD_\bsA^{}) & = \xi(0_+; \bsH_2, \bsH_1)    \lb{4.64} \\  
& = \xi(0;A_+,A_-)     \lb{4.65} \\ 
& = \pi^{-1} \lim_{\varepsilon \downarrow 0}\Im\big(\ln\big({\det}_{\cH}
\big((A_+ - i \varepsilon I_{\cH})(A_- - i \varepsilon I_{\cH})^{-1}\big)\big)\big),       \lb{4.66}
\end{align}
with a choice of branch of $\ln({\det}_{\cH}(\cdot))$ on $\bbC_+$ analogous to that in Theorem \ref{t4.8}. 
\end{theorem}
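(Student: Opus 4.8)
The plan is to derive \eqref{4.64}, \eqref{4.65}, \eqref{4.66} by assembling results already established, the only genuinely new ingredient being a residue computation at $z=0$. First, by Theorem \ref{t4.10} the hypothesis $0\in\rho(A_+)\cap\rho(A_-)$ guarantees that $\bsD_\bsA^{}$ is Fredholm in $L^2(\bbR;\cH)$; since $\ker(\bsD_\bsA^{})=\ker(\bsD_\bsA^*\bsD_\bsA^{})=\ker(\bsH_1)$ and $\ker(\bsD_\bsA^*)=\ker(\bsD_\bsA^{}\bsD_\bsA^*)=\ker(\bsH_2)$, the index equals $\ind(\bsD_\bsA^{})=\dim\ker(\bsH_1)-\dim\ker(\bsH_2)$. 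The Fredholm property also forces $0\notin\sigma_{\rm ess}(\bsH_1)\cup\sigma_{\rm ess}(\bsH_2)$, so for some $\delta>0$ one has $\sigma(\bsH_j)\cap\big((-\delta,0)\cup(0,\delta)\big)=\emptyset$ ($j=1,2$), using $\bsH_j\ge 0$ to rule out the negative part; hence, by the normalization \eqref{2.27}, $\xi(\cdot\,;\bsH_2,\bsH_1)$ vanishes on $(-\delta,0)$ and is constant, equal to $\xi(0_+;\bsH_2,\bsH_1)$, on $(0,\delta)$.

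For \eqref{4.64} I would compare the two sides of the trace formula \eqref{2.28} near $z=0$. Since $0$ is an isolated eigenvalue of finite multiplicity of each $\bsH_j$ (or lies in its resolvent set), the resolvent difference has at worst a simple pole at $z=0$ with residue $P_1-P_2$, where $P_j$ is the finite-rank spectral projection of $\bsH_j$ onto $\ker(\bsH_j)$, and the regular part is holomorphic in trace norm near $0$; taking traces, $\lim_{z\to0}z\,\tr_{L^2(\bbR;\cH)}\big((\bsH_2-z\bsI)^{-1}-(\bsH_1-z\bsI)^{-1}\big)=\dim\ker(\bsH_1)-\dim\ker(\bsH_2)=\ind(\bsD_\bsA^{})$. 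On the right side of \eqref{2.28}, split $\int_{[0,\infty)}$ at $\delta$: the contribution of $[\delta,\infty)$ stays bounded as $z\to0$ (by the integrability \eqref{2.30}), while on $(0,\delta)$, where $\xi(\cdot\,;\bsH_2,\bsH_1)\equiv\xi(0_+;\bsH_2,\bsH_1)$, an elementary computation gives $-\int_0^\delta\xi(0_+;\bsH_2,\bsH_1)(\lambda-z)^{-2}\,d\lambda=\xi(0_+;\bsH_2,\bsH_1)\,z^{-1}+\Oh(1)$. Multiplying \eqref{2.28} by $z$ and letting $z\to0$ thus yields $\ind(\bsD_\bsA^{})=\xi(0_+;\bsH_2,\bsH_1)$, i.e., \eqref{4.64}.

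Next, for \eqref{4.65} I would let $\lambda\downarrow0$ in Pushnitski's identity \eqref{2.37a} of Theorem \ref{t4.9}. The substitution $\nu=\lambda^{1/2}\sin\theta$ turns its right-hand side into $\pi^{-1}\int_{-\pi/2}^{\pi/2}\xi\big(\lambda^{1/2}\sin\theta;A_+,A_-\big)\,d\theta$; since $0\in\rho(A_+)\cap\rho(A_-)$, the spectral shift function $\xi(\cdot\,;A_+,A_-)$ is locally constant, hence continuous, near $0$ with value $\xi(0;A_+,A_-)$, so dominated convergence gives the limit $\xi(0;A_+,A_-)$. Combined with the constancy of $\xi(\cdot\,;\bsH_2,\bsH_1)$ on $(0,\delta)$ from the first paragraph, this gives $\xi(0_+;\bsH_2,\bsH_1)=\xi(0;A_+,A_-)$, which is \eqref{4.65}. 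Finally, \eqref{4.66} is precisely the conclusion \eqref{2.35} of Theorem \ref{t4.8} (applicable because $0\in\rho(A_-)\cap\rho(A_+)$), upon recalling that $D_{A_+/A_-}(z)={\det}_\cH\big((A_+-zI_{\cH})(A_--zI_{\cH})^{-1}\big)$, well defined since $[A_+-A_-]\in\cB_1(\cH)$ by \eqref{2.5a}.

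The main obstacle is the residue argument of the second paragraph: one must confirm that $0$ is isolated in $\sigma(\bsH_1)\cup\sigma(\bsH_2)$ and that the regular part of the resolvent difference is trace-norm holomorphic at $z=0$, and --- most delicately --- keep track of signs so that the normalization \eqref{2.27}--\eqref{2.28} of $\xi(\cdot\,;\bsH_2,\bsH_1)$ produces $+\xi(0_+;\bsH_2,\bsH_1)$ and not its negative. The passage to the limit in \eqref{4.65} is then routine once the continuity of $\xi(\cdot\,;A_+,A_-)$ at $0$ has been invoked.
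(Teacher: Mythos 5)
The paper does not actually prove Theorem \ref{t4.11}; it is stated as a citation from \cite{GLMST11} and \cite{Pu08}, so the comparison here is against the argument one would expect those sources to give. Your plan reconstructs exactly that argument and is correct: the Fredholm property and the supersymmetric identifications $\ker(\bsD_\bsA^{})=\ker(\bsH_1)$, $\ker(\bsD_\bsA^*)=\ker(\bsH_2)$ reduce $\ind(\bsD_\bsA^{})$ to $\dim\ker(\bsH_1)-\dim\ker(\bsH_2)$; the residue argument at $z=0$ in \eqref{2.28} gives \eqref{4.64}; letting $\lambda\downarrow 0$ in \eqref{2.37a} gives \eqref{4.65} (the substitution $\nu=\lambda^{1/2}\sin\theta$ is the right way to see the limit, since $\xi(\cdot\,;A_+,A_-)$ is locally constant near $0$ under $0\in\rho(A_+)\cap\rho(A_-)$); and \eqref{4.66} is just a restatement of \eqref{2.35}. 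Your sign bookkeeping is also correct: $(\bsH_j-z\bsI)^{-1}=-z^{-1}P_j+R_j(z)$ gives residue $P_1-P_2$, while $-\int_0^\delta\xi(0_+)(\lambda-z)^{-2}\,d\lambda=\xi(0_+)z^{-1}+\Oh(1)$, so both sides of \eqref{2.28} produce $+\xi(0_+;\bsH_2,\bsH_1)$.

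The one point you flag as an ``obstacle'' --- trace-norm holomorphy of the regular part $R(z)=R_2(z)-R_1(z)$ at $z=0$ --- is in fact automatic, so it is not a genuine gap, just an unfilled detail. Here is the short argument: $z\mapsto T(z):=(\bsH_2-z\bsI)^{-1}-(\bsH_1-z\bsI)^{-1}$ is $\cB_1$-holomorphic on $\rho(\bsH_1)\cap\rho(\bsH_2)$ (by the Neumann series and the telescoping identity for $A^n-B^n$ with $A-B\in\cB_1$), hence $F(z):=zR(z)=zT(z)-(P_1-P_2)$ is $\cB_1$-holomorphic on $0<|z|<\delta$. Its $\cB_1$-valued Laurent coefficients $a_n=\frac{1}{2\pi i}\oint_{|z|=\rho}F(z)z^{-n-1}\,dz$ satisfy $\|a_n\|_{\cB(L^2)}\le\sup_{|z|=\rho}\|F(z)\|_{\cB(L^2)}\,\rho^{-n}$; since $F(z)\to 0$ in operator norm as $z\to 0$, one gets $a_n=0$ for all $n\le 0$, so $F$ extends $\cB_1$-holomorphically with $F(0)=0$, and hence $z\,\tr_{L^2(\bbR;\cH)}(R(z))\to 0$. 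This legitimizes the residue computation you set up. An equally clean alternative (which sidesteps the Laurent argument entirely) is to apply Krein's trace formula to an $f\in C_0^\infty((-\delta,\delta))$: then $f(\bsH_j)=f(0)P_j$, so $\tr_{L^2(\bbR;\cH)}(f(\bsH_2)-f(\bsH_1))=-f(0)\,\ind(\bsD_\bsA^{})$, while $\int_\bbR f'\xi=-f(0)\,\xi(0_+;\bsH_2,\bsH_1)$, yielding \eqref{4.64} directly.
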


\section{A New Proof of the Trace Relation \eqref{2.19}}    \lb{s5}

In our final section we now provide the promised new proof of the fundamental trace relation 
\eqref{2.19}, employing basic notions in scattering theory and the theory of 
Fredholm determinants. Along the way we also derive the new result \eqref{5.54}, which  
provides an explicit computation of the symmetrized perturbation determinant 
$\wti D_{\bsH_2/\bsH_1} (z)$ in \eqref{2.31} in terms of a particular determinant in $\cH$. 
Together with the new proof of the trace relation \eqref{2.19}, the determinant formula \eqref{5.54} represents the principal new result of this section.  

Our proof differs from the one by Pushnitski \cite{Pu08} in a variety of ways: Foremost, we 
explicitly exploit the factorizations $\bsH_1=\bsD_\bsA^* \bsD_\bsA^{}$ and 
$\bsH_2=\bsD_\bsA^{} \bsD_\bsA^*$ (also dubbed ``supersymmetry''), in addition, we apply 
the reduction theory for determinants outlined in Theorem \ref{tA.13} which then permits us to 
go a step further and also derive the new formula \eqref{5.54}. 

Throughout this section we assume Hypothesis \ref{h4.1}. In addition, we will temporarily assume that
\begin{equation}
\supp(\|A'(\cdot)\|_{\cB(\cH}) \, \text{ is compact.}     \lb{3.0}
\end{equation}

\begin{proof}[Proof of The Trace Relation \eqref{2.19}] 
Introducing 
\begin{align}
\begin{split} 
k_{\pm}(z) = \big[z I_{\cH} - A_{\pm}^2\big]^{1/2} = \int_{\sigma(A_{\pm})} dE_{A_{\pm}}(\alpha) \, [z - \alpha^2]^{1/2},& \\ 
\Im \big([z - \alpha^2]^{1/2}\big) \geq 0, \; z \in \bbC, \; \alpha \in \bbR,&    \lb{3.1}
\end{split}  
\end{align}
with $\{E_{A_\pm}(\alpha)\}_{\alpha \in \bbR}$ denoting the strongly right-continuous family of 
spectral projections for $A_{\pm}$, the $\cB(\cH)$-valued Jost solutions associated with 
$\bsH_j$, $j=1,2$, are given by
\begin{align}
&f_{j,\pm} (z,t) = e^{\pm i k_{\pm}(z) t} I_{\cH}       \no \\ 
& \quad - \int_t^{\pm \infty} dt' \, k_{\pm}(z)^{-1} \sin[k_{\pm}(z) (t - t')] \big[A(t')^2 - A_{\pm}^2
+ (-1)^j A'(t')\big] f_{j,\pm} (z,t'),    \no \\
& \hspace*{7.7cm} j = 1,2, \; z \in \bbC, \; t \in \bbR.   \lb{3.2}
\end{align}
In this context one notes that 
\begin{equation}
[\pm i k_{\pm} (z)]^* = \pm i k_{\pm} (z), \quad z < 0.    \lb{3.3}
\end{equation}

Then, for $z \in \bbC$, $f_{j,\pm}(z,\cdot)$ are $\cB(\cH)$-valued solutions of 
\begin{align}
\bsH_1 f_{1,\pm} = \bsD_\bsA^* \bsD_\bsA^{} f_{1,\pm} = z f_{1,\pm},    \lb{3.4} \\
\bsH_2 f_{2,\pm} = \bsD_\bsA^{} \bsD_\bsA^* f_{2,\pm} = z f_{2,\pm},    \lb{3.5} 
\end{align}
in the sense described in Corollary \ref{c2.5}. Since asymptotically, as $t \to \pm \infty$, 
\begin{equation}
\big\|f_{j,\pm} (z,t) - e^{\pm i k_{\pm}(z) t} I_{\cH} \big\|_{\cB(\cH)} \underset{t \to \pm \infty}{=}  \oh(1), \quad z \in \bbC,    \lb{3.6} 
\end{equation}
and hence 
\begin{equation}
\|(\bsD_\bsA^{} f_{1,\pm})(z,t) - [\pm i k_{\pm} (z) + A_{\pm}] e^{\pm i k_{\pm}(z) t} I_{\cH} \|_{\cB(\cH)} 
\underset{t \to \pm \infty}{=} \oh(1),  \quad z \in \bbC,    \lb{3.7} 
\end{equation}
the supersymmetric structure of $\bsH_j$, $j=1,2$, and \eqref{3.4} and \eqref{3.5} imply
\begin{equation}
\bsD_\bsA^{} \bsD_\bsA^* (\bsD_\bsA^{} f_{1,\pm}) = z (\bsD_\bsA^{} f_{1,\pm}),    \quad z \in \bbC, 
\end{equation}
and thus,
\begin{equation}
f_{2,\pm} (z,t) = (\bsD_\bsA^{} f_{1,\pm})(z,t) [\pm i k_{\pm} (z) + A_{\pm}]^{-1},   \quad 
 z \in \bbC, \; t \in \bbR.    \lb{3.9}
\end{equation}
(Actually, the temporary hypothesis \eqref{3.0} permits one to replace $\oh(1)$ on the right-hand 
sides of \eqref{3.6} and \eqref{3.7} by $0$, but we don't need to use this fact.) 

Next, we recall that the Wronskian of $\cB(\cH)$-valued functions $\phi$ and $\psi$ (which together 
with their derivatives are assumed to be continuous with respect to $t \in \bbR$) is given by 
$W(\phi, \psi)(t) = \phi(t) \psi'(t) - \phi'(t) \psi(t)$, $t \in \bbR$, 
where $\prime$ denotes differentiation with respect to $t \in \bbR$ (as usual throughout this 
paper). Then a somewhat lengthy, but straightforward computation, employing \eqref{3.3}--\eqref{3.5} 
(and hence the crucial factorization of $\bsH_j$, $j=1,2$, into products of $\bsD_\bsA^{}$ and 
$\bsD_\bsA^*$) yields
\begin{align}
\begin{split} 
W(\bsD_\bsA^{} f_{1,-}(\ol z,\cdot)^*, \bsD_\bsA^{} f_{1,+}(z,\cdot))(t) 
= z W(f_{1,-}(\ol z,\cdot)^*, f_{1,+}(z,\cdot))(t),&    \\  
z \in \bbC, \; t \in \bbR.&     \lb{3.12} 
\end{split}
\end{align}
Clearly, the identity \eqref{3.12} lies at the heart of the supersymmetric approach employed. 

In this context we recall that \eqref{3.4}, \eqref{3.5} explicitly read,
\begin{align}
- f_{j,\pm}''(z,t) + V_j(t) f_{j,\pm} (z,t) = z f_{j,\pm} (z,t), \quad j=1,2, \; 
z \in \bbC, \; t \in \bbR, 
\end{align} 
and hence,
\begin{align}
- f_{j,\pm}''(\ol z,t)^* + f_{j,\pm} (\ol z,t)^* V_j(t) = z f_{j,\pm} (\ol z,t)^*, \quad j=1,2, \; 
z \in \bbC, \; t \in \bbR, 
\end{align} 
where
\begin{equation}
V_j(t) = A(t)^2 + (-1)^j A'(t), \quad j=1,2, \; t \in \bbR. 
\end{equation}
In addition, one recalls that if $\bsH_j y_{j,k}(z,t) = z y_{j,k}(z,t)$ for $j, k \in \{1,2\}$, then the 
Wronskian 
\begin{equation}
W(y_{j,1}(\ol z,\cdot)^*, y_{j,2}(z,\cdot)), \quad j = 1,2, 
\end{equation}
is independent of $t \in \bbR$. 

The fact that $V_2(t) - V_1(t) = 2 A'(t)$ for a.e.\ $t\in\bbR$, yields the following Volterra-type 
integral equations relating $f_{2,\pm} (z,\cdot)$ and $f_{1,\pm} (z,\cdot)$,
\begin{equation}
f_{2\pm} (z,t) = f_{1,\pm} (z,t) - \int_x^{\pm \infty} dt' \, H_1(z,t,t') [2 A'(t)] f_{2,\pm}(z,t'), \quad 
z \in \rho(\bsH_1), \; t \in \bbR,      \lb{3.19}
\end{equation}
where
\begin{align}
H_1(z,t,t') & = 
f_{1,+}(z,t) W(f_{1,-}(\ol z,\cdot)^*, f_{1,+}(z,\cdot))^{-1} f_{1,-}(\ol z,t')^*      \no \\
& \quad + f_{1,-}(z,t) W(f_{1,+}(\ol z,\cdot)^*, f_{1,-}(z,\cdot))^{-1} f_{1,+}(\ol z,t')^*,    \lb{3.20} \\
& \hspace*{4.78cm}   z \in \rho(\bsH_1), \; t, t' \in \bbR,   \no 
\end{align}
denotes the $\cB(\cH)$-valued Volterra Green's function associated with $\bsH_1$ (to be 
distinguished from the $\cB(\cH)$-valued (Fredholm) Green's function of $\bsH_1$, the integral kernel of the 
resolvent of $\bsH_1$, cf.\ \eqref{3.37}).
 
While \eqref{3.19} is well-known in the special scalar case where $\dim(\cH) = 1$ 
(cf.\ \cite{BGGSS87}), 
it requires some effort to derive its present analog in the infinite-dimensional context.  Introducing the 
$\cB(\cH)$-valued half-line Weyl--Titchmarsh functions (cf.\ Theorem \ref{t2.15}, 
\eqref{2.59})
\begin{equation}
m_{1,\pm}(z,t) = f_{1,\pm}'(z,t) f_{1,\pm}(z,t)^{-1}, \quad  
z \in \bbC\backslash\bbR, \; t \in \bbR,   \lb{3.21}
\end{equation}
one has the (Nevanlinna--Herglotz function related, cf.\ \cite{GWZ12}) 
property (cf.\ \eqref{3.59A})
\begin{equation} 
m_{1,\pm} (\ol z,t)^* = m_{1,\pm} (z,t), \quad  z \in \bbC\backslash\bbR, \; 
t \in \bbR, 
\end{equation}
implying (cf.\ \eqref{2.63}, \eqref{2.65}) 
\begin{align}
G_1(z,t,t) & = - f_{1,+}(z,t) W(f_{1,-}(\ol z,\cdot)^*, f_{1,+}(z,\cdot))^{-1} f_{1,-}(\ol z,t)^*   \no \\
& = f_{1,-}(z,t) W(f_{1,+}(\ol z,\cdot)^*, f_{1,-}(z,\cdot))^{-1} f_{1,+}(\ol z,t)^*     \no \\
& = [m_{1,-}(z,t) - m_{1,+}(z,t) ]^{-1}, \quad z \in \bbC\backslash\bbR, \; t \in \bbR.     
\lb{3.23} 
\end{align}
This yields 
\begin{align}
& f_{1,+}'(z,t) W(f_{1,-}(\ol z,\cdot)^*, f_{1,+}(z,\cdot))^{-1} f_{1,-}(\ol z,t)^* 
= \big[I_{\cH} - m_{1,-}(z,t) m_{1,+}(z,t)^{-1}\big]^{-1},    \\
& f_{1,-}'(z,t) W(f_{1,+}(\ol z,\cdot)^*, f_{1,-}(z,\cdot))^{-1} f_{1,+}(\ol z,t)^* 
= \big[I_{\cH} - m_{1,+}(z,t) m_{1,-}(z,t)^{-1}\big]^{-1}, 
\end{align} 
and hence 
\begin{align}
\begin{split} 
& f_{1,+}'(z,t) W(f_{1,-}(\ol z,\cdot)^*, f_{1,+}(z,\cdot))^{-1} f_{1,-}(\ol z,t)^*     \\
& \quad + f_{1,-}'(z,t) W(f_{1,+}(\ol z,\cdot)^*, f_{1,-}(z,\cdot))^{-1} f_{1,+}(\ol z,t)^* = I,   
\end{split} 
\end{align}
and similarly,
\begin{align} 
& f_{1,+}'(z,t) W(f_{1,-}(\ol z,\cdot)^*, f_{1,+}(z,\cdot))^{-1} f_{1,-}' (\ol z,t)^*    \no \\
& \qquad + f_{1,-}'(z,t) W(f_{1,+}(\ol z,\cdot)^*, f_{1,-}(z,\cdot))^{-1} f_{1,+}'(\ol z,t)^*  \no \\
& \quad = \big[(m_{1,-}(\ol z,t)^*)^{-1} - m_{1,+}(z,t)^{-1}\big]^{-1} 
+ \big[(m_{1,+}(\ol z,t)^*)^{-1} - m_{1,-}(z,t)^{-1}\big]^{-1}    \no \\ 
& \quad = 0.       \lb{3.27}
\end{align}
At this point one can show that $f_{2,\pm} (z,\cdot)$ as defined by the Volterra-type integral equations \eqref{3.19} 
indeed satisfy $\bsH_2 f_{2,\pm} (z,\cdot) = z f_{2,\pm} (z,\cdot)$ 
in the sense described in Corollary \ref{c2.5}, upon differentiating \eqref{3.19} twice with respect to $t \in \bbR$, employing 
\eqref{3.21}--\eqref{3.27}. 

Next, we will use \eqref{3.19} to relate the Wronskians $W(f_{1,-}(\ol z,\cdot)^*, f_{1,+}(z,\cdot))$ and 
$W(f_{2,-}(\ol z,\cdot)^*, f_{2,+}(z,\cdot))$. For this purpose we note that \eqref{3.19} and \eqref{3.23} imply 
\begin{align}
& f_{2,\pm}(z,t) = f_{1,\pm}(z,t) - \int_x^{\pm \infty} dt' \, \big[f_{1,+}(z,t) W(f_{1,-}(\ol z,\cdot)^*, f_{1,+}(z,\cdot))^{-1} 
f_{1,-}(\ol z,t')^*    \no \\
& \quad + f_{1,-}(z,t) W(f_{1,+}(\ol z,\cdot)^*, f_{1,-}(z,\cdot))^{-1}  f_{1,+}(\ol z,t')^*\big] [2 A'(t')] f_{2,\pm}(z,t'),      \\ 
& f_{2,\pm}'(z,t) = f_{1,\pm}'(z,t) - \int_x^{\pm \infty} dt' \, \big[f_{1,+}'(z,t) W(f_{1,-}(\ol z,\cdot)^*, f_{1,+}(z,\cdot))^{-1} 
f_{1,-}(\ol z,t')^*    \no \\
& \quad + f_{1,-}'(z,t) W(f_{1,+}(\ol z,\cdot)^*, f_{1,-}(z,\cdot))^{-1}  f_{1,+}(\ol z,t')^*\big] [2 A'(t')] f_{2,\pm}(z,t'),      \\ 
& f_{2,\pm}(\ol z,t)^* = f_{1,\pm}(\ol z,t)^* + \int_x^{\pm \infty} dt' \, f_{2,\pm}(\ol z,t')^* [2 A'(t')] \big[f_{1,-} (z,t')     \no \\
& \hspace*{5.3cm} \times W(f_{1,+}(\ol z,\cdot)^*, f_{1,-}(z,\cdot))^{-1} f_{1,+} (\ol z,t)^*   \no \\
& \quad + f_{1,+} (z,t') W(f_{1,-}(\ol z,\cdot)^*, f_{1,+}(z,\cdot))^{-1} f_{1,-} (\ol z,t)^*\big],     \\
& f_{2,\pm}'(\ol z,t)^* = f_{1,\pm}'(\ol z,t)^* + \int_x^{\pm \infty} dt' \, f_{2,\pm}(\ol z,t')^* [2 A'(t')] \big[f_{1,-} (z,t')     \no \\
& \hspace*{5.3cm} \times W(f_{1,+}(\ol z,\cdot)^*, f_{1,-}(z,\cdot))^{-1} f_{1,+}' (\ol z,t)^*   \no \\
& \quad + f_{1,+} (z,t') W(f_{1,-}(\ol z,\cdot)^*, f_{1,+}(z,\cdot))^{-1} f_{1,-}' (\ol z,t)^*\big],      \\
& \hspace*{5.3cm} z \in \rho(\bsH_1), \; t \in \bbR.     \no   
\end{align}
Thus, one computes for $t>0$ sufficiently large and beyond the support of $A'(\cdot)$, 
\begin{align}
& W(f_{2,-}(\ol z,\cdot)^*, f_{2,+}(z,\cdot)) = f_{2,-} (\ol z,t)^* f_{2,+}' (z,t) - f_{2,-}' (\ol z,t)^* f_{2,+} (z,t) \no \\
& \quad \underset{t \uparrow \infty}{=} \bigg(I_{\cH} - \int_{\bbR} dt' \, f_{2,-} (\ol z,t')^* [2 A'(t')] f_{1,+}(z,t') 
W(f_{1,-}(\ol z,\cdot)^*, f_{1,+}(z,\cdot))^{-1} \bigg)    \no \\
& \qquad \quad \times \big[ f_{1,-} (\ol z,t)^* f_{1,+}' (z,t) - f_{1,-}' (\ol z,t)^* f_{1,+} (z,t)\big] 
+ \oh(1)   \no \\
& \quad = W(f_{1,-}(\ol z,\cdot)^*, f_{1,+}(z,\cdot)) - \int_{\bbR} dt' \, f_{2,-} (\ol z,t')^* [2 A'(t')] f_{1,+}(z,t').   \lb{3.32}
\end{align}
Taking adjoints in \eqref{3.32} and replacing $z$ by $\ol z$ then yields 
\begin{align}
\begin{split}
& W(f_{2,+}(\ol z,\cdot)^*, f_{2,-}(z,\cdot)) = W(f_{1,+}(\ol z,\cdot)^*, f_{1,-}(z,\cdot))     \\
& \quad + \int_{\bbR} dt \, f_{1,+}(\ol z, t)^* [2 A'(t)] f_{2,-}(z,t),  \quad z \in \bbC.  \lb{3.33} 
\end{split}
\end{align}
Similarly, assuming $- t>0$ sufficiently large and again beyond the support of $A'(\cdot)$, one computes 
\begin{align}
& W(f_{2,-}(\ol z,\cdot)^*, f_{2,+}(z,\cdot)) = f_{2,-} (\ol z,t)^* f_{2,+}' (z,t) - f_{2,-}' (\ol z,t)^* f_{2,+} (z,t) \no \\
& \quad \underset{t \downarrow - \infty}{=} \big[ f_{1,-} (\ol z,t)^* f_{1,+}' (z,t) - f_{1,-}' (\ol z,t)^* f_{1,+} (z,t)\big]   \no \\
& \qquad \quad \;\; \times \bigg(I_{\cH} - W(f_{1,-}(\ol z,\cdot)^*, f_{1,+}(z,\cdot))^{-1} \int_{\bbR} dt' \, 
f_{1,-} (\ol z,t')^* [2 A'(t')] f_{2,+} (z,t')\bigg)    \no \\
& \hspace*{1cm} + \oh(1),   
\end{align}  
and hence,
\begin{align}
\begin{split}
& W(f_{2,-}(\ol z,\cdot)^*, f_{2,+}(z,\cdot)) = W(f_{1,-}(\ol z,\cdot)^*, f_{1,+}(z,\cdot))    \\
& \quad - \int_{\bbR} dt \, f_{1,-} (\ol z,t)^* [2 A'(t)] f_{2,+} (z,t), \quad z \in\bbC.     \lb{3.35} 
\end{split} 
\end{align}

We note as an aside that the Volterra-type integral equations \eqref{3.19} together with the Wronski relations 
\eqref{3.33} and \eqref{3.35} imply the Fredholm-type integral equations
 \begin{align}
& f_{2\pm} (z,t) = f_{1,\pm} (z,t) W(f_{1,\mp}(\ol z,\cdot)^*, f_{1,\pm}(z,\cdot))^{-1} 
W(f_{2,\mp}(\ol z,\cdot)^*, f_{2,\pm}(z,\cdot))   \no \\
& \quad - \int_{\bbR} dt' \, G_1(z,t,t') [2 A'(t)] f_{2,\pm}(z,t'), \quad 
z \in \rho(\bsH_1), \; t \in \bbR,      \lb{3.36}
\end{align}
were (cf.\ again \eqref{2.63}, \eqref{2.65}) 
\begin{align}
G_1(z,t,t') & = (\bsH_1 -z \bsI)^{-1}(t,t')     \no \\ 
& = \begin{cases} 
- f_{1,+}(z,t) W(f_{1,-}(\ol z,\cdot)^*, f_{1,+}(z,\cdot))^{-1} f_{1,-}(\ol z,t')^*, & t \geq t', \\
\;\;\, f_{1,-}(z,t) W(f_{1,+}(\ol z,\cdot)^*, f_{1,-}(z,\cdot))^{-1} f_{1,+}(\ol z,t')^*, & t \leq t',
\end{cases}     \lb{3.37} \\
& \hspace*{6.26cm}   z \in \rho(\bsH_2), \; t, t' \in \bbR,   \no 
\end{align}
denotes the $\cB(\cH)$-valued Green's function of $\bsH_1$. 

Next, recalling the polar decomposition for $A'(\cdot)$,  
\begin{equation} 
A'(t) = U_{A'(t)} |A'(t)|,  \quad t\in\bbR,    \lb{3.38a} 
\end{equation}   
with $U_{A'(t)}$ a partial isometry in $\cH$, we introduce the Birman--Schwinger-type integral operator 
$\bsK(z) \in \cB_1\big(L^2(\bbR; \cH)\big)$ (cf.\ Theorem \ref{tB.2}, particularly, \eqref{B.25}), defined by 
\begin{equation}
\bsK(z) = -2 \bsU_{\bsA'} |\bsA'|^{1/2} (\bsH_1 - z \bsI)^{-1} |\bsA'|^{1/2}, \quad z \in \rho(\bsH_1), 
\lb{3.41} 
\end{equation}
with integral kernel $K(z,t,t')$ defined by 
\begin{align}
\begin{split}
K(z,t,t') &= - 2 U_{A'(t)} |A'(t)|^{1/2} G_1(z,t,t') |A'(t')|^{1/2}     \lb{3.42} \\
&= \begin{cases} f_1 (z,t) g_1 (z,t'), & t \geq t', \\
f_2 (z,t) g_2 (z,t'), & t \leq t', 
\end{cases}  \quad z \in \rho(\bsH_1), \; t, t' \in \bbR, 
\end{split} 
\end{align}
where 
\begin{align}
f_1 (z,t) &= - 2 U_{A'(t)} |A'(t)|^{1/2} f_{1,+} (z,t),   \\
g_1 (z,t) &= - W(f_{1,-}(\ol z,\cdot)^*, f_{1,+}(z,\cdot))^{-1} f_{1,-}(\ol z,t)^* |A'(t)|^{1/2},     \\
f_2 (z,t) &= - 2 U_{A'(t)} |A'(t)|^{1/2} f_{1,-} (z,t),   \\
g_2 (z,t) &= W(f_{1,+}(\ol z,\cdot)^*, f_{1,-}(z,\cdot))^{-1} f_{1,+}(\ol z,t)^* |A'(t)|^{1/2}.   
\end{align}
Due to the compact support property of $A'(\cdot)$, and since $|A'(t)|^{1/2} \in \cB_2(\cH)$, 
$t\in\bbR$, with $\int_{\bbR} dt \, \|A'(t)\|_{\cB_1(\cH)} < \infty$ (cf.\ \eqref{2.3A}),  
one concludes that 
\begin{equation}  
\|f_j( \cdot)\|_{\cB_2(\cH)} \in L^2(\bbR), \; 
\|g_j (\cdot)\|_{\cB_2(\cH)} \in L^2(\bbR), \quad j=1,2. \lb{3.42a}
\end{equation}
Here 
\begin{equation}
(\bsU_{\bsA'} f)(t) = U_{A'(t)} f(t) \, \text{ for a.e.\ $t\in\bbR$,}  
\quad f \in L^2(\bbR;\cH).    \lb{3.42A}
\end{equation}

This yields (cf.\ \eqref{B.12}--\eqref{B.15} with $\bsH_0$, $\bsH$, $\bsV$ replaced by $\bsH_1$, 
$\bsH_2$, $2 \bsA'$, respectively), 
\begin{align}
\begin{split} 
& (\bsH_2 - z \bsI)^{-1} - (\bsH_1 - z \bsI)^{-1} 
= -2 \big[|\bsA^{\prime}|^{1/2} (\bsH_1 - {\ol z} \bsI)^{-1}\big]^*      \\
& \quad \times [\bsI - \bsK(z)]^{-1} \bsU_{\bsA'} |\bsA'|^{1/2} (\bsH_1 - z \bsI)^{-1} 
\in \cB_1\big(L^2(\bbR; \cH)\big), 
\quad  z \in \rho(\bsH_2).     \lb{3.43}
\end{split} 
\end{align}

Associated to the (Fredholm) integral kernel $K(z,\cdot,\cdot)$ one also introduces the corresponding 
Volterra integral kernel $H(z,\cdot,\cdot)$ (cf.\ \cite{GM03}),  
\begin{align}
& H(z,t,t') = f_1 (z,t) g_1 (z,t') - f_2 (z,t) g_2 (z,t')    \no \\
& \quad =  2 U_{A'(t)} |A'(t)|^{1/2} f_{1,+} (z,t) W(f_{1,-}(\ol z,\cdot)^*, f_{1,+}(z,\cdot))^{-1} f_{1,-}(\ol z,t')^* |A'(t')|^{1/2} 
\no \\
& \qquad + 2 U_{A'(t)} |A'(t)|^{1/2} f_{1,-} (z,t) W(f_{1,+}(\ol z,\cdot)^*, f_{1,-}(z,\cdot))^{-1} f_{1,+}(\ol z,t')^* |A'(t')|^{1/2},  
\no \\
& \hspace*{8.6cm}  z\in \bbC, \; t, t' \in \bbR, 
\end{align}  
and the associated Volterra integral equations 
\begin{align}
\widehat f_1 (z,t) &= f_1 (z,t) - \int_t^{\infty} dt' \, H(z,t,t') \widehat f_1 (z,t'),     \lb{3.48} \\
\widehat f_2 (z,t) &= f_2 (z,t) - \int_{-\infty}^t dt' \, H(z,t,t') \widehat f_2 (z,t'),    \lb{3.49} \\
& \hspace*{3.53cm}  z \in \bbC, \; t, t' \in \bbR.    \no 
\end{align}
A comparison of \eqref{3.48}, \eqref{3.49} with \eqref{3.19} yields 
\begin{align}
\widehat f_1 (z,t) &= - 2 U_{A'(t)} |A'(t)|^{1/2} f_{2,+} (z,t),   \\
\widehat f_2 (z,t) &= - 2 U_{A'(t)} |A'(t)|^{1/2} f_{2,-} (z,t),    \\
& \hspace*{2.28cm}  z \in \bbC, \; t, t' \in \bbR,    \no 
\end{align}
and as in \eqref{3.42a} one concludes that
\begin{equation}  
\|\widehat f_j( \cdot)\|_{\cB_2(\cH)} \in L^2(\bbR), \quad j=1,2. \lb{3.50}
\end{equation}

A direct application of Theorem \ref{tA.13}, particularly, \eqref{A.101a} and \eqref{A.104a} 
(see also Theorem \ref{tB.3}) then yields
\begin{align}
& {\det}_{L^2(\bbR;\cH)} (\bsI - \bsK(z)) = {\det}_{\cH} \bigg(I_{\cH} - \int_{\bbR} dt \, g_1(t) \widehat f_1(t)\bigg)\no\\
&\quad = {\det}_{\cH} \bigg(I_{\cH} - \int_{\bbR} dt \, g_2(t) \widehat f_2(t)\bigg)  \no \\
& \quad =  {\det}_{\cH} \bigg(I_{\cH} - W(f_{1,-}(\ol z,\cdot)^*, f_{1,+}(z,\cdot))^{-1} \int_{\bbR} dt \, 
f_{1,-} (\ol z,t)^* [2 A'(t)] f_{2,+} (z,t)\bigg)   \no \\ 
& \quad = {\det}_{\cH} \bigg(I_{\cH} + W(f_{1,+}(\ol z,\cdot)^*, f_{1,-}(z,\cdot))^{-1} \int_{\bbR} dt \, 
f_{1,+} (\ol z,t)^* [2 A'(t)] f_{2,-} (z,t)\bigg)   \no \\ 
& \quad =  {\det}_{\cH} \big(W(f_{1,-}(\ol z,\cdot)^*, f_{1,+}(z,\cdot))^{-1} 
W(f_{2,-}(\ol z,\cdot)^*, f_{2,+}(z,\cdot))\big)    \lb{3.52} \\
& \quad =  {\det}_{\cH} \big(W(f_{1,+}(\ol z,\cdot)^*, f_{1,-}(z,\cdot))^{-1} 
W(f_{2,+}(\ol z,\cdot)^*, f_{2,-}(z,\cdot))\big),    \lb{3.53} \\
& \hspace*{7.95cm}  z \in \rho(\bsH_2).    \no  
\end{align}
Here the final two equalities are an immediate consequence of the Wronski relations \eqref{3.33}, 
\eqref{3.35}. 

Next, an application of the first relation in \eqref{A.93a} to the perturbation determinant 
$\wti D_{\bsH_2/\bsH_1} (z)$ in \eqref{2.31}, taking into account the definition \eqref{3.41} of $\bsK(z)$,  
and employing the polar decomposition of $\bsA'$ as well as the factorization 
$\bsA' = \bsU_{\bsA'} |\bsA'| = \big[\bsU_{\bsA'} |\bsA'|^{1/2}\big] |\bsA'|^{1/2}$, one observes that actually, 
\begin{align}
\begin{split} 
\wti D_{\bsH_2/\bsH_1} (z) &= {\det}_{L^2(\bbR;\cH)} 
\big(\bsI + 2 (\bsH_1 - z \bsI)^{-1/2} \bsA' (\bsH_1 - z \bsI)^{-1/2}\big)   \\
&= {\det}_{L^2(\bbR;\cH)} (\bsI - \bsK(z)), \quad z \in \rho(\bsH_1).     \lb{2.53a} 
\end{split} 
\end{align} 

Combining \eqref{3.9}, \eqref{3.12}, and \eqref{3.52} finally yields 
\begin{align}
& {\det}_{L^2(\bbR;\cH)} (\bsI - \bsK(z)) = \wti D_{\bsH_2/\bsH_1} (z)    \lb{5.54} \\
& \quad = - {\det}_{\cH} \Big(z^{-1} \big[(A_+^2 - z I_{\cH})^{1/2} + A_+\big] 
\big[-(A_-^2 - z I_{\cH})^{1/2} + A_-\big]\Big), \quad z \in \rho(\bsH_1),      \no 
\end{align}
one of the principle new results of this section. 

Consequently, 
\begin{align}
& {\tr}_{L^2(\bbR;\cH)} \big((\bsH_2 - z \bsI)^{-1} - (\bsH_1 - z \bsI)^{-1}\big) 
= - \f{d}{dz} \ln\big({\det}_{L^2(\bbR;\cH)} (\bsI - \bsK(z))\big)     \no \\
& \quad = \f{d}{dz} \ln \Big({\det}_{\cH} \Big(z^{-1} \big[(A_+^2 - z I_{\cH})^{1/2} + A_+\big] 
\big[-(A_-^2 - z I_{\cH})^{1/2} + A_-\big]\Big)\Big)      \no \\
& \quad = \f{1}{2 z} {\tr}_{\cH} \big(g_z(A_+) - g_z(A_-)\big),  \quad  z \in \bbC\backslash [0,\infty).  \lb{3.55} 
\end{align}
The last equality in \eqref{3.55}, although, straightforward, requires a bit of calculation, repeatedly utilizing 
cyclicity of the trace. This proves \eqref{2.19}, subject to the constraint \eqref{3.0}. However, the latter 
is now removed by an approximation argument precisely along the lines detailed in \eqref{B.33}--\eqref{B.47}.
\end{proof}

\begin{remark} \lb{r3.2} (Reduction and Topological invariance.)
We emphasize the remarkable reduction of the Fredholm determinant 
${\det}_{L^2(\bbR;\cH)} (\bsI - \bsK(z))$ associated with the Hilbert space 
$L^2(\bbR;\cH)$, to a Fredholm determinant in the Hilbert space $\cH$ 
in \eqref{3.52}, \eqref{3.53}, invoking Wronski determinants of Jost solutions 
associated with $\bsH_1$ and $\bsH_2$, and in \eqref{5.54}, involving only the asymptotes $A_\pm$ of $A(t)$ as $t \to \pm \infty$.

More precisely, one notes the remarkable independence of 
\begin{align}
\begin{split} 
& {\det}_{L^2(\bbR;\cH)} (\bsI - \bsK(z)), \quad 
\tr_{L^2(\bbR;\cH)}\big((\bsH_2 - z \, \bsI)^{-1} 
- (\bsH_1 - z \, \bsI)^{-1}\big), \\
& \xi(\lambda; \bsH_2, \bsH_1), 
\, \text{ and } \, \ind (\bsD_\bsA^{})    \lb{2.41}
\end{split} 
\end{align} 
of the details of the operator path $\{A(t)\}_{t\in\bbR}$. Indeed, \eqref{5.54} settles 
the claim for ${\det}_{L^2(\bbR;\cH)} (\bsI - \bsK(z))$. In addition, since only the asymptotes 
$A_{\pm} = \nlim_{t \to \pm \infty} A(t)$ enter $\xi(\,\cdot\,; A_+,A_-)$, equations \eqref{2.19}, \eqref{2.24}, \eqref{2.37a}, and \eqref{4.65} also settle the claim for the remaining quantities 
in \eqref{2.41}. This is sometimes dubbed topological invariance in the pertinent literature (see, e.g., \cite{BGGSS87}, \cite{Ca78}, \cite{GLMST11}, \cite{GS88}, 
and the references therein). $\Diamond$
\end{remark}

\appendix
\section{Basic Weyl--Titchmarsh Theory for Schr\"odinger Operators with Bounded 
Operator-Valued Potentials} \lb{sA}
\renewcommand{\theequation}{A.\arabic{equation}}
\renewcommand{\thetheorem}{A.\arabic{theorem}}
\setcounter{theorem}{0} \setcounter{equation}{0}

In this appendix we summarize some of the basic results on spectral theory for 
Schr\"odinger operators with bounded operator-valued potentials as recently developed in \cite{GWZ12}, \cite{GWZ13}. (The latter references, in particular, contain all 
proofs and the pertinent background literature on the material displayed in this preparatory section.)
The results represented below provide the necessary background for the Schr\"odinger operators 
discussed in Sections \ref{s3} and \ref{s5}. 

We start with some necessary preliminaries:
Let $(a,b) \subseteq \bbR$ be a finite or infinite interval and $\cH$ a complex, separable 
Hilbert space.
Unless explicitly stated otherwise (such as in the context of operator-valued measures in
Nevanlinna--Herglotz representations), integration of $\cH$-valued functions on $(a,b)$ will
always be understood in the sense of Bochner (cf., e.g., \cite[p.\ 6--21]{ABHN01},
\cite[p.\ 44--50]{DU77}, \cite[p.\ 71--86]{HP85}, \cite[Ch.\ III]{Mi78}, \cite[Sect.\ V.5]{Yo80} for 
details). In particular, if $p\geq 1$, the Banach space $L^p((a,b);dx;\cH)$ denotes the set of equivalence classes of strongly measurable $\cH$-valued functions which differ at most on sets of Lebesgue measure zero, such that $\|f(\cdot)\|_{\cH}^p \in L^1((a,b);dx)$. The
corresponding norm in $L^p((a,b);dx;\cH)$ is given by
\begin{equation}
\|f\|_{L^p((a,b);dx;\cH)} = \bigg(\int_{(a,b)} dx\, \|f(x)\|_{\cH}^p \bigg)^{1/p}. 
\end{equation}
In the case $p=2$, $L^2((a,b);dx;\cH)$ is a separable 
Hilbert space. By a result of Pettis \cite{Pe38}, weak measurability of $\cH$-valued functions 
implies their strong measurability.

If $g \in L^1((a,b);dx;\cH)$, $f(x)= \int_{x_0}^x dx' \, g(x')$, $x_0, x \in (a,b)$, then $f$ is
strongly differentiable a.e.\ on $(a,b)$ and
\begin{equation}
f'(x) = g(x) \, \text{ for a.e.\ $x \in (a,b)$}.
\end{equation}
In addition,
\begin{equation}
\lim_{t\downarrow 0} \f{1}{t} \int_x^{x+t} dx' \, \|g(x') - g(x)\|_{\cH} = 0 \,
\text{ for a.e.\ $x \in (a,b)$,}
\end{equation}
in particular,
\begin{equation}
\slim_{t\downarrow 0}\f{1}{t} \int_x^{x+t} dx' \, g(x') = g(x)
\, \text{ for a.e.\ $x \in (a,b)$.}
\end{equation}

Sobolev spaces $W^{n,p}((a,b); dx; \cH)$ for $n\in\bbN$ and $p\geq 1$ are defined as follows: 
$W^{1,p}((a,b);dx;\cH)$ is the set of all
$f\in L^p((a,b);dx;\cH)$ such that there exists a $g\in L^p((a,b);dx;\cH)$ and an
$x_0\in(a,b)$ such that
\begin{equation}
f(x)=f(x_0)+\int_{x_0}^x dx' \, g(x') \, \text{ for a.e.\ $x \in (a,b)$.}
\end{equation}
In this case $g$ is the strong derivative of $f$, $g=f'$. Similarly,
$W^{n,p}((a,b);dx;\cH)$ is the set of all $f\in L^p((a,b);dx;\cH)$ so that the first $n$ strong
derivatives of $f$ are in $L^p((a,b);dx;\cH)$. For simplicity of notation one also introduces
$W^{0,p}((a,b);dx;\cH)=L^p((a,b);dx;\cH)$. Finally, $W^{n,p}_{\rm loc}((a,b);dx;\cH)$ is
the set of $\cH$-valued functions defined on $(a,b)$ for which the restrictions to any
compact interval $[\alpha,\beta]\subset(a,b)$ are in $W^{n,p}((\alpha,\beta);dx;\cH)$.
In particular, this applies to the case $n=0$ and thus defines $L^p_{\rm loc}((a,b);dx;\cH)$.
If $a$ is finite we may allow $[\alpha,\beta]$ to be a subset of $[a,b)$ and denote the
resulting space by $W^{n,p}_{\rm loc}([a,b);dx;\cH)$ (and again this applies to the case
$n=0$).

Following a frequent practice (cf., e.g., the discussion in \cite[Sect.\ III.1.2]{Am95}), we
will call elements of $W^{1,1} ([c,d];dx;\cH)$, $[c,d] \subset (a,b)$ (resp.,
$W^{1,1}_{\rm loc}((a,b);dx;\cH)$), strongly absolutely continuous $\cH$-valued functions
on $[c,d]$ (resp., strongly locally absolutely continuous $\cH$-valued functions
on $(a,b)$). 

For simplicity of notation, from this point on we will omit the Lebesgue measure whenever 
no confusion can occur and henceforth simply write $L^p_{(\loc)}((a,b);\cH)$ for 
$L^p_{(\loc)}((a,b);dx;\cH)$. Moreover, in the special case where $\cH = \bbC$, we omit $\cH$ 
and typically (but not always) the Lebesgue measure and just write $L^p_{(\loc)}((a,b))$.

For brevity of notation, $\bsI$ denotes the identity operator in $L^2((a,b);\cH)$.

In addition, and also for reasons of brevity, for operator-valued functions that are measurable with 
respect to the uniform operator topology, we typically use the short cut uniformly measurable.  

Finally, one more remark on a notational convention:

\begin{remark} \lb{r2.1} 
To avoid possible confusion later on between two standard
notions of strongly continuous operator-valued functions $F(x)$,
$x \in (a,b)$, that is, strong continuity of $F(\cdot) h$ in $\cH$ for all $h \in\cH$ (i.e.,
pointwise continuity of $F(\cdot)$), versus strong continuity of $F(\cdot)$ in the norm
of $\cB(\cH)$ (i.e., uniform continuity of $F(\cdot)$), we will always mean pointwise continuity of 
$F(\cdot)$ in $\cH$. The same pointwise conventions will apply to the notions of strongly 
differentiable and strongly measurable operator-valued functions throughout this paper.
In particular, and unless explicitly stated otherwise, for operator-valued functions $Y$, the symbol 
$Y'$ will be understood in the strong sense; similarly,  $y'$ will denote the strong derivative for 
vector-valued functions $y$. $\Diamond$
\end{remark}

\begin{definition} \lb{d2.2}
Let $(a,b)\subseteq\bbR$ be a finite or infinite interval and
$Q:(a,b)\to\cB(\cH)$ a weakly measurable operator-valued function with
$\|Q(\cdot)\|_{\cB(\cH)}\in L^1_\loc((a,b))$, and suppose that
$f\in L^1_{\loc}((a,b);\cH)$. Then the $\cH$-valued function
$y: (a,b)\to \cH$ is called a $($strong\,$)$ solution of
\begin{equation}
- y'' + Q y = f   \lb{2.15A}
\end{equation}
if $y \in W^{2,1}_\loc((a,b);\cH)$ and \eqref{2.15A} holds a.e.\ on $(a,b)$.
\end{definition}

We recall our notational convention that vector-valued solutions of \eqref{2.15A} will always be viewed as strong solutions.

One verifies that $Q:(a,b)\to\cB(\cH)$ satisfies the conditions in
Definition \ref{d2.2} if and only if $Q^*$ does (a fact that will play a role later on, cf.\
the paragraph following \eqref{2.33A}).

\begin{theorem} \lb{t2.3}
Let $(a,b)\subseteq\bbR$ be a finite or infinite interval and
$V:(a,b)\to\cB(\cH)$ a weakly measurable operator-valued function with
$\|V(\cdot)\|_{\cB(\cH)}\in L^1_\loc((a,b))$. Suppose that
$x_0\in(a,b)$, $z\in\bbC$, $h_0,h_1\in\cH$, and $f\in
L^1_{\loc}((a,b);\cH)$. Then there is a unique $\cH$-valued
solution $y(z,\cdot,x_0)\in W^{2,1}_\loc((a,b);\cH)$ of the initial value problem
\begin{equation}
\begin{cases}
- y'' + (V - z) y = f \, \text{ on } \, (a,b)\bs E,  \\
\, y(x_0) = h_0, \; y'(x_0) = h_1,
\end{cases}     \lb{2.1a}
\end{equation}
where the exceptional set $E$ is of Lebesgue measure zero and independent
of $z$.

Moreover, the following properties hold:
\begin{enumerate}[$(i)$]
\item For fixed $x_0,x\in(a,b)$ and $z\in\bbC$, $y(z,x,x_0)$ depends jointly continuously on $h_0,h_1\in\cH$, and $f\in L^1_{\loc}((a,b);\cH)$ in the sense that
\begin{align}
\begin{split}
& \big\|y\big(z,x,x_0;h_0,h_1,f\big) - y\big(z,x,x_0;\wti h_0,\wti h_1,\wti f\big)\big\|_{\cH}    \\
& \quad \leq C(z,V)
\big[\big\|h_0 - \wti h_0\big\|_{\cH} + \big\|h_1 - \wti h_1\big\|_{\cH}
+ \big\|f - \wti f\big\|_{L^1([x_0,x];\cH)}\big],    \lb{2.1A}
\end{split}
\end{align}
where $C(z,V)>0$ is a constant, and the dependence of
$y$ on the initial data $h_0, h_1$ and the inhomogeneity $f$ is displayed 
in \eqref{2.1A}. 
\item For fixed $x_0\in(a,b)$ and $z\in\bbC$, $y(z,x,x_0)$ is strongly continuously differentiable with respect to $x$ on $(a,b)$.
\item For fixed $x_0\in(a,b)$ and $z\in\bbC$, $y'(z,x,x_0)$ is strongly differentiable with respect to $x$ on $(a,b)\bs E$.
\item For fixed $x_0,x \in (a,b)$, $y(z,x,x_0)$ and $y'(z,x,x_0)$
are entire with respect to $z$.
\end{enumerate}
\end{theorem}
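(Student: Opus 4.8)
The plan is to prove Theorem \ref{t2.3} by the standard reduction of the second-order equation \eqref{2.1a} to a first-order system, followed by a Volterra-type fixed-point argument adapted to the Bochner-integral setting in $\cH\oplus\cH$. First I would rewrite \eqref{2.1a} as $Y'(x) = \mathcal{A}(z,x)Y(x) + \Phi(x)$, where $Y = (y, y')^\top$,
\[
\mathcal{A}(z,x) = \begin{pmatrix} 0 & I_\cH \\ V(x) - z I_\cH & 0\end{pmatrix}, \quad \Phi(x) = \begin{pmatrix} 0 \\ -f(x) \end{pmatrix},
\]
with initial condition $Y(x_0) = (h_0,h_1)^\top$. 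The condition $\|V(\cdot)\|_{\cB(\cH)}\in L^1_\loc$ makes $\|\mathcal{A}(z,\cdot)\|_{\cB(\cH\oplus\cH)}\in L^1_\loc((a,b))$, and weak measurability of $V(\cdot)$ gives weak — hence (by Pettis' theorem, as recalled near \eqref{A.79a} and in the appendix preliminaries) strong — measurability of $\mathcal{A}(z,\cdot)$.

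The key step is to solve the integral equation $Y(x) = (h_0,h_1)^\top + \int_{x_0}^x dx'\,[\mathcal{A}(z,x')Y(x') + \Phi(x')]$ by Picard iteration on compact subintervals $[\alpha,\beta]\subset(a,b)$ containing $x_0$. Setting $Y_0 \equiv (h_0,h_1)^\top$ and $Y_{n+1}(x) = (h_0,h_1)^\top + \int_{x_0}^x dx'\,[\mathcal{A}(z,x')Y_n(x') + \Phi(x')]$, an induction using $m(x) := \big|\int_{x_0}^x dx'\,\|\mathcal{A}(z,x')\|_{\cB(\cH\oplus\cH)}\big|$ gives
\[
\|Y_{n+1}(x) - Y_n(x)\|_{\cH\oplus\cH} \leq \frac{m(x)^n}{n!}\,\sup_{[\alpha,\beta]}\|Y_1 - Y_0\|_{\cH\oplus\cH},
\]
so the series $\sum_n (Y_{n+1}-Y_n)$ converges absolutely and uniformly on $[\alpha,\beta]$ to a continuous $\cH\oplus\cH$-valued function $Y$ solving the integral equation; since $[\alpha,\beta]$ is arbitrary this produces a solution on all of $(a,b)$. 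Because the integrand $x'\mapsto \mathcal{A}(z,x')Y(x') + \Phi(x')$ lies in $L^1_\loc((a,b);\cH\oplus\cH)$, the fundamental theorem of calculus for Bochner integrals (recalled in the appendix preliminaries, cf.\ the displayed formulas following $W^{n,p}$) shows $Y\in W^{1,1}_\loc$ with $Y' $ equal a.e.\ to that integrand; reading off the two components yields $y\in W^{2,1}_\loc((a,b);\cH)$ with $y'' = (V-z)y - f$ a.e., i.e.\ off a Lebesgue-null exceptional set $E$. Uniqueness follows from the same Gronwall-type estimate applied to the difference of two solutions with identical data. Independence of $E$ from $z$ is seen by noting $E$ can be taken as the set where either $\|V(\cdot)\|_{\cB(\cH)}$ fails to be locally integrable in the relevant Lebesgue sense or where the FTC differentiation fails, neither of which involves $z$.

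For the itemized properties: $(i)$ the Lipschitz estimate \eqref{2.1A} is immediate from the integral equation and the Gronwall bound, with $C(z,V) = \exp\big(m(\max(x_0,x))\big)$ or a similar explicit constant, using $\|\int_{x_0}^x\mathcal{A}(z,\cdot)(Y-\widetilde Y)\| \le \int |Y-\widetilde Y|\,\|\mathcal{A}\|$ and absorbing the $f$-term via $\|\int_{x_0}^x \Phi - \widetilde\Phi\| \le \|f-\widetilde f\|_{L^1([x_0,x];\cH)}$. Property $(ii)$ is the statement that the first component $y(z,\cdot,x_0)$ has a strongly continuous derivative, which holds because $y' = $ (second component of $Y$) is continuous on $(a,b)$ by construction; $(iii)$ is that $y'$ is strongly differentiable off $E$ with $y'' = (V-z)y-f$, already established. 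Property $(iv)$ — entirety in $z$ — I would get by observing that each Picard iterate $Y_n(x)$ is, for fixed $x$, a polynomial in $z$ (one inserts $\mathcal{A}(z,x') = \mathcal{A}(0,x') - z E_{21}$, where $E_{21}$ is the constant matrix unit), and the convergence of $\sum_n(Y_{n+1}-Y_n)$ is locally uniform in $z\in\bbC$ (the bound $m(x)^n/n!$ is $z$-independent after replacing $m$ by a bound valid on a compact $z$-set), so the limit is entire by the Weierstrass convergence theorem for Banach-space-valued holomorphic functions; the same applies to $y'(z,x,x_0)$.

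The main obstacle is purely bookkeeping rather than conceptual: one must be careful that all the elementary real-variable facts (FTC for Bochner integrals, measurability via Pettis, interchange of sum and integral) are invoked with the correct hypotheses in the $\cH$-valued setting, and that the exceptional set $E$ is genuinely $z$-independent — the latter is handled cleanly by the system formulation since the only $z$-dependence, namely the term $-zI_\cH$ in the lower-left block, is a bounded everywhere-defined operator. There is no serious analytic difficulty; the result is the operator-valued analog of the classical Carathéodory existence-uniqueness theorem for linear ODEs, and its proof transcribes essentially verbatim.
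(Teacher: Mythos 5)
The paper does not prove Theorem \ref{t2.3}; the opening of Appendix \ref{sA} explicitly states that all proofs of the background results summarized there are contained in \cite{GWZ12}, \cite{GWZ13}, and the reduction to a first-order system $Y'=\cA(z,\cdot)Y+\Phi$ in $\cH\oplus\cH$ followed by Picard--Volterra iteration with the $m(x)^n/n!$ bound that you propose is precisely the route taken in \cite{GWZ12}. Your argument is correct.

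The one spot that is glossed over is the $z$-independence (and $h_0,h_1$-independence) of the exceptional set $E$, which is slightly more delicate than ``the set where FTC differentiation fails,'' since a priori that set depends on the integrand $V(\cdot)y(\cdot) - z\,y(\cdot) - f(\cdot)$ and hence on $z$, $h_0$, $h_1$, $f$. The clean fix is to take $E$ to be the union of the non-Lebesgue points of the scalar function $\|V(\cdot)\|_{\cB(\cH)}$, the non-Lebesgue points of $f$, and, for a fixed countable dense set $\{h_n\}_{n\in\bbN}\subset\cH$, the non-Lebesgue points of the $\cH$-valued functions $V(\cdot)h_n$; this is a single Lebesgue-null set depending only on $V$ and $f$, and off it one checks that $V(\cdot)u(\cdot) - z\,u(\cdot) - f(\cdot)$ has a Lebesgue point for every strongly continuous $u:(a,b)\to\cH$ and every $z\in\bbC$, the term $-z\,u(\cdot)$ contributing nothing since it is continuous. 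With that made explicit, the rest of your proof---the Gronwall estimate for $(i)$, reading off continuity of the second component for $(ii)$, the FTC for $(iii)$, and the Weierstrass argument with polynomial iterates for $(iv)$---goes through unchanged and matches the treatment in \cite{GWZ12}.
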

  
\begin{definition} \lb{d2.4}
Let $(a,b)\subseteq\bbR$ be a finite or infinite interval and assume that
$F,\,Q:(a,b)\to\cB(\cH)$ are two weakly measurable operator-valued functions such
that $\|F(\cdot)\|_{\cB(\cH)},\,\|Q(\cdot)\|_{\cB(\cH)}\in L^1_\loc((a,b))$. Then the
$\cB(\cH)$-valued function $Y:(a,b)\to\cB(\cH)$ is called a solution of
\begin{equation}
- Y'' + Q Y = F   \lb{2.26A}
\end{equation}
if $Y(\cdot)h\in W^{2,1}_\loc((a,b);\cH)$ for every $h\in\cH$ and $-Y''h+QYh=Fh$ holds
a.e.\ on $(a,b)$.
\end{definition}

\begin{corollary} \lb{c2.5}
Let $(a,b)\subseteq\bbR$ be a finite or infinite interval, $x_0\in(a,b)$, $z\in\bbC$, $Y_0,\,Y_1\in\cB(\cH)$, and suppose $F,\,V:(a,b)\to\cB(\cH)$ are two weakly measurable operator-valued functions with
$\|V(\cdot)\|_{\cB(\cH)},\,\|F(\cdot)\|_{\cB(\cH)}\in L^1_\loc((a,b))$. Then there is a
unique $\cB(\cH)$-valued solution $Y(z,\cdot,x_0):(a,b)\to\cB(\cH)$ of the initial value
problem
\begin{equation}
\begin{cases}
- Y'' + (V - z)Y = F \, \text{ on } \, (a,b)\bs E,  \\
\, Y(x_0) = Y_0, \; Y'(x_0) = Y_1.
\end{cases} \lb{2.3a}
\end{equation}
where the exceptional set $E$ is of Lebesgue measure zero and independent
of $z$. Moreover, the following properties hold:
\begin{enumerate}[$(i)$]
\item For fixed $x_0 \in (a,b)$ and $z \in \bbC$, $Y(z,x,x_0)$ is continuously
differentiable with respect to $x$ on $(a,b)$ in the $\cB(\cH)$-norm.
\item For fixed $x_0 \in (a,b)$ and $z \in \bbC$, $Y'(z,x,x_0)$ is strongly differentiable with respect to $x$ on $(a,b)\bs E$.
\item For fixed $x_0, x \in (a,b)$, $Y(z,x,x_0)$ and $Y'(z,x,x_0)$ are entire in $z$ in
the $\cB(\cH)$-norm.
\end{enumerate}
\end{corollary}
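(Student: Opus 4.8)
The plan is to obtain Corollary \ref{c2.5} from the vector-valued existence and uniqueness result, Theorem \ref{t2.3}, by testing the operator-valued initial value problem \eqref{2.3a} against individual vectors $h \in \cH$, and afterwards to upgrade the pointwise (strong) regularity so produced to regularity in the $\cB(\cH)$-norm by means of the integral reformulation of the equation.

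First I would fix $h \in \cH$ and apply Theorem \ref{t2.3} with initial data $h_0 = Y_0 h$, $h_1 = Y_1 h$ and inhomogeneity $f = F(\cdot) h \in L^1_{\loc}((a,b);\cH)$, obtaining the unique solution $y_h(z,\cdot,x_0) \in W^{2,1}_{\loc}((a,b);\cH)$ of $-y'' + (V-z)y = F(\cdot)h$ satisfying $y_h(z,x_0,x_0) = Y_0 h$, $y_h'(z,x_0,x_0) = Y_1 h$; then I would set $Y(z,x,x_0) h := y_h(z,x,x_0)$. Linearity of $h \mapsto Y(z,x,x_0)h$ is immediate from the uniqueness part of Theorem \ref{t2.3} (the data and the equation depend linearly on $h$), and boundedness follows from the quantitative continuous-dependence estimate \eqref{2.1A} taken with $\wti h_0 = \wti h_1 = 0$, $\wti f = 0$, using that $\|F(\cdot)h\|_{L^1([x_0,x];\cH)} \leq \|h\|_{\cH} \int_{[x_0,x]} dx' \, \|F(x')\|_{\cB(\cH)} < \infty$; since this estimate is locally uniform in $x$, it also shows that $x \mapsto \|Y(z,x,x_0)\|_{\cB(\cH)}$ is bounded on compact subintervals of $(a,b)$. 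By construction $Y(z,\cdot,x_0)h \in W^{2,1}_{\loc}((a,b);\cH)$ and $-Y''h + (V-z)Yh = Fh$ holds a.e.\ for every $h$, so $Y(z,\cdot,x_0)$ is a solution of \eqref{2.3a} in the sense of Definition \ref{d2.4}; the null set on which the equation may fail can be arranged to be independent of $z$ (it already is in Theorem \ref{t2.3}) and of $h$ (by a routine argument over a countable dense subset of $\cH$ combined with Pettis' theorem \cite{Pe38}). Uniqueness is equally direct: any other $\cB(\cH)$-valued solution $\wti Y$ of \eqref{2.3a}, evaluated on an arbitrary $h$, solves the same vector-valued problem as $Y(\cdot)h$, hence coincides with it by Theorem \ref{t2.3}.

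For the regularity assertions, $(ii)$ and $(iii)$ in the strong/pointwise sense are inherited directly from parts $(ii)$ and $(iv)$ of Theorem \ref{t2.3}. To reach the $\cB(\cH)$-norm statements in $(i)$ and $(iii)$ I would integrate the equation twice, writing, for every $h$,
\[
Y(z,x,x_0) h = Y_0 h + (x - x_0) Y_1 h + \int_{x_0}^x dx' \, (x - x')\big[(V(x') - z) Y(z,x',x_0) h - F(x') h\big],
\]
an $\cH$-valued Bochner integral whose integrand is dominated in $\cH$-norm by $\big[(\|V(x')\|_{\cB(\cH)} + |z|)\|Y(z,x',x_0)\|_{\cB(\cH)} + \|F(x')\|_{\cB(\cH)}\big]\|h\|_{\cH}$, a function in $L^1_{\loc}((a,b))$ once the local boundedness of $\|Y(z,\cdot,x_0)\|_{\cB(\cH)}$ is invoked. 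Estimating the increments of the right-hand side uniformly over unit vectors $h$ (using absolute continuity of the integral) yields $\cB(\cH)$-norm continuity of $x \mapsto Y(z,x,x_0)$, and the once-differentiated identity $Y'(z,x,x_0) = Y_1 + \int_{x_0}^x dx' \, [(V(x') - z) Y(z,x',x_0) - F(x')]$ (the integral again understood via its action on vectors) then gives the $\cB(\cH)$-norm $C^1$-property of $(i)$; joint analyticity in $z$ in the $\cB(\cH)$-norm in $(iii)$ follows from the locally uniform (in $x$ and $z$) convergence of the Neumann-type iteration of this integral equation, which exhibits $Y(z,x,x_0)$ as a locally uniform limit of polynomials in $z$. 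I expect the only genuine obstacle to be exactly this last point — transferring the pointwise continuity and differentiability of Theorem \ref{t2.3} to the operator norm — and the mechanism is the uniform-in-$h$ control coming from \eqref{2.1A} together with the resulting local boundedness of $x \mapsto \|Y(z,x,x_0)\|_{\cB(\cH)}$.
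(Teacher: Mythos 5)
The paper itself does not prove Corollary \ref{c2.5}; it explicitly defers all proofs of the Appendix~\ref{sA} material to \cite{GWZ12}, \cite{GWZ13}. Your proposal is therefore evaluated on its own merits, and it is essentially correct: you construct the operator-valued solution by evaluating the problem on each $h\in\cH$ and invoking the vector-valued Theorem~\ref{t2.3}; linearity in $h$ comes from uniqueness, boundedness and local-in-$x$ uniform bounds from the continuous-dependence estimate \eqref{2.1A}, and the upgrade of the pointwise (strong) regularity to $\cB(\cH)$-norm $C^1$-regularity and $z$-analyticity comes from the twice-integrated Volterra form of the equation together with a supremum over unit vectors $h$ and a Neumann iteration. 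This is the natural route and is what the cited references carry out.

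Two spots deserve a slightly more explicit word. First, the bound \eqref{2.1A} is stated for fixed $x$ with a constant $C(z,V)$; you read this (correctly, since the underlying proof is Gronwall-type) as uniform over compact $x$-subintervals, but this needs to be said, because the local boundedness of $x\mapsto\|Y(z,x,x_0)\|_{\cB(\cH)}$ is the hinge of the whole upgrade. Second, for the $\cB(\cH)$-norm analyticity in $z$, one should note that the Neumann iterates and the limit are only defined \emph{strongly} (i.e., pointwise on $h$) as integrals — since $V(\cdot)$, $F(\cdot)$ are only weakly measurable, the iterates need not be Bochner-integrable with values in the nonseparable space $\cB(\cH)$ — but the increments between iterates are nonetheless controlled uniformly over unit $h$ via the scalar majorant $(\|V\|_{\cB(\cH)}+|z|)\|Y\|_{\cB(\cH)}+\|F\|_{\cB(\cH)}\in L^1_\loc$, which is exactly what gives the locally uniform $\cB(\cH)$-norm convergence and hence analyticity. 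Finally, the statement of the corollary only asks for the exceptional set $E$ to be independent of $z$ (not of $h$), so the $h$-independence you sketch, while true, is not required; if you do wish to keep it, it is cleanest to observe that $Y''(x)$ is itself given a.e.\ by the bounded operator $(V(x)-z)Y(z,x,x_0)-F(x)$, so once the equation holds for a countable dense set of $h$ it extends to all $h$ by density.
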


\begin{definition} \lb{d2.6}
Pick $c \in (a,b)$.
The endpoint $a$ (resp., $b$) of the interval $(a,b)$ is called {\it regular} for the operator-valued differential expression $- (d^2/dx^2) + Q(\cdot)$ if it is finite and if $Q$ is weakly measurable and $\|Q(\cdot)\|_{\cB(\cH)}\in  L^1_{\loc}([a,c])$ (resp.,
$\|Q(\cdot)\|_{\cB(\cH)}\in  L^1_{\loc}([c,b])$) for some $c\in (a,b)$. Similarly,
$- (d^2/dx^2) + Q(\cdot)$ is called {\it regular at $a$} $($resp., {\it regular at $b$}$)$ if
$a$ $($resp., $b$$)$ is a regular endpoint for $- (d^2/dx^2) + Q(\cdot)$.
\end{definition}

We note that if $a$ (resp., $b$) is regular for $- (d^2/dx^2) + Q(x)$, one may allow for
$x_0$ to be equal to $a$ (resp., $b$) in the existence and uniqueness Theorem \ref{t2.3}.

If $f_1, f_2$ are strongly continuously differentiable $\cH$-valued functions, we define the Wronskian of $f_1$ and $f_2$ by
\begin{equation}
W_{*}(f_1,f_2)(x)=(f_1(x),f'_2(x))_\cH - (f'_1(x),f_2(x))_\cH,    \lb{2.31A}
\quad x \in (a,b).
\end{equation}
If $f_2$ is an $\cH$-valued solution of $-y''+Qy=0$ and $f_1$ is an $\cH$-valued
solution of $-y''+Q^*y=0$, their Wronskian $W_{*}(f_1,f_2)(x)$ is $x$-independent, that is,
\begin{equation}
\f{d}{dx} W_{*}(f_1,f_2)(x) = 0, \, \text{ for a.e.\ $x \in (a,b)$.}   \lb{2.32A}
\end{equation}
Equation \eqref{2.52A} will show that the right-hand side of \eqref{2.32A} actually
vanishes for all $x \in (a,b)$.

We decided to use the symbol $W_{*}(\cdot,\cdot)$ in \eqref{2.31A} to indicate its
conjugate linear behavior with respect to its first entry.

Similarly, if $F_1,F_2$ are strongly continuously differentiable $\cB(\cH)$-valued
functions, their Wronskian is defined by
\begin{equation}
W(F_1,F_2)(x) = F_1(x) F'_2(x) - F'_1(x) F_2(x), \quad x \in (a,b).    \lb{2.33A}
\end{equation}
Again, if $F_2$ is a $\cB(\cH)$-valued solution of  $-Y''+QY = 0$ and $F_1$ is a
$\cB(\cH)$-valued solution of $-Y'' + Y Q = 0$ (the latter is equivalent to
$- {(Y^{*})}^{\prime\prime} + Q^* Y^* = 0$ and hence can be handled in complete analogy
via Theorem \ref{t2.3} and Corollary \ref{c2.5}, replacing $Q$ by $Q^*$) their Wronskian will be $x$-independent,
\begin{equation}
\f{d}{dx} W(F_1,F_2)(x) = 0 \, \text{ for a.e.\ $x \in (a,b)$.}
\end{equation}

Our main interest is in the case where $V(\cdot)=V(\cdot)^* \in \cB(\cH)$ is self-adjoint,
that is, in the differential equation $\tau \eta=z \eta$, where $\eta$ represents an $\cH$-valued, respectively, $\cB(\cH)$-valued solution (in the sense of Definitions \ref{d2.2},
resp., \ref{d2.4}), and where $\tau$ abbreviates the operator-valued differential expression
\begin{equation} \label{2.4a}
\tau =-(d^2/dx^2) + V(\cdot).
\end{equation}
To this end, we now introduce the following basic assumption:

\begin{hypothesis} \lb{h2.7}
Let $(a,b)\subseteq\bbR$, suppose that $V:(a,b)\to\cB(\cH)$ is a weakly
measurable operator-valued function with $\|V(\cdot)\|_{\cB(\cH)}\in L^1_\loc((a,b))$,
and assume that $V(x) = V(x)^*$ for a.e.\ $x \in (a,b)$.
\end{hypothesis}

Moreover, for the remainder of this section we assume that $\alpha \in \cB(\cH)$ is a
self-adjoint operator,
\begin{equation}
\alpha = \alpha^* \in \cB(\cH).      \lb{2.4A}
\end{equation}

Assuming Hypothesis \ref{h2.7} and \eqref{2.4A}, we introduce the standard fundamental systems of operator-valued solutions of $\tau y=zy$ as follows: Since $\alpha$ is a bounded self-adjoint operator, one may define the self-adjoint operators $A=\sin(\alpha)$ and $B=\cos(\alpha)$ via the spectral theorem. One then concludes that
$\sin^2(\alpha) + \cos^2(\alpha) = I_\cH$ and $[\sin\alpha,\cos\alpha]=0$ (here
$[\cdot,\cdot]$ represents the commutator symbol). The spectral theorem implies also
that the spectra of $\sin(\alpha)$ and $\cos(\alpha)$ are contained in $[-1,1]$ and that the spectra of $\sin^2(\alpha)$ and $\cos^2(\alpha)$ are contained in $[0,1]$. Given such an operator $\alpha$ and a point $x_0\in(a,b)$ or a regular endpoint for $\tau$, we now
define $\theta_\alpha(z,\cdot, x_0,), \phi_\alpha(z,\cdot,x_0)$ as those $\cB(\cH)$-valued
solutions of $\tau Y=z Y$ (in the sense of Definition \ref{d2.4}) which satisfy the initial
conditions
\begin{equation}
\theta_\alpha(z,x_0,x_0)=\phi'_\alpha(z,x_0,x_0)=\cos(\alpha), \quad
-\phi_\alpha(z,x_0,x_0)=\theta'_\alpha(z,x_0,x_0)=\sin(\alpha).    \lb{2.5A}
\end{equation}

By Corollary 2.5\,$(iii)$, for any fixed $x, x_0\in(a,b)$, the functions
$\theta_{\alpha}(z,x,x_0)$ and $\phi_{\alpha}(z,x,x_0)$ as well as their strong $x$-derivatives are entire with respect to $z$ in the $\cB(\cH)$-norm. The same is true for the functions $z\mapsto\theta_{\alpha}(\ol{z},x,x_0)^*$ and $z\mapsto\phi_{\alpha}(\ol{z},x,x_0)^*$.

We also recall two versions of Green's formula (also called Lagrange's identity).

\begin{lemma} \label{l2.8}
Let $(a,b)\subseteq\bbR$ be a finite or infinite interval and $[x_1,x_2]\subset(a,b)$. \\
$(i)$ Assume that $f,g\in W^{2,1}_{\rm loc}((a,b);\cH)$. Then
\begin{equation}
\int_{x_1}^{x_2} dx \, [((\tau f)(x),g(x))_\cH-(f(x),(\tau g)(x))_\cH]
= W_{*}(f,g)(x_2)-W_{*}(f,g)(x_1).     \lb{2.52A}
\end{equation}
$(ii)$ Assume that $F,\,G:(a,b)\to\cB(\cH)$ are absolutely continuous operator-valued functions such that $F',\,G'$ are again differentiable and that $F''$, $G''$ are weakly measurable. In addition, suppose that $\|F''\|_\cH,\, \|G''\|_\cH \in L^1_\loc((a,b);dx)$. Then
\begin{equation}
\int_{x_1}^{x_2} dx \, [(\tau F^*)(x)^*G(x) - F(x) (\tau G)(x)] = (FG'-F'G)(x_2)-(FG'-F'G)(x_1).
\lb{2.53A}
\end{equation}
\end{lemma}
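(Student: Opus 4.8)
The plan is to prove both formulas in the standard way: for each identity, show that the integrand on the left-hand side equals, for almost every $x$, the (strong) derivative of the Wronskian-type expression appearing on the right-hand side, and then invoke the fundamental theorem of calculus on the compact interval $[x_1,x_2]\subset(a,b)$.

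For part $(i)$ I would first expand $(\tau f)(x)=-f''(x)+V(x)f(x)$ and $(\tau g)(x)=-g''(x)+V(x)g(x)$ and use $V(x)=V(x)^*$ for a.e.\ $x$ (Hypothesis \ref{h2.7}) to cancel the potential contributions, obtaining
\[
((\tau f)(x),g(x))_\cH-(f(x),(\tau g)(x))_\cH=(f(x),g''(x))_\cH-(f''(x),g(x))_\cH
\]
for a.e.\ $x\in(a,b)$. Since $f,g\in W^{2,1}_{\loc}((a,b);\cH)$, the $\cH$-valued functions $f,f',g,g'$ all lie in $AC_{\loc}((a,b);\cH)$; on $[x_1,x_2]$ they and their strong derivatives are bounded, so the product rule for strongly absolutely continuous $\cH$-valued functions shows that $x\mapsto W_*(f,g)(x)=(f(x),g'(x))_\cH-(f'(x),g(x))_\cH$ (cf.\ \eqref{2.31A}) is a scalar absolutely continuous function on $[x_1,x_2]$ with
\[
\frac{d}{dx}W_*(f,g)(x)=(f(x),g''(x))_\cH-(f''(x),g(x))_\cH
\]
for a.e.\ $x$, the cross terms $\pm(f'(x),g'(x))_\cH$ cancelling. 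Comparing the two displays and integrating over $[x_1,x_2]$ via the fundamental theorem of calculus yields \eqref{2.52A}.

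For part $(ii)$ the argument is entirely parallel, with one preliminary step. For a $\cB(\cH)$-valued function $F$ with the stated two-fold strong differentiability, strong differentiation commutes with taking adjoints, so $(F^*)^{(k)}(x)=(F^{(k)}(x))^*$; together with $(V(x)F^*(x))^*=F(x)V(x)$ and $V(x)=V(x)^*$ this gives $(\tau F^*)(x)^*=-F''(x)+F(x)V(x)$ for a.e.\ $x$ --- this is exactly the reconciliation between $-Y''+QY=0$ and $-(Y^*)''+Q^*Y^*=0$ noted around \eqref{2.33A}. Cancelling the potential terms again,
\[
(\tau F^*)(x)^*G(x)-F(x)(\tau G)(x)=F(x)G''(x)-F''(x)G(x)
\]
for a.e.\ $x$, while the Leibniz rule for strongly differentiable $\cB(\cH)$-valued functions shows $x\mapsto(FG'-F'G)(x)$ (cf.\ \eqref{2.33A}) is strongly absolutely continuous on $[x_1,x_2]$ with strong derivative $F(x)G''(x)-F''(x)G(x)$ a.e., the $F'(x)G'(x)$ terms cancelling. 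The strong (Bochner) fundamental theorem of calculus on $[x_1,x_2]$ then gives \eqref{2.53A}.

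The only points needing a little care --- the ``main obstacle,'' such as it is --- are the two bookkeeping items: (a) that the relevant inner products and operator products are genuinely absolutely continuous in the appropriate scalar, respectively strong, sense, which rests on the product and Leibniz rules for strongly absolutely continuous vector- and operator-valued functions together with boundedness on compact subintervals; and (b) in $(ii)$, the commutation of strong differentiation with the adjoint operation and the resulting identity $(\tau F^*)^*=-F''+FV$. Both follow directly from the Bochner calculus and the strong-differentiability facts recorded at the beginning of this appendix, so no substantive difficulty arises; as a byproduct, \eqref{2.52A} with $\tau f=\tau g=0$ and arbitrary $x_1<x_2$ upgrades \eqref{2.32A} to genuine $x$-independence of $W_*(f,g)$ on all of $(a,b)$.
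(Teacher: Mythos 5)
Your proof is correct and is the standard one: cancel the potential terms using $V=V^*$, recognize the remaining integrand as the a.e.\ derivative of the Wronskian, and integrate by the fundamental theorem of calculus. Note that the paper itself does not supply a proof of Lemma~\ref{l2.8}; the appendix explicitly defers all proofs to \cite{GWZ12}, \cite{GWZ13}, so there is no argument in the paper to compare against.

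One point worth stating with a bit more care in part~$(ii)$ is the step $(\tau F^*)^*=-F''+FV$. In general, strong (pointwise) convergence $T_n\to T$ in $\cB(\cH)$ does \emph{not} imply strong convergence $T_n^*\to T^*$, so the bare assertion that strong differentiation commutes with the adjoint operation is not automatic. What does follow readily from your computation with difference quotients is the \emph{weak} identity $(k,(F^*)'(x)h)_\cH=(k,(F'(x))^*h)_\cH$, i.e.\ $F^*$ is weakly differentiable with weak derivative $(F')^*$. To upgrade this to the form used in the Leibniz-rule and fundamental-theorem steps, you should invoke the hypotheses of the lemma more explicitly: $F''$ is weakly measurable with $\|F''(\cdot)\|_{\cB(\cH)}\in L^1_\loc$, so on any compact $[x_1,x_2]$ the map $x\mapsto F'(x)$ is recovered (weakly, hence strongly, by separability and Pettis) as a Bochner integral of $F''$, and the same integral representation passes through the adjoint termwise. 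With that said explicitly, the identity $(F^*)''(x)=(F''(x))^*$ a.e.\ and the rest of your argument go through exactly as you wrote; the cancellation of the $F'G'$ cross terms and the application of the Bochner fundamental theorem of calculus then give \eqref{2.53A}. In short: the idea is right and the conclusion holds, but the ``differentiation commutes with adjoints'' step should be justified via the $L^1_\loc$ control on $F''$ rather than asserted as a general fact.
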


Next, following \cite{GWZ12} (see also \cite{GWZ13}), we briefly recall basic 
elements of the Weyl--Titchmarsh theory for self-adjoint Schr\"odinger 
operators $\bsH_{\alpha}$ in $L^2((a,b);\cH)$ associated with the operator-valued differential expression 
$\tau =-(d^2/dx^2)+V(\cdot)$, assuming regularity of the 
left endpoint $a$ and the limit point case at the right endpoint $b$ (see 
Definition \ref{d2.10}). 

As before, $\cH$ denotes a separable Hilbert space and $(a,b)$ denotes a finite or infinite interval. 
One recalls that $L^2((a,b);\cH)$ is separable (since $\cH$ is) 
and that
\begin{equation}
(f,g)_{L^2((a,b);\cH)} =\int_a^b dx \, (f(x),g(x))_\cH, \quad f,g\in L^2((a,b);\cH).
\end{equation}

Still assuming Hypothesis \ref{h2.7}, we are interested in
studying certain self-adjoint operators in $L^2((a,b);\cH)$ associated with the 
operator-valued differential expression $\tau =-(d^2/dx^2)+V(\cdot)$. These will be suitable restrictions of the {\it maximal} operator $\bsH_{\max}$ in $L^2((a,b);\cH)$ defined by
\begin{align}
& \bsH_{\max} f = \tau f,    \\
& f\in \dom(\bsH_{\max})=\big\{g\in L^2((a,b);\cH) \,\big|\, g\in W^{2,1}_{\rm loc}((a,b);\cH); \, 
 \tau g\in L^2((a,b);\cH)\big\}.     \no 
\end{align}
We also introduce the operator $\dot \bsH_{\min}$ in $L^2((a,b);\cH)$ as the restriction of $\bsH_{\max}$ to the domain
\begin{equation}
\dom\big(\dot \bsH_{\min}\big)=\{g\in\dom(\bsH_{\max})\,|\,\supp (u) \, \text{is compact in} \, (a,b)\}.
\end{equation}
Finally, the {\it minimal} operator $\bsH_{\min}$ in $L^2((a,b);\cH)$ associated with $\tau$ is then defined as the closure of $\dot \bsH_{\min}$,
\begin{equation}
\bsH_{\min} = \ol{\dot \bsH_{\min}}.
\end{equation}

\begin{theorem} \label {t2.9}
Assume Hypothesis \ref{h2.7}. Then the operator $\dot \bsH_{\min}$ is densely defined. Moreover, $\bsH_{\max}$ is the adjoint of $\dot \bsH_{\min}$,
\begin{equation}
\bsH_{\max} = (\dot \bsH_{\min})^*.   \lb{3.12a}
\end{equation}
In particular, $\bsH_{\max}$ is closed. In addition, $\dot \bsH_{\min}$ is symmetric and
$\bsH_{\max}^*$ is the closure of $\dot \bsH_{\min}$, that is,
\begin{equation}
\bsH_{\max}^* = \ol{\dot \bsH_{\min}} = \bsH_{\min}.    \lb{3.13a}
\end{equation}
\end{theorem}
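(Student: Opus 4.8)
The plan is to follow the classical template for proving that the minimal and maximal operators associated with a Sturm--Liouville expression are mutually adjoint, adapted to the operator-valued setting via the existence and uniqueness results of Theorem \ref{t2.3} and Corollary \ref{c2.5}, together with the two forms of Green's formula in Lemma \ref{l2.8}. First I would settle density of $\dom\big(\dot\bsH_{\min}\big)$: finite linear combinations $\sum_k \phi_k(\cdot) h_k$ with $\phi_k \in C_0^\infty((a,b))$ and $h_k \in \cH$ lie in $\dom\big(\dot\bsH_{\min}\big)$ (each such function is in $W^{2,1}_{\loc}((a,b);\cH)$, has compact support, and $\tau$ applied to it lies in $L^2((a,b);\cH)$ because $\|V(\cdot)\|_{\cB(\cH)}\in L^1_{\loc}((a,b))$), and this set is dense in $L^2((a,b);\cH)$. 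This gives the first assertion and shows $\dot\bsH_{\min}$ is closable once symmetry is established.

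Next I would treat simultaneously symmetry of $\dot\bsH_{\min}$ and the inclusion $\bsH_{\max} \subseteq \big(\dot\bsH_{\min}\big)^*$. For $f\in\dom(\bsH_{\max})$ and $g\in\dom\big(\dot\bsH_{\min}\big)$, pick $[x_1,x_2]\subset(a,b)$ containing $\supp(g)$ and apply \eqref{2.52A}; since $g$ and $g'$ vanish near $x_1$ and $x_2$, the boundary Wronskian terms $W_*(f,g)(x_j)$ vanish, and using $V(x)=V(x)^*$ for a.e.\ $x$ (Hypothesis \ref{h2.7}) one obtains $(\tau f,g)_{L^2((a,b);\cH)} = (f,\tau g)_{L^2((a,b);\cH)}$. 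Taking $f\in\dom\big(\dot\bsH_{\min}\big)$ shows $\dot\bsH_{\min}$ is symmetric; for general $f\in\dom(\bsH_{\max})$ it shows $f\in\dom\big((\dot\bsH_{\min})^*\big)$ with $(\dot\bsH_{\min})^* f = \tau f = \bsH_{\max} f$.

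The heart of the argument, and the step I expect to be the main obstacle, is the reverse inclusion $\big(\dot\bsH_{\min}\big)^* \subseteq \bsH_{\max}$, which is a ``weak equals strong'' regularity statement. Let $u\in\dom\big((\dot\bsH_{\min})^*\big)$ and put $w=(\dot\bsH_{\min})^* u\in L^2((a,b);\cH)$, so $(\tau g,u)_{L^2}=(g,w)_{L^2}$ for all $g\in\dom\big(\dot\bsH_{\min}\big)$. Fix $x_0\in(a,b)$ and, using Corollary \ref{c2.5} (equivalently, variation of parameters with the fundamental system $\theta_0(0,\cdot,x_0)$, $\phi_0(0,\cdot,x_0)$ of $\tau Y=0$, whose Wronskian is $I_\cH$), construct a strong solution $u_0\in W^{2,1}_{\loc}((a,b);\cH)$ of $\tau u_0 = w$ on $(a,b)$. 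Applying \eqref{2.52A} to $g$ and $u_0$ on an interval containing $\supp(g)$ (boundary terms vanishing as before, and note $u_0\in L^2$ is not needed since only a local integral occurs) gives $(\tau g,u_0)_{L^2}=(g,\tau u_0)_{L^2}=(g,w)_{L^2}$ for all such $g$. Subtracting, $v:=u-u_0\in L^1_{\loc}((a,b);\cH)$ satisfies $\int_a^b\big({-}g''(x)+V(x)g(x),v(x)\big)_\cH\,dx=0$ for every $g$ of the form $\phi(\cdot)h$ with $\phi\in C_0^\infty((a,b))$, $h\in\cH$. The remaining task is to deduce that $v$ agrees a.e.\ with a strong solution of $\tau y=0$; this follows from a Hilbert-space-valued version of the du Bois-Reymond lemma (integrate by parts twice to move all derivatives off $g$, let $h$ run through a dense subset of $\cH$ and $\phi$ through $C_0^\infty((a,b))$, and invoke uniqueness in Theorem \ref{t2.3}), exactly as developed in \cite{GWZ12}. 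Hence $v\in W^{2,1}_{\loc}((a,b);\cH)$ with $\tau v=0$, so $u=u_0+v\in W^{2,1}_{\loc}((a,b);\cH)$ with $\tau u=w\in L^2((a,b);\cH)$, i.e.\ $u\in\dom(\bsH_{\max})$ and $\bsH_{\max} u = w = (\dot\bsH_{\min})^* u$. This proves \eqref{3.12a}, and closedness of $\bsH_{\max}$ is automatic, being the adjoint of a densely defined operator.

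Finally, \eqref{3.13a} is a formal consequence: $\dot\bsH_{\min}$ is densely defined and symmetric, hence closable with $\overline{\dot\bsH_{\min}}=\big(\dot\bsH_{\min}\big)^{**}$; taking adjoints in the already established identity $\bsH_{\max}=\big(\dot\bsH_{\min}\big)^*$ yields $\bsH_{\max}^* = \big(\dot\bsH_{\min}\big)^{**}=\overline{\dot\bsH_{\min}}=\bsH_{\min}$, which is the claim.
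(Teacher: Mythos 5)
The paper states Theorem~\ref{t2.9} without proof, referring to \cite{GWZ12}, \cite{GWZ13} for the details, so the comparison is with the classical scheme carried out there, which is also the scheme you outline: density of $\dom\big(\dot\bsH_{\min}\big)$, the easy inclusion $\bsH_{\max}\subseteq\big(\dot\bsH_{\min}\big)^*$ via Green's formula \eqref{2.52A}, the hard inclusion via a weak-equals-strong regularity argument, and \eqref{3.13a} by taking adjoints in \eqref{3.12a}. Your overall architecture, your symmetry/easy-inclusion step, and your deduction of \eqref{3.13a} from \eqref{3.12a} are correct.

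There is, however, a genuine gap in your density step, and it also infects your Weyl-type regularity step. You claim $\phi(\cdot)h\in\dom\big(\dot\bsH_{\min}\big)$ for $\phi\in C_0^\infty((a,b))$, $h\in\cH$, ``because $\|V(\cdot)\|_{\cB(\cH)}\in L^1_{\loc}((a,b))$.'' But membership in $\dom(\bsH_{\max})$ requires $\tau(\phi h)=-\phi''h+V\phi h\in L^2((a,b);\cH)$, and
\begin{equation*}
\int_a^b \|V(x)\phi(x)h\|_{\cH}^2\,dx=\int_a^b |\phi(x)|^2\,\|V(x)h\|_{\cH}^2\,dx,
\end{equation*}
which in general requires $\|V(\cdot)\|_{\cB(\cH)}\in L^2_{\loc}((a,b))$, not merely $L^1_{\loc}$; already for $\cH=\bbC$ and $V(x)=|x|^{-1/2}$ near the origin one has $V\in L^1_{\loc}\setminus L^2_{\loc}$, and $V\phi\notin L^2$ for every $\phi$ with $\phi(0)\neq 0$. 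So $C_0^\infty((a,b))\otimes\cH$ is in general \emph{not} contained in $\dom\big(\dot\bsH_{\min}\big)$, the density claim does not follow from this, and in the regularity step the identity $(\tau g,v)_{L^2((a,b);\cH)}=0$ is available only for $g\in\dom\big(\dot\bsH_{\min}\big)$, so you may not test against $g=\phi h$. Both steps are repaired---as in \cite{GWZ12}---by constructing compactly supported elements of $\dom\big(\dot\bsH_{\min}\big)$ via variation of parameters: for $[c,d]\subset(a,b)$ and $f\in L^2((c,d);\cH)$ satisfying suitable orthogonality constraints against the $\cB(\cH)$-valued fundamental system of $\tau Y=0$ (Corollary~\ref{c2.5}), the strong solution $y$ of $\tau y=f$ with $y(c)=y'(c)=0$ also has $y(d)=y'(d)=0$ by Green's formula, so its extension by zero lies in $\dom\big(\dot\bsH_{\min}\big)$ with $\tau y=f\chi_{[c,d]}\in L^2((a,b);\cH)$. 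These are the functions one must use both to prove density and as test functions in the Weyl-type lemma.
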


Using the dominated convergence theorem and Green's formula \eqref{2.52A} one can show that $\lim_{x\to a}W_*(u,v)(x)$ and $\lim_{x\to b}W_*(u,v)(x)$ both exist whenever
$u,v\in\dom(\bsH_{\max})$. We will denote these limits by $W_*(u,v)(a)$ and $W_*(u,v)(b)$, respectively. Thus Green's formula also holds for $x_1=a$ and $x_2=b$ if $u$ and $v$ are in $\dom(\bsH_{\max})$, that is,
\begin{equation}\label{3.17A}
(\bsH_{\max}u,v)_{L^2((a,b);\cH)}-(u,\bsH_{\max}v)_{L^2((a,b);\cH)}
= W_*(u,v)(b) - W_*(u,v)(a).
\end{equation}
This relation and the fact that $\bsH_{\min}=\bsH_{\max}^*$ is a restriction of $\bsH_{\max}$ show that
\begin{align}
\begin{split}
& \dom(\bsH_{\min})=\{u\in\dom(\bsH_{\max}) \,|\, W_*(u,v)(b)=W_*(u,v)(a)=0   \\
& \hspace*{6.2cm} \text{ for all } v\in \dom(\bsH_{\max})\}.     \label{3.18A}
\end{split}
\end{align}

\begin{definition} \lb{d2.10}
Assume Hypothesis \ref{h2.7}.
Then the endpoint $a$ $($resp., $b$$)$ is said to be of {\it limit-point type for $\tau$} if
$W_*(u,v)(a)=0$ $($resp., $W_*(u,v)(b)=0$$)$ for all $u,v\in\dom(\bsH_{\max})$.
\end{definition}

Next, we introduce the subspaces
\begin{equation}
\cD_{z}=\{u\in\dom(\bsH_{\max}) \,|\, \bsH_{\max}u=z u\}, \quad z \in \bbC.
\end{equation}
For $z\in\bbC\backslash\bbR$, $\cD_{z}$ represent the deficiency subspaces of
$\bsH_{\min}$. Von Neumann's theory of extensions of symmetric operators implies that
\begin{equation} \label{3.20A}
\dom(\bsH_{\max})=\dom(\bsH_{\min}) \dotplus \cD_i \dotplus \cD_{-i}
\end{equation}
where $\dotplus$ indicates the direct (but not necessarily orthogonal direct) sum.

Next, we determine the self-adjoint restrictions of $\bsH_{\max}$ assuming
that $a$ is a regular endpoint for $\tau$ and $b$ is of limit-point type for $\tau$ 
(e.g., having in mind half-line situations where $a\in \bbR$, $b = \infty$). 

\begin{hypothesis} \lb{h2.11}
In addition to Hypothesis \ref{h2.7} suppose that $a$ is a regular
endpoint for $\tau$ and $b$ is of limit-point type for $\tau$.
\end{hypothesis}

\begin{theorem} \lb{t2.12}
Assume Hypothesis \ref{h2.11}. If $H$ is a self-adjoint restriction of
$\bsH_{\max}$, then there is a bounded and self-adjoint operator $\alpha\in\cB(\cH)$
such that
\begin{equation}
\dom(H)=\{u\in\dom(\bsH_{\max}) \,|\, \sin(\alpha)u'(a) + \cos(\alpha)u(a)=0\}.  \lb{3.29A}
\end{equation}
Conversely, for every $\alpha \in \cB(\cH)$, \eqref{3.29A} gives rise to a self-adjoint restriction of $\bsH_{\max}$ in $L^2((a,b);\cH)$.
\end{theorem}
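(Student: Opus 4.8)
The plan is to reduce everything to the linear algebra of the boundary form at the regular endpoint $a$. On $\cH \oplus \cH$ introduce the sesquilinear form $\omega\big((p,q),(p',q')\big) = (q,p')_{\cH} - (p,q')_{\cH}$, and for $u \in \dom(\bsH_{\max})$ set $\gamma u := (u(a),u'(a)) \in \cH \oplus \cH$, which is well defined because $a$ is regular. Since $b$ is of limit-point type, $W_*(u,v)(b) = 0$ for $u,v \in \dom(\bsH_{\max})$, so Green's formula \eqref{3.17A} reads
$(\bsH_{\max}u,v)_{L^2((a,b);\cH)} - (u,\bsH_{\max}v)_{L^2((a,b);\cH)} = -W_*(u,v)(a) = \omega(\gamma u,\gamma v)$.
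First I would record two facts. (i) The boundary map $\gamma\colon \dom(\bsH_{\max}) \to \cH \oplus \cH$ is onto: given $(h_0,h_1)$, solve $\tau y = 0$ with $y(a)=h_0$, $y'(a)=h_1$ by Theorem \ref{t2.3} (legitimate since $a$ is regular), pick a smooth scalar cutoff $\chi$ equal to $1$ near $a$ with support compact in $[a,b)$, and check $u = \chi y \in \dom(\bsH_{\max})$ with $\gamma u = (h_0,h_1)$; indeed $\tau u = -\chi'' y - 2\chi' y'$ is continuous with compact support, hence in $L^2((a,b);\cH)$. (ii) $\ker(\gamma) = \dom(\bsH_{\min})$, which is \eqref{3.18A} together with the limit-point hypothesis at $b$ and surjectivity of $\gamma$. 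I also note that any self-adjoint restriction $H$ of $\bsH_{\max}$ automatically satisfies $\bsH_{\min} \subseteq H$, since $H \subseteq \bsH_{\max}$ gives $\bsH_{\max}^* \subseteq H^*$ and $\bsH_{\max}^* = \bsH_{\min}$ by \eqref{3.13a}.

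For the ``conversely'' direction, fix self-adjoint $\alpha \in \cB(\cH)$ and write $C = \cos(\alpha)$, $S = \sin(\alpha)$, so $C=C^*$, $S=S^*$, $CS=SC$ and $C^2+S^2 = I_{\cH}$. The key elementary computation is that $L_\alpha := \{(p,q) : Cp + Sq = 0\}$ equals $\{(Sr,-Cr) : r \in \cH\}$ (given $(p,q)\in L_\alpha$ put $r = Sp - Cq$ and use $C^2+S^2=I_\cH$) and is \emph{maximal $\omega$-isotropic}: isotropy is $\omega\big((Sr,-Cr),(Sr',-Cr')\big) = ((CS-SC)r,r')_{\cH} = 0$, while $\omega\big((p_0,q_0),(Sr,-Cr)\big) = ((Cp_0+Sq_0),r)_{\cH}$ vanishes for all $r$ only if $(p_0,q_0)\in L_\alpha$. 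Now let $H$ be $\bsH_{\max}$ restricted to $\dom(H) = \gamma^{-1}(L_\alpha) = \{u \in \dom(\bsH_{\max}) : \sin(\alpha)u'(a) + \cos(\alpha)u(a) = 0\}$. Isotropy of $L_\alpha$ makes $H$ symmetric. For self-adjointness: $\bsH_{\min} \subseteq H$ gives $H^* \subseteq \bsH_{\min}^* = \bsH_{\max}$, and if $v \in \dom(H^*)$ then $\omega(\gamma u,\gamma v) = (\bsH_{\max}u,v) - (u,\bsH_{\max}v) = 0$ for every $u \in \dom(H)$, so $\gamma v$ is $\omega$-orthogonal to $\gamma(\dom(H)) = L_\alpha$ (here surjectivity of $\gamma$ enters), hence $\gamma v \in L_\alpha$ by maximality and $v \in \dom(H)$; thus $H = H^*$.

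For the forward direction, let $H$ be a self-adjoint restriction of $\bsH_{\max}$ and put $L_H := \gamma(\dom(H)) \subseteq \cH \oplus \cH$. Symmetry of $H$ makes $L_H$ $\omega$-isotropic, and then $\dom(H) = \gamma^{-1}(L_H)$, since $\bsH_{\max}|_{\gamma^{-1}(L_H)}$ is symmetric and contains the self-adjoint operator $H$, hence equals it. Self-adjointness of $H$ upgrades $L_H$ to a \emph{Lagrangian} (maximal $\omega$-isotropic) subspace, by the same orthogonality argument as above using surjectivity of $\gamma$. It remains to show every Lagrangian $L \subseteq (\cH \oplus \cH, \omega)$ has the form $L_\alpha$ for some self-adjoint $\alpha \in \cB(\cH)$; I expect this to be the main obstacle, because one cannot simply write $q = -S^{-1}Cp$ — $L$ may be the graph of an unbounded self-adjoint operator, so $S$ need not be invertible. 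The clean route is the Cayley-type bijection between Lagrangian subspaces of $(\cH\oplus\cH,\omega)$ and unitaries $W$ on $\cH$, under which $L = \{\big((I-W)\phi,\ \pm i(I+W)\phi\big) : \phi \in \cH\}$; eliminating $\phi$ (using that $W$ commutes with $I \pm W$) shows that this is precisely the solution set of $Cp + Sq = 0$ once one sets $W = e^{\mp 2i\alpha}$, and every unitary $W$ admits such a representation with $\alpha = \alpha^* \in \cB(\cH)$ via the spectral theorem (take the spectral measure of $W$ supported on $[0,2\pi)$ and integrate the angle). Pulling this back through $\gamma$ gives $\dom(H) = \{u \in \dom(\bsH_{\max}) : \sin(\alpha)u'(a) + \cos(\alpha)u(a) = 0\}$, which is \eqref{3.29A}, completing the proof.
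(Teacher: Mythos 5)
The paper itself gives no proof of Theorem~\ref{t2.12}; the appendix explicitly defers all proofs to \cite{GWZ12}, \cite{GWZ13}, so there is no in-text argument to compare yours against. On its own merits, your strategy is the standard boundary-form reduction: since $a$ is regular and $b$ is limit-point, Green's formula collapses to the sesquilinear form $\omega$ on the boundary trace $\gamma u = (u(a),u'(a))$, and self-adjoint restrictions of $\bsH_{\max}$ correspond to Lagrangian subspaces of $(\cH\oplus\cH,\omega)$. The individual steps you carry out are correct and carefully argued: the cutoff construction showing $\gamma$ is onto $\cH\oplus\cH$; the identification $\ker\gamma=\dom(\bsH_{\min})$ from \eqref{3.18A}; the observation $\bsH_{\min}\subseteq H$ for any self-adjoint restriction $H$; the parametrization $L_\alpha=\{(\sin(\alpha)r,-\cos(\alpha)r):r\in\cH\}$ and its maximal isotropy; and the adjoint-domain argument in both directions, using surjectivity of $\gamma$ to go between the $\omega$-orthogonal complement of $L$ and $\dom(H^*)$.

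The one genuinely incomplete step is the one you flag yourself: the claim that every closed Lagrangian $L\subseteq(\cH\oplus\cH,\omega)$ has the form $L_\alpha$ for a bounded self-adjoint $\alpha$. You assert the Cayley-type bijection with unitaries but do not prove it, and this is precisely where the work lies (one cannot assume $L$ is a graph over the first coordinate, as you correctly note). To close it, conjugate by the unitary $U=\tfrac{1}{\sqrt2}\left(\begin{smallmatrix}I&I\\iI&-iI\end{smallmatrix}\right)$, for which $U^*\Omega U=i\left(\begin{smallmatrix}I&0\\0&-I\end{smallmatrix}\right)$ where $\Omega(p,q)=(q,-p)$ represents $\omega$. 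In the new coordinates $\omega$ becomes $i[(\cdot,\cdot)_1-(\cdot,\cdot)_2]$, so a closed maximal isotropic subspace $M$ (i) contains no nonzero $(x,0)$ or $(0,y)$, hence is a graph $\{(x,Wx)\}$; (ii) isotropy makes $W$ isometric; and (iii) maximality plus closedness forces $\dom(W)=\ran(W)=\cH$, so $W$ is unitary. Translating back gives $L=\{((I+W)x,i(I-W)x):x\in\cH\}$, and writing $W=-e^{2i\alpha}$ (possible for every unitary $W$ by the spectral theorem) one finds, with $x=\tfrac{i}{\sqrt2}e^{-i\alpha}r$, exactly $L=\{(\sin(\alpha)r,-\cos(\alpha)r)\}=L_\alpha$. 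Note, incidentally, that your stated normalization $W=e^{\mp 2i\alpha}$ in $L=\{((I-W)\phi,\pm i(I+W)\phi)\}$ does \emph{not} reproduce $Cp+Sq=0$ on direct substitution (one gets $Sp-Cq=0$ or similar); the correct relation involves an extra minus sign as above. This is a computational slip with no structural consequence, since $\{-e^{2i\alpha}:\alpha=\alpha^*\in\cB(\cH)\}$ is still the full unitary group, but it should be fixed to make the proof check out line by line.
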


Henceforth, under the assumptions of Theorem \ref{t2.12}, we denote the operator $H$ in $L^2((a,b);\cH)$ associated with the boundary condition induced by $\alpha = \alpha^* \in \cB(\cH)$, that is, the restriction of $\bsH_{\max}$ to the set
\begin{equation}
\dom(\bsH_{\alpha})=\{u\in\dom(\bsH_{\max}) \,|\, \sin(\alpha)u'(a)+\cos(\alpha)u(a)=0\}
\end{equation}
by $\bsH_{\alpha}$. 

Our next goal is to construct the square integrable solutions $Y(z,\cdot) \in\cB(\cH)$
of $\tau Y=zY$, $z\in\bbC\backslash\bbR$, the $\cB(\cH)$-valued Weyl--Titchmarsh solutions, under the assumptions that $a$ is a regular endpoint for $\tau$ and $b$ is of limit-point type for $\tau$.

Fix $c\in(a,b)$ and $z \in \rho(\bsH_{\alpha})$. For any $f_0\in\cH$ let 
$f=f_0\chi_{[a,c]}\in L^2((a,b);\cH)$ and 
$u(f_0,z,\cdot)=(\bsH_{\alpha} - z \bsI)^{-1} f \in\dom(\bsH_{\alpha})$. (We recall 
our convention that $\bsI$ denotes the identity operator in $L^2((a,b);\cH)$.) 
By the variation of constants formula,
\begin{align}
\begin{split}
u(f_0,z,x) &=\theta_{\alpha}(z,x,a)\bigg(g(z)+\int_x^c dx' \, \phi_{\alpha}(\ol z,x',a)^* f_0\bigg)\\
& \quad  +\phi_{\alpha}(z,x,a)\bigg(h(z)-\int_x^c dx' \, \theta_{\alpha}(\ol z,x',a)^* f_0\bigg)
 \end{split}
\end{align}
for suitable vectors $g(z) \in \cH$, $h(z) \in \cH$. Since
$u(f_0,z,\cdot)\in\dom(\bsH_{\alpha})$, one infers that
\begin{equation} \label{3.40A}
g(z)=-\int_a^c dx' \, \phi_{\alpha}(\ol z,x',a)^* f_0, \quad z \in \rho(\bsH_{\alpha}),
\end{equation}
and that
\begin{equation} \lb{3.40B}
h(z)=\cos(\alpha)u'(f_0,z,a) - \sin(\alpha)u(f_0,z,a)
+ \int_a^c dx' \, \theta_{\alpha}(\ol z,x',a)^* f_0, \quad z \in \rho(\bsH_{\alpha}).
\end{equation}

\begin{lemma} \lb{l2.13}
Assume Hypothesis \ref{h2.11} and suppose that $\alpha \in \cB(\cH)$ is self-adjoint. In addition, choose $c \in (a,b)$ and introduce $g(\cdot)$ and $h(\cdot)$ as in \eqref{3.40A} and \eqref{3.40B}. Then the maps
\begin{equation}
C_{1,\alpha}(c,z):\begin{cases} \cH\to\cH, \\
f_0\mapsto g(z), \end{cases} \quad
C_{2,\alpha}(c,z): \begin{cases} \cH\to\cH, \\
f_0\mapsto h(z), \end{cases}  \quad z \in \rho(\bsH_{\alpha}),
\end{equation}
are linear and bounded. Moreover, $C_{1,\alpha}(c,\cdot)$ is entire and
$C_{2,\alpha}(c,\cdot)$ is analytic on $\rho(\bsH_{\alpha})$. In addition,
$C_{1,\alpha}(c,z)$ is boundedly invertible if $z\in \bbC\backslash\bbR$ and $c$
is chosen appropriately.
\end{lemma}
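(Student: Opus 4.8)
The plan is to verify the four assertions of the lemma --- linearity, boundedness, analyticity, and bounded invertibility of $C_{1,\alpha}(c,z)$ --- in that order, the last being by far the deepest. Linearity is immediate: $g(z)$ depends linearly on $f_0$ because Bochner integration is linear, and $h(z)$ does because in addition $f_0\mapsto u(f_0,z,\cdot)=(\bsH_\alpha-z\bsI)^{-1}(f_0\chi_{[a,c]})$ is linear. For boundedness of $C_{1,\alpha}(c,z)$ I would estimate $\|g(z)\|_\cH\le\big(\int_a^c dx'\,\|\phi_\alpha(\ol z,x',a)\|_{\cB(\cH)}\big)\|f_0\|_\cH$, the integral being finite because, $a$ being a regular endpoint, $x'\mapsto\phi_\alpha(\ol z,x',a)$ is $\cB(\cH)$-norm continuous on the compact interval $[a,c]$ by Corollary~\ref{c2.5}\,$(i)$. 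For $C_{2,\alpha}(c,z)$ the term $\int_a^c dx'\,\theta_\alpha(\ol z,x',a)^*f_0$ is handled the same way, so everything reduces to the boundary--trace bound $\|u(f_0,z,a)\|_\cH+\|u'(f_0,z,a)\|_\cH\le C(z)\|f_0\|_\cH$. I would obtain this in three steps: $(a)$ the resolvent estimate $\|u(f_0,z,\cdot)\|_{L^2((a,b);\cH)}\le\|(\bsH_\alpha-z\bsI)^{-1}\|\,(c-a)^{1/2}\|f_0\|_\cH$; $(b)$ fixing an interior point $c_0\in(a,b)$, the continuous embedding $W^{2,1}((a,c_0);\cH)\hookrightarrow C^1([a,c_0];\cH)$ together with the differential equation $-u''+Vu=zu+f_0\chi_{[a,c]}$ and an interpolation step absorbing the $L^1_\loc$-potential contribution, yielding $\|u(c_0)\|_\cH+\|u'(c_0)\|_\cH\le C(z)\big(\|u(f_0,z,\cdot)\|_{L^2}+\|f_0\|_\cH\big)$; $(c)$ running the initial value problem backward from $c_0$ to $a$ and invoking the continuous-dependence estimate \eqref{2.1A} of Theorem~\ref{t2.3}. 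All constants produced this way are locally bounded in $z$ on $\rho(\bsH_\alpha)$.

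For analyticity, $C_{1,\alpha}(c,\cdot)$ is entire: by Corollary~\ref{c2.5}\,$(iii)$, $z\mapsto\phi_\alpha(\ol z,x',a)^*$ is $\cB(\cH)$-norm entire, locally uniformly in $x'\in[a,c]$, so $C_{1,\alpha}(c,z)=-\int_a^c dx'\,\phi_\alpha(\ol z,x',a)^*$ is a norm-convergent integral of an entire integrand over a compact set. For $C_{2,\alpha}(c,\cdot)$ on $\rho(\bsH_\alpha)$ the integral term is entire, and for the boundary-value terms I would use that $z\mapsto(\bsH_\alpha-z\bsI)^{-1}$ is $\cB(L^2((a,b);\cH))$-norm analytic on $\rho(\bsH_\alpha)$ together with a difference-quotient argument: the difference $[z-z_0]^{-1}\big(u(f_0,z,\cdot)-u(f_0,z_0,\cdot)\big)-(\bsH_\alpha-z_0\bsI)^{-2}(f_0\chi_{[a,c]})$ satisfies a second-order equation whose $L^1$-inhomogeneity tends to zero as $z\to z_0$, so the trace estimate of steps $(b)$--$(c)$ forces its boundary data to tend to zero; hence $z\mapsto u(f_0,z,a)$ and $z\mapsto u'(f_0,z,a)$ are $\cH$-valued analytic, and together with the local boundedness already obtained this makes $z\mapsto C_{2,\alpha}(c,z)$ norm-analytic on $\rho(\bsH_\alpha)$.

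The main obstacle is the bounded invertibility of $C_{1,\alpha}(c,z)$ for $z\in\bbC\bs\bbR$. The structural fact I would exploit is that $g(z)=C_{1,\alpha}(c,z)f_0$ controls $u(f_0,z,\cdot)$ to the right of $c$: on $(c,b)$ one has $(\tau-z)u(f_0,z,\cdot)=0$ with $u(f_0,z,\cdot)$ square-integrable near the limit-point endpoint $b$, so $u(f_0,z,x)=\psi_+(z,x)\eta$ for $x\in(c,b)$, where $\psi_+(z,\cdot)\in\cB(\cH)$ is the Weyl solution square-integrable near $b$ (whose construction is part of the Weyl--Titchmarsh theory of \cite{GWZ12}, \cite{GWZ13}) and $\eta\in\cH$; comparing with the variation-of-constants representation of $u(f_0,z,x)$ displayed just before the lemma shows that $\eta$ depends on $f_0$ only through $g(z)$, via an invertible normalization operator. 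Hence $C_{1,\alpha}(c,z)f_0=0$ forces $u(f_0,z,\cdot)$ to vanish on $(c,b)$, so $u(f_0,z,\cdot)$ is supported in $[a,c]$, lies in $\dom(\bsH_\alpha)$, has vanishing Cauchy data at $c$, and satisfies $(\bsH_\alpha-z\bsI)u(f_0,z,\cdot)=f_0\chi_{[a,c]}$; pairing this identity with $u(f_0,z,\cdot)$ in $L^2((a,b);\cH)$, applying Green's formula (Lemma~\ref{l2.8}) on $(a,c)$ --- where the boundary term at $c$ vanishes and the one at $a$ is real since $\alpha=\alpha^*$ --- and taking imaginary parts, together with unique continuation for $\tau Y=zY$, then forces $f_0=0$, giving injectivity of $C_{1,\alpha}(c,z)$. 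For surjectivity one writes $C_{1,\alpha}(c,z)$ through the invertible fundamental matrix of $\tau Y=zY$ at $c$ and the Weyl data $\psi_+(z,c),\psi_+'(z,c)$, producing an explicit bounded inverse well defined for all $c$ outside an exceptional set; the phrase ``$c$ chosen appropriately'' refers precisely to choosing $c$ outside that set.

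I expect the genuinely hard step to be this last one: making rigorous, in the operator-valued rather than the scalar setting, the identification of $u(f_0,z,\cdot)|_{(c,b)}$ with $\psi_+(z,\cdot)\eta$, the non-degeneracy of the normalization operator linking $\eta$ to $g(z)$, and the exact sense in which ``appropriate $c$'' yields surjectivity --- all of which rest on the limit-point Weyl--Titchmarsh theory at $b$ of \cite{GWZ12}, \cite{GWZ13}. The boundary--trace estimate underlying the boundedness and analyticity parts, although routine in spirit, also requires some care precisely because $V(\cdot)$ is only assumed to lie in $L^1_\loc$.
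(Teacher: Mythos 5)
The paper does not actually supply a proof of Lemma~\ref{l2.13}: the opening paragraph of Appendix~\ref{sA} states explicitly that all proofs for this preparatory material are to be found in the references \cite{GWZ12}, \cite{GWZ13}, and indeed no proof environment follows the lemma. So there is no internal proof to compare you against; what I can do is assess your proposal against the logical structure of the appendix itself.

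Your treatment of linearity, boundedness, and analyticity is plausible in outline (and matches what one would expect), although step~$(b)$ of your boundary--trace bound is circular as written: to control $\|u''\|_{L^1}$ from $-u''+Vu=zu+f_0\chi_{[a,c]}$ you need $\|u\|_{L^\infty}$, which is what you are trying to estimate. This is fixable via a Gronwall-type absorption on a short interval, but it must be spelled out; the reference to an unspecified ``interpolation step'' leaves the gap in place.

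The more serious problem is in your invertibility argument, and it is a genuine one of logical order rather than mere detail. You invoke the Weyl solution $\psi_+(z,\cdot)$ at $b$ and the fact that the $L^2$-near-$b$ solutions of $(\tau-z)u=0$ form exactly the family $\{\psi_+(z,\cdot)\eta:\eta\in\cH\}$. But in the appendix's development the operator-valued Weyl solution $\psi_\alpha(z,\cdot)$ is \emph{defined} in \eqref{3.49A} precisely by using the inverse $C_{1,\alpha}(c,z)^{-1}$ that you are trying to produce, and the parameterization of $\cD_z$ by $\cH$ (Lemma~\ref{l2.14}) is downstream of that construction. So the existence and structure of $\psi_+$ cannot be taken for granted at this stage without importing an independent construction --- which would itself need a proof comparable in depth to the lemma. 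In addition, the Green's-formula-plus-imaginary-parts step you propose for injectivity yields only the identity $-\Im(z)\,\|u\|^2_{L^2}=\Im\!\int_a^c(u,f_0)_\cH\,dx$, which is exactly the self-adjointness of $\bsH_\alpha$ applied to $(\bsH_\alpha-z\bsI)u=f_0\chi_{[a,c]}$ and therefore holds for \emph{every} $f_0$; it places no constraint forcing $f_0=0$. Something beyond that identity (and beyond homogeneous unique continuation, which does not apply to the inhomogeneous equation $-u''+(V-z)u=f_0$ on $(a,c)$) is required, and this is where the actual work lies. The role of ``$c$ chosen appropriately'' also suggests the intended argument is more local to the regular endpoint $a$ (e.g., a Taylor expansion of $-\int_a^c\phi_\alpha(\ol z,x',a)^*\,dx'$ in $c-a$ together with the relation $\sin^2\alpha+\cos^2\alpha=I_\cH$), rather than a global argument hinging on the limit-point endpoint $b$.
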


Using the bounded invertibility of $C_{1,\alpha}(c,z)$ one now defines 
\begin{equation}
\psi_{\alpha}(z,x)=\theta_{\alpha}(z,x,a)
+ \phi_{\alpha}(z,x,a)C_{2,\alpha}(c,z)C_{1,\alpha}(c,z)^{-1},
\quad z \in \bbC\backslash\bbR, \; x \in [a,b),     \lb{3.49A}
\end{equation}
still assuming Hypothesis \ref{h2.11} and $\alpha = \alpha^* \in \cB(\cH)$. By Lemma \ref{l2.13}, $\psi_{\alpha}(\cdot,x)$ is analytic on
$z \in \bbC\backslash\bbR$ for fixed $x \in [a,b]$.

Since $\psi_{\alpha}(z,\cdot) f_0$ is the solution of the initial value problem
\begin{equation}
\tau y =z y, \quad y(c)=u(f_0,z,c), \; y'(c)=u'(f_0,z,c), \quad z \in \bbC\backslash\bbR,
\end{equation}
the function $\psi_{\alpha}(z,x)C_{1,\alpha}(z,c)f_0$ equals
$u(f_0,z,x)$ for $x\geq c$, and thus is square integrable for every choice of $f_0\in\cH$.
In particular, choosing $c \in (a,b)$ such that $C_{1,\alpha}(z,c)^{-1} \in \cB(\cH)$, one infers that
\begin{equation}
\int_a^b dx \, \|\psi_{\alpha}(z,x) f\|_{\cH}^2 < \infty, \quad
f \in \cH, \; z \in \bbC\backslash\bbR.
\end{equation}

Every $\cH$-valued solution of $\tau y=z y$ may be written as
\begin{equation}
y=\theta_\alpha(z,\cdot,a)f_{\alpha,a} + \phi_\alpha(z,\cdot,a)g_{\alpha,a},
\end{equation}
with
\begin{equation}
f_{\alpha,a}=(\cos\alpha)y(a)+(\sin\alpha)y'(a), \quad
g_{\alpha,a}=-(\sin\alpha)y(a)+(\cos\alpha)y'(a).
\end{equation}
Hence one can define the maps
\begin{align}
& \cC_{1,\alpha,z}:\begin{cases} \cD_z\to\cH, \\
\theta_\alpha(z,\cdot,a) f_{\alpha,a}
+ \phi_\alpha(z,\cdot,a) g_{\alpha,a} \mapsto f_{\alpha,a}, \end{cases} \\
& \cC_{2,\alpha,z}: \begin{cases} \cD_z\to\cH, \\
\theta_\alpha(z,\cdot,a) f_{\alpha,a} + \phi_\alpha(z,\cdot,a) g_{\alpha,a} \mapsto g_{\alpha,a}. \end{cases}
\end{align}

\begin{lemma} \label{l2.14}
Assume Hypothesis \ref{h2.11}, suppose that $\alpha \in \cB(\cH)$ is self-adjoint, and
let $z\in\bbC\backslash\bbR$. Then the operators
$\cC_{1,\alpha,z}$ and $\cC_{2,\alpha,z}$ are linear bijections and hence 
\begin{equation}
\cC_{1,\alpha,z}, \, \cC_{1,\alpha,z}^{-1}, \, \cC_{2,\alpha,z}, \, 
\cC_{2,\alpha,z}^{-1} \in \cB(\cH).   \lb{3.57a}
\end{equation}
\end{lemma}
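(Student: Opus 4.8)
The plan is to identify $\cD_z$ explicitly as the range of the Weyl--Titchmarsh solution $\psi_{\alpha}(z,\cdot)$, read off bijectivity of $\cC_{1,\alpha,z}$ and $\cC_{2,\alpha,z}$ from this, and then obtain the boundedness assertions in \eqref{3.57a} from the closed graph and open mapping theorems. First I would fix $c\in(a,b)$ with $C_{1,\alpha}(c,z)$ boundedly invertible (Lemma \ref{l2.13}) and set $M=C_{2,\alpha}(c,z)C_{1,\alpha}(c,z)^{-1}\in\cB(\cH)$, so that $\psi_{\alpha}(z,\cdot)f=\theta_{\alpha}(z,\cdot,a)f+\phi_{\alpha}(z,\cdot,a)Mf$ for $f\in\cH$; by the discussion preceding the lemma, $\psi_{\alpha}(z,\cdot)f\in L^2((a,b);\cH)$ solves $\tau y=zy$, hence lies in $\cD_z$, and by uniqueness of the $(\theta_{\alpha},\phi_{\alpha})$-expansion the solution $u=\psi_{\alpha}(z,\cdot)f$ has parameters $f_{\alpha,a}=f$ and $g_{\alpha,a}=Mf$. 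The key claim is that $\cD_z=\{\psi_{\alpha}(z,\cdot)f\mid f\in\cH\}$: given an arbitrary $u\in\cD_z$ with parameters $f_{\alpha,a},g_{\alpha,a}$, the difference $u-\psi_{\alpha}(z,\cdot)f_{\alpha,a}=\phi_{\alpha}(z,\cdot,a)w$ with $w:=g_{\alpha,a}-Mf_{\alpha,a}$ is an $L^2$-solution of $\tau y=zy$ satisfying the boundary condition $\sin(\alpha)y'(a)+\cos(\alpha)y(a)=0$ (since $\phi_{\alpha}$ does), hence lies in $\dom(\bsH_{\alpha})$; as $\bsH_{\alpha}$ is self-adjoint (Theorem \ref{t2.12}) and $z\in\bbC\backslash\bbR\subset\rho(\bsH_{\alpha})$, it vanishes identically, and evaluating at $x=a$ gives $-\sin(\alpha)w=0=\cos(\alpha)w$, so $w=0$ and $u=\psi_{\alpha}(z,\cdot)f_{\alpha,a}$. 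Thus $f\mapsto\psi_{\alpha}(z,\cdot)f$ is a linear bijection of $\cH$ onto $\cD_z$ whose inverse is precisely $\cC_{1,\alpha,z}$, and the same computation shows $\cC_{2,\alpha,z}=M\,\cC_{1,\alpha,z}$ on $\cD_z$.

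To see that $\cC_{2,\alpha,z}$ is a bijection as well I would pass to the rotated boundary angle $\beta:=\alpha-\tfrac{\pi}{2}I_{\cH}=\beta^{*}\in\cB(\cH)$. By uniqueness of solutions of the initial value problem (Corollary \ref{c2.5}) together with the spectral-theorem identities $\sin(\beta)=-\cos(\alpha)$, $\cos(\beta)=\sin(\alpha)$, one has $\theta_{\alpha}=\phi_{\beta}$ and $\phi_{\alpha}=-\theta_{\beta}$ throughout $(a,b)$. Applying the previous paragraph with $\alpha$ replaced by $\beta$ yields the $L^2$-solutions $\psi_{\beta}(z,\cdot)h\in\cD_z$, $h\in\cH$; rewriting $\psi_{\beta}(z,\cdot)h=\theta_{\beta}(z,\cdot,a)h+\phi_{\beta}(z,\cdot,a)M_{\beta}h$ in the $(\theta_{\alpha},\phi_{\alpha})$-basis gives $\psi_{\beta}(z,\cdot)h=\theta_{\alpha}(z,\cdot,a)(M_{\beta}h)+\phi_{\alpha}(z,\cdot,a)(-h)$, so its $g_{\alpha,a}$-parameter equals $-h$; letting $h$ range over $\cH$ shows $\cC_{2,\alpha,z}$ is surjective. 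Conversely, if $u\in\cD_z$ has $g_{\alpha,a}=0$, then $u$ satisfies the $\beta$-boundary condition at $a$, hence lies in $\dom(\bsH_{\beta})$ and is annihilated by $\bsH_{\beta}-z$ with $z\in\rho(\bsH_{\beta})$, forcing $u=0$; thus $\cC_{2,\alpha,z}$ is injective. (Equivalently, $M=\cC_{2,\alpha,z}\cC_{1,\alpha,z}^{-1}$ is a bijection of $\cH$.)

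For the boundedness statements I would invoke the closed graph theorem on the Banach space $\cD_z=\ker(\bsH_{\max}-z\bsI)$, which is a closed subspace of $L^2((a,b);\cH)$ since $\bsH_{\max}$ is closed (Theorem \ref{t2.9}). The map $f\mapsto\psi_{\alpha}(z,\cdot)f$, $\cH\to\cD_z$, has closed graph: if $f_n\to f$ in $\cH$ and $\psi_{\alpha}(z,\cdot)f_n\to v$ in $L^2((a,b);\cH)$, then $\psi_{\alpha}(z,x)f_n\to\psi_{\alpha}(z,x)f$ pointwise because $\psi_{\alpha}(z,x)\in\cB(\cH)$, while a subsequence of $\psi_{\alpha}(z,\cdot)f_n$ converges to $v$ pointwise a.e., so $v=\psi_{\alpha}(z,\cdot)f$. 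Hence this map is bounded, and by the open mapping theorem $\cC_{1,\alpha,z}\in\cB(\cD_z,\cH)$ and $\cC_{1,\alpha,z}^{-1}\in\cB(\cH,\cD_z)$. Since $\cC_{2,\alpha,z}=M\,\cC_{1,\alpha,z}$ with $M\in\cB(\cH)$ boundedly invertible by the previous step, also $\cC_{2,\alpha,z}$ and $\cC_{2,\alpha,z}^{-1}=\cC_{1,\alpha,z}^{-1}M^{-1}$ are bounded, which establishes \eqref{3.57a}.

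The main obstacle I anticipate is the exact identification $\cD_z=\psi_{\alpha}(z,\cdot)\cH$, i.e.\ ruling out spurious square-integrable solutions of $\tau y=zy$: this is precisely where both the limit-point hypothesis at $b$ (entering through self-adjointness of $\bsH_{\alpha}$ in Theorem \ref{t2.12}) and the bounded invertibility of $C_{1,\alpha}(c,z)$ from Lemma \ref{l2.13} are used. The auxiliary rotation $\beta=\alpha-\tfrac{\pi}{2}I_{\cH}$ needed to handle $\cC_{2,\alpha,z}$ is the one extra wrinkle; once these structural facts are in place, the passage to boundedness is soft.
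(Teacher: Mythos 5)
The paper itself does not supply a proof of Lemma \ref{l2.14}; the appendix explicitly defers all proofs to the references \cite{GWZ12}, \cite{GWZ13}, so there is no ``paper's own proof'' to compare against. Evaluating your argument on its own terms, it is correct and complete, and the structure is the natural one.

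Your three ingredients all check out. For the identification $\cD_z=\psi_{\alpha}(z,\cdot)\cH$: the difference $u-\psi_{\alpha}(z,\cdot)f_{\alpha,a}=\phi_{\alpha}(z,\cdot,a)w$ is an $L^2$ solution satisfying the $\alpha$-boundary condition at $a$ (since $\sin(\alpha)\phi'_{\alpha}(z,a,a)+\cos(\alpha)\phi_{\alpha}(z,a,a)=\sin(\alpha)\cos(\alpha)-\cos(\alpha)\sin(\alpha)=0$ using commutativity from the spectral theorem), hence lies in $\ker(\bsH_{\alpha}-z\bsI)=\{0\}$; and $w=\sin^2(\alpha)w+\cos^2(\alpha)w=0$ once both $\sin(\alpha)w$ and $\cos(\alpha)w$ vanish. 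The $\pi/2$ rotation $\beta=\alpha-\tfrac{\pi}{2}I_{\cH}$ is a nice device for $\cC_{2,\alpha,z}$: it cleanly avoids the circularity of appealing to invertibility of $m_{\alpha}(z)$ (which is only \emph{defined} via $\cC_{2,\alpha,z}\cC_{1,\alpha,z}^{-1}$ in \eqref{3.57A} once this lemma is in place, and whose Herglotz-type invertibility is established later). Your verification that $\theta_{\alpha}=\phi_{\beta}$ and $\phi_{\alpha}=-\theta_{\beta}$ via the initial data in \eqref{2.5A} and the spectral-theorem identities $\sin(\beta)=-\cos(\alpha)$, $\cos(\beta)=\sin(\alpha)$ is correct. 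Finally, the closed-graph/open-mapping step is sound: $\cD_z=\ker(\bsH_{\max}-z\bsI)$ is closed since $\bsH_{\max}$ is closed, and the pointwise-a.e.\ subsequence argument for the graph of $f\mapsto\psi_{\alpha}(z,\cdot)f$ works in $L^2((a,b);\cH)$ just as in the scalar case. One could shortcut the closed-graph step by observing that $f_0\mapsto u(f_0,z,\cdot)=(\bsH_{\alpha}-z\bsI)^{-1}(f_0\chi_{[a,c]})$ is manifestly bounded $\cH\to L^2((a,b);\cH)$ and composing with $C_{1,\alpha}(c,z)^{-1}\in\cB(\cH)$, then handling $[a,c]$ by norm-continuity of $\psi_{\alpha}(z,\cdot)$; but this is a cosmetic alternative to what you already have.
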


At this point one is finally in a position to define the 
Weyl--Titchmarsh $m$-function for $z\in\bbC\backslash\bbR$ by setting
\begin{equation} \label{3.57A}
m_{\alpha}(z)=\cC_{2,\alpha,z}\cC_{1,\alpha,z}^{-1}, \quad
z\in\bbC\backslash\bbR.
\end{equation}

\begin{theorem} \label{t2.15}
Assume Hypothesis \ref{h2.11} and that $\alpha \in \cB(\cH)$ is self-adjoint. Then 
\begin{equation}
m_{\alpha}(z) \in \cB(\cH), \quad z\in\bbC\backslash\bbR,   \lb{3.57B}
\end{equation}
and $m_{\alpha}(\cdot)$ is analytic on $\bbC\backslash\bbR$. Moreover,
\begin{equation}
m_{\alpha}(z)=m_{\alpha}(\ol z)^*, \quad z \in \bbC\backslash\bbR.    \lb{3.59A}
\end{equation}
\end{theorem}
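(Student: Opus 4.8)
The plan is to treat the three assertions of Theorem~\ref{t2.15} in turn. The boundedness \eqref{3.57B} is immediate from Lemma~\ref{l2.14}: for $z\in\bbC\backslash\bbR$ both $\cC_{2,\alpha,z}$ and $\cC_{1,\alpha,z}^{-1}$ lie in $\cB(\cH)$, hence so does their product $m_\alpha(z)=\cC_{2,\alpha,z}\cC_{1,\alpha,z}^{-1}$.

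For analyticity I would first identify $m_\alpha(z)$ with $C_{2,\alpha}(c,z)C_{1,\alpha}(c,z)^{-1}$ for any $c\in(a,b)$ at which $C_{1,\alpha}(c,z)$ is boundedly invertible. Indeed, by \eqref{3.49A} one has, for every $f_0\in\cH$, the representation $\psi_\alpha(z,\cdot)f_0=\theta_\alpha(z,\cdot,a)f_0+\phi_\alpha(z,\cdot,a)\big[C_{2,\alpha}(c,z)C_{1,\alpha}(c,z)^{-1}f_0\big]$ of the square-integrable solution $\psi_\alpha(z,\cdot)f_0\in\cD_z$ in the fundamental system, and by uniqueness of such expansions this exhibits the coefficients mapped out by $\cC_{1,\alpha,z}$ and $\cC_{2,\alpha,z}$; thus $\cC_{1,\alpha,z}\psi_\alpha(z,\cdot)f_0=f_0$ and $\cC_{2,\alpha,z}\psi_\alpha(z,\cdot)f_0=C_{2,\alpha}(c,z)C_{1,\alpha}(c,z)^{-1}f_0$, whence $m_\alpha(z)=\cC_{2,\alpha,z}\cC_{1,\alpha,z}^{-1}=C_{2,\alpha}(c,z)C_{1,\alpha}(c,z)^{-1}$. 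In particular the right-hand side is independent of the admissible choice of $c$. By Lemma~\ref{l2.13}, $C_{1,\alpha}(c,\cdot)$ is entire, $C_{2,\alpha}(c,\cdot)$ is analytic on $\rho(\bsH_\alpha)$, and $C_{1,\alpha}(c,z)^{-1}\in\cB(\cH)$ for $z\in\bbC\backslash\bbR$ once $c$ is chosen suitably; hence near each $z_0\in\bbC\backslash\bbR$ the function $m_\alpha(\cdot)$ coincides with the analytic operator-valued function $C_{2,\alpha}(c,\cdot)C_{1,\alpha}(c,\cdot)^{-1}$, and since these local representatives all equal the $c$-independent $m_\alpha(\cdot)$ they patch together to give analyticity of $m_\alpha(\cdot)$ on $\bbC\backslash\bbR$.

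The relation \eqref{3.59A} is the substantive part, and I would prove it by evaluating the operator Wronskian $W(\psi_\alpha(\ol z,\cdot)^*,\psi_\alpha(z,\cdot))$ in two ways. Since $V=V^*$, the function $\psi_\alpha(\ol z,\cdot)^*$ is a $\cB(\cH)$-valued solution of $-Y''+Y(V-z)=0$, so by the remark following \eqref{2.33A} (equivalently Lemma~\ref{l2.8}(ii)) $W(\psi_\alpha(\ol z,\cdot)^*,\psi_\alpha(z,\cdot))$ is independent of $x\in(a,b)$. Evaluating at $x=a$ via the initial conditions \eqref{2.5A} together with $\psi_\alpha(z,a)=\cos(\alpha)-\sin(\alpha)m_\alpha(z)$ and $\psi_\alpha'(z,a)=\sin(\alpha)+\cos(\alpha)m_\alpha(z)$ (read off from \eqref{3.49A} and the previous step), a short computation using $\sin^2(\alpha)+\cos^2(\alpha)=I_\cH$ and $[\sin(\alpha),\cos(\alpha)]=0$ collapses everything to $W(\psi_\alpha(\ol z,\cdot)^*,\psi_\alpha(z,\cdot))(a)=m_\alpha(z)-m_\alpha(\ol z)^*$. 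On the other hand, for $f,g\in\cH$ one has the pointwise identity $W_*(\psi_\alpha(\ol z,\cdot)g,\psi_\alpha(z,\cdot)f)(x)=\big(g,W(\psi_\alpha(\ol z,\cdot)^*,\psi_\alpha(z,\cdot))(x)f\big)_\cH$, and this scalar Wronskian is again $x$-independent because $\psi_\alpha(z,\cdot)f$ solves $-y''+(V-z)y=0$ while $\psi_\alpha(\ol z,\cdot)g$ solves $-y''+(V-\ol z)y=0=-y''+(V-z)^*y=0$. Since $\psi_\alpha(z,\cdot)f,\psi_\alpha(\ol z,\cdot)g\in\dom(\bsH_{\max})$ — they are square integrable by the construction preceding Lemma~\ref{l2.13}, lie in $W^{2,1}_{\rm loc}$ as solutions, and satisfy $\tau(\psi_\alpha(z,\cdot)f)=z\psi_\alpha(z,\cdot)f\in L^2$ — the limit-point hypothesis at $b$ (Definition~\ref{d2.10}) forces $W_*(\psi_\alpha(\ol z,\cdot)g,\psi_\alpha(z,\cdot)f)$ to vanish as $x\to b$, hence identically; taking $x=a$ gives $\big(g,[m_\alpha(z)-m_\alpha(\ol z)^*]f\big)_\cH=0$ for all $f,g\in\cH$, that is, \eqref{3.59A}.

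The hard part will be the bookkeeping in this last step: one must carefully distinguish the two Wronskian conventions $W(\cdot,\cdot)$ and $W_*(\cdot,\cdot)$, verify that $\psi_\alpha(z,\cdot)$ and $\psi_\alpha(\ol z,\cdot)$ form a compatible pair precisely because $(V-z)^*=V-\ol z$, and legitimately invoke the limit-point property at $b$, which presupposes checking that $\psi_\alpha(z,\cdot)f$ genuinely belongs to $\dom(\bsH_{\max})$ rather than merely being $L^2$ near $b$. By contrast, the computation at the regular endpoint $a$ and the analyticity argument are routine once the formula $m_\alpha(z)=C_{2,\alpha}(c,z)C_{1,\alpha}(c,z)^{-1}$ is in hand.
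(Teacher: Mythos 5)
Your proposal is correct, and it follows the standard Weyl--Titchmarsh argument that the paper itself sketches (for the analogous full-line result) in equations \eqref{2.60}--\eqref{2.62}. The paper does not actually reproduce a proof of Theorem \ref{t2.15} --- it defers to the cited references \cite{GWZ12}, \cite{GWZ13} --- but your three-step structure (boundedness from Lemma \ref{l2.14}, analyticity from the local representation $m_\alpha(z)=C_{2,\alpha}(c,z)C_{1,\alpha}(c,z)^{-1}$ and Lemma \ref{l2.13}, and the reality relation \eqref{3.59A} via a Wronskian computation at $a$ together with the limit-point vanishing at $b$) is precisely the right route. The one place where you are careful, and need to be, is in matching up the two Wronskian conventions $W$ and $W_*$: the identity $W_*(\psi_\alpha(\ol z,\cdot)g,\psi_\alpha(z,\cdot)f)(x)=\big(g,W(\psi_\alpha(\ol z,\cdot)^*,\psi_\alpha(z,\cdot))(x)f\big)_\cH$, the observation that $f_1=\psi_\alpha(\ol z,\cdot)g$ solves the equation with $Q^*=V-\ol z$ while $f_2=\psi_\alpha(z,\cdot)f$ solves the one with $Q=V-z$ (so that \eqref{2.32A} applies and the Wronskian is constant), and the check that $\psi_\alpha(z,\cdot)f$ lies in $\dom(\bsH_{\max})$ so Definition \ref{d2.10} may be invoked at $b$ --- all of these you address correctly. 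The evaluation at $a$ using the initial conditions \eqref{2.5A} and the commutativity $[\sin\alpha,\cos\alpha]=0$ indeed collapses to $m_\alpha(z)-m_\alpha(\ol z)^*$ as you claim, since the $\sin^2\alpha$, $\cos^2\alpha$ terms multiply $m_\alpha(z)$ from the left and $m_\alpha(\ol z)^*$ from the right and so can be summed without any commutation issues. The argument is complete.
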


Thus, the $\cB(\cH)$-valued function $\psi_{\alpha}(z,\cdot)$
in \eqref{3.49A} can be rewritten in the form
\begin{equation} \label{3.58A}
\psi_{\alpha}(z,x)=\theta_{\alpha}(z,x,a)+\phi_{\alpha}(z,x,a)m_{\alpha}(z),
\quad z \in \bbC\backslash\bbR, \; x \in [a,b).
\end{equation}
In particular, this implies that $\psi_{\alpha}(z,\cdot)$ is independent of
the choice of the parameter $c \in (a,b)$ in \eqref{3.49A}.
Following the tradition in the scalar case ($\dim(\cH) = 1$), one calls 
$\psi_{\alpha}(z,\cdot)$ the {\it Weyl--Titchmarsh} solution associated
with $\tau Y = z Y$.

With the aid of the Weyl--Titchmarsh solutions one can now give a detailed description of the resolvent
$\bsR_{z,\alpha} = (\bsH_{\alpha} - z \bsI)^{-1}$ of $\bsH_{\alpha}$.

\begin{theorem}\label{t2.16}
Assume Hypothesis \ref{h2.11} and that $\alpha \in \cB(\cH)$ is self-adjoint. Then the 
resolvent of $\bsH_{\alpha}$ is an integral operator of the type
\begin{align}
\begin{split}
\big((\bsH_{\alpha} - z \bsI)^{-1} u\big)(x)
= \int_a^b dx' \, G_{\alpha}(z,x,x')u(x'),& \\
u \in L^2((a,b);\cH), \; z \in \rho(\bsH_{\alpha}), \; x \in [a,b),&
\end{split}
\end{align}
with the $\cB(\cH)$-valued Green's function $G_{\alpha}(z,\cdot,\cdot) $ given by
\begin{equation} \label{3.63A}
G_{\alpha}(z,x,x') = \begin{cases}
\phi_{\alpha}(z,x,a) \psi_{\alpha}(\ol{z},x')^*, & a\leq x \leq x'<b, \\
\psi_{\alpha}(z,x) \phi_{\alpha}(\ol{z},x',a)^*, & a\leq x' \leq x<b,
\end{cases}  \quad z\in\bbC\backslash\bbR.
\end{equation}
\end{theorem}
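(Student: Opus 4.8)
The plan is to prove \eqref{3.63A} by showing that the integral operator $\bsR_z$ on $L^2((a,b);\cH)$ with $\cB(\cH)$-valued kernel $G_\alpha(z,\cdot,\cdot)$ given by the right-hand side of \eqref{3.63A} coincides with the resolvent $(\bsH_\alpha - z\bsI)^{-1}$, which for $z\in\bbC\backslash\bbR$ exists and is bounded by self-adjointness of $\bsH_\alpha$ (and more generally for $z\in\rho(\bsH_\alpha)$, where $\psi_\alpha(z,\cdot)$ is understood via \eqref{3.58A}). Since $\bsH_\alpha - z\bsI$ is injective for such $z$, it suffices to verify, on a dense subset of $L^2((a,b);\cH)$, that $w := \bsR_z u$ lies in $\dom(\bsH_\alpha)$ and satisfies $(\tau - z)w = u$, and then to extend the resulting identity by boundedness of $\bsR_z$ and of the resolvent. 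I would take the dense set to be the functions $u\in L^2((a,b);\cH)$ whose support is a compact subset of $(a,b)$.

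For such $u$ one writes, for $x\in(a,b)$,
\[
w(x) = \phi_\alpha(z,x,a)\int_x^b dx'\,\psi_\alpha(\ol z,x')^* u(x') + \psi_\alpha(z,x)\int_a^x dx'\,\phi_\alpha(\ol z,x',a)^* u(x').
\]
The heart of the verification is the differentiation of $w$. Upon differentiating the variable limits of integration, the two boundary contributions combine into $\big[\psi_\alpha(z,x)\phi_\alpha(\ol z,x,a)^* - \phi_\alpha(z,x,a)\psi_\alpha(\ol z,x)^*\big]u(x)$, which vanishes identically: substituting $\psi_\alpha(z,\cdot) = \theta_\alpha(z,\cdot,a) + \phi_\alpha(z,\cdot,a)m_\alpha(z)$ from \eqref{3.58A} and using $m_\alpha(\ol z)^* = m_\alpha(z)$ from \eqref{3.59A}, this bracket reduces to $\theta_\alpha(z,x,a)\phi_\alpha(\ol z,x,a)^* - \phi_\alpha(z,x,a)\theta_\alpha(\ol z,x,a)^*$, which is $0$ because the $\cB(\cH)$-valued fundamental matrix built from $\theta_\alpha,\phi_\alpha$ and their strong derivatives has, by the symplectic structure of $\tau Y = zY$ together with the rotation-type initial data \eqref{2.5A}, inverse expressed through $\theta_\alpha(\ol z,\cdot,a)^*$ and $\phi_\alpha(\ol z,\cdot,a)^*$; equivalently, $W(\psi_\alpha(\ol z,\cdot)^*,\phi_\alpha(z,\cdot,a)) \equiv I_\cH$. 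Differentiating once more and using the companion operator identity for the derivative factors (namely $\psi_\alpha'(z,x)\phi_\alpha(\ol z,x,a)^* - \phi_\alpha'(z,x,a)\psi_\alpha(\ol z,x)^* \equiv -I_\cH$, from the same source), together with the fact that $\phi_\alpha(z,\cdot,a)$ and $\psi_\alpha(z,\cdot)$ solve $\tau Y = zY$, one finds $w\in W^{2,1}_{\loc}((a,b);\cH)$ with $(\tau - z)w = u$ a.e.\ on $(a,b)$. Moreover, for $x$ below $\inf(\supp u)$ one has $w(x) = \phi_\alpha(z,x,a)\,\xi$ with the fixed vector $\xi = \int_a^b \psi_\alpha(\ol z,x')^* u(x')\,dx' \in \cH$, so $w$ inherits the boundary condition $\sin(\alpha)w'(a) + \cos(\alpha)w(a) = 0$ at the regular endpoint $a$ from $\phi_\alpha(z,\cdot,a)$; while for $x$ above $\sup(\supp u)$ one has $w(x) = \psi_\alpha(z,x)\,\eta$ with $\eta = \int_a^b \phi_\alpha(\ol z,x',a)^* u(x')\,dx' \in \cH$, so $w$ is square-integrable near $b$ because $\psi_\alpha(z,\cdot)g \in L^2$ near $b$ for every $g\in\cH$ by construction (cf.\ Lemma \ref{l2.13} and the discussion following \eqref{3.49A}). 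Together with $\tau w = zw + u \in L^2((a,b);\cH)$, this yields $w\in\dom(\bsH_\alpha)$ and $(\bsH_\alpha - z\bsI)w = u$, hence $\bsR_z u = (\bsH_\alpha - z\bsI)^{-1}u$ for every compactly supported $u$.

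It then remains to show $\bsR_z\in\cB\big(L^2((a,b);\cH)\big)$, after which the identity $\bsR_z = (\bsH_\alpha - z\bsI)^{-1}$ extends from the dense set of compactly supported functions to all of $L^2((a,b);\cH)$ by continuity, and \eqref{3.63A} follows. \textbf{The main obstacle is precisely this boundedness in the operator-valued setting.} On compact subintervals of $(a,b)$ the kernel $G_\alpha(z,\cdot,\cdot)$ is bounded in $\cB(\cH)$-norm (the endpoint $a$ being regular) and poses no difficulty; near the limit-point endpoint $b$, however, the construction only furnishes $\psi_\alpha(z,\cdot)g\in L^2$ for each individual $g\in\cH$, which in infinite dimensions is strictly weaker than $\|\psi_\alpha(z,\cdot)\|_{\cB(\cH)}\in L^2$ near $b$. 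I would resolve this by factoring through $C_{1,\alpha}(c,z)$: choosing $c\in(a,b)$ with $C_{1,\alpha}(c,z)^{-1}\in\cB(\cH)$ (Lemma \ref{l2.13}), one has $\psi_\alpha(z,x)C_{1,\alpha}(c,z)f_0 = u(f_0,z,x)$ for $x\ge c$, where $u(f_0,z,\cdot)$ solves the relevant inhomogeneous initial value problem and is controlled, jointly in $f_0$, by the continuity estimate \eqref{2.1A} of Theorem \ref{t2.3}; combined with the boundedness of the map $\cH\ni f_0\mapsto \psi_\alpha(z,\cdot)C_{1,\alpha}(c,z)f_0\in L^2((a,b);\cH)$, this delivers the bound on $\bsR_z$ near $b$ (and, with the regular-endpoint estimates near $a$, shows $\bsR_z$ is in fact a locally Hilbert--Schmidt integral operator). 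The details of this step are carried out in \cite{GWZ12}, \cite{GWZ13}; modulo them, the argument above completes the proof of \eqref{3.63A}.
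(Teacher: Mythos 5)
The paper itself offers no proof of Theorem~\ref{t2.16}: the entire Appendix~\ref{sA} is a summary of results from \cite{GWZ12}, \cite{GWZ13}, which are cited as containing ``all proofs.'' So your proposal can only be compared against the standard Weyl--Titchmarsh construction, and on that basis it is correct. The two operator identities on which your differentiation argument rests,
\begin{align}
\psi_\alpha(z,x)\phi_\alpha(\ol z,x,a)^* - \phi_\alpha(z,x,a)\psi_\alpha(\ol z,x)^* &= 0,  \notag \\
\psi_\alpha'(z,x)\phi_\alpha(\ol z,x,a)^* - \phi_\alpha'(z,x,a)\psi_\alpha(\ol z,x)^* &= -I_\cH, \notag
\end{align}
are exactly the $(1,1)$ and $(2,1)$ entries of the constancy relation $\Phi(z,x)\,J\,\Phi(\ol z,x)^* \equiv J$ for the $2\times 2$ block fundamental matrix $\Phi$ built from $\theta_\alpha, \phi_\alpha$ with the rotation initial data \eqref{2.5A} (here $J = \left(\begin{smallmatrix} 0 & I_\cH \\ -I_\cH & 0 \end{smallmatrix}\right)$); after inserting \eqref{3.58A} and \eqref{3.59A} they reduce, as you indicate, to the corresponding $\theta_\alpha,\phi_\alpha$ identities. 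The boundary condition inheritance from $\phi_\alpha$ at $a$ and the $L^2$-near-$b$ property from $\psi_\alpha$ are handled correctly, so the equality $\bsR_z u = (\bsH_\alpha - z\bsI)^{-1}u$ on compactly supported $u$ is sound.

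You have also correctly isolated the genuinely new difficulty in the operator-valued, limit-point setting: $\psi_\alpha(z,\cdot)g\in L^2$ near $b$ for each $g\in\cH$ is strictly weaker than $\|\psi_\alpha(z,\cdot)\|_{\cB(\cH)}\in L^2$ near $b$, so \eqref{A.82a} cannot be applied directly to the kernel $G_\alpha$. One small calibration of your framing, though: the obstacle is not really ``boundedness of $\bsR_z$'' --- once $\bsR_z u = (\bsH_\alpha - z\bsI)^{-1}u$ is known on the dense set of compactly supported $u$, the resolvent bound transfers for free. What actually requires work is checking that for general $u\in L^2((a,b);\cH)$ the Bochner integral $\int_a^b dx'\, G_\alpha(z,x,x')u(x')$ exists for a.e.\ $x$ and represents the continuous extension; in particular the tail $\int_x^b dx'\,\psi_\alpha(\ol z,x')^*u(x')$ is a priori only defined as (the action of) the adjoint of the bounded operator $\cH\ni g\mapsto \psi_\alpha(z,\cdot)g \in L^2((c,b);\cH)$ furnished by your $C_{1,\alpha}(c,z)$ factorization, and converting this into a.e.\ Bochner convergence takes an additional monotone-/dominated-convergence step. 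Your deferral of this to \cite{GWZ12}, \cite{GWZ13}, where these measurability and integrability verifications are carried out, is appropriate.
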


In particular, $G_{\alpha}(z,\cdot,\cdot)$ exhibits the semi-separable structure 
of integral kernels discussed in Section \ref{s2}. 

In the last part of this section we now treat Schr\"odinger operators with bounded operator-valued 
potentials on the entire real line $\bbR$. Hence we make the following basic assumption throughout 
the remainder of this section.

\begin{hypothesis} \lb{h2.17}
$(i)$ Suppose $V:\bbR \to \cB(\cH)$ to be a weakly measurable operator-valued function 
satisfying 
\begin{equation}
\|V(\cdot)\|_{\cB(\cH)}\in L^1_\loc((a,b)), \quad V(x)=V(x)^* \, \text{ for a.e. } x\in\bbR. 
\lb{2.51}
\end{equation}
$(ii)$ Introducing the differential expression $\tau$ given by
\begin{equation}
\tau=-\f{d^2}{dx^2} + V(x), \quad x\in\bbR, \lb{2.52}
\end{equation}
we assume $\tau$ to be in the limit point case at $+\infty$ and at
$-\infty$.
\end{hypothesis}

Associated with the differential expression $\tau$ one introduces the self-adjoint Schr\"odinger 
operator $\bsH$ in $L^2(\bbR;\cH)$ by
\begin{align}
&\bsH f=\tau f,   \lb{2.53}
\\ \no
&f\in \dom(\bsH) = \big\{g\in L^2(\bbR;\cH) \,\big|\, g, g' \in
W^{2,1}_{\loc}(\bbR;\cH); \, \tau g\in L^2(\bbR;\cH)\big\}.
\end{align}

As in the half-line context we introduce the $\cB(\cH)$-valued fundamental
system of solutions $\phi_\alpha(z,\cdot,x_0)$ and
$\theta_\alpha(z,\cdot,x_0)$, $z\in\bbC$, of
\begin{equation}
(\tau \psi)(z,x) = z \psi(z,x), \quad x\in \bbR \lb{2.54}
\end{equation}
with respect to a fixed reference point $x_0\in\bbR$, satisfying the
initial conditions at the point $x=x_0$,
\begin{align}
\begin{split}
\phi_\alpha(z,x_0,x_0)&=-\theta'_\alpha(z,x_0,x_0)=-\sin(\alpha), \\
\phi'_\alpha(z,x_0,x_0)&=\theta_\alpha(z,x_0,x_0)=\cos(\alpha), \quad
\alpha=\alpha^*\in\cB(\cH). \lb{2.55}
\end{split}
\end{align}
Again we note that by Corollary 2.5\,$(iii)$, for any fixed $x, x_0\in\bbR$, the functions $\theta_{\alpha}(z,x,x_0)$ and $\phi_{\alpha}(z,x,x_0)$ as well as their strong $x$-derivatives are entire with respect to $z$ in the $\cB(\cH)$-norm. The same is true for the functions $z\mapsto\theta_{\alpha}(\ol{z},x,x_0)^*$ and
$z\mapsto\phi_{\alpha}(\ol{z},x,x_0)^*$. In particular,
\begin{equation}
W(\theta_\alpha(\ol{z},\cdot,x_0)^*,\phi_\alpha(z,\cdot,x_0))(x)=I_\cH, \quad
z\in\bbC.  \lb{2.56}
\end{equation}

Particularly important solutions of \eqref{2.54} are the
{\it Weyl--Titchmarsh solutions} $\psi_{\pm,\alpha}(z,\cdot,x_0)$,
$z\in\bbC\backslash\bbR$, uniquely characterized by
\begin{align}
\begin{split}
&\psi_{\pm,\alpha}(z,\cdot,x_0)f\in L^2([x_0,\pm\infty);\cH), \quad f\in\cH,
\\
&\sin(\alpha)\psi'_{\pm,\alpha}(z,x_0,x_0)
+\cos(\alpha)\psi_{\pm,\alpha}(z,x_0,x_0)=I_\cH, \quad
z\in\bbC\backslash\bbR. \lb{2.57}
\end{split}
\end{align}
The crucial condition in \eqref{2.57} is again the $L^2$-property which
uniquely determines $\psi_{\pm,\alpha}(z,\cdot,x_0)$ up to constant
multiples by the limit point hypothesis of $\tau$ at $\pm\infty$. In
particular, for
$\alpha = \alpha^*, \beta = \beta^* \in \cB(\cH)$,
\begin{align}
\psi_{\pm,\alpha}(z,\cdot,x_0) = \psi_{\pm,\beta}(z,\cdot,x_0)C_\pm(z,\alpha,\beta,x_0)
\lb{2.58}
\end{align}
for some coefficients $C_\pm (z,\alpha,\beta,x_0)\in\cB(\cH)$. The normalization in \eqref{2.57} shows that
$\psi_{\pm,\alpha}(z,\cdot,x_0)$ are of the type
\begin{equation}
\psi_{\pm,\alpha}(z,x,x_0)=\theta_{\alpha}(z,x,x_0)
+ \phi_{\alpha}(z,x,x_0) m_{\pm,\alpha}(z,x_0),
\quad  z\in\bbC\backslash\bbR, \; x\in\bbR, \lb{2.59}
\end{equation}
for some coefficients $m_{\pm,\alpha}(z,x_0)\in\cB(\cH)$, the
{\it Weyl--Titchmarsh $m$-functions} associated with $\tau$, $\alpha$,
and $x_0$ (cf.\ Theorem \ref{t2.15}).

Next, we recall that $\pm m_{\pm,\alpha}(\cdot,x_0)$ are operator-valued Nevanlinna--Herglotz 
functions. By \eqref{2.54} and \eqref{2.55}, the Wronskian of $\psi_{\pm,\alpha}(\ol{z_1},x,x_0)^*$ 
and $\psi_{\pm,\alpha}(z_2,x,x_0)$ satisfies
\begin{align}
W(\psi_{\pm,\alpha}(\ol{z_1},x_0,x_0)^*,\psi_{\pm,\alpha}(z_2,x_0,x_0)) &= m_{\pm,\alpha}(z_2,x_0)- m_{\pm,\alpha}(\ol{z_1},x_0)^*,
\\
\f{d}{dx}W(\psi_{\pm,\alpha}(\ol{z_1},x,x_0)^*,\psi_{\pm,\alpha}(z_2,x,x_0)) &= (z_1-z_2)\psi_{\pm,\alpha}(\ol{z_1},x,x_0)^*\psi_{\pm,\alpha}(z_2,x,x_0), \no
\\
&\hspace{3cm} z_1,z_2\in\bbC\bs\bbR.
\end{align}
Hence, using the limit point hypothesis of $\tau$ at $\pm\infty$ and the $L^2$-property in \eqref{2.57} one obtains
\begin{align} \lb{2.60}
& (z_2-z_1)\int_{x_0}^{\pm\infty} dx\, \big(\psi_{\pm,\alpha}(\ol{z_{1}},x,x_0)f,\psi_{\pm,\alpha}(z_{2},x,x_0)g\big)_\cH
\no \\
&\quad = \big(f,[m_{\pm,\alpha}(z_2,x_0)- m_{\pm,\alpha}(\ol{z_1},x_0)^*]g\big)_\cH,
\quad  f,g\in\cH, \; z_1,z_2 \in\bbC\backslash\bbR.
\end{align}
Setting $z_1=z_2=z$ in \eqref{2.60}, one concludes
\begin{equation}
m_{\pm,\alpha}(z,x_0) = m_{\pm,\alpha}(\ol z,x_0)^*, \quad
z\in\bbC\backslash\bbR.  \lb{2.61}
\end{equation}
Choosing $f=g$ and $z_2=z$, $z_1=\ol z$ in \eqref{2.60}, one also infers
\begin{equation}
\Im(z)\int_{x_0}^{\pm\infty} dx\,\|\psi_{\pm,\alpha}(z,x,x_0)f\|_{\cH}^2
= \big(f,\Im[m_{\pm,\alpha}(z,x_0)]f\big)_\cH, \quad f\in\cH, \; z\in\bbC\backslash\bbR. \lb{2.62}
\end{equation}
Since $m_{\pm,\alpha}(\cdot,x_0)$ are analytic on $\bbC\backslash\bbR$, \eqref{2.62} yields that 
$\pm m_{\pm,\alpha}(\cdot,x_0)$ are operator-valued Nevanlinna--Herglotz functions.

In the following we abbreviate the Wronskian of $\psi_{+,\alpha}(\ol{z},x,x_0)^*$ and 
$\psi_{-,\alpha}(z,x,x_0)$ by $W(z)$. It then readily follows from the properties of the $\cB(\cH)$-valued
solutions $\theta_\alpha(z,\cdot, x_0,), \phi_\alpha(z,\cdot,x_0)$ of $\tau Y=z Y$ in \eqref{2.5A} 
that 
\begin{align}
\begin{split} 
W(z) &= \pm W(\psi_{\pm,\alpha}(\ol{z},x,x_0)^*,\psi_{\mp,\alpha}(z,x,x_0))    \\
&= m_{-,\alpha}(z,x_0) - m_{+,\alpha}(z,x_0), \quad z\in\bbC\bs\bbR.     \lb{2.64}
\end{split} 
\end{align}
The Green's function $G(z,x,x')$ of the Schr\"odinger operator $\bsH$ then reads
\begin{align}
G(z,x,x') = \psi_{\mp,\alpha}(z,x,x_0) W(z)^{-1} \psi_{\pm,\alpha}(\ol{z},x',x_0)^*,
\quad x \lesseqgtr x', \; z\in\bbC\backslash\bbR.
\lb{2.63}
\end{align}
Again, $G(z,\cdot,\cdot)$ exhibits the semi-separable structure 
of integral kernels discussed in Section \ref{s2}.
Thus, the resolvent of $\bsH$ takes on the form, 
\begin{align}
\begin{split} 
\big((\bsH-z\bsI)^{-1}f\big)(x)
=\int_{\bbR} dx' \, G(z,x,x')f(x'),&   \\
z\in\bbC\backslash\bbR, \;
x\in\bbR, \; f\in L^2(\bbR;\cH).& \lb{2.65}
\end{split} 
\end{align}
Of course, \eqref{2.57}--\eqref{2.65} extend to 
$z \in \bbC \backslash \sigma(H)$. 

\medskip

\noindent 
{\bf Acknowledgments.} We are indebted to Yuri Latushkin and Konstantin Makarov 
for helpful discussions, and particularly to Roger Nichols for a careful reading of the entire 
manuscript and for suggesting numerous improvements. 

F.G.\ gratefully acknowledges the extraordinary hospitality and stimulating atmosphere during 
his five-week visit to the Australian National University (ANU), Canberra, and to the University 
of New South Wales (UNSW), Sydney, in July/August of 2012, and the funding of his visit by 
the Australian Research Council. 

A.C., D.P., and F.S.\ gratefully acknowledge financial support from the Australian Research Council.
A.C.\ also thanks the Alexander von Humboldt Stiftung and colleagues at the University of M\"unster. 

Y.T.\ was partially supported by the NCN grant DEC-2011/03/B/ST1/00407 and by the EU Marie Curie IRSES program, project ``AOS'', No.\ 318910.

Finally, A.C., F.G., F.S., and Y.T.\ thank the Erwin Schr\"odinger International Institute for 
Mathematical Physics (ESI), Vienna, Austria, for funding support for this collaboration.

 
\end{document}